\newtheorem{theo}{Theorem}[section]
\newtheorem{lem}[theo]{Lemma}
\newtheorem{prop}[theo]{Proposition}
\newtheorem{cor}[theo]{Corollary}
\theoremstyle{remark} \newtheorem{remark}[theo]{Remark}
\newtheorem{defn}[theo]{Definition}
\newcommand{\rr}{\mathbb{R}}
\newcommand{\nn}{\mathbb{N}}
\newcommand{\cc}{\mathbb{C}}
\newcommand{\zz}{\mathbb{Z}}
\newcommand\D{\mathcal{D}}
\newcommand{\la}{\lambda}
\newcommand{\eps}{\epsilon}
\newcommand{\pl}{\partial}
\newcommand{\x}{\times}
\newcommand{\supp}{\operatorname{supp}}
\newcommand{\demi}{\frac{1}{2}}
\newcommand{\indic}{\operatorname{1\negthinspace l}}
\newcommand{\zf}{\mathrm{zf}}
\newcommand{\bfo}{\mathrm{bf}_0}
\newcommand{\rbo}{\mathrm{rb}_0}
\newcommand{\lbo}{\mathrm{lb}_0}
\newcommand{\lb}{\mathrm{lb}}
\newcommand{\rb}{\mathrm{rb}}
\newcommand{\bfc}{\mathrm{bf}}
\newcommand{\sca}{\mathrm{sc}}
\newcommand\Id{\operatorname{Id}}
\newcommand\RR{\mathbb{R}}
\newcommand\CC{\mathbb{C}}
\newcommand\NN{\mathbb{N}}
\newcommand\MMksc{M^2_{k, \sca}}
\newcommand\Tkbstar{{}^{k, b} T^*}
\newcommand\MMkb{M^2_{k, b}}
\newcommand{\SC}{\ensuremath{\mathrm{sc}}}
\newcommand{\SF}{\ensuremath{\mathrm{s}\Phi}}
\newcommand{\Tsfstar}[1][\mbox{}]{{^{\SF}T^*_{#1}}}
\newcommand{\Tscstar}[1][\mbox{}]{{^{\SC}T^*_{#1}}}
\newcommand\Ndiag{\Nscstar Z}
\newcommand\Lsharp{L^\#}
\def\sfOh{{}^{\SF} \Omega^{1/2}}
\def\sfO{{}^{\SF} \Omega}
\renewcommand\Re{\operatorname{Re}}
\newcommand\CIdot{\dot C^\infty}
\newcommand\Lbf{L^{\bfc}}
\newcommand\MMb{M^2_b}
\newcommand\WF{\operatorname{WF}}
\newcommand{\Nscstar}[1][\mbox{}]{{^{\SC}N^*_{#1}}}
\newcommand\diagb{\text{diag}_b}
\newcommand\ddiagb{\partial \text{diag}_b}
\newcommand\sigmat{\tilde \sigma}
\newcommand\mf{\operatorname{mf}}
\newcommand\ybar{w_1}
\newcommand\ybbar{\overline{w}}
\newcommand\wbar{\overline{w}}
\newcommand\mubbar{\overline{\kappa}}
\newcommand\yybar{\overline{y}}
\newcommand\mmubar{\overline{\mu}}
\newcommand\zbar{\overline{z}}
\newcommand\zetabar{\overline{\zeta}}
\newcommand\oly{{\overline{y}}}
\newcommand\Mbar{K}
\newcommand{\Nsfstar}[1][\mbox{}]{{^{\SF}N^*_{#1}}}
\newcommand\CI{C^\infty}
\newcommand\Omegakb{\Omega_{k,b}}
\newcommand\Omegakbh{\Omega_{k,b}^{1/2}}
\newcommand\mcA{\mathcal{B}}
\newcommand\mcB{\mathcal{B}}
\newcommand\mcE{\mathcal{E}}
\newcommand\half{{\frac1{2}}}
\def\dbyd#1#2{\frac{\partial #1}{\partial #2}}
\newcommand\comp{\operatorname{comp}}
\newcommand\loc{\operatorname{loc}}
\newcommand\chievl{\chi_\lambda^{\mathrm{ev}}(\sqrt{\Delta_N})}
\newcommand\chiev{\chi_\lambda^{\mathrm{ev}}}
\newcommand\Lbar{\overline{\Lambda'}}
\newcommand\Llbbar{\overline{\Lambda_{\lb}}}
\newcommand\Lrbbar{\overline{\Lambda_{\rb}'}}
\newcommand\omegab{\bm{\omega}}
\newcommand\muh{\hat\mu_0}
\newcommand\Lambdabf{\Lambda^{\bfc}}
\newcommand\Gbf{G^{\bfc}}
\newcommand\bL{\mathbf{L}}
\newcommand\bX{\mathbf{X}}
\newcommand\SR{\mathrm{SR}}
\newcommand\bH{\mathbf{H}}
\begin{document}
\title[Restriction and spectral multiplier theorems]{Restriction and spectral multiplier theorems on asymptotically conic manifolds}
\author{Colin Guillarmou}
\address{DMA, U.M.R. 8553 CNRS\\
Ecole Normale Sup\'erieure\\
45 rue d'Ulm\\ 
F 75230 Paris cedex 05 \\France}
\email{cguillar@dma.ens.fr}
\author{Andrew Hassell}
\address{Department of Mathematics, Australian National University \\ Canberra ACT 0200 \\ AUSTRALIA}
\email{Andrew.Hassell@anu.edu.au}
\author{Adam Sikora}
\address{Department of Mathematics, Australian National University \\ Canberra ACT 0200 \\ AUSTRALIA, and \ Department of Mathematics, Macquarie University \\ NSW 2109 \\ AUSTRALIA}
\email{sikora@mq.edu.au}
\thanks{This research was supported by Australian Research Council Discovery grants DP0771826  and DP1095448 (A.H., A.S.) and a Future Fellowship (A.H.). C.G. is partially supported by ANR grant ANR-09-JCJC-0099-01 and by the PICS-CNRS Progress in Geometric
Analysis and Applications, and thanks the math dept of  ANU for 
its hospitality.}
\subjclass[2010]{35P25, 47A40, 58J50}
\keywords{restriction estimates, spectral multipliers, Bochner-Riesz summability, asymptotically conic manifolds}

\begin{abstract} 
The classical Stein-Tomas restriction theorem is equivalent to the statement that the spectral measure $dE(\lambda)$ of the square root of the Laplacian on $\RR^n$ is bounded from $L^p(\RR^n)$ to $L^{p'}(\RR^n)$ for $1 \leq p \leq 2(n+1)/(n+3)$, where $p'$ is the conjugate exponent to $p$, with operator norm scaling as $\lambda^{n(1/p - 1/p') - 1}$. We prove a geometric generalization in which the Laplacian on $\RR^n$ is replaced by the Laplacian, plus suitable potential, on a nontrapping asymptotically conic manifold, which is the first time such a result has been proven in the variable coefficient setting. It is closely related to, but stronger than, Sogge's discrete $L^2$ restriction theorem, which is an $O(\lambda^{n(1/p - 1/p') - 1})$ estimate on the $L^p \to L^{p'}$ operator norm of the spectral projection for a spectral window of fixed length. From this, we deduce spectral multiplier estimates for these operators, including Bochner-Riesz summability results, which are sharp for $p$ in the range above. 

The paper divides naturally into two parts. In the first part, we show at an abstract level that restriction estimates imply spectral multiplier estimates, and are implied by certain pointwise bounds on the Schwartz kernel of $\lambda$-derivatives of the spectral measure. In the second part, we prove such pointwise estimates for the spectral measure of the square root of Laplace-type operators on asymptotically conic manifolds. These are valid for all $\lambda > 0$ if the asymptotically conic manifold is nontrapping, and for small $\lambda$ in general. We also observe that Sogge's estimate on spectral projections is valid for any complete manifold with $C^\infty$ bounded geometry, and in particular for asymptotically conic manifolds (trapping or not), while by contrast, the operator norm on $dE(\lambda)$ may blow up exponentially as $\lambda \to \infty$ when trapping is present. This justifies the statement that the estimate on $dE(\lambda)$ is strictly stronger than Sogge's estimate. 
\end{abstract}
\maketitle
\tableofcontents

\section{Introduction}
The aim of this article is to prove some $L^p$ multiplier properties for the Laplacian, and a Stein-Tomas-type restriction theorem 
for its spectral measure, on a class of Riemannian manifolds which include metric perturbations of Euclidean space. 
One of the first natural questions in harmonic analysis is to understand the $L^p$ boundedness of Fourier 
multipliers $M$ on $\rr^n$, defined by
\[ M(f)(x)=\frac{1}{(2\pi)^n}\int_{\rr^n}e^{ix.\xi}m(\xi)\hat{f}(\xi)d\xi\]
where $m$ is a measurable function. Notice that for radial multipliers $m(\xi)=F(|\xi|)$, 
this amounts to study the $L^p$ boundedness of $F(\sqrt{\Delta})$ where $\Delta$ is the non-negative Laplacian. 
Of course, for $p=2$,  the necessary and sufficient condition on $m$ for $M$ to be bounded on $L^2$ is that $m\in L^\infty(\rr^n)$, but the case $p\not=2$ is much more difficult. The first results in this direction were given by Mikhlin \cite{Mik}: $M$ acts boundedly on $L^p(\rr^n))$ 
for all $1<p<\infty$ if
\[m\in C^\infty(\rr^n\setminus \{0\}), \, |\xi|^k |\nabla^k m(\xi)|\in L^\infty,\, \forall k,\, 0 \leq k \leq n/2+1,\]
and sharpened by H\"ormander \cite{Hor}, \cite[Th. 7.9.5]{Hor1}: let $\psi\in C_0^\infty(\demi,2)$ be not identically zero, 
then $M$ acts boundedly on $L^p(\rr^n))$ for all $1<p<\infty$ if
\[\sup_{t > 0}||m(t\cdot) \, \psi||_{H^s(\rr^n)}<\infty, \quad \frac{n}{2} < s \in \NN.
\]

More generally, let $\bL$ be a self-adjoint operator acting on $L^2$ of some measure space.  Using the spectral theorem, 
`spectral multipliers'  $F(\bL)$ can be defined for any bounded Borel  function $F$, and   act continuously on $L^2$. A question which 
has attracted a lot of attention during the last thirty years is to find some necessary conditions on the function 
$F$  to ensure that the operator $F(\bL)$ extends as a bounded operator for 
some range of $L^p$ spaces for $p \neq 2$. Probably the most natural and concrete examples are functions of the 
Laplacian on complete Riemannian manifolds, or functions 
of  Schr\"odinger operators with real potential $\Delta +V$, but  these problems are 
also studied for abstract self-adjoint operators. Some particular families of functions $F$ are also investigated in the 
theory of spectral multipliers: some of the most important examples include oscillatory integrals 
$e^{i(t\bL)^\alpha}(\Id+(t\bL)^\alpha)^{-\beta}$ and Bochner-Riesz means \eqref{br}. 
The subject of Bochner-Riesz means and spectral multipliers is so
broad that it is impossible to provide a comprehensive bibliography here, 
so we refer the  reader to the following  papers where further literature can be found
\cite{An, CS, ClSt, CSi, MM, MuSt, Sogge, SeeS, Sog, Tay, Tha}.  


The theory of Fourier multipliers and Bochner-Riesz analysis in this setting is  related to the so-called \emph{sphere restriction problem} for the Fourier transform:
find the pairs $(p, q)$  for which the \emph{sphere restriction operator} $\SR(\lambda)$, defined by 
\[\SR(\lambda) f(\omega):=\hat{f}(\la\omega), \, \, \omega\in S^{n-1}, \la >0,\] 
acts boundedly from $L^p(\rr^n)$ to $L^q(S^{n-1}))$.
See for example \cite{fef, fef2}. 
Of course, the dependence in $\la$  is trivial here since $\SR(\lambda) f=\la^{-n}\SR(1)(f(\la^{-1}\, \cdot))$ but 
this parameter $\la$ will be important later on.
There is a long list of results on this problem, but the first ones for general dimensions are due to Stein  and Tomas. The theorem of Tomas \cite{To}, improved by Stein \cite[Chapter IX, Section 2]{Stein} for the endpoint  $p=2\frac{n+1}{n+3}$ is the following:  $\SR(1)$ maps $L^p(\rr^n)$ boundedly to $L^q(S^{n-1}))$ if $p\leq 2\frac{n+1}{n+3}$ and $q\leq \frac{n-1}{n+1}\frac{p}{p-1}$ (notice that $q=2$ when $p$ reaches the endpoint). On the other hand, a necessary condition (based on the Knapp example) for boundedness is only given by  $p< 2\frac{n}{n+1}$ and this leads to the conjecture that 
$p< 2\frac{n}{n+1}$ and $q\leq \frac{n-1}{n+1}\frac{p}{p-1}$ is a necessary and sufficient condition. 
In fact, this has been shown by Zygmund \cite{Zy} in dimension $2$, improving a result of Fefferman \cite{fef} (by obtaining the endpoint estimate), but the conjecture is still open for $n>2$.  For more references and new results in this direction, we refer the interested reader to the survey by Tao \cite{Tao} on the subject. 

Like the $L^p$ multiplier problem,  the sphere restriction problem   has a corresponding natural generalization to 
certain types of manifolds (at least if we think of Fourier transform as a spectral diagonalisation for the Laplacian), and 
in particular those which have similar structure at infinity as  Euclidean space.
On $\rr^n$, the Schwartz kernel of the spectral measure $dE_{\sqrt{\Delta}}(\la)$ of $\sqrt{\Delta}$ is given by 
\[dE_{\sqrt{\Delta}}(\la;z,z')=\frac{\la^{n-1}}{(2\pi)^n}\int_{S^{n-1}}e^{i(z-z').\la\omega}d\omega, \quad z, z' \in \RR^n, \] 
therefore $dE_{\sqrt{\Delta}}(\la)=\frac{\la^{n-1}}{(2\pi)^n}\SR(\lambda)^*\SR(\lambda)$ and the restriction theorem for $q=2$ is equivalent to finding the largest $p<2$  such that $dE_{\sqrt{\Delta}}$ maps $L^p$ to $L^{p'}$. There is a natural class of Riemannian manifolds, called
\emph{scattering manifolds} or \emph{asymptotically conic manifolds}, for which  the spectral measure of the Laplacian
admits an analogous factorization. 
 Such manifolds, introduced by Melrose \cite{scatmet}, are by definition the interior $M^\circ$ of a compact manifold with boundary $M$, such that the metric $g$ is smooth on $M^\circ$ and has the form 
\begin{equation}
g=\frac{dx^2}{x^4}+\frac{h(x)}{x^2} 
\label{scatteringmetric}\end{equation}
in a collar neighbourhood near $\pl M$, where $x$ is a smooth boundary defining function for $M$ and $h(x)$ a smooth one-parameter family of metrics on $\pl M$; the function $r:=1/x$ near $x=0$ can be thought of as a radial coordinate near infinity and the metric is asymptotic to the exact conic metric $((0,\infty)_r\x \pl M, dr^2+r^2h(0))$ as $r \to \infty$. 
Associated to the Laplacian on such a manifold is the family of Poisson operators $P(\lambda)$ defined for $\lambda > 0$. These form a sort of distorted Fourier transform for the Laplacian: they map $L^2(\partial M)$ into the null space of $\Delta_g - \lambda^2$ and satisfy 
$dE_{\sqrt{\Delta_g}}(\la)=(2\pi)^{-1} P(\lambda) P(\lambda)^*$ \cite{HV1}. Thus $(\lambda/2\pi)^{-(n-1)/2} P(\lambda)^*$ is an analogue of the restriction operator in this setting.  The corresponding restriction problem is therefore to study the 
$L^p(M)\to L^q(\pl M)$ boundedness of $P(\lambda)^*$, and its norm in terms of the frequency $\la$ (the dependence of $P(\lambda)$ in $\la$ is no longer a scaling as it is for $\rr^n$).\\
   
The aim of the present work is to address these multiplier and restriction problems in the geometric setting of asymptotically conic manifolds. In fact, we shall first show, in an abstract setting, that restriction-type estimates on the spectral measure of an operator imply spectral multiplier results for that operator. Then we will prove such restriction estimates for a class of operators which are 0-th order perturbations of the Laplacian on asymptotically conic manifolds. 
In particular, our results cover the following settings:
\begin{itemize}
\item Schr\"odinger operators, i.e. $\Delta + V$ on $\RR^n$, where $V$ smooth and decaying sufficiently at infinity;

\item The Laplacian with respect to  metric perturbations of the flat metric on $\RR^n$, again decaying sufficiently at infinity;


\item The Laplacian on asymptotically conic manifolds. 
\end{itemize}

Our first main result is that restriction estimates imply spectral multiplier estimates:

\begin{theo}\label{bori}
Let $\bL$ be a non-negative self adjoint operator on $L^2(X,d\mu)$ where $(X,d,\mu)$ is a metric measure space
 such that the volume of balls satisfy the uniform bound $C_2>\mu(B(x,\rho))/\rho^n>C_1$ for some $C_2>C_1>0$. 
Suppose that the operator $\cos(t\sqrt{\bL})$ satisfies finite speed propagation property \eqref{fsp}, that the spectrum of $\bL$  
is absolutely continuous and that there exists $1\le p <2$ such that the spectral measure of $\bL$ satisfies 
\begin{equation}
\|d E_{\sqrt{\bL}}(\lambda)\|_{p \to p' }\le C\lambda^{n(1/p-1/p')-1},
\label{sp-meas-est}\end{equation}
where $p'$ is the exponent conjugate to $p$.  
Let $s > n(1/p - 1/2)$ be a Sobolev exponent. Then there exists $C$ depending only on $n, p$, $s$, and the constant in \eqref{re} such that, 
for every even $F \in H^s(\RR)$ supported in $[-1, 1]$, 
$F(\sqrt{\bL})$ maps $L^p(X) \to L^p(X)$, and 
\begin{equation}
\sup_{\alpha>0} \big\|F(\alpha\sqrt{\bL})\big\|_{p\to p} \leq  C\|F\|_{H^s}  .
\label{multip}\end{equation}
\end{theo}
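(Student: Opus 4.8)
The plan is to follow the classical Stein–Tomas-type argument adapted to the abstract setting, exploiting the restriction estimate \eqref{sp-meas-est} as the replacement for the Euclidean Stein–Tomas inequality, and the finite speed of propagation \eqref{fsp} together with the volume doubling hypothesis to localize everything on the scale dictated by the support of $F$. First I would reduce to the case $\alpha = 1$: for general $\alpha$ one rescales, noting that the support condition on $F$ forces $F(\alpha\sqrt{\bL})$ to have a kernel supported, by finite speed of propagation, within distance $\alpha$ of the diagonal, and the volume bound $C_1 \rho^n \le \mu(B(x,\rho)) \le C_2\rho^n$ makes the relevant constants scale-invariant; so it suffices to bound $\|F(\sqrt{\bL})\|_{p\to p}$ by $C\|F\|_{H^s}$ for $F$ even, supported in $[-1,1]$.

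Next I would dyadically decompose $F$ in the frequency (Fourier-dual-to-$\lambda$) variable. Writing $\hat F$ for the Fourier transform of $F$, choose a Littlewood–Paley partition $\hat F = \sum_{j\ge 0} (\hat F)_j$ where $(\hat F)_0$ is supported in $\{|t|\le 2\}$ and $(\hat F)_j$ in $\{2^{j-1}\le |t|\le 2^{j+1}\}$ for $j \ge 1$; correspondingly $F = \sum_j F_j$ where $F_j(\sqrt{\bL}) = (2\pi)^{-1}\int \hat F_j(t)\cos(t\sqrt{\bL})\,dt$ (using that $F$ is even). The point of decomposing on the Fourier side is that $\|F_j\|_{H^s}$ can be summed: $\sum_j \|F_j\|_{H^s} \lesssim \sum_j 2^{js}\|(\hat F)_j\|_{L^2} \lesssim \|F\|_{H^s}$ as long as we only lose $2^{js}$ on each piece, by Cauchy–Schwarz in $j$ against the $H^s$-norm (here the strict inequality $s > n(1/p-1/2)$ gives the needed room). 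So it is enough to prove, for each $j$,
\begin{equation}
\|F_j(\sqrt{\bL})\|_{p\to p} \le C\, 2^{j n(1/p - 1/2)}\, \|F_j\|_{L^\infty} \quad\text{(or }\|(\hat F)_j\|_{L^1}\text{)},
\label{dyadic-target}\end{equation}
and then sum. To prove \eqref{dyadic-target} I would interpolate between an $L^2\to L^2$ bound and an $L^p\to L^{p'}$ bound. The $L^2\to L^2$ bound is immediate from the spectral theorem: $\|F_j(\sqrt{\bL})\|_{2\to 2}\le \|F_j\|_{L^\infty}$. For the $L^p\to L^{p'}$ bound, write $F_j(\sqrt{\bL}) = \int F_j(\lambda)\,dE_{\sqrt{\bL}}(\lambda)$; since $F_j$ is (essentially) supported where $\lambda \sim$ anything but is morally concentrated at frequency $\lesssim 2^j$ in the relevant sense — more precisely, since $F$ is supported in $[-1,1]$, all $\lambda$ contribute, so one uses \eqref{sp-meas-est} pointwise in $\lambda$ and integrates: $\|F_j(\sqrt{\bL})\|_{p\to p'} \le \int |F_j(\lambda)|\, C\lambda^{n(1/p-1/p')-1}\,d\lambda$. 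Because $F_j$ has Fourier support of size $\sim 2^j$, $F_j$ is slowly varying on scale $2^{-j}$, which together with $\|F_j\|_\infty \lesssim \|(\hat F)_j\|_{L^1}$ and a TT${}^*$/almost-orthogonality bookkeeping gives a bound of the form $C\, 2^{j(n(1/p-1/p'))}\,\|(\hat F)_j\|_{L^1}^{?}$; interpolating this with the $L^2$ bound via the Riesz–Thorin theorem yields \eqref{dyadic-target} with exponent $n(1/p-1/2)$ on the $2^j$, as required.

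The main obstacle, and the step requiring the most care, is the interplay in \eqref{dyadic-target} between the $L^1$-normalization of $\hat F_j$ and the $L^\infty$-normalization of $F_j$, i.e. producing the correct power $2^{jn(1/p-1/2)}$ rather than something weaker, while genuinely only using \eqref{sp-meas-est} and \eqref{fsp} (not any heat- or Poisson-kernel Gaussian bounds, which are not hypotheses here). This is precisely the place where the finite speed of propagation \eqref{fsp} enters essentially: it guarantees that $\cos(t\sqrt{\bL})$, hence $F_j(\sqrt{\bL})$, has kernel supported within distance $2^{j+1}$ of the diagonal, so that on $L^p$ the operator can be treated as "local" on that scale and one can upgrade the $L^p\to L^{p'}$ estimate to an $L^p\to L^p$ estimate at the cost of the volume of a ball of radius $2^j$, namely a factor $\lesssim (2^j)^{n(1-2/p')} = (2^j)^{n(2/p-1)}$ on the norm — wait, one must instead combine the $p\to p'$ bound with the support property via Hölder on each ball, which converts $\|\cdot\|_{p\to p'}$ localized at scale $2^j$ into $\|\cdot\|_{p\to p}$ with an extra $(2^j)^{n(1/p-1/p')}$; reconciling the two powers of $2^j$ and checking they combine to give a summable series in $j$ under $s > n(1/p-1/2)$ is the crux. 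I expect the clean way to organize this is exactly as in Sikora's and the second-named author's earlier abstract work: prove an $L^p \to L^{p'}$ bound for $F_j(\sqrt{\bL})$ of the form $C 2^{j n(1/p-1/p')} \|F_j\|_\infty$ directly from \eqref{sp-meas-est} (using that $F_j$ is concentrated at frequencies $\lesssim 2^j$ in the sense that $F_j(\lambda)$ is negligible unless — here one actually splits $F_j$ further by a dyadic decomposition in $\lambda$ and sums a geometric-type series, controlled because $n(1/p-1/p') \ge 1$ on the relevant range or by absorbing into $\|(\hat F)_j\|_{L^1}$), then use finite speed of propagation plus volume doubling to deduce the $L^p\to L^p$ bound \eqref{dyadic-target}, and finally sum over $j$.
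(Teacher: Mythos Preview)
Your overall architecture --- dyadic decomposition of $\hat F$ at scale $2^j$, finite speed propagation to localize the kernel of $F_j(\sqrt{\bL})$ at scale $2^j$, then converting an off-diagonal estimate to $L^p\to L^p$ via H\"older on balls --- is the same as the paper's. But the way you propose to use the restriction estimate will not reach the sharp threshold $s>n(1/p-1/2)$; as written it only gives $s>n(1/p-1/p')+1/2$ (and your attempted interpolation between $L^2\to L^2$ and $L^p\to L^{p'}$ cannot produce $L^p\to L^p$).

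Here is the arithmetic of the gap. If you bound $\|F_j(\sqrt{\bL})\|_{p\to p'}$ by $\int |F_j(\lambda)|\,\|dE(\lambda)\|_{p\to p'}\,d\lambda \lesssim \|F_j\|_\infty$ (using $\lambda\lesssim 1$) and then convert $p\to p'$ into $p\to p$ via the support lemma, you pick up the volume factor $(2^j)^{n(1/p-1/p')}$. Summing $\sum_j 2^{jn(1/p-1/p')}\|F_j\|_\infty$ is control by a Besov norm $B^{n(1/p-1/p')}_{\infty,1}$, which for compactly supported $F$ requires $s>n(1/p-1/p')+\tfrac12 = 2n(1/p-1/2)+\tfrac12$. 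That is exactly the loss the theorem is designed to avoid.

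The paper's fix is twofold. First, apply the support-localization lemma with target exponent $q=2$, not $q=p'$: this costs only $(2^j)^{n(1/p-1/2)}$ and reduces matters to bounding $\|F_j(\alpha\sqrt{\bL})\|_{p\to 2}$. Second --- and this is the step your proposal gestures at but does not actually implement --- use $TT^*$ in the form
\[
\|F_j(\alpha\sqrt{\bL})\|_{p\to 2}^2 \;=\; \big\|\,|F_j|^2(\alpha\sqrt{\bL})\,\big\|_{p\to p'} \;\le\; \int |F_j(\alpha\lambda)|^2\,\|dE(\lambda)\|_{p\to p'}\,d\lambda,
\]
so that the restriction estimate now acts on $|F_j|^2$ rather than $|F_j|$. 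This yields $\alpha^{n(1/p-1/2)}\|F_j(\alpha\sqrt{\bL})\|_{p\to 2}\lesssim \|F_j\|_{L^2}$, and the sum $\sum_j 2^{jn(1/p-1/2)}\|F_j\|_{L^2}$ is exactly the Besov norm $B^{n(1/p-1/2)}_{2,1}$, controlled by $H^s$ for any $s>n(1/p-1/2)$. (A separate cutoff $\psi$ supported in $[-4,4]$ is needed to handle the tails of $F_j$ outside $\operatorname{supp} F$; those tails are dealt with by integration by parts against $e^{it(\lambda-\lambda')}$ and contribute only $\|F\|_2$.) The passage through $q=2$ and the $TT^*$ squaring are what convert the $p\to p'$ restriction input into an $L^2$-based Besov norm on the multiplier side, and this is precisely the half-derivative gain over the cruder route you outlined.
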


\begin{remark} 
As noted above, the hypothesis \eqref{sp-meas-est} is valid on Euclidean space $\RR^n$ and for $1 \leq p \leq 2(n+1)/(n+3)$. In this case, the range of the Sobolev exponent above, $s > n(1/p - 1/2)$ is known to be \emph{sharp}; see \cite[Section IX.2]{Stein}. 
\end{remark}

In the second part of the paper, we prove \eqref{sp-meas-est} for the spectral measure of the Laplacian $\Delta_g$, plus a suitable potential,  on asymptotically conic manifolds.

\begin{theo}\label{main3} Let $(M,g)$ be an asymptotically conic manifold of dimension $n \geq 3$, and let $x$ be a smooth boundary defining function of 
$\pl M$. Let $\bH := \Delta_g+V$ be a Schr\"odinger operator 
on $M$, with $V\in x^3C^\infty(M)$, and assume that $\bH$ is a positive operator and that $0$ is neither an eigenvalue nor a resonance. Then:
\begin{itemize}
\item[(A)] 
  For any $\lambda_0 > 0$ there exists a constant $C>0$ such that the spectral measure $dE(\lambda)$
for $\sqrt{\bH}$ satisfies
\begin{equation}
\| dE_{\sqrt{\bH}}(\lambda) \|_{L^p(M) \to L^{p'}(M)} \leq C \lambda^{n(1/p - 1/p') - 1}
\label{restr}\end{equation}
for $1 \leq p \leq 2(n+1)/(n+3)$ and $0 < \lambda \leq \lambda_0$.\\
\item[(B)]  If $(M,g)$ is nontrapping, then there exists $C>0$ such that 
\eqref{restr} holds for all $\lambda > 0$. \\
\item[(C)]   If $(M,g)$ is trapping  and has asymptotically \textbf{Euclidean} ends,
there exists $\chi\in C_0^\infty(M^\circ)$ and $C>0$ such that 
\begin{equation}\label{cutoffest}
\|(1-\chi) dE_{\sqrt{\bH}}(\lambda) (1-\chi)\|_{L^p(M) \to L^{p'}(M)} \leq C \lambda^{n(1/p - 1/p') - 1}, \quad \forall \lambda\geq 0 
\end{equation}
for $1 < p \leq 2(n+1)/(n+3)$. 
However, \eqref{restr} need not hold for all $\lambda > 0$: there exist  (trapping) asymptotically Euclidean manifolds $(M,g)$,  sequences $\lambda_n \to \infty$ and $C, c > 0$  such that 
\begin{equation}
\| dE_{\sqrt{\Delta_g}}(\lambda_n) \|_{L^p(M) \to L^{p'}(M)} \geq C e^{c \lambda_n}. 
\label{restr-counterex}\end{equation}
\item[(D)]  On the other hand, the spectral projection estimate
\begin{equation}\label{integrated}
\big\| \indic_{[\lambda, \lambda + 1]}(\sqrt{\Delta_g}) \big\|_{L^p(M) \to L^{p'}(M)} \leq C \lambda^{n(1/p - 1/p') - 1}, \quad\forall \lambda \geq 1,
\end{equation}
holds for $1 \leq p \leq 2(n+1)/(n+3)$ for all asymptotically conic manifolds, trapping or not, and indeed for the much larger class of complete manifolds with $\CI$ bounded geometry. 
\end{itemize}
\end{theo}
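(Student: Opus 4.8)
\emph{Sketch of the argument.} The four parts require rather different techniques. Parts (A) and (B) will be deduced from the abstract criterion to be established in Part~1 of the paper — by which suitable pointwise and symbolic-in-$\lambda$ bounds on the Schwartz kernel of $dE_{\sqrt{\bH}}(\lambda)$ and its $\lambda$-derivatives imply the restriction estimate \eqref{restr} on the range $1\le p\le 2(n+1)/(n+3)$ — once we produce those kernel bounds. Using Stone's formula, $\frac{d}{d\lambda}E_{\sqrt{\bH}}(\lambda)=\frac{\lambda}{\pi i}\big(R(\lambda^2+i0)-R(\lambda^2-i0)\big)$ with $R(\sigma)=(\bH-\sigma)^{-1}$, so the task is to describe the two boundary values of the resolvent by a parametrix construction. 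For $0<\lambda\le\lambda_0$ (part (A)) the relevant tool is the low-energy resolvent construction on the blown-up double space adapted to the conic ends (the $k,b$ double space of Guillarmou--Hassell): the hypotheses $V\in x^3\CI(M)$, positivity of $\bH$, and ``$0$ is neither an eigenvalue nor a resonance'' are exactly what make $R(\sigma)$ polyhomogeneous conormal on that space with the expected leading orders uniformly down to $\lambda=0$. For part (B) I would adjoin the high-energy regime $h=\lambda^{-1}\to0$, studying $h^2\bH-1$ with semiclassical scattering-microlocal analysis: the nontrapping assumption furnishes the outgoing resolvent bound, and propagation of singularities then gives a semiclassical Lagrangian/FIO description of $R(\lambda^2\pm i0)$ for $\lambda\ge\lambda_0$. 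Matching the two regimes yields a decomposition $dE_{\sqrt{\bH}}(\lambda;z,z')=\lambda^{n-1}\sum_j a_j(\lambda,z,z')e^{i\lambda\Phi_j(z,z')}$ into finitely many terms, with $\Phi_j$ comparable to $d(z,z')$ and symbol bounds $|(\lambda\partial_\lambda)^k a_j|\le C_k(1+\lambda d(z,z'))^{-(n-1)/2}$; feeding this into the abstract criterion gives (A) and (B).

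For part (C), the trapped set $K\subset S^*M$ projects onto a compact subset of $M^\circ$, and I would take $\chi\in C_0^\infty(M^\circ)$ equal to $1$ near that projection. On asymptotically Euclidean manifolds one has polynomially bounded resolvent estimates microlocally away from $K$ (Cardoso--Vodev-type bounds, glued \`a la Datchev--Vasy to the model Euclidean ends); combined with the semiclassical parametrix at those ends this reproduces the oscillatory structure above for $(1-\chi)dE(\lambda)(1-\chi)$, hence \eqref{cutoffest}, the loss of the endpoint $p=1$ coming from the interpolation in the gluing step. For \eqref{restr-counterex} I would choose a trapping asymptotically Euclidean $(M,g)$ carrying a \emph{stable} closed geodesic $\gamma$ (such metrics exist, e.g.\ a Euclidean end attached to a bulge) and invoke the Gaussian-beam construction: there are $\lambda_n\to\infty$ and $u_n$ with $\|u_n\|_2=1$, $\supp u_n$ in a fixed tube around $\gamma$, transverse concentration at scale $\lambda_n^{-1/2}$, and $\|(\bH-\lambda_n^2)u_n\|_2\le Ce^{-c\lambda_n}$. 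The quasimode bound forces all but an $O(e^{-c\lambda_n})$ fraction of the spectral mass of $u_n$ into an interval of length $e^{-c\lambda_n/2}/\lambda_n$ about $\lambda_n$, so some $\mu_n$ there has $\langle dE(\mu_n)u_n,u_n\rangle\gtrsim\lambda_n e^{c\lambda_n/2}$; since $\|u_n\|_p\|u_n\|_{p'}$ stays bounded (because $1/p+1/p'=1$), this gives $\|dE(\mu_n)\|_{p\to p'}\gtrsim e^{c\lambda_n/2}$, which is \eqref{restr-counterex}.

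Part (D) uses only the local structure. Choose an even $\varphi\in C_c^\infty(\RR)$ with $\widehat\varphi$ supported in $(-1,1)$ and $\varphi\ge1$ on $[-1,1]$; then, with $\psi=\varphi^2$ (so $\widehat\psi\in C_c^\infty(-2,2)$), a $TT^*$ argument gives $\|\indic_{[\lambda,\lambda+1]}(\sqrt{\Delta_g})\|_{p\to p'}\le\|\psi(\sqrt{\Delta_g}-\lambda)\|_{p\to p'}$, while
\[
\psi(\sqrt{\Delta_g}-\lambda)+\psi(\sqrt{\Delta_g}+\lambda)=\tfrac{1}{\pi}\int_{-2}^{2}\widehat\psi(t)\cos(t\lambda)\cos(t\sqrt{\Delta_g})\,dt .
\]
By finite speed of propagation the kernel of the right-hand side is supported in $\{d(z,z')\le2\}$, and on a manifold with $\CI$ bounded geometry the Hadamard parametrix for $\cos(t\sqrt{\Delta_g})$, $|t|\le2$, together with uniform remainder bounds, yields $|\psi(\sqrt{\Delta_g}-\lambda)(z,z')|\le C\lambda^{n-1}(1+\lambda d(z,z'))^{-(n-1)/2}$ there. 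Interpolating the $L^1\to L^\infty$ bound $O(\lambda^{n-1})$ near the diagonal against the $L^2\to L^2$ bound with the oscillatory decay away from it — the Stein--Tomas argument — gives \eqref{integrated} for $1\le p\le 2(n+1)/(n+3)$, uniformly in $\lambda\ge1$; asymptotically conic manifolds have $\CI$ bounded geometry, so the estimate applies to all of them, trapping or not.

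The technical core, and the main obstacle, is part (B): building the global high-energy parametrix for the spectral measure on a manifold with conic ends with pointwise kernel bounds that are uniform and \emph{symbolic} in $\lambda$ as $\lambda\to\infty$. This requires the semiclassical scattering calculus at infinity, the nontrapping propagation estimate through the compact part, and a careful match with the low-energy construction of (A); the quasimode pairing in (C) is a secondary but still delicate point.
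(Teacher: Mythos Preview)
Your overall architecture is sound, but there is a genuine gap in the sketch for (A) and (B). The claim that the spectral-measure kernel admits a global oscillatory decomposition with phases $\Phi_j\sim d(z,z')$ and amplitudes obeying $(1+\lambda d(z,z'))^{-(n-1)/2}$ bounds is \emph{not} available on a general nontrapping asymptotically conic manifold: the Legendrian $L$ carrying the singularities of $dE(\lambda)$ need not project diffeomorphically to the base away from the diagonal (conjugate points, caustics of the geodesic flow), and at such points the pointwise bound you assert simply fails. The paper's way around this is the heart of the argument and is absent from your sketch. One does not estimate $dE(\lambda)$ directly; instead one chooses a \emph{pseudodifferential} partition of unity $\mathrm{Id}=\sum_i Q_i(\lambda)$ with small microsupport, so that on the microlocal support of each diagonal block $Q_i\,dE(\lambda)\,Q_i^*$ the projection $\pi:L\to\mathrm{base}$ is either a local diffeomorphism or degenerates only in the controlled way it does at the boundary of the diagonal (Lemmas~\ref{L-geometry-1}, \ref{L-geometry-he}). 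For these diagonal blocks the desired pointwise bounds do hold (Propositions~\ref{Leg-bd-1}, \ref{Leg-bd-he}); the off-diagonal blocks $Q_i\,dE\,Q_k^*$, $i\ne k$, are never estimated pointwise. One instead exploits the factorization $dE(\lambda)=(2\pi)^{-1}P(\lambda)P(\lambda)^*$ through the Poisson operator and a $TT^*$ argument (Proposition~\ref{QQ1}): the diagonal estimate yields $\|Q_iP(\lambda)\|_{L^2\to L^{p'}}$, summing over $i$ gives the bound on $P(\lambda)$, and squaring recovers $\|dE(\lambda)\|_{p\to p'}$. Without this device the argument does not close.

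A secondary point on (C): Gaussian beams along a stable closed geodesic in the $C^\infty$ category produce quasimodes with error $O(\lambda^{-N})$ for every $N$, which via your pigeonhole argument shows only that no \emph{polynomial} bound on $\|dE(\lambda_n)\|_{p\to p'}$ can hold (this is Proposition~\ref{applquasimode}). The \emph{exponential} lower bound \eqref{restr-counterex} requires exponentially accurate quasimodes, which Gaussian beams deliver only under analyticity hypotheses you did not impose. The paper obtains \eqref{restr-counterex} by an explicit example (Remark~\ref{metricbottle}): on a connected sum of $\mathbb{R}^n$ with a round sphere, the restricted spherical harmonics $(x_1+ix_2)^N$ are quasimodes with error $O(e^{-\alpha N})$ by direct computation, since $|x_1+ix_2|<1$ on the support of the commutator with the cutoff. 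Your sketches for the localized estimate in (C) and for (D) are along the right lines and close to the paper's.
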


Since $\bH$ in Theorem~\ref{main3} also satisfies the finite speed of propagation property \eqref{fsp}, we deduce from the two theorems above

\begin{cor}\label{maincor}
Let $\bL = \bH$, where $\bH$ is as in Theorem~\ref{main3}, and assume that $(M,g)$ in Theorem~\ref{main3} is nontrapping.   Then 
$\bL$ satisfies \eqref{multip}, where $F$ and $s$ are as in Theorem~\ref{bori} and $p \in [1, 2(n+1)/(n+3)]$. 
\end{cor}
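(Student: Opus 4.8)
The plan is to derive Corollary~\ref{maincor} by checking that the operator $\bL = \bH$ of Theorem~\ref{main3} fits the abstract framework of Theorem~\ref{bori}, and then feeding the restriction bound of Theorem~\ref{main3}(B) into Theorem~\ref{bori}. So the task reduces to verifying, one by one, the hypotheses of Theorem~\ref{bori}: the metric measure space structure with uniform Euclidean-type volume bounds, finite speed of propagation for $\cos(t\sqrt{\bH})$, absolute continuity of the spectrum of $\bH$, and the spectral measure estimate \eqref{sp-meas-est} for some $1 \le p < 2$.

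For the metric measure structure, take $X = M^\circ$ with $d = d_g$ the Riemannian distance and $\mu$ the Riemannian volume. Small geodesic balls have volume comparable to $\rho^n$ by boundedness of the geometry, while for large $\rho$ the metric is uniformly comparable, near $\partial M$, to the exact conic metric $dr^2 + r^2 h(0)$, whose balls of radius $\rho$ centred at any point have volume $\asymp \rho^n$; compactness of $\partial M$ makes the implied constants uniform in the centre. This gives the required two-sided bound $C_1 \rho^n < \mu(B(x,\rho)) < C_2 \rho^n$. The finite speed of propagation \eqref{fsp} holds with unit speed because $\bH = \Delta_g + V$ has the same principal symbol as $\Delta_g$ --- the potential $V \in x^3 C^\infty(M)$ being a bounded zeroth-order term --- so the standard energy estimate for the wave equation $(\partial_t^2 + \bH)u = 0$ applies verbatim.

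For absolute continuity of the spectrum: the hypotheses that $\bH$ is a positive operator and that $0$ is neither an eigenvalue nor a resonance, together with the nontrapping assumption, put us squarely in the setting of the limiting absorption principle on asymptotically conic manifolds; combined with a Rellich-type theorem excluding positive eigenvalues for Schr\"odinger operators with short-range (here $O(r^{-3})$) potentials and a Mourre estimate ruling out singular continuous spectrum, this gives that $\bH$ has purely absolutely continuous spectrum on $(0,\infty)$ --- and this spectral picture is in any case established in the second part of the paper. Finally, for each fixed $p$ with $1 \le p \le 2(n+1)/(n+3)$ (and note $2(n+1)/(n+3) < 2$), Theorem~\ref{main3}(B) is precisely the estimate \eqref{sp-meas-est} for that $p$, valid for all $\lambda > 0$.

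With all hypotheses in hand, Theorem~\ref{bori} applies for each such $p$ and each Sobolev exponent $s > n(1/p - 1/2)$, giving $\sup_{\alpha > 0}\|F(\alpha\sqrt{\bH})\|_{p\to p} \le C\|F\|_{H^s}$ for every even $F \in H^s(\RR)$ supported in $[-1,1]$; letting $p$ range over the whole interval $[1, 2(n+1)/(n+3)]$ yields the corollary. None of these steps is genuinely deep --- the corollary is essentially a matter of transcribing hypotheses --- but the one point deserving real care is the absolute continuity of the spectrum, which is where the nontrapping hypothesis enters (through the global limiting absorption principle) and without which the spectral measure $dE_{\sqrt{\bH}}(\lambda)$ would not be the pointwise-defined object that Theorem~\ref{bori} requires.
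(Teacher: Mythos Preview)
Your proof is correct and follows the same route as the paper: check that $(M^\circ, d_g, \mu_g)$ and $\bH$ satisfy the standing hypotheses of Theorem~\ref{bori}, supply the restriction estimate via Theorem~\ref{main3}(B), and conclude. The paper's own argument is the single sentence preceding the corollary.

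One correction to your closing commentary, though it does not affect the validity of the proof. The nontrapping hypothesis is \emph{not} what secures absolute continuity of the spectrum. Absolute continuity of $\bH$ on $(0,\infty)$ holds on any asymptotically conic manifold, trapping or not, under the standing assumptions (positivity, no zero eigenvalue or resonance): the limiting absorption principle at each fixed positive energy together with the absence of embedded positive eigenvalues (a Rellich-type unique continuation argument for short-range potentials) already gives this. Where nontrapping genuinely enters is in Theorem~\ref{main3}(B), namely in obtaining the bound $\|dE_{\sqrt{\bH}}(\lambda)\|_{p\to p'} \le C\lambda^{n(1/p-1/p')-1}$ uniformly for \emph{all} $\lambda>0$ rather than only for $\lambda\le\lambda_0$; part~(C) of Theorem~\ref{main3} shows this uniform estimate can fail exponentially when trapping is present. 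So the role of nontrapping in the corollary is through the restriction estimate, not through the spectral type.
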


\begin{figure}\label{figure}
\vspace{-6cm} 
\begin{pspicture}(-1.5,14)(12,5)
	\psset{xunit=9cm, yunit=2.7cm}
	\newgray{gray0}{.9}
	\newgray{gray1}{.7}
	\newgray{gray2}{.7}
	\newgray{gray3}{.6}
	\pspolygon[linestyle=none,fillstyle=solid,fillcolor=gray2](.5,0)(.2,.7)(0,1.5)(0,2.5)(1,2.5)(1,1.5)(.8,.7)
	\pspolygon[linestyle=none,fillstyle=solid,fillcolor=gray0](.5,0)(.8,.7)(.625,0)
\pspolygon[linestyle=none,fillstyle=solid,fillcolor=gray0](.5,0)(.2,.7)(.375,0)
	\pspolygon[linestyle=none,fillstyle=solid,fillcolor=gray0](.5,0)(.8,.7)(.625,0)
	\pspolygon[linestyle=none,fillstyle=solid,fillcolor=gray3](.5,0)(0,2)(0,2.8)(1,2.8)(1,2)
	\psline[linewidth=.5pt]{->}(0,0)(1.2,0)
	\uput[d](1.2,0){\Large$\frac{1}{p}$}
	\psline[linewidth=.5pt]{->}(0,0)(0,3)
	\uput[l](0,2.5){\Large $s$}
	\psline[linewidth=2.5pt](.375,0)(0,1.5)
	\psline[linewidth=1.5pt](.5,0)(0,2)
	\psline[linewidth=1.5pt](.5,0)(1,2)
\psline[linewidth=1pt](.5,0)(.8,.7)
\psline[linewidth=1pt](.5,0)(.2,.7)	
	\psline[linewidth=2.5pt](.625,0)(1,1.5)
	\psline[linestyle=dotted](.8,0)(.8,.7)
	\psline[linestyle=dotted](.2,0)(.2,.7)
	\psline[linewidth=.5pt](1,0)(1,1.5)
	\uput[l](0,1.95){\Large$\frac{n+1}{2}$}
	\uput[l](0.52,1.75){\Large A}
	\uput[l](0.18,1.15){\Large B}
	\uput[l](0.88,1.15){\Large B}
	\uput[l](0.18,0.45){\Large C}
	\uput[l](0.88,0.45){\Large C}
	\uput[l](0.65,0.15){\Large D}
	\uput[l](0.42,0.15){\Large D}
	\uput[l](0,1.5){\Large$\frac{n}{2}$}
	    \uput[l](-0.05,0){\Large$\frac{1}{2}$}
 	\uput[d](.5,0){\Large$\frac{1}{2}$}
	  \uput[d](0,0){\Large$0$}
	  \uput[d](1,0){\Large$1$}
	\uput[d](.625,0){\Large$\frac{n+1}{2n}$}
	\uput[d](.375,0){\Large$\frac{n-1}{2n}$}
	\uput[d](.8,0){\Large$\frac{n+3}{2(n+1)}$}
	\uput[d](.2,0){\Large$\frac{n-1}{2(n+1)}$}

\end{pspicture}
\vspace{6cm}
\caption{This figure shows where statement \eqref{multip} has been established on nontrapping asymptotically conic manifolds, for different values of $s$ and $p$. In region A this was previously known \cite{DOS}; see also Proposition~\ref{propsl}. In the present paper we establish \eqref{multip} also for region B (previously this was known only in the classical case of flat Euclidean space and the flat Laplacian). In region C it is known to be false, while region D is still unknown. For comparison with the Bochner-Riesz multiplier $F_\delta(\lambda) = (1 - \lambda^2)_+^\delta$ observe that $F_\delta$ is in $H^s$ for $s > \delta + 1/2$. 
For $F = F_\delta$,  part of region D is known for flat Euclidean space \cite{Lee}, 
and the celebrated Bochner-Riesz conjecture is that, for flat Euclidean space, \eqref{multip} is true for $F = F_\delta$ in the whole of region D.} \end{figure}
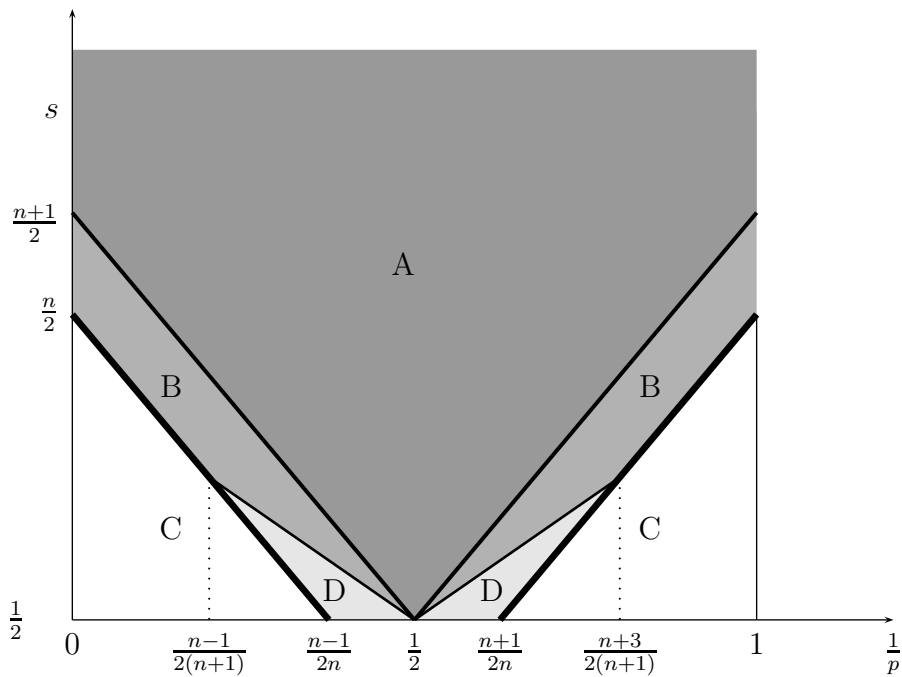

\begin{remark}
As far as we are aware, the restriction estimates for the spectral measure in 
Theorem \ref{main3} were previously known only for $\bH$ being the Laplacian in the Euclidean space $\rr^n$. As for the spectral multiplier result of Corollary~\ref{maincor}, this was previously known for $s > n(1/p - 1/2) + 1/2$ \cite{DOS}. 
Thus, for $p \in [1, 2(n+1)/(n+3)]$, we gain half a derivative over the best results previously known. The region in the $(1/p, s)$-plane in which we improve previous results is illustrated in Figure~\ref{figure}. 
The lower threshold of $n(1/p - 1/2)$ for the Sobolev exponent $s$ in Corollary~\ref{maincor} is known to be sharp in Euclidean space, and it is not hard to see that it is sharp for any asymptotically conic manifold. 
\end{remark}

\begin{remark} There are not many examples of sharp spectral multiplier results in the literature. Those known to the authors are as follows. 
The sharp multiplier theorem \eqref{multip} for $p=2\frac{n+1}{n+3}$ (the other $p$ are obtained by interpolation) was proved for the Laplacian on any compact manifold by Seeger-Sogge \cite{SeeS}. In fact, they only needed the integrated estimate \eqref{integrated} 
to obtain the multiplier theorem in that setting. 
In the setting of the twisted Laplacian operator
$ \Delta _x +\Delta _y +\frac14(\left\| x \right\|^2 + \left\| y \right\|^2)-i\sum_{j = 1}^n {(x_j\partial_{y_j}-y_j \partial _{x_j } )} $ the sharp multiplier theorem \eqref{multip} was proved by Stempak-Zienkiewicz \cite{StZi}. However, in this setting  the 
required form of restriction estimates has a form different from both \eqref{restr} and \eqref{integrated}, see \cite{KoRi}.
The last case of a sharp multiplier theorem known to the authors, although with a slightly different range of $p$, is for the
 harmonic oscillator and is described in \cite{Ka, KoTa, Tha}.
 \end{remark}

\begin{remark}
 Notice that the multiplier theorem of the type \eqref{multip} does not hold
for manifolds with exponentially volume growth  (like negatively curved complete manifolds); a \emph{necessary} condition on the multiplier $F$ in that case is typically a holomorphic extension of $F$ into a strip. See for instance the work of Clerc-Stein or Anker \cite{ClSt,An} for the case of non-compact symmetric spaces, or Taylor \cite{Tay} in the case of manifolds with bounded geometry, 
where sufficient conditions are also given.  
 \end{remark}

\begin{remark}
Theorems \ref{main3} and Theorem \ref{bori} imply Bochner-Riesz summability for a range of exponents similar to those proved for the Euclidean Laplacian in \cite[Proposition, p. 390]{Stein} and \cite[Theorem~2.3.1]{Sogge} and for compact manifolds by Christ-Sogge \cite{CS}, Sogge \cite{Sog}. See Corollary \ref{riesz} below.
\end{remark}


\begin{remark}
Probably the non-trapping condition is not necessary to obtain the estimate \eqref{restr} for all $\la>0$; it seems likely that asymptotically conic manifolds with a hyperbolic trapped set of sufficiently small dimension will also satisfy \eqref{restr}, by analogy with \cite{BGH}. However,  manifolds with elliptic trapping
will typically have sequences of $\lambda$ for which the norm on the left hand side of \eqref{restr} grows super-polynomially; see Section~\ref{trappingex}. 
\end{remark}

\begin{remark} 
The spatially cut-off estimate \eqref{cutoffest} can be compared to the 
non trapping $L^2$ estimate proved by Cardoso-Vodev \cite{CV} 
\[||(1-\chi)(L-\la^2+i0)^{-1}(1-\chi)||_{L_\alpha^2\to L_{-\alpha}^2}=O(\la^{-1}) , \quad \forall \la>1,\,\, \forall \alpha>\demi\] 
where $L^2_\alpha:=\langle r \rangle^{-\alpha}L^2(M)$. As a matter of fact, we use this estimate to prove \eqref{cutoffest}.
\end{remark}

The heuristics one can extract from Theorem \ref{main3}  Êand the last two remarks can be summarized as follows:
\begin{itemize}
\item  the sharp restriction estimate on $dE(\la)$ at bounded and low frequencies $\la$
only depends on the geometry near infinity;
\item  the high frequencies restriction estimate on $dE(\la)$ also depends strongly 
on global dynamical properties (trapping/non-trapping);
\item  the integrated estimate \eqref{integrated} for all frequencies $\la>1$ only 
depends on having uniform  local geometry.
\end{itemize}  

The proof of Theorem \ref{bori}, given in Section~\ref{reisme},  is based on a principle common to the proofs of most of Fourier and 
spectral multiplier theorems. 
The rough idea is that one can control the $L^p$ to $L^p$ norm of operators with singular integral kernels 
by estimating the $L^p$ to $L^q$ norm of the 
operator for some $q>p$ (usually $q=2$)  and showing that a large part of 
the corresponding kernel is concentrated near the diagonal. 
See for example \cite{fef, fef2, Sog, SeeS}. For calculations starting from $L^1 \to L^2$ estimates 
 this principle can be equivalently stated in terms of 
weighted $L^2$ norms of the kernel; see  \cite{Hor, MM, CSi}. 
Our implementation of this principle in the proof of Theorem \ref{bori} is based on finite speed propagation of
the wave equation, following \cite{CGT, CSi, Sik} for example. 
In the proof, we decompose  the operator $F(\alpha \sqrt{L})$ as a sum over $\ell\in\nn$ of multipliers $F_\ell(\alpha \sqrt{L})$ satisfying some finite speed propagation properties with $F_\ell$ Schwartz. The $L^p\to L^p$ norms for $F_\ell(\alpha \sqrt{L})$ are controlled by $C(\alpha 2^\ell)^{n(1/p-1/2)}$
times the  $L^p\to L^2$ norms and then the $TT^*$ argument reduces the problem to the bound of the $L^p\to L^{p'}$ norms of 
$|F_\ell|^2(\alpha\sqrt{L})$, which can be obtained using the restriction estimate of the spectral measure.\\

The proof of Theorem \ref{main3} proceeds in two steps. In the first step we suppose that we have an abstract operator $\bL$ whose spectral measure can be factorized as $dE_{\sqrt{\bL}}(\lambda) = (2\pi)^{-1} P(\lambda) P(\lambda)^*$ (cf. the discussion below \eqref{scatteringmetric}), where the initial space of $P(\lambda)$ is a Hilbert space. We then prove the following
result in Section~\ref{sec:keire}:

\begin{prop}\label{QQ1} Let $(X, d, \mu)$ and $\bL$ be as in Theorem~\ref{bori}, and assume in addition that $dE_{\sqrt{\bL}}(\lambda) = (2\pi)^{-1} P(\lambda) P(\lambda)^*$ as described above. Also assume that for each $\lambda$ we have an operator partition of unity on $L^2(X)$,
\begin{equation}
\Id = \sum_{i=1}^{N(\lambda)} Q_i(\lambda),
\label{opou}\end{equation}
where the $Q_i$ are uniformly bounded as operators on $L^2(X)$
and $N(\lambda)$ is uniformly bounded. We assume that for $1 \leq i \leq N(\lambda)$, and some nonnegative function $w(z,z')$ on $X \times X$, the estimate 
\begin{equation} 
\Big| \big( Q_i(\lambda)   dE_{\sqrt{\bH}}^{(j)}(\lambda) Q_i(\lambda) \big) (z,z') \Big| \leq C \lambda^{n-1-j} \big( 1 + \lambda w(z,z') \big)^{-(n-1)/2 + j} 
\label{spec-meas-j-1}\end{equation}
holds for $j = 0$ and for $j = n/2 -1$ and $j = n/2$ if $n$ is even, or for $j = n/2 - 3/2$ and $j = n/2 + 1/2$ if $n$ is odd. 
Here $dE_{\sqrt{\bL}}^{(j)}(\lambda)$ means $(d/d\lambda)^j dE_{\sqrt{\bL}}(\lambda)$, and $C$ is independent of $\lambda$ and $i$. 
Then restriction estimates \eqref{re} hold for all $p$ in the range $[1, 2(n+1)/(n+3)]$. Moreover, if the estimates above hold only for $0 < \lambda \leq \lambda_0$, then low energy restriction estimates \eqref{lere} hold for the same range of $p$. 
\end{prop}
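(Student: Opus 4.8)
The plan is to reduce the estimate on $dE_{\sqrt{\bL}}(\lambda)$ to its microlocal blocks $Q_i(\lambda)\,dE_{\sqrt{\bL}}(\lambda)\,Q_i(\lambda)$, and then to prove an abstract Stein--Tomas estimate for each block, the pointwise bounds \eqref{spec-meas-j-1} playing the role that curvature of the sphere plays in the classical argument. For the reduction, write $P=P(\lambda)$ and $Q_i=Q_i(\lambda)$ (which we may take self-adjoint). From $dE_{\sqrt{\bL}}(\lambda)=(2\pi)^{-1}PP^*$ and $\Id=\sum_i Q_i$ we get $P=\sum_i Q_i P$, so $dE_{\sqrt{\bL}}(\lambda)=(2\pi)^{-1}\sum_{i,j}(Q_iP)(Q_jP)^*$, whence, using the identity $\|AA^*\|_{p\to p'}=\|A\|_{2\to p'}^2$, Cauchy--Schwarz, and the uniform bound on $N(\lambda)$,
\[
  \big\|dE_{\sqrt{\bL}}(\lambda)\big\|_{p\to p'}
  \ \le\ \frac{1}{2\pi}\Big(\sum_{i}\|Q_iP\|_{2\to p'}\Big)^2
  \ \le\ N(\lambda)\sum_{i}\big\|Q_i\,dE_{\sqrt{\bL}}(\lambda)\,Q_i\big\|_{p\to p'}
\]
(sum over $1\le i\le N(\lambda)$), valid for $1\le p\le 2$. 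Hence it suffices to bound $\big\|Q_i\,dE_{\sqrt{\bL}}(\lambda)\,Q_i\big\|_{p\to p'}$ by $C\lambda^{n(1/p-1/p')-1}$ uniformly in $i$ (and in $\lambda$, or in $\lambda\le\lambda_0$ for the low-energy version). The case $p=1$ is exactly \eqref{spec-meas-j-1} with $j=0$, since then $n(1/p-1/p')-1=n-1$; so everything rests on the endpoint $p_0:=2(n+1)/(n+3)$, the range $1<p<p_0$ following by Riesz--Thorin interpolation with $p=1$.

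Fix $i$ and set $T=Q_i\,dE_{\sqrt{\bL}}(\lambda)\,Q_i$. Using the representation of $dE_{\sqrt{\bL}}(\lambda)$ in terms of $\cos(t\sqrt{\bL})$, decompose $T=\sum_{\ell\ge0}T_\ell$ by inserting a dyadic partition of unity in the time variable at the scales $|t|\sim 2^\ell/\lambda$ (with $\ell=0$ absorbing $|t|\lesssim 1/\lambda$). Then $T_\ell=Q_i\,g_\ell(\sqrt{\bL})\,Q_i$, where $g_\ell$ is essentially a bump at $\mu=\lambda$ of height $\sim 2^\ell/\lambda$ and width $\sim\lambda/2^\ell$, whose moments all vanish for $\ell\ge1$; and, crucially, by finite speed of propagation \eqref{fsp} the Schwartz kernel of $T_\ell$ is supported in $\{d(z,z')\le C2^\ell/\lambda\}$. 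Two estimates on $T_\ell$ are needed. By the spectral theorem and the uniform $L^2$-boundedness of the $Q_i$, $\|T_\ell\|_{2\to2}\le C\|g_\ell\|_\infty\le C\,2^\ell/\lambda$. And $\|T_\ell\|_{1\to\infty}=\sup_{z,z'}|T_\ell(z,z')|\le C\,\lambda^{n-1}2^{-\ell(n-1)/2}$: writing $T_\ell(z,z')=\int g_\ell(\mu)\,\big(Q_i\,dE_{\sqrt{\bL}}(\mu)\,Q_i\big)(z,z')\,d\mu$ and integrating by parts — the vanishing moments of $g_\ell$ keep its iterated antiderivatives concentrated at scale $\lambda/2^\ell$ around $\mu=\lambda$ — the derivative bounds \eqref{spec-meas-j-1} at the higher orders $j\approx(n-1)/2$ produce, on the support where $d(z,z')\sim 2^\ell/\lambda$, exactly the gain $2^{-\ell(n-1)/2}$. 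This is why \eqref{spec-meas-j-1} is assumed at those specific orders: the two consecutive integer orders $j=n/2-1,n/2$ (for $n$ even), or the pair provided for $n$ odd, are what one interpolates in the spectral variable $\lambda$ to reach the --- in general half-integer --- exponent $(n-1)/2$, while the order-$0$ bound handles the near-diagonal regime $\lambda d(z,z')\lesssim1$. From the same kernel bound and the Euclidean-type volume growth one also records the companion bounds $\|T_\ell\|_{2\to\infty}\le C\lambda^{n/2-1}2^{\ell/2}$ and its dual $\|T_\ell\|_{1\to2}\le C\lambda^{n/2-1}2^{\ell/2}$.

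Interpolating the first two estimates at $p_0$ gives $\|T_\ell\|_{p_0\to p_0'}\le C\lambda^{(n-1)/(n+1)}$ for each $\ell$, but with \emph{no} decay in $\ell$ --- the familiar borderline behaviour of Stein--Tomas. To sum $\sum_\ell T_\ell$ one invokes the standard endpoint device: apply Stein's interpolation theorem for analytic families to a family $T^{(w)}=\sum_\ell 2^{a\ell(w-w_0)}T_\ell$ with $T^{(w_0)}=T$, using the $L^1\to L^\infty$ bound on one edge of the strip and the $L^2\to L^2$, $L^2\to L^\infty$ and $L^1\to L^2$ bounds on the other, so arranged that both edge estimates hold uniformly (with polynomial growth in $\operatorname{Im}w$); this yields $\|T\|_{p_0\to p_0'}\le C\lambda^{(n-1)/(n+1)}$ with no logarithmic loss. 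Combined with the first paragraph, the $p=1$ bound and interpolation, this proves \eqref{re} for all $p\in[1,2(n+1)/(n+3)]$. If the hypotheses \eqref{spec-meas-j-1} hold only for $0<\lambda\le\lambda_0$, the argument is unchanged --- every time scale $|t|\sim 2^\ell/\lambda$ is still available --- and yields \eqref{lere} on the same range of $p$.

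The step I expect to be the main obstacle is the off-diagonal decay estimate for the time-localised pieces together with the summation over $\ell$ at the endpoint: one must set up the decomposition so that $g_\ell$ genuinely carries the cancellation needed to convert the pointwise bounds \eqref{spec-meas-j-1} into the gain $2^{-\ell(n-1)/2}$ (this is where the precise list of orders $j$ in \eqref{spec-meas-j-1}, and the half-integer value $(n-1)/2$, enter), and then organise the analytic family so that Stein's interpolation closes the borderline endpoint. The reduction to microlocal blocks and the interpolation down to $p=1$ are, by comparison, routine.
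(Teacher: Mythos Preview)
Your $TT^*$ reduction to the diagonal blocks $Q_i\,dE_{\sqrt{\bL}}(\lambda)\,Q_i^*$ is exactly what the paper does. Where you diverge is in the endpoint bound for each block. The paper does not decompose dyadically in time; instead it runs Stein's complex interpolation on the analytic family obtained by convolving the block, in the spectral variable, with $\chi_+^a$. On $\Re a=0$ one has a trivial $L^2\to L^2$ bound; on $\Re a=-(n+1)/2$ the required $L^1\to L^\infty$ bound is read off directly from the pointwise hypotheses \eqref{spec-meas-j-1} via an interpolation inequality (Lemma~\ref{infty}) for $\|\chi_+^{b+is}*f\|_\infty$ in terms of $\|\chi_+^{a}*f\|_\infty$ and $\|\chi_+^{c}*f\|_\infty$ with integers $a<b<c$. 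This is precisely why \eqref{spec-meas-j-1} is assumed at two integer orders bracketing $(n-1)/2$. Neither finite speed of propagation nor any summation device is used here.

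Your dyadic-in-time route has a genuine gap. Finite speed \eqref{fsp} constrains the support of $g_\ell(\sqrt{\bL})$, but in this abstract statement the $Q_i$ are only assumed to be uniformly bounded on $L^2$, with no locality whatsoever; hence $T_\ell=Q_i\,g_\ell(\sqrt{\bL})\,Q_i$ need \emph{not} have kernel supported in $\{d(z,z')\le C2^\ell/\lambda\}$. That support is exactly what you invoke to convert $(1+\lambda w)^{-(n-1)/2+j}$ into the gain $2^{-\ell(n-1)/2}$, and it is also what underlies your companion $L^2\to L^\infty$ and $L^1\to L^2$ bounds (kernel bound times volume of support). Without those, your endpoint summation does not close: with only $\|T_\ell\|_{2\to2}\lesssim 2^\ell/\lambda$ and $\|T_\ell\|_{1\to\infty}\lesssim\lambda^{n-1}2^{-\ell(n-1)/2}$, interpolation at $p_0$ gives no $\ell$-decay, and a reweighted family $\sum_\ell 2^{a\ell(w-w_0)}T_\ell$ cannot be made summable on both edges at the critical parameter. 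You can still recover the uniform kernel bound $|T_\ell(z,z')|\le C\lambda^{n-1}2^{-\ell(n-1)/2}$ without any support assumption, by choosing pointwise in $(z,z')$ how many times to integrate by parts ($j=0$ when $\lambda w\gtrsim 2^\ell$, $j$ near $(n-1)/2$ otherwise), but that yields only the open range $p<p_0$. The paper's $\chi_+^a$ family is what delivers the endpoint.
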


The key point here is that we only need to consider operators 
$Q_i(\lambda) dE_{\sqrt{\bL}}^{(j)}(\lambda) Q_k(\lambda)$ for $i = k$, which effectively means that we only need to analyze the kernel of $dE_{\sqrt{\bL}}^{(j)}(\lambda)$ close to the diagonal. 
The proof of this 
is based on the complex interpolation idea of Stein \cite{St} and appears in Section~\ref{sec:keire}. 

The second step is to prove estimates \eqref{spec-meas-j-1} in the case where $\bL$ is the Laplacian or a Schr\"odinger operator on an asymptotically conic manifold. We show 

\begin{theo}\label{pointwisesmestimates} Let $(M,g)$ and $\bH$ be as in Theorem~\ref{main3}. Then there exists an operator partition of unity, \eqref{opou}, where the $Q_i$ are uniformly bounded as operators on $L^2(X)$
and  $N(\lambda)$ is uniformly bounded, such that the estimates 
\eqref{spec-meas-j-1}
hold for all integers $j \geq 0$ and for $0 < \lambda \leq \lambda_0$, with $w(z, z')$  the Riemannian distance between points $z, z' \in M^\circ$. Moreover, if $(M,g)$ is nontrapping, then 
estimates \eqref{spec-meas-j-1} hold for all $0 < \lambda < \infty$. 
\end{theo}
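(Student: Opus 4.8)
\emph{Plan.} I would prove the two assertions — the unconditional statement for $0<\lambda\le\lambda_0$ and the stronger statement for all $\lambda>0$ under nontrapping — by realising the Schwartz kernel of $\lambda^{-(n-1)}\,dE_{\sqrt{\bH}}(\lambda)$ as a (low-energy, resp. semiclassical) Legendre distribution of the appropriate order on a suitably blown-up double space, and then reading off \eqref{spec-meas-j-1}. The bridge from ``Legendre distribution'' to the clean pointwise bound is elementary once the structure is known: microlocalised to a single sheet of the relevant Legendrian, such a kernel has the form $\lambda^{n-1}\sum_\pm e^{\pm i\lambda\Phi(z,z')}a_\pm(\lambda;z,z')+(\text{residual})$, where $\Phi$ equals $d(z,z')$ on the support and the amplitudes satisfy symbol estimates $|\partial_\lambda^k a_\pm|\le C_k\,\lambda^{-k}\,(1+\lambda d(z,z'))^{-(n-1)/2}$; since $d^{\,j}(1+\lambda d)^{-(n-1)/2}\asymp\lambda^{-j}(1+\lambda d)^{-(n-1)/2+j}$, applying $\partial_\lambda^j$ and keeping the term in which all derivatives fall on the phase reproduces exactly \eqref{spec-meas-j-1} for every integer $j\ge0$, the remaining terms being better by positive powers of $\lambda^{-1}$ and the residual being $O(\lambda^{-\infty})$. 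In particular one obtains all $j$, not merely those needed in Proposition~\ref{QQ1}.

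\emph{High energy ($\lambda\ge\lambda_0$, nontrapping).} Write the spectral measure via Stone's formula, $dE_{\sqrt{\bH}}(\lambda)=\tfrac{\lambda}{\pi i}\big((\bH-\lambda^2-i0)^{-1}-(\bH-\lambda^2+i0)^{-1}\big)$, and set $h=\lambda^{-1}$. I would invoke the semiclassical resolvent parametrix construction for $h^2\bH-1\mp i0$ on a nontrapping asymptotically conic manifold (Hassell--Wunsch / Hassell--Vasy): its kernel is a semiclassical Legendre distribution on the blown-up double scattering space $\MMksc$, associated with the intersecting pair with conic points consisting of the propagating Legendrian $L$ — the flow-out of the semiclassical conormal bundle of the diagonal under the rescaled geodesic flow — and the boundary (incoming/outgoing) Legendrian $\Lsharp$. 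Nontrapping is used precisely here: it guarantees that $L$ reaches the boundary faces in finite time, so the parametrix construction closes up globally with $O(h^\infty)$ error. The hypothesis $V\in x^3\CI(M)$ makes $V$ short-range, so it does not perturb the leading symbolic or Legendrian data. Taking the imaginary part and multiplying by $2\lambda/\pi$ gives the asserted structure for $dE_{\sqrt{\bH}}(\lambda)$, and the factorisation $dE_{\sqrt\bH}(\lambda)=(2\pi)^{-1}P(\lambda)P(\lambda)^*$ \cite{HV1} provides the consistency check.

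\emph{Low energy ($0<\lambda\le\lambda_0$) and the partition of unity.} For bounded and low frequencies I would instead use the low-energy resolvent construction of Guillarmou--Hassell on the ``$k$-blown-up'' double space $\MMkb$, in which the assumptions that $\bH>0$ and that $0$ is neither an $L^2$-eigenvalue nor a resonance ensure the parametrix closes up at the zero-energy face; this again yields a (low-energy) Legendre-type structure for $dE_{\sqrt{\bH}}(\lambda)$ with the powers of $\lambda$ displayed in \eqref{spec-meas-j-1}. The operator partition of unity $\Id=\sum_{i=0}^N Q_i(\lambda)$ is built from the scattering cosphere bundle: since $M$ is compact with boundary and $g$ is a scattering metric, $\Tscstar M$ is an honest vector bundle over the compact space $M$, so the locus $\{|\xi|_g=1\}$ — which carries the characteristic variety of $h^2\bH-1$ — lies in a compact set that can be covered by a \emph{fixed finite} family of coordinate balls of small radius $\delta$, together with one open set disjoint from $\{|\xi|_g=1\}$; quantising a subordinate partition of unity to semiclassical scattering pseudodifferential operators gives uniformly $L^2$-bounded $Q_i(\lambda)$ with $N$ independent of $\lambda$. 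For $i=0$, $Q_0(\lambda)\,dE(\lambda)\,Q_0(\lambda)=O(\lambda^{-\infty})$ by ellipticity off the characteristic variety. For $i\ge1$, microlocalising both input and output momenta to a $\delta$-ball forces the contributing geodesics to have $\delta$-close initial and terminal (rescaled) momenta; for $\delta$ small, the corresponding piece of $L$ is a graph over the base (no conjugate points in the short-time part, the long-time part being governed by the product/conic structure at $\Lsharp$), so $Q_i(\lambda)\,dE(\lambda)\,Q_i(\lambda)$ is of the single-sheet form above with $\Phi=\pm d(z,z')$ smooth on the (small) support — this is also what removes the cut locus of $d(\cdot,\cdot)$, the only genuine singularity of $w$, from the problem.

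\emph{Main obstacle.} The technical core is establishing that $dE_{\sqrt\bH}(\lambda)$ is a Legendre distribution of \emph{precisely} the right order, uniformly from the diagonal conormal singularity out to the spatial boundary faces (and, at low energy, up to the zero-energy face), together with uniform symbol bounds under $\lambda$-differentiation. This is the semiclassical (resp.\ low-energy) parametrix construction with its composition and propagation-of-singularities estimates, and nontrapping enters there in an essential way — indeed the conclusion genuinely fails at high energy in the presence of elliptic trapping, as Theorem~\ref{main3}(C) shows. By comparison, the construction of the bounded partition of unity and the passage from the Legendrian structure to \eqref{spec-meas-j-1} for all $j$, via the inequality $d^{\,j}(1+\lambda d)^{-(n-1)/2}\asymp\lambda^{-j}(1+\lambda d)^{-(n-1)/2+j}$, are routine.
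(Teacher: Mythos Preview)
Your overall architecture is correct --- invoke the Legendre-distribution descriptions of $dE_{\sqrt\bH}(\lambda)$ from \cite{GHS} and \cite{HW}, build a pseudodifferential partition of unity microlocalising away from $L^\sharp$, and read off pointwise bounds --- and this is what the paper does. But the step you call ``routine'' contains the real difficulty. Your claim that after microlocalisation ``the corresponding piece of $L$ is a graph over the base'' is \emph{false} near the diagonal: $L$ (resp.\ $L^{\bfc}$) is the bicharacteristic flowout of the cosphere bundle $G$ over $\diagb$, and over a diagonal point $(z,z)$ the fibre of $L$ is the entire sphere $S^{n-1}_z$. Thus $\pi:L\to\mf$ has $(n-1)$-dimensional fibres at $G$ and $\det d\pi$ vanishes there to order exactly $n-1$ (Lemmas~\ref{L-geometry-2}, \ref{L-geometry-he-1}). ``No conjugate points for short time'' is beside the point --- that concerns degeneracy of $\exp_z$, whereas $\pi$ is already maximally degenerate at time zero. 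So a microlocalised piece of $dE(\lambda)$ near $G$ is \emph{not} of the form $e^{\pm i\lambda d(z,z')}a_\pm$ with smooth amplitude; it is an oscillatory integral with $n-1$ phase variables, and your asserted bound $|a_\pm|\le C(1+\lambda d)^{-(n-1)/2}$ is precisely what must be \emph{proved}. The paper does this in Propositions~\ref{Leg-bd-1} and \ref{Leg-bd-he}: one constructs a phase $\Phi$ parametrising $L$ that vanishes at $G$, uses the exact order of vanishing of $\det d\pi$ to show $d^2_{vv}\Phi=w_1 A$ with $A$ invertible, and runs a degenerate stationary-phase argument after the change of variables $\theta=w_1^{-1/2}d_v\Phi$. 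The $\lambda$-derivatives are then handled not via your inequality (which is only $\lesssim$, not $\asymp$) but by noting that each $\partial_\lambda$ on the phase brings down a factor of $\Phi$, which vanishes at $G$; hence the full symbol of $dE^{(j)}$ vanishes to order $j$ there, and the same proposition applies with $\alpha=-(n-1)/2+j$.

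Two smaller points. At low energy the $Q_i$ lie in the calculus $\Psi^{-\infty}_k(M,\Omegakbh)$ of \cite{GH1}, not in semiclassical scattering pseudos. And the partition is not built from generic small balls on the cosphere bundle: the paper partitions in the $\nu$ coordinate (and, at high energy in the interior, also in $\tau$ via Lemma~\ref{eta}, which is a second place nontrapping is used), so that on $\WF'(Q_i\,dE\,Q_i^*)$ one has $|\nu+\nu'|<\delta$. Lemmas~\ref{L-geometry-1} and \ref{L-geometry-he} then guarantee this region misses $L^\sharp$ and that $\pi$ is a diffeomorphism \emph{except} at $G$. A generic small-ball partition does not obviously achieve this; in particular one must contend with the fact (Remark~\ref{sigma-behaviour}) that $\sigma=x/x'$ is unconstrained by microlocalisation.
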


In the free Euclidean setting, this estimate is obvious (with the trivial partition of unity) by using the explicit formula of the spectral measure, but in our general setting it turns out to be quite involved and we really need to choose the partition of unity carefully. We use some results of 
 \cite{HV2} on the resolvent of $L$ on the spectrum, the high-energy (semi-classical) version of this  \cite{HW} and the low energy estimates of our previous work \cite{GHS}. These three articles on which we build our estimates describe the Schwartz kernel of the spectral measure as a Legendrian distribution (Fourier Integral Operator, in a sense) 
on a desingularized version of the compactification of the space $M\x M$, and this was done in a sort of uniform way with respect to the spectral parameter $\la$. The operators $Q_i$ in the partition of unity will be pseudodifferential operators of a particular sort; see Section~\ref{(A)} for the estimate \eqref{spec-meas-j-1} for small $\lambda$, and  Section~\ref{(B)} for the same estimate for large $\lambda$. By our discussion above, this establishes parts (A) and (B) of  Theorem~\ref{main3}. Part (C) of  Theorem~\ref{main3} is proved in Section~\ref{sec:slr} and part (D) is proved in Section~\ref{sec:spe}. 
\\

\textbf{Acknowledgements.} C.G. was supported by ANR grant  ANR-09-JCJC-0099-01 and is grateful to the ANU math department for its hospitality. A.H. was partially supported by ARC Discovery grants DP0771826 and DP1095448. A. S. was partially supported by ARC grant DP110102488. C.G. and A.H. thank MSRI for its hospitality during the 2008 workshop `Elliptic and Hyperbolic operators on singular spaces', where some of this research was carried out.  
We also thank S. Zelditch, J. Wunsch and G. Vodev for useful discussions.


\part{Abstract self-adjoint operators}

\section{Restriction estimates imply spectral multiplier estimates}\label{reisme}
Let $\bL$ be an abstract positive self-adjoint operator on $L^2(X)$, where $X$ is a metric measure space with metric $d$ and measure $\mu$. We make the following assumptions about $\bL$ and $(X, d, \mu)$:
\begin{itemize}
\item The space $X$ is separable and has dimension $n$ in the sense of the volume growth of balls: that is, there exist constants $0 < c_1 < c_2 < \infty$ such that
\begin{equation}
c_1 \rho^n \leq \mu(B(x, \rho)) \leq  c_2 \rho^n
\label{vol-balls}\end{equation}
for every $x \in X$ and $\rho > 0$;
\item $\cos(t \sqrt{\bL})$ satisfies finite speed propagation in the sense that
\begin{equation}
\supp \, \cos(t \sqrt{\bL}) \subset \mathcal{D}_{t} := \{ (z_1, z_2) \subset X \times X \mid d(z_1, z_2) \leq t \}.
\label{fsp}\end{equation}
The meaning of this statement is that $\langle f_1, \cos(t \sqrt{\bL}) f_2 \rangle = 0$ whenever $\supp f_1 \in B(z_1, \rho_1)$, $\supp f_2 \in B(z_2, \rho_2)$ and $|t|  + \rho_1 + \rho_2 \leq d(z_1, z_2)$. 
\item $\bL$ satisfies restriction estimates, which come in a strong and a weak form. We say that $\bL$ satisfies \emph{$L^p$ to $L^{p'}$  restriction estimates for all energies} if the spectral measure $dE_{\sqrt{\bL}}(\lambda)$ maps $L^p(X)$ to $L^{p'}(X)$ for some $p$ satisfying $1 \leq p < 2$ and all $\lambda >0$, with an operator norm estimate
\begin{equation}
\big\| dE_{\sqrt{\bL}}(\lambda) \big\|_{L^p(X) \to L^{p'}(X)} \leq C \lambda^{n(1/p - 1/p') - 1}, \text{ for all } \lambda > 0.
\label{re}\end{equation}
We also consider a weaker form of these estimates: we say that
$\bL$ satisfies \emph{low energy $L^p$  to $L^{p'}$  restriction estimates}
if $dE_{\sqrt{\bL}}(\lambda)$ maps $L^p(X)$ to $L^{p'}(X)$ for some $p$ satisfying $1 \leq p < 2$ and all $\lambda \in (0, \lambda_0]$, with an operator norm estimate
 \begin{equation}
\big\| dE_{\sqrt{\bL}}(\lambda) \big\|_{L^p(X) \to L^{p'}(X)} \leq C \lambda^{n(1/p - 1/p') - 1}, \ 0 < \lambda \leq \lambda_0
\label{lere}\end{equation}
for some $C$, together with weaker estimates for $\lambda \geq \lambda_0$:
\begin{equation}
\big\| E_{\sqrt{\bL}}[0 , \lambda] \big\|_{L^p(X) \to L^{p'}(X)} \leq C \lambda^{n(1/p - 1/p')}, \quad \lambda \geq \lambda_0
\label{re-integrated}\end{equation}
with a uniform $C$. (Here $E_{\sqrt{\bL}}[0 , \lambda]$ is the same as $\indic_{[0, \lambda]}(\sqrt{\bL})$.)
\end{itemize}

\begin{remark} The assumptions (with restriction estimates for all energies) are satisfied by taking $X = \RR^n$ with the standard metric and measure, and $\bL$ to be the (positive) Laplacian on $\RR^n$ (with domain $H^2(\RR^n)$). As we shall see, the assumptions are also satisfied for asymptotically conic manifolds, with the low energy restriction estimates holding unconditionally, and restriction estimates for all energies satisfied if the manifold is nontrapping.
\end{remark}  
\begin{remark} Clearly, \eqref{re-integrated} follows from \eqref{re} by integrating over the interval $[0, \lambda]$. However, in Remark~\ref{metricbottle} we give an example where we have
by Proposition~\ref{sp-proj-est}
  $$\| E_{\sqrt{\bL}}[\lambda , \lambda + 1] \|_{L^p(X) \to L^{p'}(X)} \leq C \lambda^{n(1/p - 1/p') - 1}, \quad \lambda \geq \lambda_0,$$ 
  (which implies \eqref{re-integrated}),  but the pointwise estimate on the $L^p \to L^{p'}$ operator norm of $dE(\lambda)$ grows exponentially for a subsequence of $\lambda$ tending to infinity.
\end{remark}

\begin{remark} The   spectral projection estimate \eqref{re-integrated}  is implied by a heat kernel bound
\begin{equation}
\big\| e^{-t\bL} \big\|_{L^p \to L^{p'}} \leq C t^{-n(1/p - 1/p')/2}, \quad t \leq \frac1{\lambda_0} .
\label{hkb}\end{equation}
This follows from short-time Gaussian bounds for the heat kernel, 
which hold for the Laplacian on any complete Riemannian manifold with bounded curvature and injectivity radius bounded below \cite[Theorem 4]{CLY}. 
This is proved by writing 
\begin{multline*}
  E_{\sqrt{\bL}}[0, \lambda]  =  E_{\sqrt{\bL}}[0, \lambda] \, e^{\bL/\lambda^2}  e^{-\bL/\lambda^2} \\ \implies \big\|  E_{\sqrt{\bL}}[0, \lambda] \big\|_{p \to 2}  \leq \big\|  E_{\sqrt{\bL}}[0, \lambda] e^{\bL/\lambda^2} \big\|_{2 \to 2} \big\| e^{-\bL/\lambda^2} \big\|_{p \to 2}.
\end{multline*}
Conversely, \eqref{re-integrated}  implies the heat kernel bound  \eqref{hkb}, which can be seen by writing $e^{-t\bL}$ as in integral over the spectral measure, and then integrating by parts. 
\end{remark}

\subsection{The main result}

The following theorem is the main result of this section.

\begin{theo}\label{thm:sm} Suppose that $(X, d, \mu)$ and $\bL$ satisfy \eqref{vol-balls} and \eqref{fsp}, and that $\bL$ satisfies  $L^p$ to $L^{p'}$ restriction estimates for all energies, \eqref{re}, for some $p$ with $1 \leq p < 2$. 
Let $s > n(1/p - 1/2)$ be a Sobolev exponent. Then there exists $C$ depending only on $n, p$, $s$, and the constant in \eqref{re} such that, 
for every even $F \in H^s(\RR)$ supported in $[-1, 1]$, 
$F(\sqrt{\bL})$ maps $L^p(X) \to L^p(X)$, and 
\begin{equation}
\sup_{\alpha>0} \big\|F(\alpha\sqrt{\bL})\big\|_{p\to p} \leq  C\|F\|_{H^s}  .
\label{sme}\end{equation}
If $\bL$ only satisfies the weaker estimates \eqref{lere}, \eqref{re-integrated}, i.e. low energy $L^p$ to $L^{p'}$  restriction estimates, then for all $F$ as above, we have 
\begin{equation}
\sup_{\alpha \geq 4/\lambda_0}\big\|F(\alpha\sqrt{\bL})\big\|_{p\to p} \leq  C\|F\|_{H^s}
\label{lesme}\end{equation}
where $C$ depends on $n$, $p$, $s$, $\lambda_0$, and the constants in \eqref{lere} and \eqref{re-integrated}. \end{theo}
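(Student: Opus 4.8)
The plan is to realize the standard spectral-multiplier philosophy: interpolate an $L^p\to L^2$ bound (coming from the restriction estimate, via a $TT^*$ trick) against an $L^p\to L^p$ bound for a kernel concentrated near the diagonal (coming from finite speed of propagation). First I would reduce to $\alpha=1$: by scaling invariance of the hypotheses \eqref{vol-balls}, \eqref{fsp}, \eqref{re} (all of which are homogeneous of the right degree under $\bL\mapsto\alpha^2\bL$, $d\mapsto\alpha^{-1}d$), it suffices to bound $\|F(\sqrt{\bL})\|_{p\to p}$ by $C\|F\|_{H^s}$ uniformly in the even function $F$ supported in $[-1,1]$. For the low-energy case one instead keeps $\alpha\ge 4/\lambda_0$ fixed and uses \eqref{lere} for the part of the spectrum in $(0,\lambda_0]$ and \eqref{re-integrated} for the high part, which only costs an extra $O(1)$ factor in the estimate since $F$ is supported in a bounded set and $\alpha\sqrt{\bL}\le 1$ on the support forces $\sqrt{\bL}\le 1/\alpha\le\lambda_0/4$.

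Next I would perform a dyadic decomposition adapted to finite speed of propagation. Choose $\phi\in C_c^\infty$ even with $\widehat{\phi}$ supported in $[-1,1]$ (say $\phi$ itself smooth, $\widehat{\phi}$ the cutoff), and write, via the Fourier inversion formula for even functions,
\[
F(\sqrt{\bL}) = \frac{1}{2\pi}\int_{\RR}\widehat{F}(t)\,\cos(t\sqrt{\bL})\,dt,
\]
then split $\widehat{F} = \sum_{\ell\ge 0}\widehat{F}\,\eta_\ell$ into pieces supported where $|t|\sim 2^\ell$ (with $\eta_0$ handling $|t|\lesssim 1$), and set $F_\ell(\sqrt{\bL}) = \frac{1}{2\pi}\int \widehat{F}(t)\eta_\ell(t)\cos(t\sqrt{\bL})\,dt$. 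By \eqref{fsp}, $F_\ell(\sqrt{\bL})$ has Schwartz kernel supported in $\{d(z_1,z_2)\le C2^\ell\}$, and each $F_\ell$ extends to a Schwartz function on $\RR$ with good bounds controlled by $\|F\|_{H^s}$ and a decay factor: by integration by parts one gets $\|F_\ell\|_{L^\infty} + \dots \lesssim 2^{-\ell(s-1/2)}\|F\|_{H^s}$ roughly (this is where the Sobolev norm enters and where the gain $2^{-\ell \epsilon}$ needed for summability comes from, using $s > n(1/p-1/2)$).

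Then I would estimate $\|F_\ell(\sqrt{\bL})\|_{p\to p}$ by
\[
\|F_\ell(\sqrt{\bL})\|_{p\to p} \le C\,(2^\ell)^{n(1/p-1/2)}\,\|F_\ell(\sqrt{\bL})\|_{p\to 2},
\]
which is the heart of the argument: because the kernel of $F_\ell(\sqrt{\bL})$ is supported within distance $C2^\ell$ of the diagonal, one covers $X$ by balls of radius $2^\ell$ with bounded overlap (using \eqref{vol-balls}), and on each such ball the $L^p\to L^p$ operator norm is controlled by the $L^p\to L^2$ norm times the $2^{\ell n(1/p-1/2)}$ volume factor from H\"older; the bounded overlap and finite propagation let these local estimates be reassembled. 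Then $\|F_\ell(\sqrt{\bL})\|_{p\to 2}^2 = \|F_\ell(\sqrt{\bL})F_\ell(\sqrt{\bL})^*\|_{p\to p'} = \||F_\ell|^2(\sqrt{\bL})\|_{p\to p'}$ by the $TT^*$ identity (here evenness and reality of $|F_\ell|^2$ are used), and $|F_\ell|^2(\sqrt{\bL}) = \int_0^\infty |F_\ell(\lambda)|^2\,dE_{\sqrt{\bL}}(\lambda)$, so by \eqref{re},
\[
\big\||F_\ell|^2(\sqrt{\bL})\big\|_{p\to p'} \le C\int_0^1 |F_\ell(\lambda)|^2\,\lambda^{n(1/p-1/p')-1}\,d\lambda \le C\,\|F_\ell\|_{L^\infty}^2,
\]
using $\supp F\subset[-1,1]$ so the $\lambda$-integral is over $[0,1]$ and $n(1/p-1/p')-1 > -1$. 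Combining, $\|F_\ell(\sqrt{\bL})\|_{p\to p} \lesssim 2^{\ell n(1/p-1/2)} 2^{-\ell(s-1/2)}\|F\|_{H^s}$ (modulo making the $\widehat F_\ell$ decay estimate precise), and since $s - 1/2 > n(1/p-1/2) - 1/2$, i.e. the exponent of $2^\ell$ is negative, summing over $\ell\ge0$ converges and yields \eqref{sme}.

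\textbf{Main obstacle.} The delicate point is the passage from the $L^p\to L^2$ bound to the $L^p\to L^p$ bound via the factor $2^{\ell n(1/p-1/2)}$, i.e. correctly exploiting the near-diagonal support together with the volume doubling \eqref{vol-balls}: one must choose a covering of $X$ by balls of radius $\sim 2^\ell$ with uniformly bounded overlap, check that $F_\ell(\sqrt{\bL})$ maps functions supported in one ball to functions supported in a fixed dilate, and sum the local $L^p\to L^p$ estimates with $\ell^p$-type summation of the pieces — this requires care since $p<2$ and the overlap argument must not lose powers of the number of balls. A secondary technical point is obtaining the decay $\|F_\ell\|_{L^\infty}\lesssim 2^{-\ell(s-1/2)}\|F\|_{H^s}$ (and similar bounds on derivatives needed for the off-diagonal tails of $F_\ell$, since $\eta_\ell\widehat{F}$ is not compactly supported but only rapidly decaying), which is a one-dimensional Fourier-analysis lemma but must be stated with the right weights so that summation in $\ell$ closes.
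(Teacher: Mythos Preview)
Your overall architecture matches the paper's: dyadic decomposition of $\hat F$, finite speed to localize $F_\ell(\sqrt{\bL})$ near the diagonal, Lemma~\ref{w}-type passage from $L^p\to L^2$ to $L^p\to L^p$, and $TT^*$ combined with the restriction estimate \eqref{re}. But as written the argument does not give the sharp threshold $s>n(1/p-1/2)$; it only yields $s>n(1/p-1/2)+\tfrac12$.

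There are two issues. First, a technical one: $F_\ell$ is \emph{not} supported in $[-1,1]$ (the Fourier cutoff $\eta_\ell$ on $\hat F$ destroys the compact support of $F$), so the integral $\int |F_\ell(\lambda)|^2\,dE_{\sqrt{\bL}}(\lambda)$ is over all of $(0,\infty)$, not $[0,1]$. The paper handles this by inserting a spatial-side cutoff $\psi\in C_c^\infty(-4,4)$ with $\psi\equiv1$ on $[-2,2]$, estimating $\psi F_\ell$ via \eqref{re} and $(1-\psi)F_\ell$ via \eqref{re-integrated} together with repeated integration by parts (since $|\lambda-\lambda'|\geq \lambda/2$ when $\lambda\in\supp(1-\psi)$, $\lambda'\in\supp F$). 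You flag this concern at the end but your main estimate ignores it.

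Second, and more seriously, there is an arithmetic slip that hides a half-derivative loss. You bound $\int_0^1|F_\ell(\lambda)|^2\lambda^{n(1/p-1/p')-1}\,d\lambda\le C\|F_\ell\|_{L^\infty}^2$ and then use $\|F_\ell\|_{L^\infty}\lesssim 2^{-\ell(s-1/2)}\|F\|_{H^s}$. This gives $\|F_\ell(\sqrt{\bL})\|_{p\to p}\lesssim 2^{\ell(n(1/p-1/2)-s+1/2)}\|F\|_{H^s}$, and summability in $\ell$ requires $s>n(1/p-1/2)+\tfrac12$, not $s>n(1/p-1/2)$ as you wrote. (Indeed, the paper's Proposition~\ref{propsl} shows that $L^\infty$-type control of $F_\ell$ gives exactly the weaker $W^s_\infty$ result.) The fix is to bound instead by the $L^2$ norm: since the exponent $n(1/p-1/p')-1\ge 0$ (because \eqref{re} forces $p\le 2n/(n+1)$), one has $\int_0^4|\psi F_\ell|^2\lambda^{n(1/p-1/p')-1}\,d\lambda\le C\|\psi F_\ell\|_2^2$, and then
\[
\sum_{\ell\ge0}2^{\ell n(1/p-1/2)}\|F_\ell\|_{L^2}=\|F\|_{B^{n(1/p-1/2)}_{2,1}}\le C_s\|F\|_{H^s}\quad\text{for all }s>n(1/p-1/2),
\]
which is the key Besov embedding the paper uses to close the argument at the sharp exponent.
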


\begin{remark} Notice that if $p > 2n/(n+1)$ then $s = 1/2$ satisfies $s> n(1/p - 1/2)$. However,  $H^{1/2}$ functions need not be bounded, and such functions cannot be $L^p$ multipliers even for $p = 2$, and a fortiori for $p \neq 2$. We deduce that, under the assumptions of  Theorem~\ref{thm:sm}, \eqref{re} or even \eqref{lere} is impossible for $p > 2n/(n+1)$. 
\end{remark}

In preparation for the proof of Theorem~\ref{thm:sm}, we have (following \cite{CGT})

\begin{lem}\label{step}
Assume that $\bL$ satisfies \eqref{fsp} and that
$F$ is an even bounded Borel function with  Fourier transform
 $\hat F$ satisfying
$\mbox{\rm supp}\; \hat{F} \subset [-\rho,\rho]$.
Then 
$$
\mbox{\rm supp}\; K_{F(\sqrt \bL)} \subset \D_\rho.
$$
\end{lem}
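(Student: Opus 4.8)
The plan is to use the Fourier inversion formula to write $F(\sqrt{\bL})$ as a superposition of the wave propagators $\cos(t\sqrt{\bL})$ over $|t| \leq \rho$, and then to invoke the finite speed propagation hypothesis \eqref{fsp}. Concretely, since $F$ is even and bounded Borel, and $\hat F$ is supported in $[-\rho, \rho]$, I would write
\[
F(\sqrt{\bL}) = \frac{1}{2\pi} \int_{\RR} \hat F(t) \cos(t\sqrt{\bL}) \, dt = \frac{1}{2\pi} \int_{-\rho}^{\rho} \hat F(t) \cos(t\sqrt{\bL}) \, dt,
\]
where the first equality is justified by the spectral theorem: for each $\lambda \geq 0$ one has $F(\lambda) = (2\pi)^{-1}\int \hat F(t) \cos(t\lambda)\, dt$ (Fourier inversion for the even function $F$, valid because $\hat F$ is compactly supported, so $F$ is smooth, and one can check $F$ is bounded by hypothesis), and then one applies this identity through the functional calculus. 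Some care is needed to justify interchanging the integral with the spectral integral defining the functional calculus — this is a standard argument using that $\hat F \in L^1$ (being continuous with compact support) together with the uniform bound $\|\cos(t\sqrt{\bL})\|_{2\to 2} \leq 1$, so the integral converges as a Bochner integral of bounded operators on $L^2$.

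Next I would test the Schwartz kernel $K_{F(\sqrt{\bL})}$ against pairs of functions $f_1, f_2$ with supports in balls $B(z_1, \rho_1)$ and $B(z_2, \rho_2)$ respectively, where $\rho_1 + \rho_2 + \rho < d(z_1, z_2)$. Pairing the displayed formula with $f_1, f_2$ gives
\[
\langle f_1, F(\sqrt{\bL}) f_2 \rangle = \frac{1}{2\pi} \int_{-\rho}^{\rho} \hat F(t) \, \langle f_1, \cos(t\sqrt{\bL}) f_2 \rangle \, dt,
\]
and by \eqref{fsp}, for every $t$ with $|t| \leq \rho$ we have $|t| + \rho_1 + \rho_2 \leq \rho + \rho_1 + \rho_2 < d(z_1, z_2)$, so each term $\langle f_1, \cos(t\sqrt{\bL}) f_2 \rangle$ vanishes. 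Hence the whole integral vanishes, which shows $\langle f_1, F(\sqrt{\bL}) f_2\rangle = 0$ for all such $f_1, f_2$, i.e. $K_{F(\sqrt{\bL})}$ is supported in $\{(z_1,z_2) : d(z_1,z_2) \leq \rho\} = \D_\rho$.

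The main obstacle — really the only subtlety — is the justification of the operator-valued Fourier representation and the interchange of integration with the functional calculus, particularly since $F$ is only assumed bounded Borel (not, say, Schwartz). One clean way around this is to first prove the statement for $F$ Schwartz (where everything is classical), and then approximate a general bounded Borel $F$ with $\supp \hat F \subset [-\rho,\rho]$ by a sequence $F_k \to F$ obtained by mollifying in a way that keeps $\supp \hat{F_k} \subset [-\rho, \rho]$ (e.g. convolve $\hat F$ with an approximate identity supported in an arbitrarily small neighbourhood of $0$, then note $\D_{\rho + \epsilon} \downarrow \D_\rho$). Then $F_k(\sqrt{\bL}) \to F(\sqrt{\bL})$ strongly on $L^2$, support of the kernel passes to the limit, and one concludes. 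Alternatively one simply observes that $\hat F \in L^1$ suffices to run the Bochner-integral argument directly without approximation. Either way the argument is short, and this is essentially the content of the cited reference \cite{CGT}.
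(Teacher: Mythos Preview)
Your proof is correct and follows exactly the same approach as the paper: write $F(\sqrt{\bL})$ via Fourier inversion as an integral of $\cos(t\sqrt{\bL})$ over $|t|\le \rho$, then invoke the finite speed propagation hypothesis \eqref{fsp}. The paper's proof is in fact even terser than yours, omitting the Bochner-integral and approximation justifications you supply; your added care is welcome but not a different method.
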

\begin{proof}
If $F$ is an even function, then by the Fourier inversion formula,
$$
F(\sqrt \bL) =\frac{1}{2\pi}\int_{-\infty}^{+\infty}
  \hat{F}(t) \cos(t\sqrt \bL) \;dt.
$$
But supp $\hat{F} \subset [-\rho,\rho]$
and Lemma \ref{step} follows from (\ref{fsp}). 
\end{proof}


The next lemma is a crucial tool in using restriction type results, i.e. $L^p \to L^q$ continuity of spectral projectors,   to obtain spectral
multiplier type bounds, i.e.  
$L^p \to L^p$ estimates. 

\begin{lem}\label{w}
Suppose that $(x, d, \mu)$ satisfies \eqref{vol-balls} and $S$ is an bounded linear operator from $L^p(X) \to L^q(X)$  such that 
$$
\supp S  \subset \D_{\rho}
$$
for some $\rho>0$. Then for any any $1\le p <q \le \infty $ there exists a constant $C=C_{p,q}$ such that
$$
\|S\|_{{p}\to {p}} \le C 
\rho^{n(1/p -1/q)}
\|S\|_{p\to q}.
$$
\end{lem}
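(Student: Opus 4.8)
The plan is to exploit the finite-speed-of-propagation support condition $\supp S \subset \D_\rho$ to localize the operator $S$ at scale $\rho$, and then interpolate between the trivial $L^p \to L^p$ bound coming from $L^p \to L^q$ boundedness on a single ball and an $\ell^p$-summation over a covering of $X$ by such balls. First I would choose a maximal $\rho$-separated subset $\{z_k\}$ of $X$; by \eqref{vol-balls} the balls $B(z_k, \rho)$ cover $X$, the balls $B(z_k, \rho/2)$ are disjoint, and there is a uniform bound $N = N(n, c_1, c_2)$ on the number of indices $k'$ with $B(z_{k'}, \rho)$ meeting $B(z_k, \rho)$ (finite overlap / bounded geometry of the covering). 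Let $\chi_k$ be the indicator function of a measurable partition $\{A_k\}$ of $X$ subordinate to this cover, with $A_k \subset B(z_k, \rho)$.

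Next I would write $S = \sum_{k, k'} \chi_{k'} S \chi_k$, and observe that by the support hypothesis $\supp S \subset \D_\rho = \{d(z_1,z_2) \le \rho\}$, the term $\chi_{k'} S \chi_k$ vanishes unless $d(z_{k'}, z_k) \le 3\rho$ (say), which by the separation of the $z_k$'s happens for at most $N$ values of $k'$ for each fixed $k$ (and symmetrically). So $S = \sum_{j=1}^{N} S_j$ where each $S_j$ is a sum $\sum_k \chi_{k'(j,k)} S \chi_k$ of operators with pairwise disjoint ranges and pairwise disjoint domains. For such a "block-diagonal" operator, the $L^p \to L^p$ norm is controlled: for $f \in L^p$,
\[
\| S_j f \|_p^p = \sum_k \| \chi_{k'(j,k)} S \chi_k f \|_p^p \le \sum_k \| \chi_{k'(j,k)} S \chi_k f \|_q^p \, \mu(B(z_{k'(j,k)}, \rho))^{1 - p/q},
\]
using Hölder on the set $A_{k'(j,k)}$ of measure $\le c_2 \rho^n$. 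Then $\| \chi_{k'(j,k)} S \chi_k f \|_q \le \| S \|_{p \to q} \| \chi_k f \|_p$, and summing gives $\| S_j f \|_p^p \le (c_2 \rho^n)^{1 - p/q} \| S \|_{p \to q}^p \sum_k \| \chi_k f \|_p^p = (c_2 \rho^n)^{p/p - p/q}\,\|S\|_{p\to q}^p \, \| f \|_p^p$ since the $A_k$ partition $X$; that is, $\| S_j \|_{p \to p} \le (c_2)^{1/p - 1/q} \rho^{n(1/p - 1/q)} \| S \|_{p \to q}$. Summing over the $N$ values of $j$ yields $\| S \|_{p \to p} \le N (c_2)^{1/p - 1/q} \rho^{n(1/p - 1/q)} \| S \|_{p \to q}$, which is the claimed bound with $C = C_{p,q}$ depending only on $n, p, q$ and the volume constants.

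The main obstacle is purely bookkeeping: one must make the decomposition $S = \sum_j S_j$ genuinely block-diagonal, i.e. produce for each $j$ a partial matching $k \mapsto k'(j,k)$ such that the ranges $A_{k'(j,k)}$ are disjoint (so that the $\ell^p$-orthogonality of the pieces holds with constant $1$, not just $N$). This is a graph-coloring argument: the "interaction graph" on the index set, with an edge between $k$ and $k'$ when $d(z_k, z_{k'}) \le 3\rho$, has degree bounded by $N$, hence its edge set (or rather the bipartite incidence relation) decomposes into at most $N' = N'(N)$ matchings by Vizing/greedy edge-coloring; each matching gives one block-diagonal $S_j$. One should take a little care that the separate bound for $q = \infty$ (where Hölder reads $\| \cdot \|_p \le \mu(A_{k'})^{1/p} \| \cdot \|_\infty$) works identically, and that the constant genuinely does not depend on $\rho$ or on $S$. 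No analytic difficulty arises beyond Hölder's inequality and the volume-doubling bounds \eqref{vol-balls}.
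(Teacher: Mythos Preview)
Your proof is correct and follows essentially the same strategy as the paper: choose a covering of $X$ by balls at scale comparable to $\rho$, build a measurable partition subordinate to it, use the support condition to reduce to finitely many interacting pairs of pieces, and apply H\"older on each piece together with the volume bound \eqref{vol-balls}. The one difference is cosmetic: you invoke an edge-coloring (matching) argument to split $S$ into genuinely block-diagonal pieces $S_j$, whereas the paper simply estimates $\|Sf\|_p^p = \sum_i \|\sum_{j} \chi_i S \chi_j f\|_p^p$ directly, paying a factor of $K^{p-1}$ for the inner sum over the at most $K$ interacting indices $j$; this avoids the graph-theoretic detour entirely and is slightly shorter, but the final constant and the analytic content are the same.
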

\begin{proof}
We fix $\rho>0$. Then first we choose a sequence $x_n \in M$ such that
$d(x_i,x_j)> \rho/10$ for $i\neq j$ and $\sup_{x\in X}\inf_i d(x,x_i)
\le \rho/10$. Such sequence exists because $M$ is separable.
Second we define $\widetilde{B_i}$ by the formula
\begin{equation}\label{jadro}
\widetilde{B_i}=\bar{B}\left(x_i,\frac{\rho}{10}\right)-\left(\cup_{j<i}\bar{B}\left(x_j,\frac{\rho}{10}\right)\right),
\end{equation}
where $\bar{B}\left(x, \rho\right)=\{y\in M \colon d(z,z') \le \rho\}$. 
Third we put $\chi_i=\chi_{\widetilde B_i}$, where $\chi_{\widetilde B_i}$ is the characteristic function 
of set ${\widetilde B_i}$.  Fourth we define the operator
$M_{\chi_i}$ by the formula 
$M_{\chi_i}g=\chi_i g$.

Note that for $i\neq j$ 
 $B(x_i, \frac{\rho}{20}) \cap B(x_i, \frac{\rho}{20})=\emptyset$. Hence 
$$
K=\sup_i\#\{j;\;d(x_i,x_j)\le  2\rho\} \le
  \sup_x \frac{|\bar{B}(x, 2\rho)|}{\left|B\left(x, \frac{\rho}{20}\right)\right|}< \frac{40^nc_2}{c_1}< \infty.
$$
It is not difficult to see that 
$$
\D_{\rho}  \subset \cup_{\{i,j;\, d(x_i,x_j)<
 2 \rho\}} \widetilde{B}_i\times \widetilde{B}_j \subset \D_{4 \rho}
$$
so
$$
Sf =\sum_{{d}(x_i,x_j)< 2\rho} M_{\chi_i}S M_{\chi_j}f.
$$
Hence by H\"older inequality 
\begin{eqnarray*}
\|S f\|_{p}^p=\big\|\sum_{{d}(x_i,x_j)< 2\rho} M_{\chi_i}S M_{\chi_j}f\big\|_{L^p}^p=\sum_i \big\|\sum_{j;\,{d}(x_i,x_j)< 2\rho} M_{\chi_i}SM_{\chi_j}f\big\|_{p}^p   \\ \le 
\sum_i|\widetilde{B}_i|^{p(1/p-1/q)}\big\|\sum_{j;\,{d}(x_i,x_j)< 2\rho} M_{\chi_i}SM_{\chi_j}f\big\|_{q}^p
\\ \le C \rho^{np(1/p-/q)}  \sum_i\big\|\sum_{j;\,{d}(x_i,x_j)< 2\rho} M_{\chi_i}SM_{\chi_j}f\big\|_{q}^p
\\ \le CK^{p-1}  \rho^{np(1/p-1/q)}  \sum_i  \sum_{j;\,{d}(x_i,x_j)< 2\rho}   \big\|M_{\chi_i}SM_{\chi_j}f\big\|_{q}^p
\\ \le CK^{p}  \rho^{np(1/p-1/q)}  \sum_j\big\|SM_{\chi_j}f\big\|_{q}^p
\\ \le CK^{p} \rho^{np(1/p-1/q)}\|S\|_{p\to q}^p  \sum_j\big\|M_{\chi_j}f\big\|_{p}^p
\\=  CK^{p} \rho^{np(1/p-1/q)}\|S\|_{p\to q}^p  \|f\|_{p}^p
\end{eqnarray*}
This finishes the proof of Lemma~\ref{w}.
\end{proof}

\begin{proof}[Proof of Theorem~\ref{thm:sm}]
We first assume that $\bL$ satisfies $L^p$ to $L^{p'}$ restriction estimates for all energies. 
We take $\eta \in C_c^\infty(-4,4)$ even and such that 
$$
\sum_{n\in \zz} \eta\big( \frac{t}{2^l} \big)=1 \quad \text{for all } t \neq 0. 
$$
Then we set $\phi(t)=\sum_{l\le 0} \eta(2^{-l} t)$, 
$$
F_0(\lambda)=\frac{1}{2\pi}\int_{-\infty}^{+\infty}
 \phi(t) \hat{F}(t) \cos(t\lambda) \;dt
$$ 
and
\begin{equation}\label{defF_l}
F_l(\lambda) =\frac{1}{2\pi}\int_{-\infty}^{+\infty}
 \eta\big( \frac{t}{2^l} \big) \hat{F}(t) \cos(t\lambda) \;dt.
\end{equation}
Note that by virtue of the Fourier inversion formula 
$$
F(\lambda)=\sum_{l \ge 0}F_l(\lambda)
$$
and by Lemma \ref{step}
$$
{\rm supp}\,F_l(\alpha  \sqrt \bL) \subset \D_{2^l \alpha }.
$$
Now by Lemma \ref{w},
\begin{equation}\label{osz1}
\big\|F(\alpha  \sqrt \bL)\big\|_{p\to p } \le \sum_{l \ge 0}\big\|F_l( \alpha \sqrt {\bL})\big\|_{p\to p }
\le  \sum_{l \ge 0}   (2^l \alpha )^{n(1/p-1/2)}\big\|F_l(\alpha  \sqrt {\bL})\big\|_{p\to 2 }.
\end{equation}
Unfortunately, $F_l$ is no longer compactly supported. To remedy this  we choose a function $\psi \in C_c^\infty(-4, 4)$ such that $\psi(\lambda)=1$ for $\lambda \in (-2,2)$ 
and note that
$$
\big\|F_l(\alpha \sqrt {\bL})\big\|_{p\to 2 }\le \big\|(\psi F_l)(\alpha \sqrt {\bL})\big\|_{p\to 2 }+\big\|((1-\psi)F_l)(\alpha \sqrt {\bL})\big\|_{p\to 2 }.
$$
To estimate the norm $\|\psi F_l(\alpha \sqrt {\bL})\|_{p\to 2 }$ we use our restriction estimates (\ref{re}).
Using a $T^*T$ argument and the fact that $\supp \psi \subset [-4,4]$, we note that 
  \begin{eqnarray}
\big\|\psi F_l(\alpha \sqrt \bL)\big\|_{p\to 2}^2 = \big\||\psi F_l|^2( \alpha \sqrt \bL)\big\|_{p\to p' }\le \int_{0}^{4/\alpha } |\psi F_l(\alpha \lambda)|^2
\big\|dE_{\sqrt{\bL}}(\lambda)\big\|_{p \to p'}\, d\lambda \nonumber \\ \le \frac{C}{\alpha} \int_{0}^{4} |\psi F_l(\lambda)|^2
\big\|  dE_{\sqrt{\bL}}(\la/\alpha ) \big\|_{p \to p'} \,   d\lambda.\label{osz2}
\end{eqnarray}
It follows from the above calculation and \eqref{re} that 
\begin{equation}
\alpha^{n(1/p-1/2)}\big\|\psi F_l(\alpha \sqrt {\bL})\big\|_{p\to 2 }\le C\|\psi F_l\|_{2}, 
\label{psiFl-L2est}\end{equation}
for all $\alpha >0$. 
As a consequence, we obtain 
$$
\sum_{l \ge 0}   2^{ln(1/p-1/2)}\alpha ^{n(1/p-1/2)}\big\|\psi F_l(\alpha  \sqrt {\bL})\big\|_{p\to 2 }
\le \sum_{l \ge 0}   2^{ln(1/p-1/2)}\|\psi F_l\|_{2}
$$
for all $\alpha >0$.  
Now let us recall that by definition of Besov space
$$
 \sum_{l \ge 0}   2^{ln(1/p-1/2)}\|\psi F_l\|_{2}\le  \sum_{l \ge 0}   2^{ln(1/p-1/2)}\| F_l\|_{2}
 =\|F\|_{B_{1,2}^{n(1/p-1/2)}}.
$$
See, e.g., \cite[Chap.~I and~II]{Triebel}
for more details. We also recall that if $s>s'$ then $H_s \subset B_{1,2}^{s'}$ and 
$\|F\|_{B_{1,2}^{n(1/p-1/2)}}\le C_s \|F\|_{H^s}$ for all $s>  n(1/p-1/2)$, see again \cite{Triebel}. Therefore, we have shown that 
\begin{equation}
\sum_{l \ge 0}   2^{ln(1/p-1/2)}\alpha ^{n(1/p-1/2)}\big\|\psi F_l(\alpha  \sqrt {\bL})\big\|_{p\to 2 } \leq C \| F \|_{H^s}.
\label{psipart}\end{equation}

Next we obtain bounds for  the part of estimate \eqref{osz1} corresponding 
to the term $\|(1-\psi)F_l(\alpha \sqrt {\bL})\|_{p\to 2 }$. 
This only requires the spectral projection estimates \eqref{re-integrated}. We write 
\begin{equation*}\begin{gathered}
|(1 - \psi) F_l |^2(\alpha \sqrt \bL) = \int_0^\infty \big|(1-\psi)(\alpha \lambda) F_l(\alpha \lambda) \big|^2 dE_{\sqrt{\bL}}(\lambda) \\
= -\int_0^\infty \Big( \frac{d}{d\lambda}  \big| (1-\psi)(\alpha \lambda) F_l(\alpha \lambda) \big|^2 \Big) E_{\sqrt{\bL}}(\lambda) \, d\lambda \\
= -\int_0^\infty \Big( \frac{d}{d\lambda}  \big| (1-\psi)(\lambda) F_l( \lambda) \big|^2 \Big) E_{\sqrt{\bL}}(\lambda/\alpha) \, d\lambda. 
\end{gathered}\end{equation*}
Hence, using \eqref{re-integrated}, 
\begin{equation}
\big\|(1 - \psi) F_l(\alpha \sqrt \bL)\big\|_{p\to 2}^2  \le C \int_{0}^{\infty} \Big( \frac{d}{d\lambda}|(1-\psi)(\lambda) F_l(\lambda)|^2 \Big) \lambda^{n(1/p - 1/p')} \, d\lambda.
\label{zzz}\end{equation}
We write 
$$
F_l(\lambda) = \frac1{2\pi} \int e^{it(\lambda - \lambda')} \eta \big( \frac{t}{2^l} \big) F(\lambda') \, d\lambda' \, dt,
$$
use the identity $$e^{it(\lambda - \lambda')} = i^{-N}(\lambda - \lambda')^{-N} (d/dt)^N e^{it(\lambda - \lambda')},$$ 
and integrate by parts $N$ times. Note that if $\lambda \in \supp 1 - \psi$ and $\lambda' \in \supp F$ then $\lambda \geq 2$ and $\lambda' \leq 1$, and hence $\lambda - \lambda' \geq \lambda/2$.  It follows that 
$$
\big| ((1 - \psi)F_l)(\lambda) \big| \leq C \lambda^{-N} 2^{-l(N-1)} \| F \|_2
$$
with $C$ independent of $N$. Similarly,
$$
\big| \frac{d}{d\lambda} ((1 - \psi)F_l)(\lambda) \big| \leq C \lambda^{-N} 2^{-l(N-2)} \| F \|_2.
$$
Using this in \eqref{zzz} with $N$ sufficiently large, we obtain
$$
(2^l \alpha)^{n(1/p - 1/2)} \big\|((1 - \psi) F_l)(\alpha \sqrt \bL) \big\|_{p\to 2}
\leq C 2^{-l} \| F \|_2.
$$
Therefore, we have 
\begin{equation}
\sum_l  (2^l \alpha)^{n(1/p - 1/2)} \big\|((1 - \psi) F_l)(\alpha \sqrt \bL) \big\|_{p\to 2}
\leq C \| F \|_2 \leq C \| F \|_{H^s}. 
\label{1-psipart}\end{equation}
Equations \eqref{osz1},  \eqref{psipart} and \eqref{1-psipart} prove \eqref{sme}.

The proof in the case  that $\bL$ satisfies low-energy restriction estimates \eqref{lere} and \eqref{re-integrated} proceeds the same way, except that we require the condition $\alpha \leq 4/\lambda_0$ at the step \eqref{osz2}
in order that we can use the pointwise estimate \eqref{lere} on the spectral measure in this integral. 
\end{proof}

\begin{remark}
Note that if we only assume that \eqref{re-integrated} holds for all $\lambda >0$ then we still have

\begin{eqnarray*}
\alpha^{n(1/p-1/2)}\big\|\psi F_l(\alpha \sqrt {\bL})\big\|_{p\to 2 }\le \alpha^{n(1/p-1/2)}
\big\|\psi F_l(\alpha \sqrt {\bL})e^{\alpha^2\bL}\big\|_{2\to 2 }\big\|e^{-\alpha^2\bL}\big\|_{p\to 2 }\\
\le C\|\psi F_l\|_{\infty},
\end{eqnarray*}
Now the above estimate  is just a version of \eqref{psiFl-L2est} with  norm $\|\psi F_l\|_{2}$
replaced by $\|\psi F_l\|_{\infty}$. Next if we replace
Besov space $B_{1,2}^{n(1/p-1/2)}$ by $B_{1,\infty}^{n(1/p-1/2)}$ then  we can still follow the proof of Theorem~\ref{thm:sm}.
Recall also  that if $s>s'$ then $W^s_\infty \subset B_{1,\infty}^{s'}$ and
$\|F\|_{B_{1,\infty}^{n(1/p-1/2)}}\le C_s \|F\|_{W^s_\infty}$ for all $s>  n(1/p-1/2)$,
where $\|F\|_{W^s_\infty}=\|(I-d^2/dx^2)^{s/2}F\|_\infty$; see again \cite{Triebel}.
This implies that \eqref{psipart} holds with the norm $\| F \|_{H^s}$ replaced by
the norm $\|F\|_{W^s_\infty}$. 
As the rest of the proof of Theorem~\ref{thm:sm} does not require \eqref{re}, the above argument proves the following
proposition.

\begin{prop}\label{propsl} Suppose that $(X, d, \mu)$ and $\bL$ satisfy \eqref{vol-balls} and \eqref{fsp}, and that $\bL$ satisfies
 \eqref{re-integrated} for all $\lambda >0$.
Let $s > n|1/p - 1/2|$ be a Sobolev exponent. Then there exists $C$ depending only on $n, p$, $s$, and the constant in \eqref{re-integrated} such that,
for every even $F \in W^s_\infty (\RR)$ supported in $[-1, 1]$,
$F(\sqrt{\bL})$ maps $L^p(X) \to L^p(X)$, and
\begin{equation}
\sup_{\alpha>0} \big\|F(\alpha\sqrt{\bL})\big\|_{p\to p} \leq  C\|F\|_{W^s_\infty} .
\label{winftysm}\end{equation}
 \end{prop}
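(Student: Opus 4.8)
The plan is to run the proof of Theorem~\ref{thm:sm} essentially verbatim, changing only the one step where the full restriction estimate \eqref{re} was used — the bound \eqref{psiFl-L2est} on $\|\psi F_l(\alpha\sqrt\bL)\|_{p\to 2}$ — replacing it by a heat-semigroup argument, and correspondingly replacing the Besov space $B^{n(1/p-1/2)}_{1,2}$ by $B^{n(1/p-1/2)}_{1,\infty}$ and $H^s$ by $W^s_\infty$. Since $F(\alpha\sqrt\bL)$ is bounded on $L^2$ and its adjoint is $\overline F(\alpha\sqrt\bL)$ with $\|\overline F\|_{W^s_\infty}=\|F\|_{W^s_\infty}$, and since $n|1/p-1/2|=n|1/p'-1/2|$, we may assume $1\le p\le 2$ and obtain the case $p>2$ by duality. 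With that reduction, I would set up the dyadic decomposition $F=\sum_{l\ge 0}F_l$ exactly as in \eqref{defF_l}, invoke Lemma~\ref{step} to record $\supp F_l(\alpha\sqrt\bL)\subset\D_{2^l\alpha}$, apply Lemma~\ref{w} to reduce (as in \eqref{osz1}) to controlling $\sum_{l\ge0}(2^l\alpha)^{n(1/p-1/2)}\|F_l(\alpha\sqrt\bL)\|_{p\to 2}$, and split $F_l=\psi F_l+(1-\psi)F_l$ with $\psi\in C_c^\infty(-4,4)$ equal to $1$ on $(-2,2)$.

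For the $\psi F_l$ term I would argue as follows. The function $\lambda\mapsto\psi(\alpha\lambda)F_l(\alpha\lambda)e^{\alpha^2\lambda^2}$ is supported where $\alpha\lambda<4$, hence is bounded by $e^{16}\|\psi F_l\|_\infty$, so by the spectral theorem $\|\psi F_l(\alpha\sqrt\bL)\,e^{\alpha^2\bL}\|_{2\to 2}\le e^{16}\|\psi F_l\|_\infty$. Combining this with the heat kernel bound $\|e^{-\alpha^2\bL}\|_{p\to 2}\le C\alpha^{-n(1/p-1/2)}$ — which, as recorded in the remark following \eqref{hkb}, is a consequence of hypothesis \eqref{re-integrated} holding for all $\lambda>0$ (write $e^{-t\bL}$ as a Stieltjes integral against $E_{\sqrt\bL}$ and integrate by parts) — yields $\alpha^{n(1/p-1/2)}\|\psi F_l(\alpha\sqrt\bL)\|_{p\to 2}\le C\|\psi F_l\|_\infty$, the exact analogue of \eqref{psiFl-L2est} with $\|\cdot\|_2$ replaced by $\|\cdot\|_\infty$. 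Summing in $l$, using $\|\psi F_l\|_\infty\le\|F_l\|_\infty$, the definition of the Besov norm $B^{n(1/p-1/2)}_{1,\infty}$, and the embedding $W^s_\infty\hookrightarrow B^{n(1/p-1/2)}_{1,\infty}$ (valid for $s>n(1/p-1/2)$ because $F$ is supported in $[-1,1]$) bounds the $\psi$-part by $C\|F\|_{W^s_\infty}$.

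The $(1-\psi)F_l$ term needs no change at all: the integration-by-parts argument of \eqref{zzz}–\eqref{1-psipart} only uses \eqref{re-integrated}, which we assume, and it ends with the bound $C\|F\|_2\le C\|F\|_{W^s_\infty}$ (again using $\supp F\subset[-1,1]$). Assembling the two parts via \eqref{osz1} gives \eqref{winftysm}.

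I do not expect a real obstacle; the argument is a routine variant spelled out in the preceding remark. The only points requiring a little care are: (i) verifying that \eqref{re-integrated} for all $\lambda$ genuinely produces the global heat bound \eqref{hkb}, as above; (ii) the uniformity of all constants in $\alpha$, which is automatic because every estimate used is scale-covariant in exactly the way exploited in Theorem~\ref{thm:sm}; and (iii) the $p>2$ reduction by duality. None of these is substantive.
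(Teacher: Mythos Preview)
Your proposal is correct and follows essentially the same approach as the paper: the paper's proof is precisely the remark preceding the proposition, and your argument reproduces it step for step---replace the restriction-based bound \eqref{psiFl-L2est} by the heat-semigroup factorization $\psi F_l(\alpha\sqrt{\bL})=\big(\psi F_l(\alpha\sqrt{\bL})e^{\alpha^2\bL}\big)e^{-\alpha^2\bL}$, thereby trading $\|\psi F_l\|_2$ for $\|\psi F_l\|_\infty$, swap $B^{n(1/p-1/2)}_{1,2}$ for $B^{n(1/p-1/2)}_{1,\infty}$ and $H^s$ for $W^s_\infty$, and leave the $(1-\psi)$ part untouched since it only used \eqref{re-integrated}. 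Your explicit duality reduction for $p>2$ is a small addition not spelled out in the paper but is the natural reading of the absolute value in the hypothesis $s>n|1/p-1/2|$.
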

 
 Note also that if  $s>s'$ then    $\|F\|_{W^{s'}_\infty} \le C \| F \|_{H^{s+1/2}}$.
 That is, the multiplier result with exponent one half bigger then the optimal exponent does not require \eqref{re} and holds just under
 assumption \eqref{re-integrated}, which is equivalent with the standard heat kernel bounds \eqref{hkb} (for all $t$).
 For $p=1$ Proposition~\ref{propsl} was proved in \cite{CS} and can be alternatively proved using \cite[Theorem 3.5] {CS}
 and interpolation, see also \cite[Theorem 3.1]{DOS}.
 
 From this point of view, the key point about Theorem~\ref{thm:sm} is the gain of half a derivative over the more elementary \eqref{winftysm}. 
\end{remark}

\subsection{Bochner-Riesz summability}

We use Theorem~\ref{thm:sm} to discuss boundedness of Bochner-Riesz means of the operator $\bL$. Bochner-Riesz summability is technically speaking a slight weakening of Theorem~\ref{thm:sm} but is very close, and it allows us to compare our results with results described in \cite{Stein} and \cite{Sogge}. Let us recall that Bochner-Riesz means of order $\delta$ are defined by the formula 
\begin{equation}
(1 - \bL/\lambda^2)_+^\delta, \quad \lambda > 0. 
\label{br}\end{equation}
For $\delta = 0$, this is the spectral projector $E_{\sqrt{\bL}}([0, \lambda])$, while for $\delta > 0$ we think of \eqref{br} as a smoothed version of this spectral projector; the larger $\delta$, the more smoothing. 
Bochner Riesz summability describes the range of $\delta$ for which the above operators are bounded on $L^p$ uniformly in $\lambda.$ 

\begin{cor}\label{riesz}
Suppose that $(X, d, \mu)$ is as above, and that restriction estimates \eqref{re} for $1 \leq p \leq \frac{2(n+1)}{n+3}$ and finite speed propagation property \eqref{fsp} hold for operator $\bL$. 
Then for all $p\in [1,\frac{2(n+1)}{n+3}]\cup  [\frac{2(n+1)}{n-1},\infty]$ and all $\delta>n|1/p-1/2|-1/2$, we have
\begin{equation}
\| (1 - \bL/\lambda^2)_+^\delta \|_{p\to p} \le C \text{ for all } \lambda > 0.
\label{brs}\end{equation}
For all $p\in (\frac{2(n+1)}{n+3},\frac{2(n+1)}{n-1})$ 
the above estimates hold if $\delta>\frac{n-1}{2}|1/p-1/2|$. 
 \end{cor}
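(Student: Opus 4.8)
The plan is to deduce Bochner–Riesz bounds from the spectral multiplier estimate of Theorem~\ref{thm:sm}, handling the low-regularity regime $\delta$ barely above $n|1/p-1/2|-1/2$ by a dyadic decomposition of the multiplier $F_\delta(\lambda) = (1-\lambda^2)_+^\delta$ near the singular point $\lambda = 1$. First I would reduce, by self-duality, to the case $p < 2$ (the range $p \in [2(n+1)/(n-1), \infty]$ follows since $(1-\bL/\lambda^2)_+^\delta$ is self-adjoint, so its $L^p \to L^p$ norm equals its $L^{p'} \to L^{p'}$ norm); and by scaling it suffices to bound $(1 - \bL)_+^\delta = F_\delta(\sqrt{\bL})$ with $\lambda$ fixed equal to $1$, since the hypotheses \eqref{re} and \eqref{fsp} are scale-invariant in the appropriate sense and $F_\delta(\alpha \cdot)$ for varying $\alpha$ reproduces all the dilates. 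The point is then that $F_\delta$ is supported in $[-1,1]$ and even, so Theorem~\ref{thm:sm} applies directly provided $F_\delta \in H^s$ for some $s > n(1/p - 1/2)$; and indeed $(1-\lambda^2)_+^\delta \in H^s(\RR)$ precisely for $s < \delta + 1/2$, so the multiplier theorem immediately gives \eqref{brs} whenever $\delta + 1/2 > n(1/p-1/2)$, i.e. $\delta > n(1/p - 1/2) - 1/2 = n|1/p - 1/2| - 1/2$ for $p < 2$. This handles the first claim.

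For the second claim, with the smaller lower bound $\delta > \tfrac{n-1}{2}|1/p - 1/2|$ in the intermediate range $p \in (2(n+1)/(n+3), 2(n+1)/(n-1))$, the crude Sobolev embedding is too lossy and I would instead decompose $F_\delta$ dyadically around $\lambda = 1$: write $F_\delta = \sum_{k \ge 0} F_\delta \, \beta_k$ where $\beta_k$ is supported in $|\lambda - 1| \sim 2^{-k}$ (plus a smooth piece away from $1$). Each piece $F_\delta \beta_k$ is, after rescaling the variable, essentially $2^{-k\delta}$ times a fixed bump function on a unit interval, so one can estimate $\|(F_\delta \beta_k)(\sqrt \bL)\|_{p \to p}$ using the restriction estimate \eqref{re} localized to the spectral window $[1 - 2^{-k}, 1 + 2^{-k}]$: a $TT^*$ / Stein–Tomas argument on that window, exactly as in the proof of Theorem~\ref{thm:sm} but keeping track of the window length, produces a bound of the form $C\, 2^{-k\delta} 2^{k(n-1)(1/p - 1/2)/\cdots}$ — more precisely the window of length $2^{-k}$ contributes a factor that, combined with interpolation against the trivial $L^2$ bound, yields summability in $k$ exactly when $\delta > \tfrac{n-1}{2}|1/p - 1/2|$. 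Summing the geometric series over $k$ gives \eqref{brs}.

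The main obstacle I anticipate is getting the sharp power of the window length in the dyadic estimate: one must be careful that the restriction estimate \eqref{re}, which is stated for the spectral measure $dE(\lambda)$ (a "zero-width" object), integrates correctly over a window of width $2^{-k}$ to give the $L^p \to L^{p'}$ norm of $\indic_{[1-2^{-k}, 1+2^{-k}]}(\sqrt \bL)$ with the right $2^{-k}$ gain, and then that the interpolation between this $L^p \to L^{p'}$ bound and the $L^2 \to L^2$ bound (which has no window gain) is arranged so that the exponent of $2^{-k}$ in the final $L^p \to L^p$ estimate for $(F_\delta \beta_k)(\sqrt\bL)$ is strictly negative precisely under the stated hypothesis on $\delta$. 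This is a bookkeeping exercise of the same type as in \cite{Stein, Sogge}, but it is where all the sharpness resides; the rest is routine once Theorem~\ref{thm:sm} and the window version of the $TT^*$ argument are in hand.
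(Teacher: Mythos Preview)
Your treatment of the first claim (the ranges $p\le 2(n+1)/(n+3)$ and, by duality, $p\ge 2(n+1)/(n-1)$) is correct and is exactly what the paper does: since $(1-\lambda^2)_+^\delta\in H^s$ iff $s<\delta+1/2$, Theorem~\ref{thm:sm} applies directly and gives \eqref{brs} whenever $\delta>n|1/p-1/2|-1/2$.

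For the intermediate range $p\in(2(n+1)/(n+3),2)$ your approach diverges from the paper's. The paper does \emph{not} decompose $F_\delta$ dyadically; it simply observes that at the endpoint $p_0=2(n+1)/(n+3)$ the two thresholds coincide, $n(1/p_0-1/2)-1/2=\tfrac{n-1}{2}(1/p_0-1/2)=\tfrac{n-1}{2(n+1)}$, and then interpolates (Stein's complex interpolation for the analytic family $\delta\mapsto(1-\bL/\lambda^2)_+^\delta$) between the $L^{p_0}$ bound just obtained and the trivial $L^2$ bound valid for all $\delta\ge0$. Linear interpolation of the thresholds yields $\delta>\tfrac{n-1}{2}|1/p-1/2|$ in one line. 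Your dyadic argument can be made to work and reaches the same threshold, but as written it has a small gap: the pieces $(F_\delta\beta_k)(\sqrt{\bL})$ do not have compactly supported Schwartz kernels, so you cannot pass from $L^p\to L^2$ (or $L^p\to L^{p'}$) bounds to $L^p\to L^p$ bounds via Lemma~\ref{w} as in the proof of Theorem~\ref{thm:sm}. The clean fix is to apply Theorem~\ref{thm:sm} itself to each $F_\delta\beta_k$, using $\|F_\delta\beta_k\|_{H^s}\sim 2^{k(s-\delta-1/2)}$, then Riesz--Thorin each piece against the $L^2$ bound $\|(F_\delta\beta_k)(\sqrt{\bL})\|_{2\to2}\le C2^{-k\delta}$; summing the resulting geometric series recovers exactly $\delta>\tfrac{n-1}{2}|1/p-1/2|$. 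So your route is valid after this repair, but the paper's one-shot interpolation is considerably shorter.
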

\begin{proof}
Note that $(1-\lambda^2)_+^\delta \in H^s$ if and only if $\delta>s-1/2 $.  Now for $p<\frac{2(n+1)}{n+3}$ Corollary~\ref{riesz} follows from Theorem~\ref{thm:sm}. For $\frac{2(n+1)}{n+3} < p < 2$ Corollary~\ref{riesz} follows from interpolating between \eqref{brs} with $p = \frac{2(n+1)}{n+3}$, and the trivial estimate for $p=2$. For $p > 2$ the results follow by duality. 
\end{proof}

\begin{remark} We noted in the proof above that Corollary~\ref{riesz} follows from Theorem~\ref{thm:sm}. In fact 
the Corollary~\ref{riesz} is slightly but essentially weaker than Theorem~\ref{thm:sm}. Indeed Corollary~\ref{riesz}
is equivalent with Theorem~\ref{thm:sm} in which the $H^s$ norm is replaced by the Sobolev $W^{s+1/2}_1$ norm. Let us recall that $\|F\|_{W^s_1}=\|(-d/dx+I)^s F\|_1$. To prove it we note that
$$
F(\sqrt{\bL})=\int \chi_+^{\nu}(\lambda -\sqrt{\bL})F^{\nu}(\lambda)d\lambda,
$$
where $\chi_+$ is as in Section~\ref{sec:keire} and $F^{\mu}=F*\chi_+^{-\mu}$. Now $\| F^{s}\|_1 \le C \|F\|_{W^{s'}_1}$ for all $s<s'$ and so 
Bochner-Riesz summability of order $a$ implies Theorem~\ref{thm:sm} with the Sobolev norm $W^{s+1}_1$
for all $s>a$. Note that for compactly supported functions $F$ which we consider here the norm $W^{s+1/2}_1$
is essentially stronger the  $H^s=W^{s}_2$ norm. Note also vice versa, Theorem~\ref{thm:sm} with the Sobolev norm $W^{s+1}_1$ implies Bochner-Riesz summability of order $a$ for all $a>s$. 
\end{remark}

\subsection{Singular integrals}

Finally we will discuss a singular integral version of our spectral multiplier result. The following theorem is just reformulation of \cite[Theorem~3.5 ]{CSi}. We write $D_\kappa$ for the scaling operator $D_\kappa F(x) = F(\kappa x)$. 
   \begin{theo}\label{singint}
  Suppose that operator $\bL$ satisfies finite speed propagation property \eqref{fsp}, that $s>n/2$ and that
  \begin{equation}\label{spec}
 \| dE_{\sqrt{\bL}}(\lambda)\|_{1 \to \infty} \le \lambda^{n-1} \text{ for all } \lambda > 0.
  \end{equation}
 Next let $\eta$ be a smooth compactly supported non-zero function.   Then  for any Borel bounded function
   $F$ such that  $\sup\limits_{ \kappa > 0} \Vert \eta \, D_\kappa F
\Vert_{W^p_s}<\infty$
   the operator $F(A)$ is of weak type $(1,1)$ and is bounded on $L^q(X)$
   for all $1<q<\infty$. In addition
     \begin{equation}\label{hor:m1}
     \|F({A})\|_{L^1\to  L^{1,\infty}}
     \le  C_s  \Big(\sup_{ \kappa >   0} \Vert \eta
     \, D_\kappa F \Vert_{W^p_s}+|F(0)|\Big).
     \end{equation}
   \end{theo}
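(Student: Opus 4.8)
The plan is to derive this result from the general singular integral multiplier theorem of Christ and Sikora, specifically \cite[Theorem~3.5]{CSi}, by verifying that the two hypotheses required there — finite speed of propagation and an $L^1 \to L^\infty$ bound on the spectral measure — are precisely what we have assumed. Since \cite{CSi} is formulated for operators satisfying Davies--Gaffney / finite speed estimates together with a spectral measure bound, the content of the statement is essentially bookkeeping: one matches the Sobolev-norm hypothesis $\sup_{\kappa>0}\|\eta\, D_\kappa F\|_{W^p_s} < \infty$, which is a scale-invariant Mihlin--Hörmander condition on $F$, with the condition appearing in \cite{CSi}, and checks that the weak-type $(1,1)$ conclusion together with the quantitative bound \eqref{hor:m1} carries over verbatim. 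So the first step is simply to quote \cite[Theorem~3.5]{CSi} and observe that its standing assumptions are \eqref{fsp} and \eqref{spec}.

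The one point that needs a genuine argument is that the hypothesis on $F$ stated here — a uniform bound on $\|\eta\, D_\kappa F\|_{W^p_s}$ for a \emph{fixed} cutoff $\eta$ — is equivalent to (or at least implies) the hypothesis in \cite{CSi}, which may be stated with a different choice of cutoff or in terms of a dyadic Littlewood--Paley decomposition. Here I would argue that since $\eta$ is smooth, compactly supported, and non-zero, one can build a dyadic partition of unity subordinate to dilates of $\supp \eta$ (after possibly replacing $\eta$ by $\eta(\cdot) + \eta(2\,\cdot) + \dots$ over finitely many scales so that the dilates cover $(0,\infty)$), and then the uniform bound over $\kappa$ transfers from one admissible cutoff to any other by the dilation-invariance built into the $W^p_s$ scaling and the quasi-norm equivalence of finitely overlapping pieces. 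This is the standard observation that the Hörmander-type condition $\sup_\kappa \|\eta\, D_\kappa F\|_{W^p_s}<\infty$ does not depend on the particular choice of $\eta$; I would state it and refer to \cite[Chap.~I and~II]{Triebel} or the discussion in \cite{DOS} rather than reprove it.

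Having reconciled the hypotheses, the conclusion — weak type $(1,1)$, boundedness on $L^q$ for $1<q<\infty$ by Marcinkiewicz interpolation with the trivial $L^2$ bound (note $F$ is bounded, so $F(\bL)$ is bounded on $L^2$ by the spectral theorem) and duality, and the explicit constant \eqref{hor:m1} with the $|F(0)|$ correction term accounting for the fact that $F$ need not vanish at the bottom of the spectrum — follows directly from \cite{CSi}. The main (and only real) obstacle is the cutoff-independence point in the previous paragraph; everything else is a direct citation. I would therefore present the proof as: (i) recall \cite[Theorem~3.5]{CSi}; (ii) note \eqref{fsp} and \eqref{spec} are its hypotheses; (iii) observe the Sobolev condition on $F$ is independent of the choice of admissible $\eta$; (iv) conclude.

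\begin{proof}
This is essentially a restatement of \cite[Theorem~3.5]{CSi}. That result asserts that if $\bL$ satisfies finite speed of propagation \eqref{fsp} and the spectral measure bound \eqref{spec}, then any bounded Borel function $F$ satisfying a scale-invariant Sobolev condition of the form $\sup_{\kappa>0}\|\eta\, D_\kappa F\|_{W^p_s}<\infty$, for $s>n/2$, gives rise to an operator $F(\bL)$ of weak type $(1,1)$, bounded on $L^q(X)$ for all $1<q<\infty$, with the quantitative bound \eqref{hor:m1}. The only point to check is that the hypothesis on $F$ does not depend on the particular choice of the smooth, compactly supported, non-zero cutoff $\eta$: given two such cutoffs, one passes between the corresponding uniform bounds by covering $(0,\infty)$ with finitely many dilates of the supports and using the dilation-invariance of the $W^p_s$ scaling together with the finite-overlap quasi-norm equivalence; see \cite[Chap.~I and~II]{Triebel}. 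Boundedness on $L^2(X)$ is automatic from the spectral theorem since $F$ is bounded, and boundedness on $L^q$ for $1<q<\infty$ then follows by Marcinkiewicz interpolation and duality. The term $|F(0)|$ in \eqref{hor:m1} accounts for the fact that $F$ need not vanish at $0$.
\end{proof}
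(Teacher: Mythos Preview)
Your approach is correct in spirit --- the result is indeed a consequence of \cite[Theorem~3.5]{CSi} --- but there is a genuine gap in the verification of hypotheses. You assert that \eqref{spec} is directly one of the standing hypotheses of \cite[Theorem~3.5]{CSi}, but it is not. The hypothesis actually required there (estimate (3.22) in \cite{CSi}) is a Plancherel-type $L^1 \to L^2$ bound of the form
\[
\|F(\sqrt{\bL})\|_{1\to 2}^2 \le C R^n \|D_R F\|_2^2 \quad \text{whenever } \supp F \subset [0,R).
\]
This does not coincide with the pointwise bound \eqref{spec} on the spectral measure, and the passage from one to the other is precisely the content of the proof. The paper carries this out via a $T^*T$ argument:
\[
\|F(\sqrt{\bL})\|_{1\to 2}^2 = \||F|^2(\sqrt{\bL})\|_{1\to\infty}
\le \int_0^\infty |F(\lambda)|^2 \|dE_{\sqrt{\bL}}(\lambda)\|_{1\to\infty}\,d\lambda
\le C\int_0^\infty |F(\lambda)|^2 \lambda^{n-1}\,d\lambda,
\]
and then rescales to obtain the required bound. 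Without this step your proof is incomplete: you have not matched the hypothesis of the cited theorem.

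By contrast, the cutoff-independence issue you spend most of your argument on is already noted in the paper as a standard remark and is not the substantive point of the proof.
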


\begin{remark}
It is a standard observation  that up to equivalence the norm  $$ \sup_{\kappa >   0} \Vert \eta
     \, D_\kappa F \Vert_{W^p_s}$$  does not depend on the auxiliary function $\eta$ as long as $\eta$ is not identically equal zero. 
\end{remark}
     
  \begin{proof}
  Using $T^*T$ trick we note that  by \eqref{spec} one has 
  \begin{eqnarray*}
\|F(\sqrt \bL)\|_{1\to 2}^2 = \||F|^2(\sqrt \bL)\|_{1\to \infty }\le \int_{0}^\infty |F(\lambda)|^2
\|dE_{\sqrt{\bL}}(\lambda)\|_{1 \to \infty}d\lambda \\ \le C \int_{0}^\infty |F(\lambda)|^2
\lambda^{n-1}d\lambda.
\end{eqnarray*}
Hence if $\supp F \subset [0,R)$ then 
$$
\|F(\sqrt \bL)\|_{1\to 2}^2\le CR^n \| D_R F\|^2_{2};
$$
that is the estimates (3.22) of Theorem~3.5 of \cite{CSi} hold. Now Theorem~\ref{singint} follows from 
\cite[Theorem~3.5]{CSi}. 
  \end{proof}
 \begin{remark}  Theorem~\ref{singint} is a singular integral version of Theorem~\ref{thm:sm} for $p=1$.
  We expect that a similar extension to a singular integral version is possible for all $p$. That is if one 
  assumes that $s>n|1/2-1/p|$ then one can prove weak-type $(p,p)$  version of estimates  \eqref{hor:m1}.
  However the proof of such results seems to be more complex and not directly related to the rest of this paper, 
  so we will not pursue this idea further here. 
  \end{remark}  
  

\section{Kernel estimates imply restriction estimates}\label{sec:keire}

The goal of this section is to prove Proposition~\ref{QQ1}; that is, we show that  restriction estimates \eqref{re} or \eqref{lere} follow from certain pointwise estimates of $\lambda$-derivatives of the kernel of the spectral measure. To  the proof of this proposition, we first prove a simplified version in which the partition of unity does not appear. We work in the same abstract setting as the previous section. 

\begin{prop}\label{keire} Let $(X, d, \mu)$ be a metric measure space and  $\bL$ an abstract positive self-adjoint operator on $L^2(X, \mu)$. Assume that the spectral measure $dE_{\sqrt{\bL}}(\lambda)$ for $\sqrt{\bL}$ has 
a Schwartz kernel $dE_{\sqrt{\bL}}(\lambda)(z,z')$ that satisfies, for some nonnegative function $w$ on $X \times X$, the following estimate
\begin{equation} 
\Big| \big( \frac{d}{d\lambda} \big)^{j}  dE_{\sqrt{\bL}}(\lambda)  (z,z') \Big| \leq C \lambda^{n-1-j} \big( 1 + \lambda w(z,z') \big)^{-(n-1)/2 + j} 
\label{spec-meas-j-1-3}\end{equation}
holds for $j = 0$ and for $j = n/2 -1$ and $j = n/2$ if $n$ is even, or for $j = n/2 - 3/2$ and $j = n/2 + 1/2$ if $n$ is odd. 
Then \eqref{re} holds for all $p$ in the range $[1, 2(n+1)/(n+3)]$. Moreover, if the estimates above hold only for $0 < \lambda < \lambda_0$, then \eqref{lere} hold for the same range of $p$. 
\end{prop}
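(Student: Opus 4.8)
The plan is to deduce the restriction estimate from the pointwise kernel bounds \eqref{spec-meas-j-1-3} by Stein's method of complex interpolation, the interpolated family being a one‑parameter family of fractional powers of $\lambda^2-\bL$ that specialises, at one value of the parameter, to the spectral measure itself. First I would reduce to the endpoint $p_0:=2(n+1)/(n+3)$ (so $p_0'=2(n+1)/(n-1)$): the case $p=1$, $p'=\infty$ is immediate from \eqref{spec-meas-j-1-3} with $j=0$, since the exponent $-(n-1)/2$ is negative, so that $\|dE_{\sqrt{\bL}}(\lambda)\|_{L^1\to L^\infty}=\sup_{z,z'}|dE_{\sqrt{\bL}}(\lambda)(z,z')|\le C\lambda^{n-1}$; because the target exponent $n(1/p-1/p')-1$ is affine in $1/p-1/p'$, Riesz--Thorin interpolation between $p=1$ and $p=p_0$ then yields the whole range $1\le p\le p_0$ once the endpoint is known.

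For the endpoint, fix $\lambda>0$ and consider the analytic family of operators
$$
T_\zeta \;=\; c\,\lambda\, e^{\zeta^2}\,\chi_+^{\zeta-1}(\lambda^2-\bL),
\qquad\text{with Schwartz kernel}\qquad
T_\zeta(z,z')=c\,\lambda\, e^{\zeta^2}\int_0^\lambda \chi_+^{\zeta-1}(\lambda^2-\mu^2)\,e(\mu)\,d\mu ,
$$
where $\chi_+^s$ denotes the usual entire family of homogeneous distributions ($\chi_+^{-1}=\delta$), $e(\mu):=\tfrac{d}{d\mu}\,dE_{\sqrt{\bL}}(\mu)(z,z')$ is the density of the spectral‑measure kernel, and the constant $c$ is chosen so that $T_0=dE_{\sqrt{\bL}}(\lambda)$; the Gaussian factor makes $\zeta\mapsto T_\zeta$ an admissible family, all constants below depending only sub‑exponentially on $|\Im\zeta|$. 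On a vertical line $\Re\zeta=b$ far enough to the right, $\mu\mapsto c\lambda e^{\zeta^2}\chi_+^{\zeta-1}(\lambda^2-\mu^2)$ is a bounded function on $[0,\lambda]$ of size $\le C(\Im\zeta)\lambda^{2b-1}$, so by the spectral theorem $\|T_{b+i\tau}\|_{L^2\to L^2}\le C(\tau)\lambda^{2b-1}$; this is the easy line.

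On a vertical line $\Re\zeta=a$ to the left I would prove an $L^1\to L^\infty$ bound, i.e.\ a bound on the kernel $T_a(z,z')$ pointwise and uniform in $(z,z')$. There $\chi_+^{\zeta-1}$ is a distributional derivative of a locally integrable function, so I would integrate by parts repeatedly in $\mu$ — each step raising the order of $\chi_+^{\,\cdot}(\lambda^2-\mu^2)$ and differentiating $e$, the boundary terms at $\mu=0$ vanishing because \eqref{spec-meas-j-1-3} with $j=0$ forces $e$ to vanish to order $\sim n-1$ there — until the $\mu$‑integral is absolutely convergent, and then insert \eqref{spec-meas-j-1-3} for the relevant $j$. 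The estimate of order $j<(n-1)/2$ carries a negative power of $1+\mu w(z,z')$, which is $\le1$, so it yields a bound uniform in $w(z,z')$ (this uniformity is exactly what makes it an $L^1\to L^\infty$ estimate), while the estimate of order $j>(n-1)/2$ is what permits the line $\Re\zeta=a$ to be pushed far enough to the left; the remaining integral is of Euler beta type and gives $\|T_{a+i\tau}\|_{L^1\to L^\infty}\le C(\tau)\lambda^{2a+n-1}$. Choosing $a,b$ so that $\zeta=0$ lies in the open strip $a<\Re\zeta<b$ at the weight $\theta_0$ determined by $1/p_0=(1-\theta_0)+\theta_0/2$, a direct computation shows the interpolated power of $\lambda$ is exactly $n(1/p_0-1/p_0')-1$ (and, in fact, independent of the admissible choice of $a,b$), so Stein's interpolation theorem for analytic families gives $\|dE_{\sqrt{\bL}}(\lambda)\|_{L^{p_0}\to L^{p_0'}}\le C\lambda^{n(1/p_0-1/p_0')-1}$; with the first step this is \eqref{re}. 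For the low‑energy statement the same proof works verbatim for $0<\lambda\le\lambda_0$, since every spectral integral is over $\mu\in(0,\lambda]$ and uses only kernel bounds at spectral parameters $\le\lambda$, giving \eqref{lere}.

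I expect the $L^1\to L^\infty$ step to be the main obstacle, and within it the bookkeeping that simultaneously keeps $\zeta=0$ strictly inside the admissible strip and produces the sharp power of $\lambda$: the correct line $\Re\zeta=a$ sits at the borderline of convergence of the beta integral, and getting past it is precisely why \eqref{spec-meas-j-1-3} must be assumed at the two orders straddling $(n-1)/2$ rather than at a single order — one plays the decaying and the non‑decaying estimates against each other near $\mu=\lambda$ — and why, for instance when $n=3$, the non‑decaying estimate is genuinely needed. A secondary technical point is controlling the intermediate‑order derivatives of $e$ that the chain rule produces under repeated integration by parts, which one bounds by interpolating the two given estimates (a Landau--Kolmogorov‑type inequality), together with checking throughout that all constants grow sub‑exponentially in $|\Im\zeta|$ so that Stein interpolation applies.
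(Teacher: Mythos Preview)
Your proposal follows essentially the same strategy as the paper: Stein complex interpolation applied to the analytic family $\chi_+^a$ of the operator, with the trivial $L^2\to L^2$ bound on one line and an $L^1\to L^\infty$ kernel bound on the other. Two differences are worth noting. First, the paper works with $\chi_+^a(\lambda-\sqrt{\bL})$ on the strip $-(n+1)/2\le\Re a\le0$ rather than your $\chi_+^{\zeta-1}(\lambda^2-\bL)$; this avoids the chain-rule bookkeeping you flag as a ``secondary technical point,'' since $(d/d\lambda)^{k-1}dE_{\sqrt{\bL}}(\lambda)=\chi_+^{-k}(\lambda-\sqrt{\bL})$ exactly. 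Second, and more importantly, what you describe as ``playing the decaying and the non-decaying estimates against each other'' is made precise in the paper as a stand-alone lemma (Lemma~\ref{infty}): an $L^\infty$ interpolation inequality $\|\chi_+^{b+is}*f\|_\infty\le C(1+|s|)e^{\pi|s|/2}\|\chi_+^a*f\|_\infty^\theta\|\chi_+^c*f\|_\infty^{1-\theta}$ for $a<b<c$, proved by Fourier analysis and scaling. This lemma is then applied with $f$ a compactly cut-off version of $\chi_+^{-k}(\cdot-\sqrt{\bL})(z,z')$ (the cutoff is essential and you do not mention it), and $a,c$ chosen so that the two hypotheses \eqref{spec-meas-j-1-3} straddling $(n-1)/2$ feed in directly; the geometric mean exactly cancels the $(1+\lambda w)^{\pm}$ factors. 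This is the Landau--Kolmogorov-type step you mention, but it is not secondary---it is the entire content of the $L^1\to L^\infty$ endpoint, and isolating it as a clean lemma both clarifies the argument and handles the sub-exponential growth in $\Im\zeta$ automatically.
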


We prove this proposition via complex interpolation, 
embedding the derivatives of the spectral measure in an analytic family of operators --- following the original (unpublished) proof of Stein in the classical case. To do this we use the distributions $\chi_+^a$, defined by
$$
\chi_+^a=\frac{x_+^a}{\Gamma(a+1)},
$$
where $\Gamma$ is the Gamma function and 
$$
x_+^a=x^a \quad \mbox{if} \quad x \ge 0 \quad \quad \mbox{and}\quad  x_+^a=0 \quad  \mbox{if} \quad x < 0.
$$
The $x_+^a$ are clearly distributions for $\Re a > -1$, and we have 
for $\Re a > 0$,
\begin{equation}\label{chi}
\frac{d}{dx} x_+^a = a x_+^{a-1} \implies \frac{d}{dx} \chi_+^a=\chi_+^{a-1}
\end{equation}
which we use to extend the family of functions $ \chi_+^a$ to a family of distributions on $\rr$ defined for all $a\in \cc$; see \cite{Hor1} for details.  Since $\chi_+^0(x) = H(x)$ is the Heaviside function, it follows that 
\begin{equation}
\chi_+^{-k} =\delta_0^{(k-1)}, \quad  k=1,2,\ldots ,
\label{Heaviside}\end{equation}
and therefore 
$$
\chi_+^0(\lambda - \sqrt{\bL}) = E_{\sqrt{\bL}}((0, \lambda]), \text{ and } 
\chi_+^{-k} = \big( \frac{d}{d\lambda} \big)^{k-1} dE_{\sqrt{\bL}}(\lambda), \quad k \geq 1. 
$$

A standard computation shows that for all $w,z\in \cc$
\begin{equation}\label{aa2}
\chi_+^w * \chi_+^z=\chi_+^{w+z+1}
\end{equation}
where $\chi_+^w* \chi_+^z$ is the convolution of the distributions $\chi_+^w$ and $\chi_+^z$
see \cite[(3.4.10)]{Hor1}. We can use this relation to \emph{define} the operators $\chi_+^z(\lambda - \sqrt{\bL})$ for $\Re z < 0$, provided that the spectral measure of $\sqrt{\bL}$ satisfies estimates of the type in Proposition~\ref{keire}:

\begin{defn} Suppose that $X$, $\bL$ and $w$ are as in Proposition~\ref{keire}, and that $\bL$ satisfies the kernel estimate
\begin{equation}
\Big| \big( \frac{d}{d\lambda} \big)^{k} dE_{\sqrt{\bL}}(\lambda)(z,z') \Big| \leq C \lambda^{l} \big( 1 + \lambda w(z,z') \big)^{\beta} 
\label{wzz}\end{equation}
for some $k \geq 0$, $l \geq 0$ and $\beta$. Then, for $-(k+1) < \Re a < 0$  we define the operator $\chi_+^a(\lambda - \sqrt{\bL})$ to be that operator with kernel
\begin{multline}
\chi_+^{k+a} * \chi_+^{-(k+1)}(\lambda - \sqrt(\bL))(z,z') \\ = 
(-1)^k \int_0^{\lambda} \frac{\sigma^{k+a}}{\Gamma(k+a+1)} 
\big( \frac{d}{d\sigma} \big)^{k} dE_{\sqrt{\bL}}(\lambda - \sigma)(z,z') \, d\sigma.
\label{chiplusadefn}\end{multline}
\end{defn}

Notice that the integral converges, since $\Re (k+a) > -1$ and $l \geq 0$ in \eqref{wzz}. It is also independent of the choice of integer $k > -\Re a - 1$ (provided \eqref{wzz} holds), as we check by integrating by parts in $\sigma$ in the integral above, and using \eqref{chi}. Note that the kernel $\chi_+^a(\lambda - \sqrt{\bL})(z,z')$ is analytic in $a$, and as an integral operator maps $L^1_{\comp}(X)$ to $L^\infty_{\loc}(X)$. Therefore, for each fixed $\lambda > 0$, the family $\chi_+^a(\lambda - \sqrt{\bL})$ is an analytic family of operators in the sense of Stein \cite{St} in the parameter $a$, for $\Re a > -k$. 

In the proof of Proposition~\ref{keire} we will need the following 

\begin{lem}\label{infty}
Suppose that $k \in \nn$, that $-k < a<b<c$ and that $b=\theta a +(1-\theta)c$. Then there exists a constant  $C$ such that  for any $C^{k-1}$ function $f \colon \rr \to \cc$ with compact support,  one has
$$
\| \chi_+^{b+is}*f\|_\infty \le C(1+|s|)e^{\pi |s|/2} \| \chi_+^{a}*f\|^\theta_\infty  \| \chi_+^{c}*f\|^{1-\theta}_\infty
$$
for all $s\in \rr$. 
\end{lem}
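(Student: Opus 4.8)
The plan is to prove Lemma~\ref{infty} by complex interpolation in the spirit of Stein, reducing the statement (which is really a bound on an $L^\infty$-valued analytic function) to a scalar Hadamard three-lines estimate, with the factor $e^{\pi|s|/2}$ produced by the reciprocal Gamma function. First I would fix $f$, a point $x_0\in\RR$ and $s\in\RR$; since the left-hand side is $\sup_{x_0}|(\chi_+^{b+is}*f)(x_0)|$ it suffices to bound $|(\chi_+^{b+is}*f)(x_0)|$ for each such $x_0$. Using \eqref{aa2} and \eqref{Heaviside} in the form $\chi_+^w=\chi_+^{w+k-1}*\chi_+^{-k}=\chi_+^{w+k-1}*\delta_0^{(k-1)}$, one has $\chi_+^w*f=\chi_+^{w+k-1}*f^{(k-1)}$, and since $\Re(w+k-1)>-1$ throughout the closed strip $a\le\Re w\le c$ (because $a>-k$), the function
\[
\Phi(w):=(\chi_+^w*f)(x_0)=\frac{1}{\Gamma(w+k)}\int_0^\infty\sigma^{w+k-1}f^{(k-1)}(x_0-\sigma)\,d\sigma
\]
is holomorphic in a neighbourhood of the strip: the integrand depends holomorphically on $w$, the $\sigma$-integral runs over the compact set $x_0-\supp f^{(k-1)}$, and the weight $\sigma^{w+k-1}$ is locally integrable uniformly for $\Re w\in[a,c]$.

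Next I would isolate the only source of vertical unboundedness. The factor $1/\Gamma(w+k)$ grows like $e^{\pi|\Im w|/2}$, so $w\mapsto\chi_+^w*f$ is not bounded on the strip and the naive three-lines lemma does not apply; however $\Psi(w):=\Gamma(w+k)\Phi(w)=\int_0^\infty\sigma^{w+k-1}f^{(k-1)}(x_0-\sigma)\,d\sigma$ is bounded on the strip (use $\sigma^{\Re w+k-1}\le\sigma^{a+k-1}+\sigma^{c+k-1}$ for $\sigma>0$), and on the edge lines the bound $|\sigma^{i\Im w}|=1$ together with $|\Gamma(x+iy)|\le\Gamma(x)$ gives $\sup_{\Re w=a}|\Psi|\le\Gamma(a+k)\|\chi_+^a*f\|_\infty$ and $\sup_{\Re w=c}|\Psi|\le\Gamma(c+k)\|\chi_+^c*f\|_\infty$. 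Applying Hadamard's three-lines lemma to $\Psi$ (if one wishes, to $e^{\varepsilon w^2}\Psi$ and then letting $\varepsilon\downarrow 0$, to be safe as $|\Im w|\to\infty$), at $b=\theta a+(1-\theta)c$ one obtains $|\Psi(b+is)|\le C\,\|\chi_+^a*f\|_\infty^\theta\,\|\chi_+^c*f\|_\infty^{1-\theta}$, with $C$ absorbing $\Gamma(a+k)^\theta\Gamma(c+k)^{1-\theta}$. Dividing by $|\Gamma(b+k+is)|$ recovers $|\Phi(b+is)|=|(\chi_+^{b+is}*f)(x_0)|$, and Stirling's asymptotics $|\Gamma(x+iy)|\sim\sqrt{2\pi}\,|y|^{x-1/2}e^{-\pi|y|/2}$ as $|y|\to\infty$ — uniform for $x$ in the compact interval $[a+k,c+k]\subset(0,\infty)$, and bounded below away from $0$ for bounded $|y|$ since $b+k>0$ — yields $1/|\Gamma(b+k+is)|\le C(1+|s|)e^{\pi|s|/2}$. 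Taking the supremum over $x_0$ gives the claimed inequality.

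I expect the main (and essentially only) real obstacle to be the bookkeeping around the exponential factor $e^{\pi|s|/2}$: one has to peel off the reciprocal-Gamma factor \emph{before} interpolating and then control it sharply via Stirling, rather than hoping the raw analytic family is admissible in the sense of Stein's interpolation theorem. The hypothesis that $f$ be $C^{k-1}$ with compact support is used precisely to legitimise everything above — it is what lets us write $\chi_+^w*f=\chi_+^{w+k-1}*f^{(k-1)}$ as an honest locally integrable function convolved with a compactly supported continuous function throughout the strip $\Re w>-k$, so that $\Phi$ and $\Psi$ are genuinely holomorphic there and all the integrals converge.
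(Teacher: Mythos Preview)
There is a genuine gap at the edge estimate. You claim that $\sup_{\Re w=a}|\Psi(w)|\le\Gamma(a+k)\,\|\chi_+^a*f\|_\infty$, but this does not follow from the ingredients you list. From $|\sigma^{it}|=1$ you obtain only
\[
|\Psi(a+it)|\le\int_0^\infty\sigma^{a+k-1}\,\bigl|f^{(k-1)}(x_0-\sigma)\bigr|\,d\sigma,
\]
whereas $\Gamma(a+k)\,(\chi_+^a*f)(x_0)=\int_0^\infty\sigma^{a+k-1}\,f^{(k-1)}(x_0-\sigma)\,d\sigma$ \emph{without} the absolute value. When $f^{(k-1)}$ oscillates these two integrals are unrelated: one can make $\|\chi_+^a*f\|_\infty$ as small as one likes while keeping $\int\sigma^{a+k-1}|f^{(k-1)}|\,d\sigma$ bounded below. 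Read the other way, your bound would require $|\Phi(a+it)|=|(\chi_+^{a+it}*f)(x_0)|\le\|\chi_+^a*f\|_\infty$; but $\chi_+^{a+it}*f=\chi_+^{it-1}*(\chi_+^a*f)$, and convolution with $\chi_+^{it-1}$ is \emph{not} bounded on $L^\infty$ (its Fourier multiplier $(\xi-i0)^{-it}$ is a Hilbert-transform-type symbol). So the three-lines argument, as set up, does not close.

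The paper takes a different route that sidesteps exactly this obstruction. Writing $I_\zeta f=\chi_+^\zeta*f$, it considers the operator $I_{b+is}(\sigma I_c+I_a)^{-1}$ for a suitable unimodular constant $\sigma$, identifies it (via the semigroup identity \eqref{aa2}) with convolution by an explicit kernel $\eta_s$, computes $\widehat{\eta_s}$ as a ratio of powers $\xi_\pm^{-\beta-1}/(\sigma+\text{powers of }\xi_\pm)$, and shows by elementary Sobolev and Hausdorff--Young estimates that $\|\eta_s\|_{L^1}\le C(1+|s|)e^{\pi|s|/2}$. This yields the \emph{additive} bound $\|I_{b+is}f\|_\infty\le C(1+|s|)e^{\pi|s|/2}(\|I_a f\|_\infty+\|I_c f\|_\infty)$, and a dilation argument (optimising over $\kappa$ in $I_\zeta D_\kappa=\kappa^{-\zeta-1}D_\kappa I_\zeta$) converts the sum into the claimed product $\|I_a f\|_\infty^\theta\|I_c f\|_\infty^{1-\theta}$. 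The key point is that $I_{b+is}(\sigma I_c+I_a)^{-1}$ \emph{is} bounded on $L^\infty$, even though $I_{b+is}I_a^{-1}=I_{b-a-1+is}$ alone is not.
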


\begin{remark} The convolution $\chi_+^a * f$, for $a > -k$ and $f \in C_c^{k-1}(\RR)$, may be defined to be $\chi_+^{a+k-1} * f^{(k-1)}$; this is independent of the choice of $k$. 
\end{remark}

\begin{proof}
Set, for $\zeta \in \CC$, 
$$
I_\zeta f=\chi_+^\zeta *f
$$
and consider the operator $I_{{b}+is}(\sigma I_c+I_a)^{-1}$, where number $\sigma \in \cc$ such that $|\sigma|=1$ 
will be specified later. By \eqref{aa2}
$$
I_{{b}+is}(\sigma I_c+ I_a)^{-1}=I_{{\beta}+is}(\sigma I_{-1}+I_\alpha)^{-1}=I_{{\beta}+is}
(\sigma I+ I_\alpha)^{-1},
$$
where $\beta=b-c-1 $ and $\alpha=a-c-1$. Note that $ \alpha <\beta<-1$. 
A standard calculation \cite[Example 7.1.17, p. 167 and (3.2.9) p. 72]{Hor1} shows that for $\Re \zeta \le -1$
$$
\widehat{\chi_+^{\zeta}}(\xi)= e^{-i\pi(\zeta+1)/2} (\xi - i0)^{-\zeta-1}. 
$$
It follows that $I_{{\beta}+is}(\sigma I+I_\alpha)^{-1}f=f*\eta_s $, where $\widehat{\eta_s}$ is the locally integrable function
$$
\widehat{\eta_s(\xi)}=\frac{-ie^{-i\pi ({{\beta}}+is)/2}\xi_+^{-({{\beta}}+is) -1} +ie^{{i\pi ({{\beta}}+is)/2}}\xi_-^{-({{\beta}}+is)-1}}{\sigma - ie^{-i\pi {{\alpha}}/2}\xi_+^{-{{\alpha}}-1} + ie^{i\pi {{\alpha}}/2}\xi_-^{-{{\alpha}}-1} }.
$$
Here   $\xi_+=\max(0,\xi)$ and  $\xi_-=-\min(0,\xi)$. 
Note that if $|\sigma|=1$ and $\sigma \notin \{ ie^{-i\pi {{\alpha}}/2}, -ie^{-i\pi {{\alpha}}/2} \}$ then 
$$
\left| \frac{d}{d\xi}\widehat{\eta_s(\xi)}\right| \le C (1+|s|)e^{\pi |s|/2}\min \left(|\xi|^{-\beta-2},|\xi|^{-\beta+\alpha-1}\right)
$$
and $-\beta+\alpha-1  <  -1<-\beta-2$. 
It follows from the above estimates that the function $\frac{d}{d\xi}\widehat{\eta_s}$ is in  $L^p(\rr)$ space 
for some $1 < p <2$ and is also in some weighted space $L^1((1+|x|)^\epsilon dx, \rr)$. By the Sobolev embedding  and Hausdorff-Young
theorems, the function $x \to x\eta_s(x)$ is in $L^{p'}(\rr)$ for the conjugate exponent $ p'<\infty$  and in $ C^{\epsilon'}(\rr)$ for some $\epsilon' >0$. Hence $\eta_s$  is in $L^1$ and we have
$$
\|{\eta_s}\|_1 \le C(1+|s|)e^{\pi |s|/2}.
$$
Hence the operator  $ I_{{b}+is}(\sigma I_c+ I_a)^{-1}=I_{{\beta}+is}
(\sigma I+ I_\alpha)^{-1} $ is bounded on $L^\infty(\rr)$ and 
\begin{eqnarray*}
\|I_{{b}+is}f\|_\infty \le C(1+|s|)e^{\pi |s|/2}\|\sigma I_cf+ I_{{a}}f\|_\infty
\\ \le C(1+|s|)e^{\pi |s|/2}   \left( \|I_cf\|_\infty+ \|I_{{a}}f\|_\infty\right).
\end{eqnarray*}
Now if we set $D_{\kappa} f(x)=f(\kappa x)$ then for all $\zeta\in \cc$ 
$$
I_\zeta D_{\kappa}f=\kappa ^{-\zeta-1}D_{\kappa}I_\zeta f
$$
so  
$$
\kappa ^{{-b}}\|I_{{b}+is}f\|_\infty=\kappa ^{{-b}}\|D_{\kappa} I_{{b}+is}f\|_\infty =\kappa \|I_{{b}+is}D_{\kappa}f\|_\infty.
$$
Hence 
\begin{eqnarray*}
\kappa ^{{-b}}\|I_{{b}+is}f\|_\infty=\kappa \|I_{{b}+is}D_{\kappa}f\|_\infty \\
\le C(1+|s|)e^{\pi |s|/2}\left(\kappa \|I_a (D_{\kappa} f)\|_\infty+ \kappa \| I_c(D_{\kappa} f)  \|_\infty\right)\\
= C(1+|s|)e^{\pi |s|/2}\left(\kappa ^{-a}\|I_a f\|_\infty+ \kappa ^{-c}\| I_cf  \|_\infty\right)
\end{eqnarray*}
Putting  $\kappa ^{a-c}=\|I_a f\|_\infty\|I_c f\|_\infty^{-1}$ in the above estimate yields Lemma  \ref{infty}. 
\end{proof}

\begin{proof}[Proof of Proposition~\ref{keire}]
To prove \eqref{re} in the range $1 \leq p \leq 2(n+1)/(n+3)$, it suffices by interpolation to establish the result for the endpoints $p = 1$ and $p = 2(n+1)/(n+3)$. The endpoint $p=1$ is precisely  \eqref{spec-meas-j-1} for $j=0$, so it remains to obtain the endpoint $p = 2(n+1)/(n+3)$. This we will obtain through complex interpolation, applied to the analytic (in the parameter $a$) family $\chi_+^a(\lambda - \sqrt{\bL})$ in the strip $-(n+1)/2 \leq \Re a \leq 0$. 

On the line $\Re a = 0$, we have the estimate
$$
\Big\| \chi^{is}(\lambda - \sqrt{\bL}) \Big\|_{L^2 \to L^2} \leq \Big| \frac1{\Gamma(1 + is)} \Big| = \sqrt{\frac{\sinh \pi s}{\pi s}} \leq C e^{\pi |s|/2}. 
$$
On the line $\Re a = -(n+1)/2$, we will prove an estimate of the form 
\begin{equation}
\Big\| \chi^{-(n+1)/2 + is}(\lambda - \sqrt{\bL}) \Big\|_{L^1 \to L^\infty} \leq C (1 + |s|) e^{\pi |s|/2} \lambda^{(n-1)/2} \text{ for all } s \in \RR.
\label{left-est}\end{equation}
Then, since we can write
$$
dE_{\sqrt{\bL}}(\lambda) = \chi_+^{-1}(\lambda - \sqrt{\bL})
$$
and 
\begin{align*}
-1 &= \big( \frac{n-1}{n+1} \big) \big( 0 \big) \,  + \big( \frac{2}{n+1} \big) \big( \frac{n+1}{2} \big),  \\
\frac{n+3}{2(n+1)} &= \big( \frac{n-1}{n+1} \big) \big( \frac1{2} \big) + \big( \frac{2}{n+1} \big) \big( 1 \big),
\end{align*}
we obtain \eqref{re} at $p = 2(n+1)/(n+3)$ by complex interpolation. 

It remains to prove \eqref{left-est}. 
Let $\eta \in C_c^{\infty}(\rr)$ be such a function that $0 \le \eta(x) \le 1$ for all $x\in \rr$ and 
$\eta(x)=1$ for $|x| \le 2$ and $\eta(x)=0 $ for  $|x| \ge 4$.
Set 
$$\begin{gathered}
F^{s,\Lambda}_{z,z'}(\lambda) ={\chi_+^{-\frac{3}{2}- is}}* \Big( \eta(\cdot/\Lambda){\chi_+^{-k}(\cdot - \sqrt{\bL})}(z,z') \Big)(\lambda), \quad n=2k \phantom{ + 1 .} \\
F^{s,\Lambda}_{z,z'}(\lambda) ={\chi_+^{-2- is}}* \Big(\eta(\cdot/\Lambda){\chi_+^{-k}(\cdot - \sqrt{\bL})}(z,z') \Big)(\lambda), \quad n=2k+1.
\end{gathered}$$
Note that  supp$\chi_+^z \subset [0,\infty)$ for all $z$, and $\bL \geq 0$. It follows that for $\lambda \le \Lambda$ and $n=2k$,
\begin{eqnarray*}
F^{s,\Lambda}_{z,z'}(\lambda) 
={\chi_+^{-\frac{3}{2}- is}}*{\chi_+^{-k}(\lambda - \sqrt{\bL})}(z,z')=\chi_+^{-\frac{n+1}{2}- is}(\lambda - \sqrt{\bL})(z,z')
\end{eqnarray*}
and for $\lambda \le \Lambda$, $n=2k+1$ 
\begin{eqnarray*}
F^{s,\Lambda}_{z,z'}(\lambda)
={\chi_+^{-{2}- is}}*{\chi_+^{-k}(\lambda - \sqrt{\bL})}(z,z')
=\chi_+^{-\frac{n+1}{2}- is}(\lambda - \sqrt{\bL})(z,z'),
\end{eqnarray*}
i.e. the cutoff function $\eta$ has no effect for $\lambda \leq \Lambda$. 
Hence 
\begin{eqnarray*}
 \| \chi_+^{-\frac{n+1}{2}- is}(\Lambda - \sqrt{\bL})\|_{1 \to \infty}\le  \sup_{z,z'} 
| F^{s,\Lambda}_{z,z'}(\Lambda) |.
\end{eqnarray*}
 We consider first the odd dimensional case $n=2k+1$. By Lemma~\ref{infty} and \eqref{Heaviside}
\begin{equation}\begin{aligned}
 \big| F^{s,\Lambda}_{z,z'}(\Lambda) \big| &\le \big\| F^{s,\Lambda}_{z,z'}  \big\|_\infty \\ 
 &\le  
C(1+|s|)e^{\pi|s|/2} \sup \Big| \chi_+^{-1}* \Big( \eta(\cdot/\Lambda){\chi_+^{-k}(\cdot - \sqrt{\bL})}(z,z') \Big) \Big|^{1/2}  \\
&\phantom{aaaaaaaaaaaaaaa}\times 
 \sup \Big|\chi_+^{-3}* \Big( \eta(\cdot/\Lambda){\chi_+^{-k}(\cdot - \sqrt{\bL})}(z,z') \Big) \Big|^{1/2}  \\ 
 &\le C(1+|s|)e^{\pi|s|/2} \sup_{\lambda>0}\Big|\eta(\lambda/\Lambda){\chi_+^{-k}(\lambda - \sqrt{\bL})}(z,z')\Big|^{1/2}  \\
&\phantom{aaaaaaaaaaaaaaa}\times 
 \sup_{\lambda>0}\Big|\frac{d^2}{d\lambda^2}\eta(\lambda/\Lambda){\chi_+^{-k}(\lambda - \sqrt{\bL})}(z,z')\Big|^{1/2} 
 \end{aligned}\label{wuja}\end{equation}
 where the presence of the $\eta$ cutoff is now crucial. 
It follows from \eqref{spec-meas-j-1-3} with $j = n/2 - 3/2$ and $j = n/2 + 1/2$, i.e. $j=k-1$ and $j=k+1$, that 
$$
\sup_{\lambda>0}\big|\eta(\lambda/\Lambda){\chi_+^{-k}(\lambda - \sqrt{\bL})}(z,z')\big|\le C \Lambda^{k+1} (1+\Lambda w(z,z'))^{-1}.
$$
(Here we used the fact that the function $\lambda^k (1 + \lambda w)^{\beta}$ is an increasing function of $\lambda$ provided $\lambda \geq 0$, $w \geq 0$, $k \geq 0$ and $k + \beta \geq 0$.) Similarly, 
\begin{eqnarray*}
 \sup_{\lambda>0}\big|\frac{d^2}{d\lambda^2}\eta(\lambda/\Lambda){\chi_+^{-k}(\lambda - \sqrt{\bL})}(z,z')\big| \le 
 \sup_{\lambda>0}\big|\eta(\lambda/\Lambda){\chi_+^{-k-2}(\lambda - \sqrt{\bL})}(z,z')\big|
 \\ +\frac{1}{\Lambda}\sup_{\lambda>0}\big|\eta'(\lambda/\Lambda){\chi_+^{-k-1}(\lambda - \sqrt{\bL})}(z,z')\big|
  +\frac{1}{\Lambda^2}\sup_{\lambda>0}\big|\eta'(\lambda/\Lambda){\chi_+^{-k}(\lambda - \sqrt{\bL})}(z,z')\big| \\
 \le C \Lambda^{k-1} (1+\Lambda w(z, z')).
\end{eqnarray*}
Our estimate \eqref{left-est} for $n=2k+1$ follows now from these two  estimates and  \eqref{wuja}.

If $n=2k$ is even, then by Lemma~\ref{infty} and \eqref{Heaviside}
\begin{equation}\begin{aligned}
 \big| F^{s,\Lambda}_{z,z'}(\Lambda) \big| &\le \big\| F^{s,\Lambda}_{z,z'}  \big\|_\infty \\ 
 &\le  
C(1+|s|)e^{\pi|s|/2} \sup   \Big| \chi_+^{-1}* \Big(\eta(\cdot/\Lambda){\chi_+^{-k}(\cdot - \sqrt{\bL})}(z,z') \Big) \Big|^{1/2} 
 \\
&\phantom{aaaaaaaaaaaaaaa}\times 
 \sup\Big|\chi_+^{-2}* \Big( \eta(\cdot/\Lambda){\chi_+^{-k}(\cdot - \sqrt{\bL})}(z,z') \Big) \Big|^{1/2} \\ 
 &C(1+|s|)e^{\pi|s|/2}  \sup_{\lambda>0}\big|\eta(\lambda/\Lambda){\chi_+^{-k}(\lambda - \sqrt{\bL})}(z,z')\big|^{1/2} \\
&\phantom{aaaaaaaaaaaaaaa}\times  
 \sup_{\lambda>0}\big|\frac{d}{d\lambda}\eta(\lambda/\Lambda){\chi_+^{-k}(\lambda - \sqrt{\bL})}(z,z')\big|^{1/2} 
\end{aligned}\end{equation}
and we follow the same argument as in the odd dimensional case to establish \eqref{left-est} for $n=2k$.  
\end{proof}

In some situations, including the case of Laplace-type operators on asymptotically conic manifolds discussed later in this paper, we can express the spectral measure $dE(\lambda)$ in the form $P(\lambda) P(\lambda)^*$, where the initial space of $P(\lambda)$ is an auxiliary Hilbert space $H$. In this case, we can use a $T T^*$ argument to show that the conclusions of Proposition~\ref{keire} follow from localized estimates on $dE(\lambda)$, that is, on kernel estimates on $Q_i dE(\lambda) Q_i$, with respect to a    operator partition of unity
$$
\Id = \sum_{i=1}^{N(\lambda)} Q_i(\lambda), \quad 1 \leq i \leq N(\lambda).
$$
Notice that we allow the partition of unity to depend on $\lambda$. However, we shall assume that  $N(\lambda)$ is uniformly bounded in $\lambda$. 

\begin{remark} Here we assume that $Q_i(\lambda) dE_{\sqrt{\bL}}^{(j)}(\lambda) Q_i(\lambda)$ can be defined somehow and has a Schwartz kernel; for example, we might know that there is some weight function $\omega$ on $X$ such that $dE_{\sqrt{\bL}}^{(j)}(\lambda)$ is a bounded map from  $\omega^{j+1} L^2(X)$ to $\omega^{-j-1} L^2(X)$, and that $Q_i(\lambda)$ maps $\omega^a L^2(X)$ boundedly to itself for any $a$. This is the case in our application to asymptotically conic manifolds, with $\omega = x$ (where $x$ is as in \eqref{scatteringmetric}).  
\end{remark}

\begin{proof}[Proof of Proposition~\ref{QQ1}]
Observe that Proposition~\ref{QQ1} reduces to Proposition~\ref{keire} in the case that the partition of unity $Q_i$ is trivial. 
We apply the argument in the proof of Proposition~\ref{keire} to the operators $Q_i(\lambda) dE(\lambda) Q_i(\lambda)$, i.e. we replace $dE_{\sqrt{\bL}}(\lambda)$ by $Q_i(\lambda) dE_{\sqrt{\bL}}(\lambda) Q_i(\lambda)^*$ in \eqref{chiplusadefn}. The conclusion is that 
\begin{equation*}
\big\| Q_i(\lambda) dE_{\sqrt{\bL}}(\lambda) Q_i(\lambda)^*  \big\|_{L^p(X) \to L^{p'}(X)} \leq C \lambda^{n(1/p - 1/p') - 1}, \text{ for all } \lambda > 0.
\end{equation*}
Using the fact that $dE_{\sqrt{\bL}}(\lambda) =  P(\lambda) P(\lambda)^*$ and the $T T^*$ trick, we deduce that 
\begin{equation*}
\big\| Q_i(\lambda) P(\lambda)   \big\|_{L^2(X) \to L^{p'}(X)} \leq C \lambda^{n(1/2 - 1/p') - 1/2}, \text{ for all } \lambda > 0.
\end{equation*}
Now we can sum over $i$, and find that 
\begin{equation*}
\big\| P(\lambda)   \big\|_{L^2(X) \to L^{p'}(X)} \leq C \lambda^{n(1/2 - 1/p') - 1/2}, \text{ for all } \lambda > 0.
\end{equation*}
Finally, we use $dE_{\sqrt{\bL}}(\lambda) =  P(\lambda) P(\lambda)^*$ and the $T T^*$ trick again to deduce that 
\begin{equation*}
\big\| dE_{\sqrt{\bL}}(\lambda)   \big\|_{L^p(X) \to L^{p'}(X)} \leq C \lambda^{n(1/p - 1/p') - 1}, \text{ for all } \lambda > 0,
\end{equation*}
yielding \eqref{re}. Moreover, if the estimates hold only for $0 < \lambda \leq \lambda_0$, then we obtain \eqref{lere} instead. 
\end{proof}

\begin{remark} We acknowledge and thank Jared Wunsch for suggesting to us that the $T T^*$ trick would be useful here. 
\end{remark}


\part{Schr\"odinger operators on asymptotically conic manifolds}

In this second part of the paper, we specialize to the case that $(X, d, \mu)$ is an asymptotically conic manifold $(M^\circ,g)$ with the Riemannian distance function $d$ and Riemannian measure $\mu$, and $\bL$ is a Schr\"odinger operator $\bH$ on $L^2(M^\circ,g)$, that is, an operator of the form $\bH = \Delta_g + V$, where $\Delta_g$ is the positive Laplacian associated to $g$ and $V \in \CI(M)$ is a potential function vanishing to third order at the boundary of the compactification $M$ of $M^\circ$. We assume that $\bH$ has no $L^2$-eigenvalues (which implies that it is positive as  an operator) and that zero is not a resonance. 

The goal in this part of the paper is to show that $\bH$ satisfies the low energy spectral measure estimates \eqref{lere}, and the full spectral measure estimates \eqref{re} provided that $(M^\circ,g)$ is nontrapping. To do this, we will establish the estimates 
\eqref{spec-meas-j-1} for a suitable partition of unity $Q_i(\lambda)$. In the case of low energy estimates, i.e. $\lambda \in (0, \lambda_0]$ for $\lambda_0 < \infty$, these $Q_i$ will be pseudodifferential operators, 
lying in the calculus of operators introduced in \cite{GH1}. Thus our first task is to determine the nature of the operator $Q_i dE(\lambda) Q_i$ for such $Q_i$, which is the subject of Section~\ref{ows}.  Before this, however, we recall some of the geometric preliminaries from \cite{GHS} and \cite{HW}.

\section{Geometric preliminaries}
The Schwartz kernel of the spectral measure was constructed in \cite{GHS} for low energies and in \cite{HW} for high energies on a compactification of the space $ [0, \lambda_0] \times (M^\circ)^2$, resp. $[0, h_0] \times (M^\circ)^2$, where we use $h = \lambda^{-1}$ in place of $\lambda$ for high energies. We  use the definitions and machinery from these papers extensively, and 
we do not review this material comprehensively here, since that would double the length of this paper. Nevertheless, we shall describe these compactifications, review some of their geometric properties, and define some coordinate systems that we shall use in the following sections. 

Recall from the introduction that $(M^\circ, g)$ is asymptotically conic if $M^\circ$ is the interior of a compact manifold $M$ with boundary, such that in a collar neighbourhood of the boundary, the metric $g$ takes the form $g = dx^2 / x^4 + h(x)/x^2$, where $x$ is a boundary defining function and $h(x)$ is a smooth family of metrics on the boundary $\partial M$. We use $y = (y_1, \dots, y_{n-1})$ for local coordinates on $\partial M$, so that $(x, y)$ furnish local coordinates on $M$ near  $\partial M$. Away from $\partial M$, we use $z = (z_1, \dots, z_n)$ to denote local coordinates. 

\subsection{The low energy space $\MMkb$.}\label{sec:MMkb}
In \cite{GH1} and \cite{GHS}, following unpublished work of Melrose-S\'a Barreto the low energy space $\MMkb$ is defined as follows: starting with $[0, \lambda_0] \times M^2$, we define submanifolds $C_3:=\{0\}\x\pl M\x\pl M$ 
and 
\[C_{2,L}:=\{0\}\x\pl M\x  M, \quad C_{2,R}:=\{0\}\x M\x\pl M,\quad C_{2,C}:=[0,1]\x\pl M\x\pl M.\]
The space $\MMkb$ is then defined as $[0, \lambda_0] \times M^2$ with the codimension 3 corner $C_3$ blown up, followed by the three codimension 2 corners $C_{2, *}$:
\[M_{k,b}^2:=\big[[0,1]\x M\x M; C_3, C_{2,R},C_{2,L},C_{2,C}\big]\]
The new boundary hypersurfaces created by these blowups are labelled $\bfo, \rbo, \lbo$ and $\bfc$, respectively, and the original boundary hypersurfaces $\{ 0 \} \times M^2$, $[0, \lambda] \times M \times \partial M$ and $[0, \lambda] \times \partial M \times  M$ are labelled $\zf, \rb, \lb$ respectively. We remark that $\zf$ is canonically diffeomorphic to the b-double space 
$$
\MMb = \big[ M^2; \partial M \times \partial M \big].
$$
Also, each section $\MMkb \cap \{ \lambda = \lambda_* \}$, for fixed $0 < \lambda_* < \lambda_0$ is canonically diffeomorphic to $\MMb$.

 We define functions $x$ and $y$ on $\MMkb$ by lifting from the left copy of $M$ (near $\partial M$), and $x', y'$ by lifting from the right copy of $M$; similarly $z, z'$ (away from $\partial M$). We also define $\rho = x/\lambda$, $\rho' = x'/\lambda$, and $\sigma = \rho/\rho' = x/x'$. 
 Near $\bfc$ and away from $\rb$, we use coordinates $y, y', \sigma, \rho', \lambda$, while near $\bfc$ and away from $\lb$, we use coordinates $y, y', \sigma^{-1}, \rho, \lambda$. We also use the notation $\rho_{\bullet}$, where $\bullet = \bfo, \lbo$, etc, to denote a generic boundary defining function for the boundary hypersurface $\bullet$. 

This space has a compressed cotangent bundle $\Tkbstar \MMkb$, defined in \cite[Section 2]{GHS}. A basis of sections of this space is given, in the region $\rho, \rho' \leq C$ (which includes a neighbourhood of $\bfc$), by
\begin{equation}
\frac{d\rho}{\rho^2}, \quad \frac{d\rho'}{{\rho'}^2}, \quad \frac{ dy_i}{\rho}, \quad \frac{dy'_i}{\rho'},   \quad \frac{d\lambda}{\lambda}
\label{Tkbstar-basis}\end{equation}
Therefore, any point in $\Tkbstar \MMkb$ lying over this region can be written 
\begin{equation}
\nu \frac{d\rho}{\rho^2}  +\nu' \frac{d\rho'}{{\rho'}^2}+ \mu_i \frac{dy_i}{\rho} + \mu'_i \frac{dy'_i}{\rho'} + T \frac{d\lambda}{\lambda}.
\label{T}\end{equation}
This defines local coordinates $(y, y', \sigma, \rho', \lambda, \mu, \mu', \nu, \nu', T)$ in $\Tkbstar \MMkb$, near $\bfc$ and away from $\rb$, where $(\mu, \mu', \nu, \nu', T)$ are linear coordinates on each fibre. 

The compressed density bundle $\Omegakb(\MMkb)$ is defined  to be that line bundle whose smooth nonzero sections are given by the wedge product of a basis of sections for $\Tkbstar(\MMkb)$. Using the coordinates above, we can write a smooth nonzero section $\omegab$ as follows: 
\begin{equation}
\omegab = \Big| \frac{d\rho d\rho' dy dy' d\lambda}{\rho^{n+1} {\rho'}^{n+1} \lambda} \Big| \sim \lambda^{2n} \Big| \frac{dg dg' d\lambda}{\lambda} \Big| \text{ in the region } \rho, \rho' \leq C. 
\end{equation}
For $\rho, \rho' \geq C$, we can take $\omegab = (x x')^n |dg dg' d\lambda/\lambda|$. Here 
 $dg$, resp. $dg'$ denotes the Riemannian density with respect to $g$, lifted to $\MMkb$ by the left, resp. right projection. 

The boundary of $\Tkbstar \MMkb$ lying over boundary hypersurface $\bullet$ is denoted $\Tkbstar_{\bullet} \MMkb$. 
The space $\Tkbstar_{\lb} \MMkb$ fibres over the space $\Nsfstar Z_{\lb} \times [0, \lambda_0]$, which\footnote{The spaces $Z_{\bullet}$ and $\Nsfstar Z_{\bullet}$ are defined in  \cite[Section 2]{GHS}.} is canonically isomorphic to $\Tscstar_{\partial M} M \times [0, \lambda]$ ($\Tscstar_{\partial M} M$ is defined in \cite{HV1}, \cite{HV2}). This fibration is given in local coordinates by
\begin{equation}
(y, y', \sigma, \lambda, \mu, \mu', \nu, \nu', T) \to 
(y, \mu, \nu, \lambda).
\label{lfib}\end{equation}
Similarly there is a natural fibration from $\Tkbstar_{\rb} \MMkb$ to  $\Nsfstar Z_{\rb}\times [0, \lambda_0]$, which takes the form 
\begin{equation}
(y, y', \sigma, \lambda, \mu, \mu', \nu, \nu', T) \to 
(y', \mu', \nu', \lambda).
\label{rfib}\end{equation}
We also note that there are natural maps $\pi_L, \pi_R$ mapping $\Nsfstar Z_{\bfc}\times [0, \lambda_0]$, which is naturally isomorphic to $\Tscstar_{\bfc} \MMb\times [0, \lambda_0]$ (see \cite{HV1} or \cite{HV2}), to  $\Tscstar_{\partial M} M\times [0, \lambda_0]$ which are induced by the projections  $T^* M^2 \to T^* M$ onto the left, respectively right factor.  In local coordinates, these are given by 
\begin{equation}
\pi_L(y, y', \sigma, \mu, \mu', \nu, \nu', \lambda) = (y, \mu, \nu, \lambda), \quad 
\pi_R(y, y', \sigma, \mu, \mu', \nu, \nu', \lambda) = (y', \mu', \nu', \lambda).
\label{piLpiR}\end{equation}
We use these maps in Section~\ref{ows}. 

The space $\Tkbstar_{\bfc} \MMkb$ is canonically diffeomorphic to 
$\Tsfstar_{\bfc} \MMb \times [0, \lambda_0]$, where $\Tsfstar_{\bfc} \MMb$ is the scattering-fibred cotangent bundle of $\MMb$ defined in \cite{HV1}. The space
$\Tsfstar_{\bfc} \MMb$  has a natural contact structure, and Legendre submanifolds with respect to this structure play an important role in encoding the oscillations of the spectral measure at the boundary of $\MMkb$. In fact, three Legendre submanifolds of $\Tsfstar_{\bfc} \MMb$ arise in the identification of the spectral  measure as a Legendre distribution (see \cite[Section 3]{GHS}), which we now briefly describe. One is denoted $\Nscstar \ddiagb$,  which in coordinates used in \eqref{T} is given by 
\begin{equation}
\Nscstar \ddiagb = \{ (y, y', \sigma, \mu, \mu', \nu, \nu') \mid y = y', \sigma = 1, \mu = -\mu', \nu = -\nu' \} ;
\label{Nscstar-ddiagb}\end{equation}
it is a sort of conormal bundle to the boundary of the diagonal $\ddiagb$, 
\begin{equation}
\ddiagb = \{ (y, y', \sigma) \mid y = y', \sigma = 1 \},
\label{ddiagb}\end{equation}
 in $\MMb$, and carries the `operator wavefront set' or `microlocal support' of scattering pseudodifferential operators. Another is the incoming/outgoing Legendrian submanifold $L^\sharp$, which in coordinates used in \eqref{T} is given by 
\begin{equation*}
\Nscstar \ddiagb = \{ (y, y', \sigma, \mu, \mu', \nu, \nu') \mid  \mu = \mu' = 0, \nu = \pm 1, \nu' = -\nu \} ;
\end{equation*}
it has two components (corresponding to the sign of $\nu$) and  describes oscillations that are purely radial, that is, purely incoming or outgoing. The third and most interesting Legendre submanifold is the propagating Legendrian, denoted $L^{\bfc}$. To describe it, let $G$ denote the characteristic variety of $\bH - \lambda^2$. Then $L^{\bfc}$ is given by the flowout from  $\Nscstar \ddiagb \cap G$ by the bicharacteristic flow of $\bH$. It connects the incoming and outgoing components of $L^\sharp$ and has a conic singularity at each. In terms of these Legendre submanifolds we have 
 
\begin{theo}\label{lesmLd} \cite[Theorem 3.10]{GHS}
The spectral measure 
$dE_{\sqrt{\bH}}(\lambda)$, for $0 < \lambda \leq \lambda_0$, is a conormal-Legendre distribution in the space 
$$
I^{m,p; r_{\lb}, r_{\rb}; \mcA}(\MMkb, (L^{\bfc}, L^{\sharp, \bfc}); \Omegakbh) \otimes \big| \frac{d\lambda}{\lambda} \big|^{-1/2}, 
$$
with $m = -1/2$, $p = (n-2)/2$, $r_{\lb} = r_{\rb} = (n-1)/2$, and where $\mcA$ is an index family with index sets at the faces $\bfo, \lbo, \rbo, \zf$ starting at order $-1$, $n/2 - 1$, $n/2 - 1$, $n-1$ respectively. 
\end{theo}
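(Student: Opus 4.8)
The plan is to realise $dE_{\sqrt{\bH}}(\lambda)$ as a boundary value of the resolvent $R(\lambda) = (\bH - \lambda^2)^{-1}$ and to construct that resolvent, uniformly down to $\lambda = 0$, by a parametrix argument on $\MMkb$. Stone's formula gives
\begin{equation*}
dE_{\sqrt{\bH}}(\lambda) = \frac{\lambda}{\pi i}\big( R(\lambda + i0) - R(\lambda - i0) \big), \qquad 0 < \lambda \leq \lambda_0,
\end{equation*}
so it suffices to show that the outgoing resolvent $R(\lambda+i0)$ is, uniformly in $\lambda$, a conormal--Legendrian distribution on $\MMkb$: conormal of order $-2$ along the lifted diagonal (since $\bH - \lambda^2$ is elliptic of order $2$), Legendrian near $\bfc$ with respect to the propagating Legendrian $\Lbf$ and the outgoing component of $L^{\sharp,\bfc}$, and polyhomogeneous at the finite-energy faces $\bfo,\lbo,\rbo,\zf$. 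Taking the difference then removes the interior diagonal singularity, symmetrises the picture --- restoring both components of $L^{\sharp,\bfc}$ and both boundary behaviours at $\lb,\rb$ --- and the extra factor $\lambda$ shifts all boundary orders. (Equivalently one can build the Poisson operator $P(\lambda)$ as a Legendre distribution on a half-version of $\MMkb$ and use $dE_{\sqrt{\bH}}(\lambda)=(2\pi)^{-1}P(\lambda)P(\lambda)^*$; the two routes are essentially interchangeable.)

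For the parametrix I would proceed face by face. Start with a pseudodifferential parametrix for $\bH - \lambda^2$ in the calculus of \cite{GH1}, inverting the symbol $|\xi|_g^2 - \lambda^2$; its error is smooth across the diagonal but supported at the boundary. Correct it at $\zf$ and $\bfo$ using the known structure of the zero-energy resolvent $\bH^{-1}$ from \cite{GH1}: the hypothesis that $0$ is neither an $L^2$-eigenvalue nor a resonance is precisely what makes $\bH^{-1}$ polyhomogeneous there, with leading order $n-2$ at $\zf$ --- which becomes $n-1$ once the factor $\lambda$ in Stone's formula is included. Next correct at $\lbo,\rbo$ using the corresponding model operators. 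The decisive step is the correction at $\bfc$, which must remain inside the Legendrian class: the error there is removed by solving transport equations along the bicharacteristic flow of $\bH$ on the characteristic variety $G$, and the flowout of $\Nscstar\ddiagb \cap G$ under that flow is exactly $\Lbf$, which limits onto the two components of $L^{\sharp,\bfc}$ at its conic points, where the indicial computation pins down the decay order $(n-1)/2$ at $\lb,\rb$. This is the low-energy counterpart of the high-energy parametrix of \cite{HV1,HV2}, and it uses their scattering--fibred Legendrian calculus; the blow-ups defining $\MMkb$ are arranged precisely so that the scattering behaviour at $\bfc$ is separated from the b-type behaviour at the finite-energy faces and the two calculi do not interfere. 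Summing the corrections by Borel's lemma and removing the (trivial) remainder by a Neumann series --- once more using the absence of a zero-resonance and zero-eigenvalue --- yields $R(\lambda+i0)$ itself in the asserted class.

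The last step is to take the difference. Away from $G$ the two boundary values agree; microlocally along $G \cap \Nscstar\ddiagb$ their difference is a distribution of the form $\delta(|\xi|_g^2 - \lambda^2)$, whose effect, propagated to the boundary along the flow, is governed by the Legendrians $\Lbf$ and $L^{\sharp,\bfc}$. The Legendrian orders $m = -1/2$ on $\Lbf$ and $p = (n-2)/2$ at the conic points, and the boundary orders $r_{\lb} = r_{\rb} = (n-1)/2$, are then read off from the leading oscillatory-integral representation --- as in the model case $\RR^n$, where $dE_{\sqrt{\Delta}}(\lambda)$ has the explicit Bessel form. Multiplication by $\lambda/(\pi i)$ raises each boundary order by one and supplies the half-density factor $|d\lambda/\lambda|^{-1/2}$, and tracking the remaining index sets gives orders at $\bfo,\lbo,\rbo,\zf$ starting at $-1,\ n/2-1,\ n/2-1,\ n-1$. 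I expect the main obstacle to be the $\bfc$ step: showing that the outgoing-resolvent error can genuinely be solved away inside the Legendrian class \emph{uniformly} as $\lambda \downarrow 0$ --- that is, controlling the conic interaction of $\Lbf$ with $L^{\sharp,\bfc}$, and the matching of the Legendrian model at $\bfc$ with the polyhomogeneous models at the finite-energy faces along $\lb,\rb$, which is exactly what the structure of $\MMkb$ is designed to make tractable.
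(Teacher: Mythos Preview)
This theorem is not proved in the paper: it is a cited result, stated verbatim from \cite[Theorem~3.10]{GHS} and used as black-box input for the subsequent pointwise spectral-measure estimates. There is therefore no ``paper's own proof'' to compare against.

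Your sketch is a reasonable outline of the strategy carried out in \cite{GHS} itself --- Stone's formula, a low-energy resolvent parametrix built face by face on $\MMkb$ (extending \cite{GH1}), Legendrian corrections at $\bfc$ via transport along bicharacteristics as in \cite{HV1,HV2}, and then reading off orders from the difference. So as a summary of the external reference it is broadly accurate. But be aware that the actual execution in \cite{GHS} is substantial: the matching of the scattering-fibred Legendrian structure at $\bfc$ with the polyhomogeneous expansions at $\bfo,\lbo,\rbo,\zf$, and the uniformity as $\lambda\downarrow 0$, are the technical heart of that paper and are not something one can fill in from this outline alone. For the purposes of the present paper, the correct move is simply to cite the result.
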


\subsection{The high energy space $\bX$}\label{sec:bX}
The high energy space $\bX$ is defined by $\bX = [0, h_0] \times \MMb$. The boundary hypersurfaces $[0, h_0] \times M \times \partial M$, $[0, h_0] \times \partial M \times  M$ and $\{ 0 \} \times \MMb$ are denoted $\rb, \lb$ and $\mf$ (`main face'), respectively, and the boundary hypersurface arising from $[0, h_0] \times \partial M \times \partial M$ is denoted $\bfc$. Notice that this space fits together with the low energy space: in the range $\lambda \in (C^{-1}, C)$ (where $\lambda = 1/h$), the spaces both have the form $(C^{-1}, C) \times \MMb$, and the labelling of boundary hypersurfaces is consistent. 
As before, we write $\sigma = x/x'$. 
We use coordinates $(y, y', \sigma, x', h)$ near $\bfc$ and away from $\rb$, and coordinates $(y, y', \sigma^{-1}, x, h)$ near $\bfc$ and away from $\lb$. Away from $\bfc, \lb, \rb$ we use coordinates $(z, z', h)$. 

The compressed cotangent bundle $\Tsfstar \bX$ is described in \cite{HW}. A basis of sections of this bundle is given in the region $x, x' \leq \epsilon$ by 
$$
\frac{dy_i}{xh}, \quad \frac{dy'_i}{x'h}, \quad d \big(\frac1{xh}\big), \quad  d \big(\frac1{x'h}\big), \quad  d \big(\frac1{h}\big).
$$
In terms of this basis, any point in $\Tsfstar \bX$ lying over this region can be written 
\begin{equation}
\mu \cdot \frac{dy}{xh} + \mu' \cdot \frac{dy'}{x'h} + \nu d \big(\frac1{xh}\big) + \nu'  d \big(\frac1{x'h}\big) + \tau d \big(\frac1{h}\big).
\label{coord-he-1}\end{equation}
This defines local coordinates $(y, y', \sigma, x', h, \mu, \mu', \nu, \nu', \tau)$, where $(\mu, \mu', \nu, \nu', \tau)$ are local coordinates on each fibre. In the region $x, x' \geq \epsilon$, a basis of sections is
$$
\frac{dz_i}{h}, \quad \frac{dz'_i}{h}, \quad d \big(\frac1{h}\big),
$$
and in terms of this basis, any point in $\Tsfstar \bX$ lying over this region can be written 
\begin{equation}
\zeta \cdot \frac{dz}{h} + \zeta' \cdot \frac{dz'}{h} + \tau d \big(\frac1{h}\big).
\label{coord-he-2}\end{equation}
This defines local coordinates $(z, z', h, \zeta, \zeta', \tau)$ on $\Tsfstar \bX$ over this region. 

This compressed density bundle $\sfO(\bX)$ is defined to be that line bundle whose smooth nonzero sections are given by a wedge product of a basis of sections for $\Tsfstar \bX$. We find that $|dg dg' dh/h^2| = |dg dg' d\lambda|$ is a smooth nonzero section of this bundle. 

We also note that there are natural maps from $\Tsfstar[\mf] \bX \to \Tscstar M$, which (abusing notation) we will also denote $\pi_L, \pi_R $, which are induced by the projections onto the left, respectively right factor $T^* M^2 \to T^* M$. In local coordinates, these are given by 
\begin{equation}
\pi_L(z,z', \zeta, \zeta', \tau) = (z, \zeta), \quad 
\pi_R(z,z', \zeta, \zeta', \tau) = (z', \zeta')
\label{piLpiRmf}\end{equation}
away from  the boundary hypersurface $\bfc$, or near $\bfc$ by 
\begin{equation}
\pi_L(x, y, x', y', \mu, \mu', \nu, \nu', \tau) = (x, y, \mu, \nu), \quad 
\pi_R(x, y, x', y', \mu, \mu', \nu, \nu', \tau) = (x',y', \mu', \nu').
\label{piLpiRmfbfc}\end{equation}

The space $\Tsfstar[\mf]\bX$ has a natural contact structure, as described in \cite{HW}. Legendre submanifolds with respect to this contact structure are important in describing the singularities of the spectral measure at high frequencies. We need to define two Legendre submanifolds $\Nsfstar \diagb$ and $L$ in order to describe the spectral measure at high energies as a Legendre distribution on $\bX$ (see \cite{HW}). 
The first of these, $\Nsfstar \diagb$, is associated to the diagonal submanifold $\diagb \subset \{ 0 \} \times \MMb$, defined using the coordinates above by 
\begin{equation}
\Nsfstar \diagb = \{ (z, z', h, \zeta, \zeta', \tau) \mid z= z', \zeta = -\zeta', h = 0, \tau = 0 \}
\label{Nsfstar-diagb}\end{equation}
away from $\bfc$, and 
\begin{multline}
\Nsfstar \diagb = \{ (y, y', \sigma, x', h, \mu, \mu', \nu, \nu', \tau) \mid y = y', \sigma = 1, h = 0, \\ \mu = -\mu', \nu = -\nu', \tau = 0 \}
\label{Nsfstar-diagb-bf}\end{multline}
near $\bfc$. The other, $L$, is obtained just as $L^{\bfc}$ was obtained from $\Nscstar \ddiagb$ in the previous subsection, namely as the flowout by the bicharacteristic flow of $\bH$ starting from the intersection of $\Nsfstar \diagb$ and the characteristic variety of $h^2\bH - 1$. Indeed, the submanifolds $L^{\bfc}$ and $\Nscstar \ddiagb$ are essentially the boundary hypersurfaces of $L$ and $\Nsfstar \diagb$ lying over $\bfc \cap \mf$. In terms of these Legendre submanifolds, we have

\begin{theo}\label{hesmLd} \cite[Corollary 1.2]{HW}
Suppose that $(M, g)$ is nontrapping. Then the spectral measure $dE_{\sqrt{\bH}}(\lambda)$ is a Legendre distribution 
 on $\bX$,  associated to an intersecting pair of Legendre submanifolds with conic points $(L, L^\sharp)$ where $L \subset \Tscstar_{\mf} \bX$ has a conic singularity at $L^\sharp \subset \Tscstar_{\mf \cap \bfc} \bX$:
 $$
dE_{\sqrt{\bH}}(\lambda) \in I^{m,p; r_{\bfc}, r_{\lb}, r_{\rb}}(\bX, (L, \Lsharp); \sfOh) \otimes |d\lambda|^{1/2}
$$
with $m = 1/2$, $p = (n-2)/2$, $r_{\bfc} = -1/2$, $r_{\lb} = r_{\rb} = (n-1)/2$. Here we use the order conventions in Remark~\ref{orderconventions}. 
\end{theo}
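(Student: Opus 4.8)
This structural statement is \cite[Corollary 1.2]{HW}; the plan of proof is as follows. The conceptual starting point is Stone's formula,
$$
dE_{\sqrt{\bH}}(\lambda) = \frac{\lambda}{\pi i}\Big( R(\lambda^2+i0) - R(\lambda^2-i0) \Big), \qquad R(\zeta) = (\bH - \zeta)^{-1},
$$
together with the factorization $dE_{\sqrt{\bH}}(\lambda) = (2\pi)^{-1} P(\lambda) P(\lambda)^*$ of \cite{HV1}, in which $P(\lambda)$ is the Poisson operator. Either way, everything is reduced to the high-energy structure of the outgoing resolvent $R(\lambda^2+i0)$, or of $P(\lambda)$, as a semiclassical object; here $h = 1/\lambda$ and the natural carrier of the kernel of $R(\lambda^2+i0)$ is the space $\bX$ described in Section~\ref{sec:bX}.

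The first step is to build a parametrix for $h^2\bH - 1$ on $\bX$. Locally near the diagonal one takes the usual semiclassical pseudodifferential parametrix, producing the conormal Legendre submanifold $\Nsfstar\diagb$; away from the diagonal, and in particular near $\mf$, $\lb$, $\rb$, $\bfc$, one solves the eikonal and transport equations for $h^2\bH - 1$, which propagates $\Nsfstar\diagb \cap \Char(h^2\bH - 1)$ under the rescaled bicharacteristic (geodesic) flow. The flowout so produced is exactly $L$, and it acquires a conic singularity along $\Lsharp$ over $\mf \cap \bfc$, where the near-diagonal and purely radial regimes overlap — in the same way $\Lbf$ is obtained from $\Nscstar\ddiagb$ at low energy. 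The nontrapping hypothesis is essential here: it is precisely what guarantees that every bicharacteristic issuing from the initial Legendrian reaches the boundary of $\bX$ in finite time, so that $L$ is a closed conic Legendre submanifold of $\Tscstar_{\mf}\bX$ up to the boundary and the transport equations can be integrated globally along the flow (in the trapping case this fails, and indeed \eqref{restr} itself fails, cf.\ part (C) of Theorem~\ref{main3}).

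The second step is to upgrade the parametrix to the true resolvent. The error term in the parametrix equation is, by construction, smoothing and rapidly decaying at the boundary faces; invoking the nontrapping semiclassical resolvent bounds — of Cardoso--Vodev type \cite{CV} and their microlocal refinements used in \cite{HW} — one removes it by a Neumann-type series converging in the relevant weighted operator topology, without changing the Legendre class. Finally one takes the imaginary part: since $R(\lambda^2+i0)$ and $R(\lambda^2-i0)$ carry the same conormal singularity along the diagonal, that singularity cancels in the difference, and one is left with a Legendre distribution on $\bX$ associated to the intersecting pair $(L, \Lsharp)$ — equivalently, one composes $P(\lambda)$ with $P(\lambda)^*$ using the Legendre composition calculus adapted to this geometry, exactly as in the low-energy case (Theorem~\ref{lesmLd} and \cite{GHS}). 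The orders $m = 1/2$, $p = (n-2)/2$, $r_{\bfc} = -1/2$, $r_{\lb} = r_{\rb} = (n-1)/2$ are then read off by tracking the normalizations through the construction, calibrated against the Euclidean model $dE_{\sqrt{\Delta_0}}(\lambda)(z,z') = c_n \lambda^{n-1}\int_{S^{n-1}} e^{i\lambda(z-z')\cdot\omega}\, d\omega$: $r_{\lb}$ and $r_{\rb}$ reflect the $r^{-(n-1)/2}$ decay of each factor of the generalized eigenfunctions, $p$ reflects the $(n-1)$-dimensional sphere at infinity, and $m$, $r_{\bfc}$ come from combining these with the $\lambda^{n-1}$ weight and the half-density factors.

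The main obstacle is the second step: proving that the parametrix error is genuinely negligible within the Legendre calculus, \emph{uniformly as $h \to 0$}. This rests on the full strength of the nontrapping semiclassical estimates, and on controlling the interaction of the two Legendre submanifolds $L$ and $\Lsharp$, and of the boundary hypersurfaces $\mf$, $\bfc$, $\lb$, $\rb$, in the corner $\mf \cap \bfc$; keeping the conic-Legendrian structure intact under composition, with the correct orders at that corner, is the delicate part of the argument.
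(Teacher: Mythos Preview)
The paper does not give its own proof of this statement: it is simply quoted as \cite[Corollary~1.2]{HW}, with only Remark~\ref{orderconventions} added to reconcile the order conventions. Your proposal is therefore not being compared against a proof in the paper but against the argument of \cite{HW} itself, and as a high-level sketch of that argument it is accurate: the semiclassical parametrix for $h^2\bH-1$ near the diagonal, the bicharacteristic flowout producing $L$ (with the nontrapping hypothesis used exactly to make $L$ a closed Legendrian up to the boundary), the conic singularity at $L^\sharp$, removal of the error via the nontrapping semiclassical resolvent estimates, and finally Stone's formula to pass from the resolvent to the spectral measure with cancellation of the diagonal conormal singularity --- all of this matches the structure of \cite{HW}. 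For the purposes of the present paper, which uses Theorem~\ref{hesmLd} as a black box input, your outline is an appropriate summary and nothing more is needed.
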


\begin{remark}\label{orderconventions} We use different order conventions from \cite{HW}, to agree with those used in \cite{GHS}. In terms of equation (4.15) of \cite{HW}, the order convention in the present paper corresponds to taking $N = 2n$ (not $2n+1$ as in \cite{HW}), i.e. the total space dimension, but not including the $\lambda$ dimension, and taking the fibre dimensions $f_{\bfc} = 0$ and $f_{\lb} = f_{\rb} = n$, i.e. again not including the $\lambda$ dimension. This has the effect that the orders in the present paper are $1/4$ larger at $\mf = \MMb \times \{ h = 0 \}$, and $1/4$ smaller at $\bfc, \lb$ and $\rb$, compared to \cite{HW}, and explains the discrepancies in the orders above compared to those given in Corollary 1.2 of \cite{HW}. (An  advantage of the ordering convention used here is that a semiclassical pseudodifferential operator of (semiclassical) order $m$, multiplied by $|dh/h^2|^{1/2} = |d\lambda|^{1/2}$ becomes a Legendre distribution  of the same order $m$ at 
 the conormal bundle of the diagonal in $\mf$.)
 \end{remark}


\section{Microlocal support}\label{ows}

Recall from the end of the Introduction our strategy for proving Theorem~\ref{main3}, involving estimates \eqref{spec-meas-j-1}. The elements $Q_i$ of our partition of unity will be chosen to be 
pseudodifferential operators  lying in the calculus of operators introduced in  \cite[Definition 2.7]{GH1}. In view of Theorem~\ref{lesmLd}, we need to understand what happens when a  conormal-Legendre distribution $F \in I^{m, r_{\lb}, r_{\rb}, \mcB}(\Lambda, \Omegakbh)$ is pre- and post-multiplied by  such operators. We shall use the notation $\Psi^{m}_k(M, \Omegakbh)$ to denote what in \cite{GH1} was written $\Psi^{m, \mcE}(M, \tilde \Omega_b^{1/2})$ where the index family $\mcE$ assigns the  $\CI$ index family at $\sca, \bfo$ and $\zf$ and the empty index family at all other boundary hypersurfaces. Such operators have kernels defined on the space $\MMksc$, defined in \cite{GH1}, 
that are conormal of order $m$ to the diagonal, uniformly to the boundary, smooth away from the diagonal, and rapidly vanishing at all boundary hypersurfaces not meeting the diagonal. As shown in \cite[Proposition 2.10]{GH1}, $\Psi^0(M, \Omegakbh)$ is an algebra. It follows, using H\"ormander's ``square root'' trick \cite[Section 18.1]{Hor3} that  such kernels act as uniformly bounded (in $\lambda$) operators on $L^2(M)$. 

In this section, we consider operators $Q$, $Q'$ such that
\begin{equation}\begin{gathered}
\text{ $Q, Q'$ are  of order $-\infty$, i.e. $Q, Q' \in \Psi^{-\infty}_k(M, \Omegakbh)$,} \\
\text{with compactly supported symbols;}
\end{gathered}\label{minusinfty}\end{equation}
\begin{equation}\begin{gathered}
\text{ $Q$ and $Q'$ have kernels supported close to the diagonal,}
\\ 
\text{ in particular in the region $\{ \sigma := x/x' \in [1/2, 2] \}$.}
\end{gathered}\label{kernel-support}\end{equation}

With these assumptions, the kernels of $Q, Q'$ are smooth (across the diagonal) on the space $\MMksc$. Viewed as distributions on $\MMkb$ (which has one fewer blowup than $\MMksc$) the kernels have a conic singularity at the boundary of the diagonal, $\ddiagb$. As shown in 
\cite[Section 5.1]{HV2}, this means that they are  Legendre distributions in $I^{0, \infty, \infty; (0, 0, \emptyset, \emptyset)}(\MMkb, \Nscstar \ddiagb; \Omegakbh)$, i.e. Legendre distributions of order $0$ associated to $\Nscstar \ddiagb$ (see \eqref{Nscstar-ddiagb}), with the $\CI$ index set $0$  at $\bfo$ and $\zf$, and vanishing in a neighbourhood of $\lb$, $\rb$,  $\lbo$ and $\rbo$ (which is of course an trivial consequence of \eqref{kernel-support}). 
 
 \begin{remark}
The composition $QF$ or $FQ'$ is always well-defined when $F$ is a Legendre distribution on $\MMkb$ and $Q, Q'$ are as above, since $F$ can  be regarded as a map from $x^a L^2(M)$ to $x^{-a} L^2(M)$ for sufficiently large $a \in \RR$, depending smoothly on $\lambda \in (0, \lambda_0)$, while pseudodifferential operators of order $0$ are bounded on $x^a L^2(M)$ (uniformly in $\lambda$) for any $a$. 
\end{remark} 
  
To state our results, we need to introduce some notation and define the notion of the microlocal support of $F$. Let $\Lambda \subset \Nsfstar Z_{\bfc} \equiv \Tscstar_{\bfc}\MMb$ be the Legendre submanifold associated to $F$. We always assume that $\Lambda$ is compact. Recall from \cite[Section 4]{HW} that $\Lambda$ determines two associated Legendre submanifolds $\Lambda_{\lb}$ and $\Lambda_{\rb}$ which are the bases of the fibrations on $\partial_{\lb} \Lambda$ and $\partial_{\rb} \Lambda$, respectively. These may be canonically identified with Legendre submanifolds of $\Tscstar M$. We also define $\Lambda'$ by negating the fibre coordinates corresponding to the right copy of $M$, i.e. 
\begin{equation}
q' = (y, y', x/x',\mu, \mu', \nu, \nu') \in \Lambda' \iff q = (y, y', x/x',\mu, -\mu', \nu, -\nu') \in \Lambda.
\label{qq'}\end{equation}
Similarly we define  $\Lambda_{\rb}'$ by negating the fibre coordinates:
$$ 
 q' = (y', \mu', \nu') \in \Lambda_{\rb}' \iff q = (y',  -\mu',-\nu') \in \Lambda_{\rb}.
$$

We also define $\Lbar$, $\Llbbar$, $\Lrbbar$ by 
\begin{equation}
\Lbar = \Lambda' \times [0, \lambda_0], \quad \Llbbar = \Lambda'_{\lb} \times [0, \lambda_0], \quad \Lrbbar = \Lambda'_{\rb} \times [0, \lambda_0].
\label{Lbar-defn}\end{equation}

To define the microlocal support, $\WF'(F)$, of $F$ we first recall from \cite{GHS} that 
 $F \in I^{m, r_{\lb}, r_{\rb}, \mcB}(\Lambda, \Omegakbh)$ means that $F$ can be decomposed $F = F_1 + F_2 + F_3 + F_4 + F_5 + F_6$, where 
\begin{itemize}
\item $F_1$ is supported near $\bfc$ and away from $\lb, \rb$;
\item $F_2$ is supported near $\bfc \cap \lb$;
\item $F_3$ is supported near $\bfc \cap \rb$;
\item $F_4$ is supported near $\lb$ and away from $\bfc$;
\item $F_5$ is supported near $\rb$ and away from $\bfc$;
\item $F_6$ vanishes rapidly at the boundary of $\MMb$;
\end{itemize}
and each $F_i$, $1 \leq i \leq 5$ has an oscillatory representation as follows:

$\bullet$ $F_1$ is a finite sum of terms of the form (up to rapidly vanishing terms which may be included in $F_6$)
\begin{equation}
\rho^{m-k/2+n/2} \int_{\RR^k} e^{i\Phi(y, y', x/x', v)/\rho} a(\lambda, \rho, y, y', \sigma, v) \, dv \ \omegab 
\label{u1}\end{equation}
where $\Phi$ locally parametrizes $\Lambda$, $\omegab$ is a nonzero section of the half-density bundle $\Omegakbh$,  compactly supported in $v$, and 
\begin{multline}
\text{$a$ is polyhomogeneous conormal in $\lambda$ with index set $\mcA_{\bfo}$ and} \\ \text{smooth in all other variables.}
\label{acond}\end{multline}

$\bullet$ $F_2$ is a finite sum of terms of the form (up to rapidly vanishing terms which may be included in $F_6$)
\begin{multline}
 \sigma^{r_{\lb}-k/2}  \rho'^{m-(k+k')/2+n/2} \int_{\RR^{k+k'}} e^{i\Phi_1(y, v)/ \rho} e^{i\Phi_2(y,y', \sigma, v, w)/ \rho'} \\ \times a(\lambda,  \rho', y, y', \sigma, v, w) \, dv \, dw \, \omegab
 \label{u2}\end{multline}
where $\Phi = \Phi_1 + \sigma \Phi_2$ locally parametrizes $\Lambda$ (in particular, $\Phi_1$ locally parametrizes $\Lambda_{\lb}$), and $a$ satisfies \eqref{acond} . 

$\bullet$ $F_3$ is a finite sum of terms of the form (up to rapidly vanishing terms which may be included in $F_6$)
\begin{equation}
 \rho^{m-(k+k')/2+n/2} \sigmat^{r_{\rb}-k/2} 
\!\! \int\limits_{\RR^{k+k'}} \!\!  e^{i\Phi_1'(y',v)/ \rho'} e^{i\Phi_2'(y, y', \sigmat, v,w)/ \rho} a(\lambda,  \rho, y, y', \sigmat, v, w) \, dv \, dw \, \omegab
\label{u3}\end{equation}
where $\sigmat =  \rho'/ \rho = \sigma^{-1}$ and  $\Phi = \Phi_1' + \sigmat \Phi_2'$ locally parametrizes $\Lambda$ (in particular, $\Phi_1'$ locally parametrizes $\Lambda_{\rb}$), and $a$ satisfies \eqref{acond}. 

$\bullet$ $F_4$ is a finite sum of terms of the form 
\begin{equation}
 \rho^{r_{\lb}-k/2} \int_{\RR^k} e^{i\Phi_1(y, v)/ \rho} a(\lambda,  \rho, y, z', v) \, dv \, \omegab
\label{u4}\end{equation}
where $\Phi$ parametrizes $\Lambda_{\lb}$ and $a$ satisfies \eqref{acond}. 

$\bullet$ $F_5$ is a finite sum of terms
\begin{equation}
( \rho')^{r_{\rb}-k/2} \int_{\RR^k} e^{i\Phi_1'(y', v')/ \rho'} a(\lambda,  \rho', y', z, v) \, dv \, \omegab
\label{u5}\end{equation}
where $\Phi'$ parametrizes $\Lambda_{\rb}$ and $a$ satisfies \eqref{acond}.

Then we define the microlocal support $\WF'(F)$ of $F$ to be a closed subset of $\Lbar \cup \Llbbar \cup \Lrbbar$ as follows: we say that $(q', \lambda) \in \Lbar$ is not in $\WF'(F)$ iff there is a neighbourhood of $(q, \lambda) \in \Lambda \times [0, \lambda_0] $ in which $F$ has order $\infty$. In terms of the oscillatory integral representation \eqref{u1}, say,
the condition that $F$ has order infinity at $(q, \lambda)$ is equivalent to $a$ vanishing rapidly in a neighbourhood of the point $(\lambda, 0, y, y', \sigma, v)$ which corresponds under \eqref{qq'} to $(q, \lambda)$ in the sense that    $d_{y, y', \sigma, \rho}(\Phi(y, y', x/x', v)/\rho) = q$ and $d_v \Phi(y, y', x/x', v) = 0$ (by nondegeneracy there is only one $v$ with this property). Similarly, in \eqref{u2} and \eqref{u3}. Likewise, we say that  $(q, \lambda) \in \Llbbar$ is not in $\WF'(F)$ iff there is a neighbourhood of the \emph{fibre} (see \eqref{lfib}) of  $(q, \lambda) \in \Lambda_{\lb} \times [0, \lambda_0] $ in which $F$ has order $\infty$, and
$(q', \lambda) \in \Lrbbar$ is not in $\WF'(F)$ iff there is a neighbourhood of the \emph{fibre} of  $(q, \lambda) \in \Lambda_{\rb} \times [0, \lambda_0] $ in which $F$ has order $\infty$. The fibre here is a copy of $M$. In terms of the oscillatory integral representation \eqref{u2},  
the condition that $F$ has order infinity in a neighbourhood of the fibre of $(q, \lambda) = (y, \mu, \nu, \lambda) \in \Llbbar$ is equivalent to $a$ vanishing rapidly in a neighbourhood of the point $(\lambda, \rho', y, y', 0, v, w)$ for all $(\rho', y', v, w)$ such that 
$d_{y, \rho}(\Phi_1/\rho) = q$ and $d_{v} \Phi_1 = 0$. Similarly, in \eqref{u4} the condition is that $a$ vanishes rapidly in a neighbourhood of the point $(\lambda, 0, y, z', v)$ for all $(z', v)$ such that $d_{y, \rho}(\Phi_1/\rho) = q$ and $d_{v} \Phi_1 = 0$.

These components of $\WF'(F)$ will be denoted $\WF'_{\bfc}(F)$, $\WF'_{\lb}(F)$ and $\WF'_{\rb}(F)$, respectively. 

Note that, if $F \in I^{m, r_{\lb}, r_{\rb}, \mcA}(\Lambda)$, then $F$ is rapidly decreasing at $\bfc$, $\lb$ and $\rb$  iff $\WF'(F)$ is empty. 
Also note that if $\WF'_{\lb}(F)$ is empty, then $\partial_{\lb} \Lambda \times [0, \lambda_0]$ is disjoint from $\WF'_{\bfc}(F)$, but the converse need not hold: if the kernel of $F$ is supported away from $\bfc$ then certainly $\WF'_{\bfc}(F)$ will be empty, but $\WF'_{\lb}(F)$ need not be. 

This definition makes sense also for pseudodifferential operators $Q$ of order $-\infty$ with compact operator wavefront set. In the case of a pseudodifferential operator, the Legendre submanifold is $\Nscstar \ddiagb$ defined in \eqref{Nscstar-ddiagb}, and   the components $\Lambda_{\lb} \cup \Lambda'_{\rb}$ are empty. Since $\Nscstar \ddiagb$ is canonically diffeomorphic to $\Tscstar_{\partial M} M$, 
we will always consider the microlocal support $\WF'(Q)$ of a pseudodifferential operator $Q$ of differential order $-\infty$ to be a subset of $\Tscstar_{\partial M} M \times [0, \lambda_0]$. 

\begin{lem}\label{QFQ'-microsupport}
Assume that $F \in I^{m,r_{\lb}, r_{\rb}; \mcA}(\MMb, \Lambda; \Omega)$ is associated to a compact Legendre submanifold $\Lambda$ and that $Q \in \Psi^{-\infty}_k(M; \Omegakbh)$ is of differential order $-\infty$, with compact operator wavefront set. Then $Q F$ is also a Legendre distribution in the space $I^{m,r_{\lb}, r_{\rb}; \mcA}(\MMb, \Lambda; \Omega)$ and we have
\begin{equation}\begin{gathered}
\WF'_{\lb}(QF) \subset \WF'(Q) \cap \WF'_{\lb}(F) \\
 \WF'_{\bfc}(QF) \subset  \pi_L^{-1} \WF'(Q)  \cap \WF'_{\bfc}(F)   \\ 
 \WF'_{\rb}(QF) \subset \WF'_{\rb}(F) 
\label{WFQF}\end{gathered}\end{equation}
where $\pi_L, \pi_R$ are as in \eqref{piLpiR}. 
Moreover, if $Q$ is microlocally equal to the identity on $\pi_L(\WF'_{\bfc}(F))$ and $\WF'_{\lb}(F)$, then $Q F  - F \in I^{\infty, \infty, r_{\rb}}(\MMb)$, i.e. vanishes to infinite order at $\lb$ and $\bfc$. 
\end{lem}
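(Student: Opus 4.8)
The plan is to use that composition with a pseudodifferential operator in the calculus of \cite{GH1} is a microlocal operation that preserves the associated Legendre submanifold, tracking amplitudes carefully enough to pin down the microlocal support. First I would reduce to the six pieces of \cite{GHS}: write $F = F_1 + \dots + F_6$ as in \eqref{u1}--\eqref{u5}, with $F_6$ vanishing rapidly at $\partial\MMb$. Since $Q$ is smooth across the diagonal on $\MMksc$ and, by \eqref{kernel-support}, has kernel supported in $\{\sigma \in [1/2,2]\}$ (hence supported away from $\lb$ and $\rb$), composition with $Q$ maps a kernel that is smooth and rapidly vanishing at $\partial\MMb$ to another such kernel, so $QF_6$ may be absorbed into the $F_6$-term of $QF$ and it remains to analyse $QF_i$ for $1 \le i \le 5$.

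For $F_1$ (near $\bfc$, away from $\lb,\rb$) I would write $Q$ near $\bfc$ in the oscillatory form of a Legendre distribution associated to $\Nscstar\ddiagb$ and compose the two oscillatory integrals. After stationary phase in the extra fibre variables coming from $Q$, the reduced phase again parametrizes $\Lambda$, reflecting the fact that $\Nscstar\ddiagb$ plays the role of the identity under composition; the reduced amplitude equals, to leading order, $(\pi_L^*\sigma_Q)\,a$ plus lower-order terms built from derivatives of $\sigma_Q$ at the same points, where $\sigma_Q$ is the full symbol of $Q$, and it remains polyhomogeneous conormal in $\lambda$ with index set $\mcA_{\bfo}$ since $Q$ depends smoothly on $\lambda$. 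Hence $QF_1$ is again of the form \eqref{u1} with the same order $m$, Legendrian $\Lambda$ and index family $\mcA$, so lies in $I^{m,r_{\lb},r_{\rb};\mcA}(\MMb,\Lambda;\Omega)$; and since a pseudodifferential operator creates no new singularities while $\sigma_Q$ enters through $\pi_L$, we get $\WF'_{\bfc}(QF_1) \subset \pi_L^{-1}\WF'(Q)\cap\WF'_{\bfc}(F_1)$. The corner terms $F_2,F_3$ are treated the same way using \eqref{u2},\eqref{u3} and the fibrations \eqref{lfib},\eqref{rfib}: the $\bfc$-component is handled exactly as for $F_1$, giving $\WF'_{\bfc}(QF_j)\subset\pi_L^{-1}\WF'(Q)\cap\WF'_{\bfc}(F_j)$ for $j=2,3$, in addition $\WF'_{\lb}(QF_2)\subset\WF'(Q)\cap\WF'_{\lb}(F_2)$, and, since $Q$ acts on the left variables while the $\rb$-phase $\Phi_1'$ in \eqref{u3} involves only right variables, $\WF'_{\rb}(QF_3)\subset\WF'_{\rb}(F_3)$ with no contribution from $Q$. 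For $F_4$ (near $\lb$, away from $\bfc$), the same computation---$Q$ now acting microlocally at $\Nscstar\ddiagb\cong\Tscstar[\partial M]M$ in the representation \eqref{u4}---yields $\WF'_{\lb}(QF_4)\subset\WF'(Q)\cap\WF'_{\lb}(F_4)$, while for $F_5$ (near $\rb$) the left variable is interior, $Q$ acts as an honest smoothing operator preserving the form \eqref{u5}, and $\WF'_{\rb}(QF_5)\subset\WF'_{\rb}(F_5)$. Summing over $i$ establishes that $QF\in I^{m,r_{\lb},r_{\rb};\mcA}(\MMb,\Lambda;\Omega)$ together with the three inclusions \eqref{WFQF}.

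For the final assertion, write $QF - F = (Q-\Id)F$ and put $B := Q-\Id$, a pseudodifferential operator in the calculus of order $0$. The microlocal-support argument above is local in the oscillatory representations and uses nothing about the order of the operator beyond its being pseudodifferential, so it applies to $B$: $BF$ is a Legendre distribution associated to $\Lambda$ (this also follows directly, since $BF = QF - F$ is a difference of two such), with $\WF'_{\lb}(BF)\subset\WF'(B)\cap\WF'_{\lb}(F)$ and $\WF'_{\bfc}(BF)\subset\pi_L^{-1}\WF'(B)\cap\WF'_{\bfc}(F)$. By hypothesis $Q$ is microlocally the identity on $\pi_L(\WF'_{\bfc}(F))$ and on $\WF'_{\lb}(F)$, i.e. $\WF'(Q-\Id)$ is disjoint from each of these (compact) sets; hence $\WF'_{\lb}((Q-\Id)F)$ and $\WF'_{\bfc}((Q-\Id)F)$ are both empty. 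Since emptiness of $\WF'$ at $\lb$ and $\bfc$ is, as recalled above, equivalent to rapid vanishing there, we conclude $QF - F \in I^{\infty,\infty,r_{\rb}}(\MMb)$.

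I expect the main obstacle to be the composition computation near $\bfc$ and the corners $\bfc\cap\lb$, $\bfc\cap\rb$: one must write out the oscillatory representation of $Q$ in coordinates compatible with \eqref{u1}--\eqref{u3}, perform the stationary-phase reduction in the additional fibre variables, and verify that the reduced phase parametrizes $\Lambda$ (respectively $\Lambda_{\lb}$, $\Lambda_{\rb}$) and that the amplitude has the claimed form, uniformly down to $\lambda=0$ and with the correct index sets at $\bfo$ and $\zf$. This is precisely where the scattering-fibred symbol calculus of \cite{HV1,HV2,GH1} does the work; the treatment of $F_4,F_5,F_6$ and the deduction of the last assertion are comparatively routine.
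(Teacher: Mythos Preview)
Your proposal is correct and follows essentially the same approach as the paper: decompose $F$ into the six pieces \eqref{u1}--\eqref{u5}, compose each with the oscillatory representation of $Q$, perform stationary phase in the extra variables, and read off the microsupport from the resulting amplitude (with the paper writing out the expansion \eqref{comp-symbol-formula} explicitly). For the ``moreover'' part the paper is slightly more direct than your route through $B = Q - \Id$ (which is order $0$, not $-\infty$, so does not literally satisfy the hypotheses just used): it simply observes that when $q = 1 + O(\rho^\infty)$ on $\pi_L(\WF'_{\bfc}(F))$, only the $j=0$ term survives in \eqref{comp-symbol-formula}, giving $QF_i = F_i + O(\rho^\infty)$ there---but this is the same symbolic observation your argument relies on.
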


There is of course a corresponding theorem for composition in the other order, which is obtained by taking the adjoint of the lemma above. Combining the two we obtain

\begin{cor}
Suppose that $F$ and $Q, Q'$ are as above. Then 
\begin{equation}\begin{gathered}
\WF'_{\lb}(QFQ') \subset \WF'(Q) \cap \WF'_{\lb}(F) \\
 \WF'_{\bfc}(QFQ') \subset  \pi_L^{-1} \WF'(Q)  \cap \pi_R^{-1} \WF'(Q') \cap \WF'_{\bfc}(F)   \\ 
 \WF'_{\rb}(QFQ') \subset \WF'(Q') \cap  \WF'_{\rb}(F) .
\label{WFQFQ'}\end{gathered}\end{equation}
\end{cor}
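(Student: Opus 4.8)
The plan is to obtain \eqref{WFQFQ'} by applying Lemma~\ref{QFQ'-microsupport} twice: once to the left factor $Q$, and once, in adjoint form, to the right factor $Q'$. First I would apply Lemma~\ref{QFQ'-microsupport} directly to $F$ and $Q$, which produces $G := QF \in I^{m,r_{\lb},r_{\rb};\mcA}(\MMb,\Lambda;\Omega)$ associated to the \emph{same} compact Legendre submanifold $\Lambda$ — so that $G$ again satisfies the hypotheses imposed on $F$ — together with the microsupport bounds
\begin{equation*}
\WF'_{\lb}(G)\subset\WF'(Q)\cap\WF'_{\lb}(F),\quad \WF'_{\bfc}(G)\subset\pi_L^{-1}\WF'(Q)\cap\WF'_{\bfc}(F),\quad \WF'_{\rb}(G)\subset\WF'_{\rb}(F).
\end{equation*}

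Next I would invoke the adjoint form of Lemma~\ref{QFQ'-microsupport} noted in the text just before the Corollary: taking the adjoint interchanges the left and right copies of $M$, hence interchanges $\WF'_{\lb}\leftrightarrow\WF'_{\rb}$ and $\pi_L\leftrightarrow\pi_R$, and replaces $\WF'(Q)$ by the microsupport of the adjoint operator, which under the conventions fixing $\WF'$ of a pseudodifferential operator as a subset of $\Tscstar_{\partial M} M\times[0,\lambda_0]$ coincides with $\WF'(Q)$. This yields: for $H\in I^{m,r_{\lb},r_{\rb};\mcA}(\MMb,\Lambda;\Omega)$ with $\Lambda$ compact and $Q'$ as in the hypotheses, $HQ'\in I^{m,r_{\lb},r_{\rb};\mcA}(\MMb,\Lambda;\Omega)$ and
\begin{equation*}
\WF'_{\rb}(HQ')\subset\WF'(Q')\cap\WF'_{\rb}(H),\quad \WF'_{\bfc}(HQ')\subset\pi_R^{-1}\WF'(Q')\cap\WF'_{\bfc}(H),\quad \WF'_{\lb}(HQ')\subset\WF'_{\lb}(H).
\end{equation*}
Applying this with $H=G=QF$ (legitimate by the first step), and using the associativity $(QF)Q'=QFQ'$ of operator composition, I would then substitute the first-step bounds into these to get
\begin{align*}
\WF'_{\rb}(QFQ')&\subset\WF'(Q')\cap\WF'_{\rb}(G)\subset\WF'(Q')\cap\WF'_{\rb}(F),\\
\WF'_{\bfc}(QFQ')&\subset\pi_R^{-1}\WF'(Q')\cap\WF'_{\bfc}(G)\subset\pi_L^{-1}\WF'(Q)\cap\pi_R^{-1}\WF'(Q')\cap\WF'_{\bfc}(F),\\
\WF'_{\lb}(QFQ')&\subset\WF'_{\lb}(G)\subset\WF'(Q)\cap\WF'_{\lb}(F),
\end{align*}
which are exactly the three inclusions \eqref{WFQFQ'}.

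The only content beyond Lemma~\ref{QFQ'-microsupport} itself is the adjoint form invoked in the second step, and the one delicate point there is the bookkeeping of the left/right interchange: one must check that the adjoint of a conormal–Legendre distribution in $I^{m,r_{\lb},r_{\rb};\mcA}(\MMb,\Lambda;\Omega)$ lies in the mirror-image class with $\lb$ and $\rb$, and $\pi_L$ and $\pi_R$, swapped, and that this is compatible with the fibre-sign and half-density conventions built into the definitions of $\WF'$ and of the operator wavefront set. I expect this adjoint-symmetry bookkeeping to be essentially the only obstacle; once it is granted (and it is asserted in the text preceding the Corollary), the proof is purely formal.
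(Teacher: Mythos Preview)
Your proposal is correct and follows exactly the approach indicated in the paper: apply Lemma~\ref{QFQ'-microsupport} to $QF$, then apply its adjoint form to $(QF)Q'$, and combine the resulting inclusions. The paper's own proof is nothing more than the sentence ``combining the two we obtain'', so your write-up is simply a spelled-out version of that.
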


\begin{proof}[Proof of lemma]
We decompose as above $F = F_1 + F_2 + F_3 + F_4 + F_5 + F_6$, and consider each piece $F_i$ separately. 


$\bullet$ $F_1$ term. Using the notation in \eqref{u1},  the composition $Q F_1$ takes the form
\begin{multline}
(2\pi)^{-n} \int_0^\infty \int e^{i\big((y-y'')\cdot \mu + (1-\rho/\rho'') \nu\big)/\rho} q(\lambda, \rho, y, \mu, \nu) \\
\times (\rho'')^{m-k/2+n/2} e^{i\Phi(y'', y', \rho'/\rho'', v)/\rho''} 
a(\lambda, \rho', y'', y', \rho'/\rho'', v) \, dv \, d\mu \, d\nu \, \frac{dy'' \, d\rho''}{{\rho''}^{n+1}} \, \omegab.
\end{multline}
Here the measure $\lambda^n dg''$ which arises from the combination of half-densities in $Q$ and $F$ is equal to $dy'' d\rho''/{\rho''}^{n+1}$ times a smooth nonzero factor, which has been absorbed into the $a$ term. 
Writing $\sigma'' =  \rho/ \rho''$,  this can be expressed
\begin{multline}
(2\pi)^{-n} \rho^{m-k/2-n+n/2} \int e^{i\big((y-y'')\cdot \mu + (1-\sigma'') \nu + \sigma'' \Phi(y'', y', \sigma'' /\sigma, v) \big)/\rho} \\
\times q(\lambda, \rho, y, \mu, \nu) (\sigma'')^{m-k/2+n/2 - n - 1}
a(\lambda,  \rho', y'', y', \sigma'' \sigma^{-1},  v) \, dv \, d\mu \, d\nu \, dy'' \, d\sigma'' \, \omegab.
\label{QF_1}\end{multline}

For $ \rho \geq \epsilon > 0$ the phase is not oscillating and this is polyhomogeneous conormal at $\bfo$ with the same index set $\mcA_{\bfo}$ as for $a$. For $ \rho$ small,  we perform stationary phase in the $(y'', \sigma'', \mu, \nu)$ variables. The phase has a nondegenerate stationary point where $y'' = y, \sigma'' = 1, \mu = d_y \Phi, \nu = \Phi + \sigma^{-1} d_\sigma \Phi$,  and we obtain an asymptotic expansion at $ \rho$ of the form
\begin{multline}
 \rho^{m-k/2+n/2} \int_{\RR^k} e^{i \Phi(y, y',  \sigma, v) / \rho} \tilde a(\lambda,  \rho, y, y', \sigma, v) \, dv \, \omegab , \\
\tilde a(\lambda, \rho, y, y', \sigma, v) = 
 \lambda^{-n/2} \sum_{j=0}^M  \rho^j \Big(  \frac{(\partial_{y''} \cdot \partial_\mu + \partial_{\sigma''} \partial_\nu)^j}{i^j j!}  q(\lambda,  \rho, y, \mu, \nu)  \\ \times  (\sigma'')^{m-k/2+n/2 - n - 1}  a(\lambda,  \rho', y'', y', \sigma'' /\sigma,v) \Big) \Bigg|_{y=y'', \sigma'' = 1, \mu = d_y \Phi, \nu =\Phi + \sigma d_\sigma \Phi} \!\!\!\!\!\!\!\!+ O( \rho^{M+1}) .
\label{comp-symbol-formula}\end{multline}
In particular, this is a Legendre distribution associated to $\Lambda$ of the same order, and with the same index family,  as $F$. Moreover, we see from the formula  \eqref{comp-symbol-formula} that the microlocal support  $\WF'_{\bfc}(QF_1)$ is contained in $\WF'_{\bfc}(F)$, as well as contained in $\pi_L^{-1} \WF'(Q)$.

If $q = 1 + O( \rho^\infty)$ on $\pi_L(\WF'_{\bfc}(F))$, then in the sum over $j$  in \eqref{comp-symbol-formula}, only the $j=0$ term is nonzero, because in all other terms, either $a = 0$ or $q=1 + O( \rho^\infty)$ (implying that any derivative of $q$ is $O( \rho^\infty)$) when evaluated at $y=y'', \sigma'' = 1, \mu = d_y \Phi, \nu =\Phi + \sigma d_\sigma \Phi$. Therefore, in this case, $QF_1 = F_1$ mod $O( \rho^\infty)$.

$\bullet$ $F_2$ term. In the notation \eqref{u2},  the composition $Q F_2$ takes the form 
\begin{multline*}
(2\pi)^{-n} \int e^{i \big((y-y'')\cdot \mu + (1-\sigma'') \nu\big)/ \rho} q(\lambda,  \rho, y, \mu, \nu) {\rho''}^{r_{\lb}-k/2} {\rho'}^{m-r_{\lb}-k'/2+n/2} \\
\times e^{i\Phi_1(y, v)/ \rho''} e^{i\Phi_2(y'',y', \sigma''/\sigma, v, w)/ \rho'} a(\lambda,  \rho', y'', y',\sigma/\sigma'',  v, w) \, dv \, dw \, d\mu \, d\nu \, \frac{dy'' \, d \rho''}{{ \rho''}^{n+1}} \,  \omegab. 
\end{multline*}
This can be written
\begin{multline*}
 \frac{\rho^{r_{\lb}-k/2-n} { \rho'}^{m-r_{\lb}-k'/2+n/2}}{(2\pi)^n}
\int e^{i\big((y-y'')\cdot \mu + (1-\sigma'') \nu + \sigma'' \Phi_1(y'', v) + \sigma \Phi_2(y'', y', \sigma/\sigma'', v, w) \big)/\rho} \\
\times q(\lambda, \rho, y, \mu, \nu) (\sigma'')^{-r_{\lb}+k/2+n-1} a(\lambda,  \rho', y'', y', \sigma/\sigma'',  v, w) \, dv \, dw \,  d\mu \, d\nu \, dy'' \, d\sigma'' \, \omegab. 
\end{multline*}
Now we perform stationary phase in the $(y'', \sigma'', \mu, \nu)$ variables. The phase has a nondegenerate stationary point where $y'' = y, \sigma'' = 1, \mu = d_y \Phi_1, \nu = \Phi_1 -  d_\sigma \Phi$,  and the rest of the argument to bound $\WF'_{\bfc}(QF)$ is the same as for $F_1$. We also see from the stationary phase expansion that $\WF'_{\lb}(QF)$ is contained in both $\WF'(Q)$ and $\WF'_{\lb}(F)$. 

$\bullet$ $F_4$ term. This works just as for the $F_2$ term.

$\bullet$ $F_3$ term. In the notation \eqref{u3},  the composition $Q F_3$ takes the form 
\begin{multline*}
(2\pi)^{-n}  \int  e^{i\big((y-y'')\cdot \mu + (1-\sigma'') \nu\big)/ \rho} q(\lambda, \rho, y, \mu, \nu) ( \rho'')^{m-(k+k')/2+2n/4} (\sigmat \sigma'')^{r_{\rb}-k/2}  \\
\times 
\int  e^{i\Phi'_1(y',v)/ \rho'} e^{i\Phi'_2(y', y'', \sigmat \sigma'', v,w)/ \rho''} a(\lambda,  \rho'', y'', y', \sigmat \sigma'',v,w) \, dv \, dw \, d\mu \, d\nu \, \frac{dy'' \, d \rho''}{( \rho'')^{n+1}} \,  \omegab.
\end{multline*}
This can be written
\begin{multline*}
(2\pi)^{-n} \int  e^{i\big((y-y'')\cdot \mu + (1-\sigma'') \nu + \sigma''\Phi'_2(y', y'', \sigmat \sigma'', v,w) \big)/ \rho} q(\lambda,  \rho, y, \mu, \nu) ( \rho/\sigma'')^{m-(k+k')/2}   \\
\times (\sigmat \sigma'')^{r_{\rb}-k/2}
 e^{i\Phi'_1(y',v)/ \rho'}  a(\lambda,  \rho / \sigma'', y'', y', \sigmat \sigma'', v, w) \, dv \, dw \, d\mu \, d\nu \, \frac{dy'' \, d\sigma''}{\sigma''} \, \omegab. 
\end{multline*}
To investigate the behaviour of this integral locally near a point $(x=0, \sigmat = 0, y, y') \in \bfc \cap \rb$, we  perform stationary phase in the $(y'', \sigma'', \mu, \nu)$ variables. The phase has a nondegenerate stationary point where $y'' = y, \sigma'' = 1, \mu = d_y \Phi'_2, \nu = \Phi'_2 + \sigmat d_{\sigmat} \Phi'_2$,  and we get an asymptotic expansion as $ \rho \to 0$ of the form
$$
 \rho^{m-(k+k')/2+2n/4} \sigmat^{r_{\rb}-k/2} 
\int e^{i\Phi'_1(y',v)/ \rho'} e^{i\Phi'_2(y, y', \sigmat, v,w)/ \rho} \tilde a(\lambda,  \rho, y, y', \sigmat, v, w) \, dv \, dw  \, \omegab, 
$$
where 
\begin{multline}
\tilde a(\lambda,  \rho, y, y', \sigmat, v, w) = 
 \sum_{j=0}^M \rho^j \Big(  \frac{(-i(\partial_{y''} \cdot \partial_\mu + \partial_{\sigma''} \partial_\nu))^j}{j!}  q(\lambda,  \rho, y, \mu, \nu)  \\ \times (\sigma'')^{-m+r_{\rb}+k'/2} a(\lambda, \rho'', y'', y', \sigmat \sigma'',  v, w) \Big) \Bigg|_{y=y'', \sigma'' = 1, \mu = d_y \Phi'_1, \nu =\Phi'_2 + \sigmat d_{\sigmat} \Phi'_2} \!\!\!\!\!\!\!\!+ O( \rho^{M+1}) .
\label{comp-symbol-formula-4}\end{multline}
This is a Legendre distribution associated to $\Lambda$ of the same order as $F$, and with the same index family. Moreover, we see from the formula  \eqref{comp-symbol-formula-4} that the microlocal support  $\WF'_{\bfc}(QF_3)$ is contained in $\WF'_{\bfc}(F)$, as well as contained in $\pi_L^{-1} \WF'(Q)$. Finally, if $q = 1 + O( \rho^\infty)$ on $\pi_L(\WF'_{\bfc}(F))$, then in the sum over $j$  in \eqref{comp-symbol-formula}, only the $j=0$ term is nonzero, because in all other terms, either $a = 0$ or $q=1 + O( \rho^\infty)$ (implying that any derivative of $q$ is $O( \rho^\infty)$) when evaluated at $y=y'', \sigma'' = 1, \mu = d_y \Phi'_2, \nu =\Phi'_2 + \sigma d_\sigma \Phi'_2$. Therefore, in this case, $QF_3 = F_3$ mod $O(x^\infty)$. 

$\bullet$ $F_5$ term. Writing $F_5$ in the form \eqref{u5}, we investigate $QF_5$ near a point $(z,  \rho', y')$ where $z \in M^\circ$. In this case, we can find a neighbourhood $W$ of $z$ with $\overline{W} \subset M^\circ$, and then the set 
$$
\{ (z,z') \in \supp Q \mid z \in W \} 
$$
is contained in $W \times W'$ for some $W'$ with $\overline{W'} \subset M^\circ$, since the support of $Q$ is contained in the set where $\sigma \in [1/2, 2]$. But in $W \times W'$, the kernel of $Q$ is smooth since $Q$ has differential order $-\infty$. Therefore, in this region the composition is given by an integral
$$
\int Q(z,z'') ( \rho')^{r_{\rb}-k/2} \int e^{i\Phi_1(y',v)/ \rho'} a(\lambda, z'', y',  \rho', v) \, dv \, dz'' \, \omegab
$$
with $Q(z,z'')$ smooth, and this has the form 
$$
( \rho')^{r_{\rb}-k/2} \int e^{i\Phi_1(y',v)/ \rho'} \tilde a(\lambda, z, y',  \rho', v) \, dv \, \omegab
$$
for some $\tilde a$ depending polyhomogeneously on $\lambda$ and  smoothly in its other arguments. Moreover, if for a fixed $(\lambda, y', v)$, $a$ is $O(( \rho')^\infty)$ in a neighbourhood of
$$
\{ (\lambda, z, y', 0, v) \mid z \in M \},
$$
then  the same is true of $\tilde a$. Therefore, $\WF'_{\rb}(QF_5)$ is contained in $\WF'_{\rb}(F_5)$ but is (in general) no smaller. 

$\bullet$ Since $\WF'(F_6) = \WF'(QF_6) = \emptyset$, the $F_6$ term makes no contribution to the wavefront set.

This completes the proof. 
\end{proof}

A similar result holds if $F$ is associated to a Legendre conic pair rather than a single Legendre submanifold. However, rather than give a full analogue of the result above, we give the following special cases which suffice for our needs. 

\begin{lem}\label{QFQ'-microsupport-conic}
(i) Suppose that $F \in I^{m, p; r_{\lb}, r_{\rb}; \mcA}(\MMkb, (\Lambda, \Lambda^\sharp); \Omegakbh)$ is a Legendre distribution on $\MMkb$ associated to a conic Legendrian pair $(\Lambda, \Lambda^\sharp)$,  and suppose that $Q \in \Psi^{-\infty}_k(M; \Omegakbh)$ is a scattering pseudodifferential operator such that $Q$ is microlocally equal to the identity operator near $\pi_L(\Lambda \cup \Lambda^\sharp)$. Then $Q F  - F \in I^{\infty, \infty; \infty, r_{\rb}; \mcA}(\Lambda, \Lambda^\sharp)$, so vanishes to infinite order at $\lb$ and $\bfc$. Similarly, if $F Q - F \in I^{\infty, \infty; r_{\lb}, \infty; \mcA}(\Lambda, \Lambda^\sharp)$ vanishes to infinite order at $\bfc$ and $\rb$.

(ii) Suppose that $F$ is as above,  and suppose that $Q$, $Q'$ are scattering pseudodifferential operators as above. If 
\begin{equation}
\pi_L^{-1} \WF'(Q) \cap \pi_R^{-1} \WF'(Q') \cap \Lambda^\sharp = \emptyset,\label{WF'QFQ'-conic}\end{equation}
 then $Q F Q' \in I^{m,r}(\MMkb, \Lambda; \Omegakbh)$; in particular, $\WF'_{\bfc}(QFQ')$ is disjoint from $(\Lambda^\sharp)'$.  \end{lem}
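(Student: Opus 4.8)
The plan is to reduce both statements to Lemma~\ref{QFQ'-microsupport} by splitting $F$ according to its behaviour near the conic point $\Lambda^\sharp$. Recall from \cite[Section 3]{GHS} (and \cite{HV1}) that a Legendre distribution associated to an intersecting conic Legendrian pair $(\Lambda, \Lambda^\sharp)$ can be written, modulo a term vanishing rapidly at $\bfc$, $\lb$ and $\rb$, as $F = G + H$: here $G$ is microlocally supported away from $\Lambda^\sharp$ and is a plain Legendre distribution associated to $\Lambda$, lying in $I^{m; r_{\lb}, r_{\rb}; \mcA}(\MMkb, \Lambda; \Omegakbh)$, while $H$ is microlocally supported in an arbitrarily small neighbourhood of $\Lambda^\sharp$ and near $\bfc$ has an oscillatory representation of the type \eqref{u1}--\eqref{u3} carrying an extra integration over a ``conic'' variable $s$ together with a phase homogeneous of degree one in $s$ (the blow-down of $\Lambda^\sharp$). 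The point I would exploit is that the base variables $(y, y', \sigma, v)$ over which $Q$, acting on the left, convolves are uncoupled from $s$, so the composition computations of Lemma~\ref{QFQ'-microsupport} carry over with $s$ (and the conic phase factor) along for the ride.

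For part (i): the $G$ term is handled directly by Lemma~\ref{QFQ'-microsupport} --- since $Q$ is microlocally the identity near $\pi_L(\Lambda)$, $QG - G \in I^{\infty, \infty, r_{\rb}}(\MMb)$. For the $H$ term I would repeat the $F_1$, $F_2$, $F_3$ stationary-phase computations in the proof of Lemma~\ref{QFQ'-microsupport} verbatim, treating the $s$-integral and conic phase as inert parameters; the stationary phase in $(y'', \sigma'', \mu, \nu)$ then produces an expansion as in \eqref{comp-symbol-formula} whose leading amplitude is the original amplitude of $H$ times $q$ evaluated at $y = y''$, $\sigma'' = 1$, $\mu = d_y\Phi$, $\nu = \Phi + \sigma d_\sigma\Phi$. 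By hypothesis this value of $q$ is $1 + O(\rho^\infty)$ on $\pi_L(\Lambda^\sharp)$, killing the $j\geq 1$ terms and the remainder, so $QH - H$ is rapidly decreasing at $\bfc$ and $\lb$. Adding the pieces gives $QF - F \in I^{\infty, \infty; \infty, r_{\rb}; \mcA}(\Lambda, \Lambda^\sharp)$, and the $FQ$ statement follows by taking adjoints, exactly as for Lemma~\ref{QFQ'-microsupport}.

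For part (ii): I would again decompose $F = G + H$, now choosing the support neighbourhood of $H$ small enough that $\pi_L^{-1}\WF'(Q) \cap \pi_R^{-1}\WF'(Q') \cap \WF'_{\bfc}(H) = \emptyset$; this is possible because $\WF'_{\bfc}(H)$ can be taken inside any prescribed neighbourhood of $(\Lambda^\sharp)'$, the set $\pi_L^{-1}\WF'(Q) \cap \pi_R^{-1}\WF'(Q')$ is closed and meets $(\Lambda^\sharp)'$ trivially by \eqref{WF'QFQ'-conic}, and $\WF'(Q), \WF'(Q')$ are compact. For $G$, Lemma~\ref{QFQ'-microsupport} and its adjoint give $QGQ' \in I^{m; r_{\lb}, r_{\rb}; \mcA}(\MMkb, \Lambda; \Omegakbh)$ with $\WF'_{\bfc}(QGQ') \subset \pi_L^{-1}\WF'(Q) \cap \pi_R^{-1}\WF'(Q') \cap \WF'_{\bfc}(G)$, which is disjoint from $(\Lambda^\sharp)'$. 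For $H$, running the same composition-and-stationary-phase argument on both sides keeps $QHQ'$ of conic-pair type with $\WF'_{\bfc}(QHQ') \subset \pi_L^{-1}\WF'(Q) \cap \pi_R^{-1}\WF'(Q') \cap \WF'_{\bfc}(H) = \emptyset$; hence $QHQ'$ is rapidly decreasing at $\bfc$, so supported away from the conic point and a plain Legendre distribution associated to $\Lambda$. Summing yields $QFQ' \in I^{m,r}(\MMkb, \Lambda; \Omegakbh)$, so in particular $\WF'_{\bfc}(QFQ')$ is disjoint from $(\Lambda^\sharp)'$.

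I expect the main obstacle to be the $H$ analysis in part (i): one must verify that the stationary-phase reduction of Lemma~\ref{QFQ'-microsupport}, performed in the base variables of the left factor, is genuinely uniform up to the conic point --- i.e. that it commutes with the blow-up of $\Lambda^\sharp$ and does not disturb the conic homogeneity of the phase in $s$ --- and that the resulting amplitude still depends on $s$ in the right way, so that $QH$ is again a conic-pair Legendre distribution rather than merely a distribution with the right singular support. Once this compatibility is in hand, the rest is bookkeeping already carried out in the single-Legendrian case.
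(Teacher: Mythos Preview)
Your proposal is correct and follows essentially the same approach as the paper: decompose $F = F_\Lambda + F_\sharp$ (your $G + H$) with $F_\Lambda$ a plain Legendre distribution associated to $\Lambda$ and $F_\sharp$ supported near $\Lambda^\sharp$, then apply Lemma~\ref{QFQ'-microsupport} to $F_\Lambda$ and treat $F_\sharp$ separately. For part~(ii) the paper's treatment of $F_\sharp$ is slightly more direct than yours: rather than running the composition formalism and reading off an empty microlocal support, the paper simply observes that condition~\eqref{WF'QFQ'-conic} forces the \emph{total} phase in the oscillatory integral for $QF_\sharp Q'$ to be non-stationary on the support of the integrand, so repeated integration by parts gives rapid decrease at $\bfc$, $\lb$, $\rb$ immediately --- no need to first verify that $QF_\sharp Q'$ is itself a conic-pair Legendre distribution. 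This bypasses exactly the uniformity-near-the-conic-point issue you flag at the end, which is why the paper does not need to confront it in part~(ii). For part~(i) the paper says only ``similar to above''; your detailed plan (carrying the conic variable $s$ as an inert parameter through the stationary-phase computation) is a reasonable way to fill that in, and the concern you raise about compatibility with the blow-up is the right thing to check.
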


\begin{proof} The proof of (i) is similar to above. To prove (ii), decompose  $F = F_\Lambda + F_{\sharp}$, where $F_\Lambda \in I^{m,r}(\MMkb, \Lambda; \Omegakbh)$ is a Legendre distribution associated only to $\Lambda$ and $F_{\sharp}$ is localized sufficiently close to $\Lambda^\sharp$. Here, sufficiently close means that when we write down $QF_\sharp Q'$ as an (sum of) integral(s), using a phase function that  local parametrizes of $(\Lambda, \Lambda^\sharp)$, then \eqref{WF'QFQ'-conic} implies that the total phase is non-stationary on the support of the integrand. The usual integration-by-parts argument then shows that this kernel is  rapidly decreasing at $\bfc, \lb, \rb$ and hence trivially satisfies the conclusion of the lemma. On the other hand, Lemma~\ref{QFQ'-microsupport} applies to $F_\Lambda$ and completes the proof. 
\end{proof}


\section{Low energy estimates on the spectral measure}
\subsection{Pointwise bounds on Legendre distributions}\label{leb}

Now we give a pointwise estimate on Legendre distributions of a particular type. First we begin with a trivial estimate. 

\begin{prop}\label{triv-1} Let $\Lambda \subset \Tscstar_{\bfc}(\MMb)$  be a Legendre  submanifold  that projects diffeomorphically to\footnote{In this subsection, $\bfc$ denotes the boundary hypersurface of $\MMb$ (as opposed to $\MMkb$).} $\bfc$. Suppose that $u \in I^{-n/2 - \alpha, -\alpha, -\alpha; \mcA}(\MMkb, \Lambda)$. 
Let 
\begin{equation}
b = \min(\min \mcA_{\bfo} + n, \, \min \mcA_{\lb} + n/2, \, \min \mcA_{\rb} + n/2, \, \min \mcA_{\zf}).
\label{b}\end{equation}
Then, as a multiple of the half-density $|dg dg' d\lambda/\lambda|^{1/2}$, 
we have a pointwise estimate
$$
|u| \leq \lambda^b (\rho^{-1} + (\rho')^{-1})^{\alpha}.
$$
\end{prop}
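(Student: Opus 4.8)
The plan is to prove this by working separately on each of the coordinate charts covering $\MMkb$, using the oscillatory representations \eqref{u1}--\eqref{u5} of a Legendre distribution associated to a single Legendre submanifold $\Lambda$ that projects diffeomorphically to $\bfc$. The key simplification is that, because $\Lambda$ projects diffeomorphically to $\bfc$, we may parametrize $\Lambda$ locally by a phase function $\Phi(y,y',\sigma)$ with \emph{no} oscillatory variables $v$, i.e.\ $k=0$ in \eqref{u1} and $k=k'=0$ in \eqref{u2}, \eqref{u3} (and the analogous statement at $\lb$, $\rb$). This turns each oscillatory integral into an honest function times a pure phase $e^{i\Phi/\rho}$ (or $e^{i\Phi_1/\rho}$ etc.), whose modulus is controlled simply by the modulus of the amplitude, so no stationary-phase estimate is needed — only a bookkeeping of the powers of $\rho$, $\rho'$, $\lambda$ and of the half-density factor $\omegab$.

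First I would fix the reference half-density $|dg\,dg'\,d\lambda/\lambda|^{1/2}$ and compute the ratio $\omegab \big/ |dg\,dg'\,d\lambda/\lambda|^{1/2}$ in each region, using the formulas for $\omegab$ recorded in Section~\ref{sec:MMkb}: in the region $\rho,\rho'\le C$ one has $\omegab \sim \lambda^{n}\,|dg\,dg'\,d\lambda/\lambda|$, so as a \emph{half}-density $\omegab \sim \lambda^{n}\,|dg\,dg'\,d\lambda/\lambda|^{1/2}$ times bounded factors; for $\rho,\rho'\ge C$ one has $\omegab = (xx')^{n}|dg\,dg'\,d\lambda/\lambda|$ as a half-density this contributes $(xx')^{n/2}$ and in that region $x\sim x'\sim 1$ so it is harmless. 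Next, in the $\bfc$-region (representation \eqref{u1} with $k=0$) the prefactor is $\rho^{m+n/2}$ with $m=-n/2-\alpha$, i.e.\ $\rho^{-\alpha}$; combined with the amplitude bound $|a|\le C\lambda^{\min\mcA_{\bfo}}$ (polyhomogeneous in $\lambda$ with index set $\mcA_{\bfo}$, smooth otherwise, hence on a compact chart bounded by $\lambda^{\min\mcA_{\bfo}}$ times a log power which I absorb by using strict inequality/enlarging $b$ slightly, or more cleanly by noting $\min\mcA_{\bfo}$ is attained with a genuine power) and the half-density factor $\lambda^{n}$ from $\omegab$, one gets $|u|\le C\lambda^{\min\mcA_{\bfo}+n}\rho^{-\alpha}$, which is $\le \lambda^{b}(\rho^{-1}+(\rho')^{-1})^{\alpha}$ since in this region $\sigma=\rho/\rho'\in[1/2,2]$ forces $\rho^{-1}\le 2(\rho')^{-1}$ (and the Legendrian is compact so $\sigma$ stays in a compact set). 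I would then repeat this in the $\bfc\cap\lb$ region using \eqref{u2}: there the prefactor is $\sigma^{r_{\lb}}\,(\rho')^{m-0+n/2} = \sigma^{-\alpha}(\rho')^{-\alpha}$ and $\sigma^{-\alpha}(\rho')^{-\alpha}=\rho^{-\alpha}$, so the same estimate results, with the power of $\lambda$ now being $\min\mcA_{\bfo}+n$ if the relevant blow-down face is still $\bfo$, and $\min\mcA_{\lb}+n/2$ near $\lb$ (the discrepancy $n$ versus $n/2$ being exactly why $b$ in \eqref{b} takes a minimum over the faces with these different shifts — the shift is $n$ at $\bfo$ and $\zf$ because there both $dg$ and $dg'$ degenerate, and $n/2$ at $\lb,\rb$ because only one of them does). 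Symmetrically the $\bfc\cap\rb$, $\lb$-away-from-$\bfc$, $\rb$-away-from-$\bfc$ regions \eqref{u3}--\eqref{u5} give powers $\min\mcA_{\rb}+n/2$, $\min\mcA_{\lb}+n/2$, $\min\mcA_{\rb}+n/2$ respectively, and the $F_6$ piece, being rapidly decreasing, is dominated by any positive power of $\lambda$ times a bounded factor; taking the worst of all these exponents gives exactly the exponent $b$ of \eqref{b}, and in every region $\sigma$ bounded above and below converts the single negative power $\rho^{-\alpha}$ (or $(\rho')^{-\alpha}$) into a constant times $(\rho^{-1}+(\rho')^{-1})^{\alpha}$.

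I do not expect a serious obstacle here; the statement is deliberately a ``trivial estimate'' used as a warm-up. The only points requiring minor care are: (a) converting the half-density $\omegab$ to the reference half-density correctly in each of the two regimes $\rho,\rho'\le C$ and $\rho,\rho'\ge C$, and checking that the transition region is handled by a partition of unity with uniformly bounded factors; (b) the polyhomogeneous amplitudes carry possible logarithmic factors $(\log\lambda)^{p}$ at the boundary faces — these are absorbed either by choosing $b$ slightly smaller than \eqref{b} (harmless since the bound is an upper bound and $\lambda\le\lambda_0$) or by observing that the \emph{leading} term of the expansion has a pure power and the remainder is $O(\lambda^{b+\epsilon})$; and (c) ensuring that the compactness of $\Lambda$ is used to keep $\sigma=x/x'$, the amplitudes, and the phase derivatives uniformly bounded on the relevant charts, so that all the $C$'s are uniform. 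Assembling (a)--(c) over the finitely many charts completes the proof.
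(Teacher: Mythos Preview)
Your approach is essentially the paper's: since $\Lambda$ projects diffeomorphically to $\bfc$, one may take $k=k'=0$ in each of the representations \eqref{u1}--\eqref{u5}, so there is no integration and the estimate is read off from the prefactors together with the half-density conversion $|dg\,dg'\,d\lambda/\lambda|^{1/2}=\rho_{\lbo}^{-n/2}\rho_{\rbo}^{-n/2}\rho_{\bfo}^{-n}\,\omegab$; the paper says exactly this in one sentence.

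Two small corrections to your bookkeeping. First, the boundedness of $\sigma$ on the $F_1$ chart is \emph{not} a consequence of the compactness of $\Lambda$ --- compactness of the Legendrian constrains only the fiber coordinates $(\mu,\mu',\nu,\nu')$, not the base coordinate $\sigma$ (cf.\ Remark~\ref{sigma-behaviour}); rather, $\sigma$ is bounded on that chart simply because the chart is by definition localized away from $\lb$ and $\rb$. Second, the shift at $\zf$ in \eqref{b} is $0$, not $n$: the face $\zf=\{\lambda=0\}$ lies over the interior of $M^2$ (up to the b-blowup), so neither $dg$ nor $dg'$ degenerates there, and indeed the paper's half-density formula involves only $\rho_{\bfo},\rho_{\lbo},\rho_{\rbo}$.
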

This is trivial since in this case, $u$ may be written as an oscillatory function with no integration, and the order of vanishing/growth at the boundary may be determined by inspection from \eqref{u1} --- \eqref{u5}. (The discrepancies of $n$ and $n/2$ in \eqref{b} come about from comparing 
the nonvanishing half-density $\omegab$ on $\MMkb$ with the metric half-density $|dg dg' d\lambda/\lambda|^{1/2} = \rho_{\lbo}^{-n/2} \rho_{\rbo}^{-n/2} \rho_{\bfo}^{-n} \, \omegab$.)

Now consider a situation in which the Legendre submanifold does not project diffeomorphically to $\bfc$. Let $\ddiagb$ denote the boundary of the diagonal in $\MMb$, as in \eqref{ddiagb}. Recall that we have coordinates $(y, y', \sigma)$ on $\bfc$ near $Z$. Let $w = (y - y', \sigma - 1)$, 
 and let $\kappa$ be the corresponding scattering coordinates dual to $w$. Then $\ddiagb$ is given by $\{ w = 0  \}$ as a submanifold of $\bfc$ and the contact form on $\Tscstar_{\bfc} \MMb$ takes the form
\begin{equation}
d\nu - \mu \cdot dy - \kappa \cdot dw.
\label{cf}\end{equation}
In these coordinates, the Legendre submanifold $\Nscstar \ddiagb$ is given by $\{  w = 0, \mu = 0, \nu = 0 \}$. Let $\Lambdabf$ be a Legendre submanifold contained in $\Tscstar_{\bfc} \MMb$, denote by $\pi$ the natural projection from $\Tscstar_{\bfc} \MMb \to \bfc$, and for any $q \in \Lambdabf$ denote by $d\pi$ the induced map from $T_q \Lambdabf \to T_{\pi(q)} \bfc$. We consider the following situation in which the rank of $d\pi$ is allowed to change. 

\begin{prop}\label{Leg-bd-1} Let   $\Lambdabf$ be as above. Suppose   that $\Lambdabf$ intersects $\Nscstar \ddiagb$  at $\Gbf = \Lambdabf \cap \Nscstar \ddiagb$ which is codimension 1 in $\Lambdabf$, and suppose that $\pi |_{\Gbf}$ is a fibration, with $(n-1)$-dimensional fibres, to $\ddiagb$. 
Assume further that $d\pi$ has full rank on $\Lambdabf \setminus {\Gbf}$, while 
\begin{equation}
\text{$\det d\pi$ vanishes to  order exactly $n-1$ at ${\Gbf}$.}
\label{det-ass}\end{equation}

Suppose $u \in I^{-n/2-\alpha, -\alpha, -\alpha; \mcA}(\MMkb, \Lambdabf; \Omegakbh)$, and suppose that the (full) symbol of $u$ vanishes to order $(n-1)/2 + \alpha$ on ${\Gbf} \times [0, \lambda_0]$, where $(n-1)/2 + \alpha \in \{ 0, 1, 2, \dots \}$. Then  as a multiple of the scattering half-density $|dg dg' d\lambda/\lambda|^{1/2}$, we have a pointwise estimate 
\begin{equation} 
|u| \leq  C \lambda^b \big( 1+ \frac{ |w|}{\rho} \big)^\alpha \sim C\lambda^b (1 + \lambda d(z, z'))^\alpha
\label{Linfty-est}
\end{equation}
with $b$ as in \eqref{b}. Here $d(z,z')$ is the Riemannian distance between $z, z' \in M^\circ$. 
\end{prop}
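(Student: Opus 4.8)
\emph{Proof strategy.} The plan is to split $u$ according to whether or not the projection $\pi$ is degenerate. Since $d\pi$ has full rank on $\Lambdabf\setminus\Gbf$, there $\Lambdabf$ projects locally diffeomorphically onto $\bfc$; so if $\chi$ equals $1$ near $\Gbf$ and is supported in a small neighbourhood of $\Gbf$ in $\Lambdabf$, the piece $u_{\mathrm{far}}$ of $u$ microsupported on $\{\chi=0\}$ can be written, in finitely many local charts, as an oscillatory function of type \eqref{u1}--\eqref{u5} with no fibre integration, so Proposition~\ref{triv-1} applies and gives $|u_{\mathrm{far}}|\le C\lambda^b(\rho^{-1}+(\rho')^{-1})^\alpha$. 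Moreover $\WF'(u_{\mathrm{far}})$ is disjoint from $\Nscstar\ddiagb$, so, up to a rapidly decreasing error, the kernel of $u_{\mathrm{far}}$ is supported where $|w|\ge c>0$; since $|w|$ is also bounded above on $\MMb$ and $\rho\sim\rho'$ there (the kernel lives in $\{\sigma\in[1/2,2]\}$), one checks that $(\rho^{-1}+(\rho')^{-1})^\alpha\le C(1+|w|/\rho)^\alpha$ on this support, for either sign of $\alpha$, which is the desired bound for $u_{\mathrm{far}}$.

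It remains to bound $u_{\mathrm{near}}$, microsupported near $\Gbf$. Here the hypotheses are precisely what is needed to put $\Lambdabf$ into a normal form: since $\Gbf$ has codimension $1$ in $\Lambdabf$, $\pi|_{\Gbf}$ fibres over $\ddiagb$ with $(n-1)$-dimensional fibres, and $\det d\pi$ vanishes to order exactly $n-1$ on $\Gbf$ (condition \eqref{det-ass}), one finds --- as in the treatment of the propagating Legendrian in \cite{HV2}, \cite{GHS} --- a contact transformation preserving $\Nscstar\ddiagb$ that parametrizes $\Lambdabf$ near $\Gbf$ by a phase $\Phi(y,y',\sigma,v)$, $v\in\RR^{n-1}$, with $\Phi|_{w=0}\equiv0$ (so $d_v\Phi=0$ on the critical set over $\Gbf=\{w=0\}$), with $d_v^2\Phi$ nondegenerate for $w\ne0$ and a one-dimensional degeneracy at $w=0$, and with leading radial model form $\Phi=w\cdot V(v)$, $V$ an immersion whose image meets the unit sphere transversally. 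With this normal form $u_{\mathrm{near}}$ is a finite sum of terms of shape \eqref{u1} with $k=n-1$, $m=-n/2-\alpha$, amplitude $a$ polyhomogeneous in $\lambda$ with index family $\mcA$ and, by the symbol hypothesis, vanishing to order $(n-1)/2+\alpha$ at $w=0$ on the support of the integrand; the prefactor is $\rho^{m-k/2+n/2}=\rho^{-(n-1)/2-\alpha}$.

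Such a term is then estimated using the rescaled variable $W:=w/\rho$, which satisfies $|W|\sim\lambda\,d(z,z')$ near $\bfc$. For $|W|\le1$: no stationary phase is needed, the amplitude is $O(|w|^{(n-1)/2+\alpha})=O(\rho^{(n-1)/2+\alpha})$, which cancels the prefactor, and collecting boundary weights (the comparison of $\omegab$ with $|dg\,dg'\,d\lambda/\lambda|^{1/2}$ and the index family $\mcA$, exactly as in Proposition~\ref{triv-1}) leaves $|u_{\mathrm{near}}|\le C\lambda^b$, consistent with $(1+|W|)^\alpha$. For $|W|\ge1$: apply stationary phase in $v$; the critical points are nondegenerate and separated while $|W|$ is bounded below, the expansion gains $\rho^{k/2}=\rho^{(n-1)/2}$, and the amplitude at a critical point is $O(|w|^{(n-1)/2+\alpha})=O((\rho|W|)^{(n-1)/2+\alpha})$, so the $\rho$ powers cancel and we are left with $|u_{\mathrm{near}}|\le C\lambda^b|W|^\alpha=C\lambda^b(|w|/\rho)^\alpha$, again consistent with $(1+|W|)^\alpha$. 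The analogous and easier estimates at the faces $\zf,\lbo,\rbo,\lb,\rb$ (where $w$ is bounded away from $0$) follow from Proposition~\ref{triv-1} with the weights recorded in $\mcA$. Adding the contributions of $u_{\mathrm{near}}$ and $u_{\mathrm{far}}$ and using $|w|/\rho\sim\lambda d(z,z')$ yields \eqref{Linfty-est}.

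The main obstacle is the uniformity of the stationary phase estimate across the transition regime $|W|\sim1$, where the nondegenerate $v$-critical points coalesce onto the degenerate critical set over $\Gbf$ as $|W|\to0$: one must verify that the degeneration of $d_v^2\Phi$ controlled by \eqref{det-ass}, the order of vanishing of the amplitude, and the error terms in the stationary phase expansion combine into a bound that is uniform down to $|W|\sim1$ and matches the $|W|\le1$ estimate there. It is precisely here that the exact orders in the hypotheses --- $n-1$ for $\det d\pi$ and $(n-1)/2+\alpha$ for the symbol --- enter: any other value would either spoil the matching at $|W|\sim1$ or change the power of $\lambda$ in \eqref{Linfty-est}. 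A clean way to organize this is to carry out the rescaling $w\mapsto W$ once and for all and recognize $u_{\mathrm{near}}$, at fixed $\lambda$ and fixed base point, as a conormal (Bessel-type) distribution in $W$ with the stated uniform decay; establishing that uniformity is the real content of the proof.
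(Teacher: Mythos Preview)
Your overall architecture matches the paper's: reduce via Proposition~\ref{triv-1} to a neighbourhood of $\Gbf$, construct a phase $\Phi$ parametrizing $\Lambdabf$ there with $\Phi$ vanishing along $\Gbf$, and split according to whether $|w|/\rho$ is small or large. But there is a concrete bookkeeping error in your $|W|\ge 1$ step, and the ``real content'' you flag is exactly what you have not done.

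The stationary-phase gain is \emph{not} $\rho^{(n-1)/2}$. Condition~\eqref{det-ass} forces $d_v^2\Phi = w_1 A + O(w_1^2)$ with $A$ invertible (here $w_1$ is a defining function for $\Gbf$ in $\Lambdabf$, and $|w|\sim|w_1|$ locally), so $|\det d_v^2\Phi|\sim|w_1|^{n-1}$. The correct gain is therefore $(\rho/|w_1|)^{(n-1)/2}=|W|^{-(n-1)/2}$; with your stated gain the $\rho$-powers do not cancel. Once corrected, the arithmetic does close: $\rho^{-(n-1)/2-\alpha}\cdot(\rho/|w_1|)^{(n-1)/2}\cdot|w_1|^{(n-1)/2+\alpha}=|W|^\alpha$.

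The paper makes this rigorous and uniform by the change of variables $\theta_i = w_1^{-1/2}\,\partial_{v_i}\Phi$, so that $\partial\theta/\partial v = w_1^{1/2}A$ and the Jacobian $|w_1|^{(n-1)/2}$ exactly absorbs the Hessian degeneration. In the $\theta$ coordinates the phase satisfies $\partial_\theta\Phi = A^{-1}\theta$ uniformly, and one integrates by parts via $e^{i\Phi/\rho}=\big(\tfrac{\rho}{i\theta_j}A_{jk}\partial_{\theta_k}\big)e^{i\Phi/\rho}$ after a dyadic cutoff separating $|\theta|\lesssim\sqrt{\rho}$ from $|\theta|\gtrsim\sqrt{\rho}$ (the standard argument in Sogge's book). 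This is precisely your ``rescaling $w\mapsto W$ once and for all'' made concrete, and it delivers the uniformity across $|W|\sim 1$ together with control of all remainder terms.

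Two smaller points. First, the amplitude vanishes to order $(n-1)/2+\alpha$ at the \emph{corner} $\{\rho=0,\ w_1=0\}$ (full symbol on $\Gbf\times[0,\lambda_0]$), which is why the paper writes the decomposition $a=\sum_j \rho^j w_1^{(n-1)/2+\alpha-j}a_j + \rho^{(n-1)/2+\alpha}b$; you will need the $j>0$ terms as well, and they are handled by the same mechanism (each extra $\rho$ trading against a missing $w_1$). Second, your reduction for $u_{\mathrm{far}}$ is fine, but note that the cleaner localization is in the base: away from $\ddiagb$ one has $|w|\ge c$ and Proposition~\ref{triv-1} gives $(\rho^{-1}+(\rho')^{-1})^\alpha\sim(1+|w|/\rho)^\alpha$ directly.
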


\begin{remark} Notice that the condition on $\pi$ at ${\Gbf}$ implies that $d\pi$ has corank at least $n-1$ on ${\Gbf}$, hence that $\det d\pi$ must vanish to order at \emph{least} $n-1$ there. Condition \eqref{det-ass} is therefore  that the order of vanishing at ${\Gbf}$ is the least possible, which is a nondegeneracy assumption concerning the manner in which the rank of the projection changes at ${\Gbf}$. It implies, in particular, that $\Lambdabf$ intersects $\Nscstar \ddiagb$ cleanly. 
\end{remark}

\begin{proof}
Let $q$ be an arbitrary point in ${\Gbf}$. By rotating in the $w$ variables, we can ensure that $d\kappa_1 |_{\Gbf}$ vanishes at $q$ (since $\kappa_1, \dots, \kappa_n$ are coordinates on the fibres of $\Nscstar \ddiagb \to \ddiagb$, and since $\pi |_{\Gbf} : {\Gbf} \to \ddiagb$ has $(n-1)$-dimensional fibres). We claim that $(y, w_1, \kappa_2, \dots, \kappa_{n})$ furnish coordinates on $\Lambdabf$ locally near $q$. 
To see this, first note that  
$d\kappa_2 |_{\Gbf}, \dots, d\kappa_n |_{\Gbf}$ are linearly independent at $q$, and furnish coordinates on the fibres of ${\Gbf} \to \ddiagb$. Next, since $\ddiagb$ is $(n-1)$-dimensional, ${\Gbf}$ is $2(n-1)$-dimensional, and the fibres of ${\Gbf} \to \ddiagb$ are $(n-1)$-dimensional, it follows that ${\Gbf} \to \ddiagb$ is a submersion. Since $y_i$ are local coordinates on the base $\ddiagb$, 
we see that $(y, \kappa_2, \dots, \kappa_{n})$ furnish coordinates on ${\Gbf}$ locally near $q$. Since $w_1 = 0$ on ${\Gbf}$, to prove the claim it suffices to show that $dw_1 |_{\Lambdabf} \neq 0$ at $q$. 

To see this, we use \eqref{det-ass} which implies that $d\pi$ has corank exactly $n-1$ at $q$, and hence there is a tangent vector $V \in T_q \Lambdabf$ such that $d\pi(V)$ is not tangent to $\ddiagb$. Therefore, it has a nonzero $\partial_{w_j}$ component, which means that some $dw_j$ does not vanish at $q$ when restricted to $\Lambdabf$. But since $\Lambdabf$ is Legendrian, the form \eqref{cf} vanishes when restricted to $\Lambdabf$, which implies that its differential $\omega \equiv d\mu \cdot dy + d\kappa \cdot dw$ also vanishes on $\Lambdabf$. Hence $\omega(\partial_{\kappa_j}, V) = 0$ at $q$, $j \geq 2$, since $\partial_{\kappa_j}$ and $V$ are both tangent to $\Lambdabf$. But this implies that $dw_j(V) = 0$ for $j \geq 2$, i.e. $V$ has no $\partial_{w_j}$ component for $j \geq 2$. It follows that $dw_1(V) \neq 0$, showing that $dw_1 |_{\Lambdabf} \neq 0$ at $q$.  It follows that $(y, w_1, \kappa_2, \dots, \kappa_{n})$ indeed furnish coordinates on $\Lambdabf$ locally near $q$. We will use the notation 
$\ybbar = (w_2, \dots, w_{n})$ and $\mubbar = (\kappa_2, \dots, \kappa_n)$. Notice that $w_1 |_{\Lambdabf}$ is a boundary defining function for ${\Gbf}$, as a submanifold of $\Lambdabf$, locally near $q$.

Now we write the other coordinates on $\Lambdabf$ as functions of $(y, \ybar, \mubbar)$ as follows:
\begin{equation}
\ybbar_i = W_i(y, \ybar, \mubbar), \ \mu_i = M_i(y, \ybar, \mubbar), \ \kappa_1 = \Mbar(y, \ybar, \mubbar), \ \nu = N(y, \ybar, \mubbar) \text{ on } \Lambdabf. 
\end{equation}
Notice that the vanishing of \eqref{cf} on $\Lambdabf$ implies that 
\begin{equation}
dN = \sum_{i=1}^{n-1} M_i dy_i + K dw_1 + \sum_{j=2}^n \kappa_j d W_j \text{ on } \Lambdabf. 
\label{WKN}\end{equation}
By equating the coefficients of $d\mubbar$, $dy$ and $dw_1$ on each side of \eqref{WKN}, we obtain 
 the following  identities:
\begin{equation}\begin{gathered}
\sum_{j=2}^n v_j \frac{\partial W_j(y, \ybar, v)}{\partial v_i} = \frac{\partial N(y, \ybar, v)}{\partial v_i}, \quad i = 2 \dots n, \\
\sum_{j=2}^n v_j \frac{\partial W_j(y, \ybar, v)}{\partial y_i} + M_i(y, \ybar, v) = \frac{\partial N(y, \ybar, v)}{\partial y_i}, \quad i = 1 \dots n-1, \\
\sum_{j=2}^n v_j \frac{\partial W_j(y, \ybar, v)}{\partial w_1} + \Mbar(y, \ybar, v) = \frac{\partial N(y, \ybar, v)}{\partial w_1}. 
\end{gathered}\label{identities}\end{equation}
We claim that the function
\begin{equation}
\Phi(y, \ybar, \ybbar, v) = \sum_{j=2}^n\big( \ybbar_j - W_j(y, \ybar, v) \big) v_j + N(y, \ybar, v)
\label{Phi-defn}\end{equation}
parametrizes $\Lambdabf$ locally near $q$. Notice that $W$, $M$ and $N$ are all $O(\ybar)$ at $q$. Hence, $\Phi = \ybbar \cdot v + O(\ybar)$, 
so $d_{v_j} \Phi = {\ybbar}_j + O(\ybar)$, $2 \leq j \leq n$,  have linearly independent differentials at the point $\tilde q = (y(q), w=0, \nu=0, \mu=0, \kappa_1 = 0, \mubbar(q))$ corresponding to $q$, i.e. $\Phi$ is a nondegenerate parametrization of $\Lambdabf$ near $q$. Next, using the first equation in \eqref{identities} we find that 
\begin{equation}
d_{v_j} \Phi = {\ybbar}_j - W_j(y, \ybar, v).  
\end{equation}
So $\ybbar = W$ when $d_v \Phi = 0$. The Legendrian submanifold parametrized is then given by (using \eqref{identities})
\begin{equation}\begin{gathered}
\Big\{ \big(y, \ybar, W, -v \cdot \frac{\partial W}{\partial y} + \frac{\partial N}{\partial y}, -v \cdot \frac{\partial W}{\partial \ybar} + \frac{\partial N}{\partial \ybar}, v, N \big) \Big\} \\
= \big\{ (y, \ybar, W, M, \Mbar, v, N) \big\} = \Lambdabf.
\end{gathered}\end{equation}

Notice that the second derivative matrix $d^2_{vv}\Phi$ vanishes at $\ybar = 0$. Therefore we can write $d^2_{vv}\Phi = \ybar A + O(\ybar^2)$, where $A$ is a smooth $(n-1) \times (n-1)$ matrix function of $(\oly, v)$, where we write $\oly = (y, \ybar, \ybbar)$. We claim that $A$ is invertible at (and therefore, near) $\tilde q$. To see this, we start from the fact that the map 
$$
\{ (\oly, v)  \} \to \{ (\oly, d_\oly\Phi, \Phi, d_v \Phi) \}
$$
is locally a diffeomorphism onto its image. (This follows directly from the nondegeneracy condition on $\Phi$, that the differentials $d(\partial \Phi / \partial v_j)$ are linearly independent.) Note that the determinant of
 the differential of the map 
$$
\{ (\oly, d_\oly\Phi, \Phi, d_v \Phi) \} \to \{ (\oly, d_v\Phi) \}
$$
is equal to the determinant of the differential of the map
$$
\{ (\oly, d_\oly\Phi, \Phi, d_v \Phi) \mid d_v \Phi = 0 \} \to \oly ,
$$
and this map is $\pi |_{\Lambdabf}$ (in local coordinates). It  follows that the order of vanishing of $\det d\pi$ at $q$ is the same as the order of vanishing of the determinant of the differential of the map 
$$
\{ (\oly, v) \} \to \{ (\oly, d_v\Phi) \}
$$
at $\tilde q$. But this determinant is simply $\det d^2_{vv}\Phi$. It follows from \eqref{det-ass} that $\det d^2_{vv}\Phi$ vanishes to order exactly $n-1$ at $\tilde q$. But this implies that the matrix $A$ is invertible at $\tilde q$, as claimed.

Now we write $u$ as an oscillatory integral. It suffices to prove the proposition assuming that $u$ has symbol supported close to $q$ and that $u$  itself is supported close to $\ddiagb$, since away from $\ddiagb$ the result follows from Proposition~\ref{triv-1}. It can then be written with respect to the phase function $\Phi$: modulo a smooth term vanishing to order $O(x^\infty)$, $u$ is a multiple of the scattering half-density $|dg dg' d\lambda/\lambda|^{1/2}$ given by
\begin{equation}
\rho^{-(n-1)/2 - \alpha} \lambda^n \int e^{i\Phi(y, w,v)/\rho} a(\lambda, \rho,y,v) \, dv |dg dg' d\lambda/\lambda|^{1/2}. 
\label{toest}\end{equation}
Moreover, we may assume that $a$ is a function only of $\lambda, \rho, y, \ybar$ and $v$, polyhomogeneous conormal in $\lambda$ with index set $\mcA_{\bfo}$, smooth and compactly supported in the remaining variables, and vanishing to order $(n-1)/2 + \alpha$ at $\rho=\ybar = 0$. It can therefore be written 
\begin{equation}
a = \sum_{j=0}^{(n-1)/2 + \alpha - 1} \rho^j \ybar^{(n-1)/2 + \alpha-j} a_j(\lambda, y, \ybar, v) + \rho^{(n-1)/2 + \alpha} b(\lambda, \rho, y, \ybar, v). 
\label{a-sum}\end{equation}
Note that the estimate is trivial  if  $|w_1| \leq \rho$, since then the integrand is uniformly bounded, and hence the integral is uniformly bounded in agreement with the estimate \eqref{Linfty-est} (since $|\ybar|$ is locally comparable to $|w|$). From now on, then, we will assume that $|w_1| \geq \rho$. We begin by estimating the $a_0$ term. 

Now, for fixed $\ybar \neq 0$, let us change variable from $v_1, \dots, v_{n-1}$ to $\theta_1, \dots, \theta_{n-1}$, where
\begin{equation}
\theta_i = \ybar^{-1/2} d_{v_i} \Phi.
\label{theta-var}\end{equation}
Then 
\begin{equation}
\frac{\partial \theta_i}{\partial v_j} = \ybar^{-1/2} d^2_{v_i v_j} \Phi = \ybar^{1/2} A_{ij},
\label{dwdv}\end{equation}
where $A_{ij}$ is nonsingular as we have noted above. Therefore,
\begin{equation}
\frac{\partial \Phi}{\partial \theta} = \Big( \frac{\partial \theta}{\partial v} \Big)^{-1} \frac{\partial \Phi}{\partial v} = A^{-1} \theta.
\label{A^{-1}theta}\end{equation}
This shows that the $\theta$ coordinates are suitable coordinates in which to perform stationary phase computations. We proceed with a standard argument, which can be found in Sogge's book \cite{Sogge}, for example. We use the identity
\begin{equation*}
e^{i\Phi/\rho} = \Big( \frac{\rho}{\ybar^{1/2} i \theta_j} \frac{\partial}{\partial v_j} \Big) e^{i\Phi/\rho},
\end{equation*}
which can be written
\begin{equation}
e^{i\Phi/\rho} =  \Big( \sum_k \frac{\rho}{ i \theta_j} A_{jk} \frac{\partial}{\partial \theta_k} \Big) e^{i\Phi/\rho}.
\label{ibp}\end{equation}
We also need the following observation: by applying \eqref{dwdv} repeatedly, we obtain
\begin{equation}
\big| \frac{\partial^{|\alpha|} A}{\partial^\alpha \theta} \big| \leq C |w_1|^{-|\alpha|/2} \leq C \rho^{-|\alpha|/2}.
\label{A-derivs}\end{equation}

In the $\theta$ coordinates, we are trying to prove the estimate
\begin{equation}
\Big| \rho^{-(n-1)/2 - \alpha} \int_{\RR^{n-1}} \ybar^\alpha e^{i\Phi(y, w,\theta)/\rho} \tilde a_0(\lambda, \rho, y, w_1, \theta) \, d\theta \Big| \leq C \big( \frac{\ybar}{\rho}\big)^{\alpha} \lambda^b.
\label{wts}\end{equation}
Here the $\ybar^{(n-1)/2}$ factor was absorbed as a Jacobian factor, and $\tilde a_0$  is again smooth. Clearly this is equivalent to a uniform bound on 
\begin{equation}
\Big| \rho^{-(n-1)/2} \lambda^{-b} \int_{\RR^{n-1}} e^{i\Phi(y, w,\theta)/\rho} \tilde a_0(\lambda, \rho, y, w_1, \theta) \, d\theta \Big| .
\label{uniform-bd}\end{equation}
We introduce a partition of unity in $(\rho, \theta)$-space, $1 = \chi_0 + \sum_{j=1}^{n-1} \chi_j$, where $\chi_0$ is a compactly supported function of $\theta/\sqrt{\rho}$, and $\chi_j$ is supported where $|\theta|\geq  \sqrt{\rho}$, and where $\theta_j \geq  |\theta|/(n-1)$. We can do this with derivatives estimated by 
\begin{equation}
| \nabla_\theta^{(k)} \chi_k | \leq C \rho^{-k/2}.
\label{chi-derivs}\end{equation}

The integral with $\chi_0$ inserted is trivial to estimate since it occurs on a set of measure $\rho^{(n-1)/2}$. With $\chi_j$ inserted, we use the identity 
\eqref{ibp} $M$ times, for $M$ a sufficiently large integer. Thus we consider 
$$
\rho^{-(n-1)/2} \int \chi_j \Big( \sum_k \frac{\rho}{ i \theta_j} A_{jk}(y, \theta) \frac{\partial}{\partial \theta_k} \Big)^M e^{i\Phi(y, w,\theta)/\rho} \tilde a_0(\lambda, \rho, y, w_1, \theta) \, d\theta
$$
and integrate by parts $M$ times. The result can be estimated by
\begin{equation}
C \rho^{-(n-1)/2+M} \sum_{k=0}^M  \rho^{-(M-k)/2} \int_{|\theta| \geq \sqrt{\rho}} 1_{\supp \,  \chi_j} \theta_j^{-M-k}  \, d\theta
\label{sum}\end{equation}
where $M-k$ derivatives fall on the $\chi_j$ or $A_{jk}$ terms (via \eqref{A-derivs} and \eqref{chi-derivs}), and at most $k$ fall on a $\theta_j^{-p}$ term. Note that on the support of $\chi_j$, we can estimate $\theta_j^{-1} \leq c |\theta|^{-1}$. 
The $\theta$ integral is absolutely convergent for $M > n-1$, and 
$$
\int_{|\theta| \geq \sqrt{\rho}}  |\theta|^{-M-k}  \, d\theta = C_k \rho^{-(M+k)/2 + (n-1)/2}
$$
since $\dim \theta = n-1$. Substitution of this into \eqref{sum} gives a uniform bound since $\tilde a$ is polyhomogeneous in $\lambda$ with index set $\mcA_{\bfo} + n$. Moreover, since $\Phi$ and $\tilde a$ are smooth in $\ybar$, the bound is uniform as $\ybar \to 0$. 

To treat the terms $a_i$ for $i > 0$ and $b$ in \eqref{a-sum}, we perform the same manipulations as above, and we end up with a uniform bound times $C\rho^{i} \ybar^{-i}$, which is bounded for $\rho \leq \ybar$. This completes the proof. 
\end{proof}

\subsection{Geometry of $L^{\bfc}$}
We collect here some facts concerning the geometry of the Legendre submanifold $L^{\bfc}$ (see Section~\ref{sec:MMkb}).  We begin by defining 
$$
\Gbf = \{ q = (y, y, \sigma, \mu, -\mu, \nu, -\nu) \in \Nscstar \ddiagb; \nu^2 + h^{ij} \mu_i \mu_j = 1 \}.
$$
Clearly, $\Gbf$ is an $S^{n-1}$-bundle over $\ddiagb$. 

\begin{lem}\label{L-geometry-2}
 The Legendre submanifold $\Ndiag$
intersects $L^{\bfc}$ cleanly at $\Gbf$, and the projection $\pi : L^\bfc \to \bfc$ satisfies \eqref{det-ass}. 
\end{lem}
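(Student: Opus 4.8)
The plan is to produce an explicit local model of $L^{\bfc}$ near $\Gbf$ as a flowout and to read both assertions off it. Recall that $\Ndiag = \Nscstar\ddiagb$ is the conormal bundle of the boundary diagonal, and that $L^{\bfc}$ is by definition the flowout of $\Nscstar\ddiagb \cap G$ ($G$ being the characteristic variety of $\bH - \lambda^2$ at $\bfc$) under the bicharacteristic flow of the boundary symbol $p$ of $\bH$. Working in the coordinates $(y, w, \mu, \kappa, \nu)$ of \eqref{cf}, for which $\Nscstar\ddiagb = \{ w = 0,\ \mu = 0,\ \nu = 0 \}$ and the contact form is \eqref{cf}, the restriction of $p$ to $\Nscstar\ddiagb$ equals $|\kappa|_h^2 - 1$ (the variables $\mu,\nu$ enter only at higher order in $w$); hence $\Nscstar\ddiagb \cap G = \{ w = 0,\ \mu = 0,\ \nu = 0,\ |\kappa|_h = 1 \}$, which is precisely the set $\Gbf$ of the statement (once the momentum coordinates are matched), and is the sphere subbundle of the rank-$n$ bundle $\Nscstar\ddiagb \to \ddiagb$. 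In particular $\Gbf$ is codimension $1$ in $\Nscstar\ddiagb$, and, being the time-zero slice of the one-parameter flowout, it is also codimension $1$ in $L^{\bfc}$; so $\Gbf \subseteq \Ndiag \cap L^{\bfc}$, and near $\Gbf$ we may coordinatize $L^{\bfc}$ by $(q_0, t) \in \Gbf \times (-\epsilon, \epsilon)$, with $t$ the flow time.

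For the clean intersection I would verify that the Hamilton vector field $H_p$ is transverse to $\Nscstar\ddiagb$ along $\Gbf$. Along $\Gbf$ one computes $H_p w = \partial p / \partial \kappa = 2 h^{ij}\kappa_j\, \partial_{w_i}$, which is nonzero because $|\kappa|_h = 1$; since the tangent space of $\Nscstar\ddiagb = \{ w = 0, \mu = 0, \nu = 0 \}$ is annihilated by $dw$, this forces $H_p \notin T_q \Nscstar\ddiagb$ for every $q \in \Gbf$. Consequently $w(q_0, t) = 2 h^{ij}(y_0)\kappa_{0,j}\, t + O(t^2)$ along the flowout, so $L^{\bfc} \cap \{ w = 0 \} = \{ t = 0 \} = \Gbf$ in a neighbourhood of $\Gbf$, i.e. $\Ndiag \cap L^{\bfc} = \Gbf$ there; and since $T_q L^{\bfc} = T_q \Gbf \oplus \RR H_p$ meets $T_q \Nscstar\ddiagb$ in exactly $T_q \Gbf$, the intersection is clean. (Cleanness also follows a posteriori from \eqref{det-ass} via the Remark after Proposition~\ref{Leg-bd-1}.)

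To verify \eqref{det-ass} I would compute $d\pi$ in the parametrization $(y_0, \kappa_0, t)$ of $L^{\bfc}$, with tangent directions $\partial_{y_0}$ ($n-1$ of them), $\partial_{\kappa_0}$ tangent to the cosphere $\{ |\kappa|_h = 1 \}$ ($n-1$ of them) and $\partial_t$, and with $T\bfc$ split into its $y$- and $w$-components. The flow gives $y = y_0 + O(t)$ and $w = 2 t\, \kappa_0^\sharp + O(t^2)$ with $\kappa_0^\sharp := (h^{ij}\kappa_{0,j})_i$, so in block form
\[
d\pi = \begin{pmatrix} \Id + O(t) & O(t) & O(1) \\ O(t) & 2tB + O(t^2) & 2\kappa_0^\sharp + O(t) \end{pmatrix},
\]
where $B\colon T_{\kappa_0}S^{n-1} \to (w\text{-space})$, $\delta\kappa_0 \mapsto (\delta\kappa_0)^\sharp$, is injective onto the $h$-orthocomplement of $\kappa_0^\sharp$, so that its image together with $\RR\,\kappa_0^\sharp$ spans the $n$-dimensional $w$-space. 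Clearing the lower-left $O(t)$ block with the invertible $\Id + O(t)$ block and then factoring $2t$ out of $n-1$ columns of the resulting lower-right $n\times n$ block yields $\det d\pi = c(y_0, \kappa_0, t)\, t^{n-1}$ with $c$ smooth and nonvanishing near $t = 0$. This simultaneously establishes \eqref{det-ass} (order of vanishing exactly $n-1$ at $\Gbf = \{ t = 0 \}$), full rank of $d\pi$ for $0 < |t| < \epsilon$, and, at $t = 0$, that $\pi|_{\Gbf}\colon (y_0, \kappa_0) \mapsto y_0$ is a submersion onto $\ddiagb$ with $(n-1)$-dimensional fibres $S^{n-1}$ --- i.e. all of the hypotheses of Proposition~\ref{Leg-bd-1} for $\Lambdabf = L^{\bfc}$ hold in a neighbourhood of $\Gbf$; that $d\pi$ is of full rank on the remainder of $L^{\bfc}\setminus\Gbf$ is part of the construction of $L^{\bfc}$ in \cite{GHS}, and only the behaviour near $\Gbf$ is needed below.

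The main obstacle is not the linear algebra but extracting the precise form of the boundary symbol $p$ and its Hamilton flow at $\bfc$ from the scattering-fibred machinery of \cite{GHS, HV1}: one needs (i) that $p|_{\Nscstar\ddiagb} = |\kappa|_h^2 - 1$ to the order that matters, so $G \cap \Nscstar\ddiagb$ is the cosphere bundle and the flow leaves $\Nscstar\ddiagb$ immediately; (ii) that $\partial_\kappa p|_{\Gbf}$ is a nonzero multiple of $\kappa_0^\sharp$, so the $w$-coordinate unfolds linearly and isotropically off $\Gbf$ as $t$ grows; and (iii) the identification of the momentum coordinates $(\mu, \mu', \nu, \nu')$ used to define $\Gbf$ with the $(\mu, \nu, \kappa)$ of \eqref{cf}, turning the constraint $\nu^2 + h^{ij}\mu_i\mu_j = 1$ into $|\kappa|_h = 1$. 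Once this bookkeeping is in place, the transversality of $H_p$ and the determinant computation are routine.
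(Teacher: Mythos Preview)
Your argument is correct and follows the same underlying strategy as the paper: parametrize $L^{\bfc}$ near $\Gbf$ as the time-$t$ flowout from $\Gbf$, check transversality of the flow to $\Nscstar\ddiagb$, and compute the order of vanishing of $\det d\pi$ in the flowout coordinates. The paper carries this out in the $(y,y',\sigma,\mu,\mu',\nu,\nu')$ coordinates using the explicit vector field $V_l$ of \eqref{Vl}, establishing transversality via the observation that either $\dot\sigma\neq 0$ or $\dot\nu+\dot\nu'\neq 0$, and then splits into the two cases $\nu\neq 0$ and $|\mu|_h=1$ to exhibit a coordinate system on $L^{\bfc}$ in which the $(n-1)$st order vanishing is read off column by column. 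Your version repackages the same computation as a single block-matrix determinant in the $(y,w,\kappa)$ coordinates of \eqref{cf}, which avoids the case split; the paper's version has the advantage that the flow equations are already written down explicitly in \cite{HV2}, so no translation between coordinate systems (your items (i)--(iii)) is required.

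One small correction: your closing remark that full rank of $d\pi$ on the rest of $L^{\bfc}\setminus\Gbf$ ``is part of the construction of $L^{\bfc}$ in \cite{GHS}'' is not accurate; $d\pi$ is \emph{not} globally of full rank away from $\Gbf$, and the paper has to work separately (Lemma~\ref{L-geometry-1}) to show it holds on the region $\{|\nu+\nu'|<\delta\}$ relevant to the later application. You are right, though, that only the behaviour near $\Gbf$ is asserted in the present lemma. Also, your description of $B$ as mapping onto the $h$-orthocomplement of $\kappa_0^\sharp$ presumes a metric on $w$-space that is not quite the one arising from $V_l$ (there is a factor of $2$ in the $y$-directions and a sign in the $\sigma$-direction); this does not affect the injectivity-plus-spanning conclusion, but it is worth tightening if you write this up.
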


\begin{proof} According to \cite{HV2}, the Legendre submanifold $L^{\bfc}$ is given by the flowout from $\Gbf$ by the vector field
\begin{equation}
V_l = -\nu (\sigma \dbyd{}{\sigma} + \mu \dbyd{}{\mu}) + h \dbyd{}{\nu} + \dbyd{h}{\mu_i}\dbyd{}{y_i} - \dbyd{h}{y_i}\dbyd{}{\mu_i}
, \quad h = \sum_{i,j} h^{ij}(y) \mu_i \mu_j 
\label{Vl}\end{equation}
(see \cite[Section 3.1]{GHS}). Observe that  at least one of the coefficients of $\partial_\sigma$ or $\partial_\nu$ is nonvanishing, so either $\dot \sigma \neq 0$ or $\dot \nu + \dot \nu' \neq 0$ under the flowout by $V_l$. Since $\sigma = 1$ and $\nu + \nu' = 0$ at $\Ndiag$, we see that $V_l$ is everywhere transverse to $\Ndiag$, so
$\Gbf$ has codimension $1$ in $L^{\bfc}$, and intersects $L^{\bfc}$ cleanly. 

It remains to show that the projection $\pi$ from $L^{\bfc}$ to $\bfc$ satisfies \eqref{det-ass}. First we choose coordinates on $\Lbf$. Near a point on $\Lbf$ at which $|\mu|_h^2 := h^{ij} \mu_i \mu_j < 1$, and therefore $\nu \neq 0$, we can choose coordinates $(\mu, y', \epsilon)$ where $\epsilon$ is the flowout time from $\Gbf$ along the vector field $V_l$. Coordinates on the base are $(y, y', \sigma)$. With the dot indicating derivative along the flow of $V_l$, i.e. $d/d\epsilon$, we have 
$$\begin{aligned}
\dot \sigma &= -\nu \\
\dot y^i &= 2 h^{ij} \mu_j 
\end{aligned} \quad \text{ at } \Gbf.
$$
It follows that 
$$\begin{aligned}
\sigma &= 1 - \nu \epsilon + O(\epsilon^2), \\
y^i &= (y')^i + 2h^{ij} \mu_j \epsilon + O(\epsilon^2) 
\end{aligned}$$
and we see that near $\Gbf$,
$$
\frac{\partial \sigma}{\partial \epsilon} \neq 0, \quad 
\frac{\partial y^i}{\partial \mu_j} = \epsilon h^{ij} + O(\epsilon^2),
$$
which, using the positive-definiteness of $h^{ij}$, shows that $\det d\pi$, where $\pi$ is the map
$$
\Lbf \ni \big(\mu, y', \epsilon\big) \mapsto \Big( y(\mu, y', \epsilon), y', \sigma(\mu, y', \epsilon) \Big),
$$
vanishes to order exactly $n-1$ as $\epsilon \to 0$. 

On the other hand, near a point on $\Lbf$ at which $|\mu| = 1$, we can choose a coordinate $\mu_i$ which is nonzero. Without loss of generality we suppose that $i = 1$. Then write $\yybar = (y_2, \dots, y_{n-1})$ and $\mmubar = (\mu_2, \dots,  \mu_{n-1})$. We can take $(\nu, \mmubar, y', \epsilon)$ as coordinates on $\Lbf$. Calculating as above, we find that 
$$\begin{aligned}
y^1 &= y_1' + 2h^{1j} \mu_j \epsilon + O(\epsilon^2), \\
y^i &= (y')^i + 2h^{ij} \mu_j \epsilon + O(\epsilon^2),\quad i \geq 2, \\
\sigma &= 1 - \nu \epsilon + O(\epsilon^2)
\end{aligned}$$
which shows that
$$
\frac{\partial y_1}{\partial \epsilon} > 0, \quad 
\frac{\partial \yybar^i}{\partial \mmubar_j} = \epsilon h^{ij} + O(\epsilon^2),  
\quad \frac{\partial \sigma}{\partial \nu} = -\epsilon + O(\epsilon^2).
$$
Again we find that $\det d\pi$, where $\pi$ is the map
$$
\Lbf \ni \big(\nu, \mmubar, y', \epsilon \big) \mapsto \Big( y(\nu, \mmubar, y', \epsilon), y', \sigma(\nu, \mmubar, y', \epsilon) \Big),
$$
vanishes to order exactly $n-1$ as $\epsilon \to 0$. 
\end{proof}

\begin{lem}\label{L-geometry-1}
There exists $\delta > 0$ such that, if 
$$
q = (y, y', \sigma, \mu, \mu', \nu, \nu') \in L^{\bfc} \text{ and } |\nu + \nu'| < \delta,
$$
then either $q \in \Gbf$, or $d\pi : T_q \Lbf \to T_{\pi(q)} \bfc$ is invertible, and hence $\pi : L \to \bfc$ is a diffeomorphism locally near $q$. 
\end{lem}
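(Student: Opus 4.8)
The plan is to exploit the flowout description of $L^{\bfc}$ from \eqref{Vl}, together with Lemma~\ref{L-geometry-2} and a compactness argument. Recall $L^{\bfc}$ is the flowout of $\Gbf$ along $V_l$; write $\epsilon$ for the flow parameter, so that $\{\epsilon=0\}=\Gbf$. Since $V_l$ only involves the left variables $(y,\sigma,\mu,\nu)$, the right coordinates $(y',\mu',\nu')$ are constant along each flowline, and in particular $\nu'$ equals its value $-\nu(0)$ at the initial point of $\Gbf$. First I would record, directly from \eqref{Vl}, the identities
\[
\frac{d}{d\epsilon}(\nu+\nu') = \dot\nu = h \ge 0,\qquad \dot h = -2\nu h,\qquad \frac{d}{d\epsilon}(\nu^2+h)=0,
\]
where $h = h^{ij}(y)\mu_i\mu_j$. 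The first shows $\nu+\nu'$ is nondecreasing along each flowline and vanishes at $\epsilon=0$; the second that $h$ keeps a fixed sign along each flowline; the third, together with $\nu^2+h=1$ on $\Gbf$, that $\nu^2+h\equiv 1$ on $L^{\bfc}$, so $\nu\in[-1,1]$ and $h\in[0,1]$ throughout.

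Second, I would separate the flowlines according to the sign of $h$. If $h(0)=0$ at the initial point, i.e. $\mu(0)=0$ and $\nu(0)=\pm1$, then $\mu$ and $y$ stay constant and $\sigma=e^{-\nu\epsilon}$, so the whole flowline lies in $L^\sharp$ (where $\mu=\mu'=0$, $\nu=\pm1$, $\nu'=\mp1$); such flowlines belong to the conic part $L^\sharp$ of the Legendre pair and are not the points of $L^{\bfc}$ at issue in the lemma. On every other flowline $h>0$ everywhere, so $\nu+\nu'=\int_0^\epsilon h\,d\tau$ is strictly increasing and vanishes only at $\epsilon=0$. Consequently $\{\nu+\nu'=0\}$ meets $L^{\bfc}$ exactly along $\Gbf$.

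Third, the lemma is equivalent to the assertion that $\delta := \inf\{\,|\nu+\nu'| : q\in L^{\bfc}\setminus\Gbf,\ d\pi \text{ not invertible at } q\,\}$ is positive; call this set $C$. By Lemma~\ref{L-geometry-2}, $\det d\pi$ vanishes to order exactly $n-1$ along $\Gbf$, hence is nonzero on a punctured neighbourhood of $\Gbf$ in $L^{\bfc}$, so $C$ is bounded away from $\Gbf$. Since $\Gbf$ is compact (an $S^{n-1}$-bundle over the compact $\ddiagb$) and $L^{\bfc}$ is compact, it remains to see that $C$ is closed in $L^{\bfc}$ and that $\nu+\nu'$ is bounded below on $C$; given the previous step this follows once $C$ is shown to stay away from the conic locus $L^\sharp$ as well. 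Granting this, $C$ is compact, $\nu+\nu'$ does not vanish on it, hence $\delta>0$ (take $\delta=1$ if $C=\emptyset$); the final statement that $\pi$ is a local diffeomorphism near such $q$ is then the inverse function theorem.

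The main obstacle is precisely this last point: unlike near $\Gbf$, the function $\nu+\nu'$ is also small near the conic locus $L^\sharp$ (it vanishes on $L^\sharp$), and one must rule out sequences in $C$ converging to $L^\sharp$ along which $|\nu+\nu'|\to 0$. This requires controlling the near-radial flowlines --- those with $h(0)$ small, along which $\nu+\nu'$ grows only slowly in $\epsilon$ --- and verifying that on them $d\pi$ remains invertible (equivalently, that $\pi$ stays a local diffeomorphism) all the way up to where the flowline meets the conic locus or the fibre boundaries $\lb,\rb$. I expect this to be handled by a direct computation in the flowout coordinates of Lemma~\ref{L-geometry-2} near $L^\sharp$, or by appealing to the fine structure of $L^{\bfc}$ established in \cite{GHS, HV2}, rather than from \eqref{Vl} alone.
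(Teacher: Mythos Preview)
Your overall strategy---reduce by compactness to understanding $d\pi$ in a neighbourhood of the zero set of $\nu+\nu'$ on $L^{\bfc}$, then invoke Lemma~\ref{L-geometry-2} near $\Gbf$---matches the paper's. Your flow identities $\dot\nu=h$, $\dot h=-2\nu h$, $\nu^2+h\equiv 1$ are correct and useful. The genuine gap is in your second step: the $h(0)=0$ flowlines are \emph{not} to be dismissed as lying only in $L^\sharp$. They are precisely the sets
\[
T_\pm = \{(y,y,\sigma,0,0,\pm 1,\mp 1):\sigma\in(0,\infty)\},
\]
which are smooth components of $L^{\bfc}$ itself (see the explicit description \eqref{eq:sp-1c} from \cite{HV2} used in the paper). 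Along $T_\pm$ one has $\nu+\nu'\equiv 0$, yet $T_\pm\setminus\{\sigma=1\}$ is disjoint from $\Gbf$. So your conclusion ``$\{\nu+\nu'=0\}$ meets $L^{\bfc}$ exactly along $\Gbf$'' is false, and the infimum $\delta$ you define in step three is actually zero unless you can show $d\pi$ is invertible along $T_\pm\setminus\Gbf$.

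You half-recognise this in your final paragraph, but you frame it as controlling sequences \emph{approaching} $L^\sharp$, whereas the issue is that $T_\pm$ are already points \emph{in} $L^{\bfc}$ with $\nu+\nu'=0$ that must be handled directly. The paper closes the gap by the computation you anticipate: using the explicit $(s,s')$-parametrization \eqref{eq:sp-1c}, one checks that near $T_+\cap\{\sigma\le 1-\epsilon\}$ the functions $(y',\mu',\sigma)$ are smooth coordinates on $L^{\bfc}$, and that
\[
\frac{\partial y^i}{\partial \mu'_j}\Big|_{y',\sigma} = (1-\sigma)\,h^{ij} \quad\text{at }T_+,
\]
which is nondegenerate for $\sigma\neq 1$. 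Hence $(y,y',\sigma)$ are coordinates there and $\pi$ is a local diffeomorphism. The cases $\sigma>1+\epsilon$ and $T_-$ are symmetric. Once this is in hand, your compactness argument goes through.
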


\begin{proof} We use the explicit description of $\Lbf$ given in \cite[Section 5]{HV2}:
\begin{equation}\begin{split}\label{eq:sp-1c}
\Lbf 
= \, &\{(\sigma,y,y',\nu,\nu',\mu,\mu'):\ \exists(y_0,\muh)\in S^* (\partial M),
\ s,s'\in(0,\pi),\ \text{s.t.}\\
&\quad \sigma  = \frac{\sin s}{\sin s'},
\ \nu=-\cos s,
\ \nu'=\cos s',\\
\quad(y,\mu)&=\sin s\exp(sH_{\half h})(y_0,\muh),
(y',\mu')=-\sin s' \exp(s' H_{\half h})(y_0,\muh)\}\\
&\cup T_+ \cup T_- \cup F_+ \cup F_-, \quad T_\pm  = 
\{(\sigma,y,y,\pm 1,\mp 1,0,0):\ \sigma\in(0,\infty),\ y\in\partial M\}, \\\
F_\pm = \, &\{ (\sigma, y, y', \pm 1, \pm 1, 0, 0) \mid \sigma\in(0,\infty),\ \exists \ \text{geodesic of length $\pi$ connecting $y, y'$.} \}.
\end{split}\end{equation}
We see that $\nu = -\nu'$ on $\Lbf$ only on $\Gbf \cup T_+ \cup T_-$. 
A compactness argument shows that that for any neighbourhood $U$ of $\Gbf \cup T_+ \cup T_-$, the set
$$
\{ (y, y', \sigma, \mu, \mu', \nu, \nu') \in L^{\bfc} \mid |\nu + \nu'| < \delta \}
$$
is contained in $U$ if $\delta$ is sufficiently small. So 
 it is enough to show that $\Lbf$ projects diffeomorphically to $\bfc$ in some neighbourhood of $\Gbf \cup T_+ \cup T_-$,  except at $\Gbf$ itself. 
Lemma~\ref{L-geometry-2} shows that $L^{\bfc} \subset \Tscstar_{\bfc} \MMb$ projects diffeomorphically to the base $\bfc$ in a sufficiently small deleted neighbourhood of $\Gbf$. 
Now consider a neighbourhood of $T_+ \cap \{ \sigma \leq 1 - \epsilon \}$ for some small $\epsilon$. As shown in \cite{HV2}, near this set, $(y', \mu', \sigma)$ are smooth coordinates. Also, we have
from \eqref{eq:sp-1c} 
$$
(y, \mu) = \sigma \exp\Big( \frac{s'-s}{\sin s'} H_{\half h} \Big)(y',  \mu'). 
$$
Using the expression \eqref{Vl} for the Hamilton vector field, we find that near $T_+$, we have
$$
y^i = {y'}^i + \frac{s'-s}{\sin s'} h^{ij}\mu'_j + O(|\mu'|^2) 
= (1 - \sigma) h^{ij}\mu'_j + O((\sin s)^2 + (\sin s')^2+ |\mu'|^2),
$$ 
which shows that at $T_+$, where $\sin s = \sin s' = \mu' = 0$, we have
$$
\frac{\partial y^i}{\partial \mu_j'} \Big|_{y', \sigma}= (1 - \sigma) h^{ij}.
$$
Since $(y', \mu', \sigma)$ furnish smooth coordinates near $T_+$, this equation and the positive-definiteness of $h^{ij}$ show that also $(y, y', \sigma)$ furnish smooth coordinates in a neighbourhood of $T_+$ when $\sigma < 1 - \epsilon$. (Of course, we know from Lemma~\ref{L-geometry-2} that this cannot hold uniformly up to $\sigma = 1$).  A similar argument holds for $\sigma > 1 + \epsilon$ and for $T_-$. 
\end{proof}

\begin{remark}\label{sigma-behaviour} These lemmas will be applied to distributions of the form 
\begin{equation}
Q(\lambda) dE_{\sqrt{L}}(\lambda) Q(\lambda),
\label{QEQ}\end{equation}
 where $Q$ is a  pseudodifferential operator with small microsupport. Notice that by taking the microsupport sufficiently small, we can localize the microsupport of \eqref{QEQ} to points $(y, y', \sigma, \mu, \mu', \nu, \nu')$ such that $y$ is close to $y'$, $\mu$ is close to $\mu'$ and $\nu$ is close to $\nu'$. However, we cannot localize so that $\sigma$ is close to $1$, simply because if $x, x' \in (0, \epsilon)$, then $\sigma = x/x'$ can take any value in $(0, \infty)$. Therefore, it is important to understand the properties of $\pi$ on $L$ near the whole of the sets $T_\pm$, not just close to $\Nscstar \ddiagb$. 
\end{remark}

\subsection{Proof of Theorem~\ref{main3}, part (A)}\label{(A)}
By Proposition~\ref{QQ1}, to prove part (A) of Theorem~\ref{main3} it is sufficient to prove 
Theorem~\ref{pointwisesmestimates} 
for  for  $\bL = \bH$ and for $\lambda \leq \lambda_0$, that is, to prove the estimates
\begin{equation} 
\Big| \big( Q_i(\lambda)   dE_{\sqrt{\bH}}^{(j)}(\lambda) Q_i(\lambda) \big) (z,z') \Big| \leq C \lambda^{n-1-j} \big( 1 + \lambda d(z,z') \big)^{-(n-1)/2 + j} , \quad j \geq 0. 
\label{spec-meas-j}\end{equation}

Our starting point is Theorem~\ref{lesmLd}. As an immediate consequence of this theorem, the $j$th $\lambda$-derivative $dE^{(j)}_{\sqrt{\bH}}(\lambda)$ is a Legendre distribution in the space $$I^{m-j,p-j; r_{\lb}-j, r_{\rb}-j; \mcA^{(j)}}(\MMkb, (L^{\bfc}, L^{\sharp, \bfc}); \Omegakbh),$$ where $\mcA^{(j)}$ is an index family with index sets at the faces $\bfo, \lbo, \rbo, \zf$ starting at order $-1-j$, $n/2 - 1-j$, $n/2 - 1-j$, $n-1-j$ respectively.

Next we choose a partition of unity. We choose $Q_0$ to be multiplication by the function $1-\chi(\rho)$, where $\chi(\rho) = 1$ for $\rho \leq \epsilon$ and $\chi(\rho) = 0$ for $\rho \geq 2\epsilon$, for some sufficiently small $\epsilon$. Then, $Q_0  dE^{(j)}_{\sqrt{\bH}}(\lambda) Q_0$ is polyhomogeneous on $\MMkb$, with index sets as above at  $\bfo, \lbo, \rbo, \zf$ and supported away from the remaining boundary hypersurfaces. Now recall that $|dg dg' d\lambda/\lambda |^{1/2}$ is equal to $\rho_{\bfo}^{-n} \rho_{\lbo}^{-n/2} \rho_{\rbo}^{-n/2}$ times a smooth nonvanishing section of the half-density bundle $\Omegakbh$. 
It is then immediate that $Q_0  dE^{(j)}_{\sqrt{\bH}}(\lambda) Q_0$ is bounded, as a multiple of $|dg dg' d\lambda/\lambda|^{1/2}$ by $\lambda^{n-1-j}$,
which yields \eqref{spec-meas-j} for $i=0$ since in this region we have $\lambda d(z,z') \leq C$. 

Next, we choose $Q'_1$ such that $\Id - Q'_1$ is microlocally equal to the identity for $|\mu|_h^2 + \nu^2 \leq 3/2$, and microsupported in $|\mu|_h^2 + \nu^2 \leq 2$. Let $Q_1 = \chi(\rho) Q_1'$. Then, we claim that $Q_1 dE^{(j)}_{\sqrt{\bH}}(\lambda)  Q_1$ has empty wavefront set, and is therefore polyhomogeneous with index sets at the faces $\bfo, \lbo, \rbo, \zf$ starting at order $-1$, $n/2 - 1$, $n/2 - 1$, $n-1$ respectively. To see this, we write 
\begin{multline}
Q_1  dE^{(j)}_{\sqrt{\bH}}(\lambda)  Q_1 =  dE^{(j)}_{\sqrt{\bH}}(\lambda) - (\Id - Q_1) dE^{(j)}_{\sqrt{\bH}}(\lambda) \\  -  dE^{(j)}_{\sqrt{\bH}}(\lambda)(\Id - Q_1)  + (\Id - Q_1) dE^{(j)}_{\sqrt{\bH}}(\lambda)    (\Id - Q_1).
\label{energy-cutoff}\end{multline}
Since $\Id - Q_1$ is microlocally equal to the identity on $\pi_L(\WF_{\bfc}' dE^{(j)}_{\sqrt{\bH}}(\lambda))$ and on $\WF_{\lb}'(dE^{(j)}_{\sqrt{\bH}}(\lambda))$, Lemma~\ref{QFQ'-microsupport} shows that  the sum of the first two terms on the right hand side above vanishes to infinite order at $\lb$ and $\bfc$, and similarly the sum of the third and fourth terms vanishes to infinite order at $\lb$ and $\bfc$. Now consider the multiplication of $\Id - Q_1$ on the right, and group together the first and third terms, and the second and fourth terms on the RHS. We see, using the adjoint of Lemma~\ref{QFQ'-microsupport} (since also $\Id - Q_1$ is microlocally equal to the identity on $\WF_{\rb}'(dE^{(j)}_{\sqrt{\bH}}(\lambda))$) 
that the sum of the first and third terms vanishes to infinite order at $\rb$, and similarly the sum of the second and fourth terms vanishes at $\rb$. Hence $Q_1  dE^{(j)}_{\sqrt{\bH}}(\lambda) Q_1$ vanishes to all orders at $\bfc, \lb, \rb$ and has empty wavefront set as claimed.  This piece therefore also satisfies \eqref{spec-meas-j}.

We now further decompose $\Id - Q_0 - Q_1 = \chi Q_1'$, which has compact microsupport, into a sum of terms. Choosing $\delta$ as in Lemma~\ref{L-geometry-1}, we partition the interval $[-2, 2]$ into $N-1$ intervals $B_i$ each of length $ \delta/2$, and choose a decomposition $\Id - Q_1 = \sum_{i=2}^N Q_i$ where $Q_i$, and hence also $Q_i^*$, is microsupported in the set $\{|\mu|_h^2 + \nu^2 \leq 2,  \nu \in 2B_i \}$ (where $2B_i$ is the interval with the same centre as $B_i$ and twice the length).  It follows that if $q' = (y, y', \sigma, \mu, \mu', \nu, \nu') \in (L^\bfc)'$ is such that $\pi_L(q') \in \WF'(Q_i)$ and $\pi_R(q') \in \WF'(Q_i^*)$, then $|\nu - \nu'| \leq  \delta$. Together with Lemma~\ref{QFQ'-microsupport-conic}, this means that $Q_i dE^{(j)}_{\sqrt{\bH}}(\lambda) Q_i^*$ is associated only to the Legendrian $L^\bfc$ and not to $L^{\sharp, \bfc}$, since on $(L^{\sharp, \bfc})'$ we have $|\nu - \nu'| = 2 > \delta$. 

Next, by Lemma~\ref{L-geometry-1}, if $q' = (y, y', \sigma, \mu, \mu', \nu, \nu')  \in (L^\bfc)'$ is such that $\pi_L(q') \in \WF'(Q_i)$ and $\pi_R(q') \in \WF'(Q_i^*)$, then due to our choice of $\delta$,  either $q \in \Gbf$, or locally near $q$, $L^\bfc$ projects diffeomorphically to $\bfc$. Therefore, the microsupport of $Q_i dE^{(j)}_{\sqrt{\bH}}(\lambda) Q_i^*$, $i \geq 2$, is a subset of $(L^\bfc)'$ which satisfies the conditions of either Proposition~\ref{triv-1} or  Proposition~\ref{Leg-bd-1}. 

In the case of Proposition~\ref{triv-1}, we have $b = n-1 - j$, $\alpha = -(n-1)/2 + j$ and estimate \eqref{spec-meas-j} follows directly.
Next consider the case of  Proposition~\ref{Leg-bd-1}. In this case, we have to determine the order of vanishing of the symbol of $Q_i dE^{(j)}_{\sqrt{\bH}}(\lambda) Q_i^*$ at $\Gbf$. 
Locally near $q \in \Gbf \cap L^\bfc$, $L^\bfc$ can be parametrized  by a phase function $\Phi$ that \emph{vanishes} at $\Gbf$ when $d_v \Phi = 0$ --- see \eqref{Phi-defn}. The kernel  $Q_i dE_{\sqrt{\bH}}(\lambda) Q_i^*$ is a Legendrian of order $-1/2$. Each time we apply a $\lambda$ derivative to $dE_{\sqrt{\bH}}(\lambda)$, it hits either the phase function or the symbol. If it hits the phase, then the order of the Legendrian is reduced by $1$, but it brings down a factor of $\Phi$ which vanishes at $\Gbf \times [0, \lambda_0]$. If it hits the symbol, then the order of the Legendrian is not reduced. Therefore, as a Legendrian of order $-1/2 - j$, the full symbol of $Q_i dE^{(j)}_{\sqrt{\bH}}(\lambda) Q_i^*$ vanishes to order $j$ at $\Gbf\times [0, \lambda_0]$. Therefore, we can apply Proposition~\ref{Leg-bd-1} with $b = n-1-j$ and $\alpha = -(n-1)/2 + j$, and we deduce \eqref{spec-meas-j} in this case. This concludes the proof of \eqref{spec-meas-j} and hence establishes Theorem~\ref{pointwisesmestimates} for low energies $\lambda \leq \lambda_0$.


\section{High energy estimates (in the nontrapping case)}

In the previous section we proved estimates on the spectral measure $dE_{\sqrt{\bH}}(\lambda)$ for $\lambda \in (0, \lambda_0]$. We now prove high energy estimates, i.e. estimates for $\lambda \in [\lambda_0, \infty)$. For convenience, we introduce the semiclassical parameter $h = \lambda^{-1}$, so that we are interested in estimates for $h \in (0, h_0]$, where $h_0 = \lambda_0^{-1}$. To do this, we use the description of the high-energy asymptotics of the spectral measure from \cite{HW}. The structure of the argument will be the same as in the previous
section, and our main task is to adapt each of the intermediate results --- Lemmas~\ref{QFQ'-microsupport} and \ref{QFQ'-microsupport-conic}, Propositions~\ref{triv-1} and \ref{Leg-bd-1},  Lemma~\ref{L-geometry-2} and Lemma~\ref{L-geometry-1} --- to the high-energy setting. \emph{Throughout this section we assume that the manifold $(M, g)$ is nontrapping.}

\subsection{Microlocal support} 
We begin by defining, by analogy with the discussion in Section~\ref{ows}, the notion of microlocal support  of a Legendre distribution on $\bX$. 

Let $\Lambda \subset \Tscstar_{\mf} \bX$ be the Legendre submanifold associated to $F$. We assume that $\Lambda$ is compact. 
Recall from \cite[Section 3]{HW} that $\Lambda$ determines associated Legendre submanifolds $\Lambda_{\bfc}$, $\Lambda_{\lb}$ and $\Lambda_{\rb}$ which are the bases of the fibrations on  $\partial_{\bfc} \Lambda$, $\partial_{\lb} \Lambda$ and $\partial_{\rb} \Lambda$, respectively. The Legendre submanifold $\Lambda_{\bfc}$ can be canonically identified with a Legendre submanifold of $\Tscstar_{\bfc} \MMb$, while $\partial_{\lb} \Lambda$ and $\partial_{\rb} \Lambda$ may be canonically identified with Legendre submanifolds of $\Tscstar_{\partial M} M$. We define $\Lambda'$ by negating the fibre coordinates corresponding to the right copy of $M$, i.e. 
$$
q' = (z,z', \zeta, \zeta') \in \Lambda' \iff q = (z, z', \zeta, -\zeta') \in \Lambda.
$$
Similarly we define $\Lambda_{\bfc}'$ and $\Lambda_{\rb}'$ as in the previous section. 

Then we define the microlocal support $\WF'(F)$ of $F \in I^m(\Lambda)$ to be a closed subset of 
$$
\Lambda'  \cup \Big( \Lambda_{\bfc}'\times [0, h_0] \Big) \cup \Big(\Lambda_{\lb} \times [0, h_0] \Big) \cup \Big(\Lambda'_{\rb}\times [0, h_0] \Big)
$$ 
in the same way as before: we say that $q' \in \Lambda'$ is not in $\WF'(F)$ iff there is a neighbourhood of $q \in \Lambda$ in which $F$ has order $-\infty$, in the sense of Section~\ref{ows}. That is, in a local oscillatory representation for $F$ of the form (for simplicity, where $q$ lies over the interior of $\MMb$),
$$
h^{m - k/2-n} \int_{\RR^k} e^{i\psi(z, v)/h} a(z, v, h) \, dv |dg dg' dh/h^2|^{1/2},
$$
where $q = (z_*, d_z \psi(z_*,v_*))$ and $d_v\psi(z_*, v_*) = 0$ (these conditions determining $(z_*, v_*)$ locally uniquely provided that $\psi$ is a nondegenerate parametrization of $\Lambda$), the condition that $F$ has order $-\infty$ in a neighbourhood of $q$ is equivalent to $a$ being $O(h^\infty)$ in a neighbourhood of the point $(z_*, v_*, 0)$. Similarly, $q' \in \Lambda_{\bfc}' \times [0, h_0]$ is not in $\WF'(F)$ iff there is a neighbourhood of $q \in \Lambda_{\bfc} \times [0, h_0]$ in which $F$ has order $-\infty$.

Similarly, $(\tilde q, h) \in \Lambda_{\lb} \times [0, h_0]$ is not in $\WF'(F)$ iff 
$F$ can be written modulo $(hxx')^\infty \CI(\MMb)$ using local oscillatory integral representations
with symbols that vanish in a neighbourhood of the \emph{fibre} in their domain corresponding to $(\tilde q, h)$, and 
$(\tilde q', h) \in \Lambda_{\rb}' \times [0, h_0]$ is not in $\WF'(F)$ iff 
$F$ can be written modulo $(hxx')^\infty \CI(\MMb)$ using local oscillatory integral representations
with symbols that vanish in a neighbourhood of the fibre in their domain corresponding to $(\tilde q, h)$. These components of $\WF'(F)$ will be denoted $\WF'_{\mf}(F)$, $\WF'_{\lb}(F)$, $\WF'_{\bfc}(F)$  and $\WF'_{\rb}(F)$, respectively. 

If $F \in I^m(\Lambda)$, then  $F \in (h x x')^\infty \CI(M^2)$ iff $\WF'(F)$ is empty. 
Also note that if $\WF'_{*}(F)$ is empty, then $\partial_{*} \Lambda'$ is disjoint from $\WF'_{\bfc}(F)$, but the converse need not hold: if the kernel of $F$ is supported away from $\mf$ then certainly $\WF'_{\mf}(F)$ will be empty, but $\WF'_{*}(F)$ need not be. 

Particular examples of Legendre distributions on $\bX$ are the kernels of semiclassical scattering pseudodifferential operators $Q$ of differential order $-\infty$ with compact operator wavefront set. In the case of such a pseudodifferential\footnote{Throughout this section we deal with semiclassical scattering pseudodifferential operators. The words `semiclassical scattering' will usually be omitted.} 
 operator, the Legendre submanifold $\Lambda$ is a compact  subset of $\Nsfstar \diagb$, defined in \eqref{Nsfstar-diagb},  and the components $\Lambda_{\lb} \cup \Lambda'_{\rb}$ are empty. Thus in this case we may (and will) identify the microlocal support $\WF'_{\mf}(Q)$ with a compact subset of  $\Tscstar M$, and $\WF'_{\bfc}(Q)$ may be identified with a compact subset of $\Tscstar_{\partial M} M \times [0, h_0)$. 

In the next lemma, $\pi_L$ and $\pi_R$ denote the maps defined in either \eqref{piLpiR} or \eqref{piLpiRmfbfc}, as the case may be. 

\begin{lem}\label{QFQ'-microsupport-he}
Suppose that $F$ is a Legendre distribution on $\bX$ and $Q$ is a  semiclassical scattering pseudodifferential operator. Assume that $F \in I^{m; r_{\bfc}, r_{\lb}, r_{\rb}}(\bX, \Lambda; \sfOh)$ is associated to a compact Legendre submanifold $\Lambda$ and that $Q$ is of differential order $-\infty$ and semiclassical order $0$, with compact operator wavefront set. Then $Q F$ is also a Legendre distribution in the space $I^{m; r_{\bfc}, r_{\lb}, r_{\rb}}(\bX, \Lambda; \sfOh)$ and we have
\begin{equation}\begin{gathered}
 \WF'_{\mf}(QF) \subset  \pi_L^{-1} \WF'_{\mf}(Q)  \cap \WF'_{\mf}(F) \\ 
\WF'_{\bfc}(QF) \subset  \pi_L^{-1} \WF'_{\bfc}(Q)  \cap \WF'_{\bfc}(F)   \\ 
\WF'_{\lb}(QF) \subset \WF'_{\bfc}(Q) \cap \WF'_{\lb}(F) \\
 \WF'_{\rb}(QF) \subset \WF'_{\rb}(F) .
\label{WFQF-he}\end{gathered}\end{equation}
Moreover, if $Q$ is microlocally equal to the identity on $\pi_L(\WF'_{\mf}(F))$, $\pi_L(\WF'_{\bfc}(F))$ and $\WF'_{\lb}(F)$, then $Q F  - F \in I^{\infty, \infty, \infty, r_{\rb}}(\bX, \Lambda; \sfOh)$, i.e. vanishes to infinite order at $\mf$, $\lb$ and $\bfc$. 
\end{lem}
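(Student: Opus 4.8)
The plan is to mimic the proof of Lemma~\ref{QFQ'-microsupport} from the low-energy setting, replacing the space $\MMkb$ by $\bX$ and pseudodifferential operators in $\Psi^{-\infty}_k$ by semiclassical scattering pseudodifferential operators of order $-\infty$. Since $Q$ has semiclassical order $0$, it is bounded on $h^a L^2(M)$ uniformly in $h$ for any $a$, and $F$, being a Legendre distribution, maps $h^a L^2 \to h^{-a} L^2$ for suitable $a$; hence $QF$ is well-defined. To prove that $QF$ is again a Legendre distribution of the stated orders and to establish the wavefront set inclusions \eqref{WFQF-he}, I would decompose $F = F_1 + \dots + F_6$ exactly as in the low-energy case (using the description of Legendre distributions on $\bX$ from \cite{HW}), with $F_1$ supported near $\bfc$ away from $\lb, \rb$; $F_2$ near $\bfc \cap \lb$; $F_3$ near $\bfc \cap \rb$; $F_4$ near $\lb$ away from $\bfc$; $F_5$ near $\rb$ away from $\bfc$; and $F_6$ rapidly vanishing. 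Each $F_i$ has an oscillatory representation analogous to \eqref{u1}--\eqref{u5} but with $\rho$ replaced by $h$ (and $x$, $x'$ entering through the boundary variables near $\bfc$), following the conventions of \cite{HW} and Remark~\ref{orderconventions}.

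For each piece I would write out $Q F_i$ as a composition integral, insert the oscillatory representation of $Q$'s kernel (conormal to $\Nsfstar \diagb$), and apply stationary phase in the variables coming from the composition (the analogue of the $(y'', \sigma'', \mu, \nu)$ variables near $\bfc$, or $(z'', \zeta'')$ over the interior of $\mf$). The stationary point is nondegenerate and lies on the diagonal of the composition, so the stationary phase expansion produces a Legendre distribution associated to the same $\Lambda$, with the same orders and index behaviour, and with an amplitude of the form \eqref{comp-symbol-formula}: a sum $\sum_j h^j (\text{derivatives of } q)(\text{evaluated on the stationary set}) \times a$. Reading off this formula gives the microsupport inclusions: $\WF'_{\mf}(QF_i)$ is contained in $\WF'_{\mf}(F)$ because the amplitude $a$ must be nonvanishing, and simultaneously in $\pi_L^{-1}\WF'_{\mf}(Q)$ because $q$ must be nonvanishing on the relevant stationary set; similarly near $\bfc$, and for the $F_2$, $F_4$ pieces one also reads off $\WF'_{\lb}(QF) \subset \WF'_{\bfc}(Q) \cap \WF'_{\lb}(F)$. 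The $F_5$ term is handled as in the low-energy proof: near an interior point of the left factor the kernel of $Q$ is smooth (by differential order $-\infty$ combined with the support condition on $Q$), so the composition just produces a new amplitude $\tilde a$ inheriting the vanishing of $a$, giving $\WF'_{\rb}(QF_5) \subset \WF'_{\rb}(F_5)$. Finally, for the last assertion, if $q = 1 + O(h^\infty)$ on $\pi_L(\WF'_{\mf}(F))$, $\pi_L(\WF'_{\bfc}(F))$ and $\WF'_{\lb}(F)$, then in each composition formula \eqref{comp-symbol-formula} all terms with $j \geq 1$ vanish — wherever $a \neq 0$ on the stationary set, $q \equiv 1$ modulo $O(h^\infty)$, so every derivative of $q$ appearing for $j \geq 1$ is $O(h^\infty)$ — hence $QF_i = F_i$ modulo terms vanishing to infinite order at $\mf, \bfc$ (and at $\lb$ for the pieces touching $\lb$), giving $QF - F \in I^{\infty,\infty,\infty,r_{\rb}}(\bX,\Lambda;\sfOh)$.

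The main obstacle I anticipate is bookkeeping the order conventions and half-density factors correctly in the semiclassical (high-energy) setting: the powers of $h$ appearing in the oscillatory representations on $\bX$, the Jacobian factors from the stationary phase, and the shift between the $\MMb\times\{h=0\}$ and $\bfc$ orderings described in Remark~\ref{orderconventions} all need to be tracked precisely to confirm that $QF$ lands in exactly $I^{m;r_{\bfc},r_{\lb},r_{\rb}}(\bX,\Lambda;\sfOh)$ with no loss. A secondary technical point is the behaviour near the corner $\mf \cap \bfc$, where one must check that the stationary phase is uniform up to that corner and that the two families of coordinates (interior coordinates $z, z'$ and boundary coordinates $x, y$ near $\bfc$) glue compatibly — but this is exactly parallel to the treatment of the $F_1$ and $F_3$ terms in the proof of Lemma~\ref{QFQ'-microsupport}, and no genuinely new phenomenon arises. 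Everything else is a routine, if lengthy, adaptation of the low-energy argument, so I would present the proof compactly as ``the proof is identical to that of Lemma~\ref{QFQ'-microsupport}, with $\rho$ replaced by $h$ and $\Psi^{-\infty}_k$ by semiclassical scattering operators,'' spelling out only the stationary-phase computation near $\bfc$ and the interior $F_5$ argument in detail.
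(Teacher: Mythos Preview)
Your proposal is correct and matches the paper's approach exactly: the paper omits the proof entirely, stating only that it is ``essentially identical to that of Lemma~\ref{QFQ'-microsupport},'' which is precisely the adaptation you outline. Your anticipated presentation (refer back to the low-energy argument, noting the replacement of $\rho$ by $h$ and of $\Psi^{-\infty}_k$ by semiclassical scattering operators) is in fact more detailed than what the paper provides.
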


We omit the proof, as it is essentially identical to that of Lemma~\ref{QFQ'-microsupport}. 
There is of course a corresponding theorem for composition in the other order, which is obtained by taking the adjoint of the lemma above. Combining the two we obtain

\begin{cor}\label{QFQ'-cor-he}
Suppose that $F$ and $Q, Q'$ are as above. Then 
\begin{equation}\begin{gathered}
 \WF'_{\mf}(QFQ') \subset  \pi_L^{-1} \WF'_{\mf}(Q)  \cap \pi_R^{-1} \WF'_{\mf}(Q') \cap \WF'_{\mf}(F)   \\ 
 \WF'_{\bfc}(QFQ') \subset  \pi_L^{-1} \WF'_{\bfc}(Q)  \cap \pi_R^{-1} \WF'_{\bfc}(Q') \cap \WF'_{\bfc}(F)   \\ 
 \WF'_{\lb}(QFQ') \subset \WF'_{\bfc}(Q) \cap \WF'_{\lb}(F) \\
 \WF'_{\rb}(QFQ') \subset \WF'_{\bfc}(Q') \cap  \WF'_{\rb}(F) .
\label{WFQFQ'-he}\end{gathered}\end{equation}
\end{cor}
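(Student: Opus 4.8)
The plan is to obtain Corollary~\ref{QFQ'-cor-he} by combining Lemma~\ref{QFQ'-microsupport-he} with its adjoint version, exactly as stated in the text preceding the corollary. First I would record the adjoint statement: if $Q'$ is a semiclassical scattering pseudodifferential operator of differential order $-\infty$ and semiclassical order $0$ with compact operator wavefront set, and $F \in I^{m; r_{\bfc}, r_{\lb}, r_{\rb}}(\bX, \Lambda; \sfOh)$, then $FQ' \in I^{m; r_{\bfc}, r_{\lb}, r_{\rb}}(\bX, \Lambda; \sfOh)$ and
\begin{equation*}\begin{gathered}
\WF'_{\mf}(FQ') \subset \pi_R^{-1}\WF'_{\mf}(Q') \cap \WF'_{\mf}(F), \\
\WF'_{\bfc}(FQ') \subset \pi_R^{-1}\WF'_{\bfc}(Q') \cap \WF'_{\bfc}(F), \\
\WF'_{\rb}(FQ') \subset \WF'_{\bfc}(Q') \cap \WF'_{\rb}(F), \quad
\WF'_{\lb}(FQ') \subset \WF'_{\lb}(F).
\end{gathered}\end{equation*}
This follows from Lemma~\ref{QFQ'-microsupport-he} by taking adjoints: $(FQ')^* = (Q')^* F^*$, the adjoint of a semiclassical scattering pseudodifferential operator of the stated type is again one of the same type (with $\WF'$ obtained by the obvious reflection of fibre variables), $F^*$ is a Legendre distribution associated to $\Lambda'$ with the same orders, and taking adjoints interchanges the roles of $\lb$ and $\rb$ while swapping $\pi_L$ and $\pi_R$.

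Next I would apply Lemma~\ref{QFQ'-microsupport-he} to $F$ on the left by $Q$, obtaining that $QF$ is a Legendre distribution of the same type with the wavefront bounds in \eqref{WFQF-he}, and then apply the adjoint version just stated to $QF$ on the right by $Q'$. Composing the two sets of inclusions gives, face by face: at $\mf$, $\WF'_{\mf}(QFQ') \subset \pi_R^{-1}\WF'_{\mf}(Q') \cap \WF'_{\mf}(QF) \subset \pi_R^{-1}\WF'_{\mf}(Q') \cap \pi_L^{-1}\WF'_{\mf}(Q) \cap \WF'_{\mf}(F)$; at $\bfc$, likewise $\WF'_{\bfc}(QFQ') \subset \pi_L^{-1}\WF'_{\bfc}(Q) \cap \pi_R^{-1}\WF'_{\bfc}(Q') \cap \WF'_{\bfc}(F)$; at $\lb$, the right multiplication by $Q'$ does not shrink the $\lb$ component beyond $\WF'_{\lb}(QF)$, so $\WF'_{\lb}(QFQ') \subset \WF'_{\lb}(QF) \subset \WF'_{\bfc}(Q)\cap\WF'_{\lb}(F)$; and symmetrically at $\rb$, using that left multiplication by $Q$ does not shrink the $\rb$ component, $\WF'_{\rb}(QFQ') \subset \WF'_{\rb}(FQ') \subset \WF'_{\bfc}(Q')\cap\WF'_{\rb}(F)$. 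Here one must be slightly careful that the bound $\WF'_{\lb}(QF) \subset \WF'_{\bfc}(Q)\cap\WF'_{\lb}(F)$ from \eqref{WFQF-he} is preserved under subsequent right multiplication; this is immediate since the adjoint lemma only contracts the $\mf$, $\bfc$ and $\rb$ components, leaving $\WF'_{\lb}$ unchanged.

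The only genuinely substantive point — and hence the main obstacle — is verifying that the class of semiclassical scattering pseudodifferential operators of differential order $-\infty$ and semiclassical order $0$ with compact operator wavefront set is closed under taking adjoints, and that adjoints transform the microlocal support in the expected way (reflection of the fibre variables, interchange of $\lb$ and $\rb$). This is standard for pseudodifferential calculi and was in effect already used in Section~\ref{ows} to deduce the corollary following Lemma~\ref{QFQ'-microsupport}; in the semiclassical scattering setting it follows from the mapping properties of such operators on weighted semiclassical Sobolev spaces together with the symbol calculus of \cite{HW}. Everything else is a formal chaining of the four displayed inclusions, so once the adjoint statement is in hand the corollary follows without further work.
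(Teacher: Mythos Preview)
Your proposal is correct and follows exactly the approach the paper indicates: the text preceding the corollary says ``There is of course a corresponding theorem for composition in the other order, which is obtained by taking the adjoint of the lemma above. Combining the two we obtain'' the corollary, and the paper gives no further proof. Your write-up simply spells out this combination face by face, which is precisely what is intended.
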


A similar result holds if $F$ is associated to a Legendre conic pair rather than a single Legendre submanifold. 

\begin{lem}\label{QFQ'-microsupport-conic-he}
(i) Suppose that $F \in I^{m, p; r_{\bfc}, r_{\lb}, r_{\rb}}(\bX,(\Lambda, \Lambda^\sharp); \sfOh)$ is a Legendre distribution on $\bX$ associated to a conic Legendrian pair $(\Lambda, \Lambda^\sharp)$, 
and suppose that $Q$ is a  pseudodifferential operator such that $Q$ is microlocally equal to the identity operator near $\pi_L(\Lambda \cup \Lambda^\sharp)$. Then $Q F  - F \in I^{\infty, \infty; \infty, \infty,  r_{\rb}}(\bX, (\Lambda, \Lambda^\sharp), \sfOh)$, so vanishes to infinite order at $\mf$, $\lb$ and $\bfc$. Similarly, if $Q'$ is microlocally equal to the identity operator near $\pi_R(\Lambda \cup \Lambda^\sharp)$, then 
$F Q' - F \in I^{\infty, \infty; \infty,  r_{\lb}, \infty}(\bX, (\Lambda, \Lambda^\sharp), \sfOh)$ vanishes to infinite order at $\mf$, $\bfc$ and $\rb$.

(ii) Suppose that $F$ is as above,  a Legendre distribution on $\MMb$ associated to a conic Legendrian pair $(\Lambda, \Lambda^\sharp)$ of order $(m,p; r_{\bfc}, r_{\lb}, r_{\rb})$,  and suppose that $Q$, $Q'$ are  pseudodifferential operators. If 
\begin{equation}
\pi_L^{-1} \WF'_{\bfc}(Q) \cap \pi_R^{-1} \WF'_{\bfc}(Q') \cap \Lambda^\sharp = \emptyset,
\label{WF'QFQ'-conic-mf}\end{equation}
 then $Q F Q' \in I^{m; r_{\bfc}, r_{\lb}, r_{\rb}}(\MMb, \Lambda; \Omega)$; in particular, $\WF'_{\bfc}(QFQ')$ is disjoint from $(\Lambda^\sharp)'$.  \end{lem}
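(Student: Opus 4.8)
The plan is to follow the proof of Lemma~\ref{QFQ'-microsupport-conic}, transporting the argument to the high-energy space $\bX$ and keeping track of the additional boundary hypersurface $\mf$. Throughout, the basic mechanism is the one already used for Lemma~\ref{QFQ'-microsupport-he}: composition with a $\psi$DO of differential order $-\infty$ is computed by writing both factors as oscillatory integrals in the model forms (the semiclassical analogues of \eqref{u1}--\eqref{u5}, in which the variable $h$ plays the role that $\rho$ played at low energy), performing stationary phase in the "pseudodifferential" variables $(z'',h'')$ or $(y'',\sigma'',\mu,\nu)$, and reading off the microlocal support from the resulting symbol expansion.

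For part (i), decompose $F = F_\Lambda + F_\sharp$, where $F_\Lambda \in I^{m; r_{\bfc}, r_{\lb}, r_{\rb}}(\bX, \Lambda; \sfOh)$ is associated only to the Legendrian $\Lambda$, and $F_\sharp$ is microsupported in an arbitrarily small neighbourhood of the conic Legendrian $\Lambda^\sharp$; the hypothesis that $Q$ is microlocally equal to the identity near $\pi_L(\Lambda \cup \Lambda^\sharp)$ passes to each summand. For $F_\Lambda$ the conclusion is exactly the last assertion of Lemma~\ref{QFQ'-microsupport-he}: on the stationary set of the composition the full symbol of $Q$ equals $1$ modulo $O((hxx')^\infty)$, so only the $j=0$ term of the stationary-phase expansion survives and it reproduces $F_\Lambda$ modulo an error vanishing to infinite order at $\mf$, $\lb$ and $\bfc$. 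For $F_\sharp$ one uses a phase function parametrizing the conic pair $(\Lambda, \Lambda^\sharp)$ near the conic point (as in \cite{HW}); since $Q$ is microlocally the identity on a full neighbourhood of $\pi_L(\Lambda^\sharp)$, the same stationary-phase computation applies verbatim in that region, and again only the leading term is nonzero. Adding the two pieces gives $QF - F \in I^{\infty, \infty; \infty, \infty, r_{\rb}}(\bX, (\Lambda, \Lambda^\sharp); \sfOh)$; the statement for composition on the right follows by taking adjoints.

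For part (ii), write once more $F = F_\Lambda + F_\sharp$ with $F_\sharp$ localized sufficiently close to $\Lambda^\sharp$. Applying Lemma~\ref{QFQ'-microsupport-he} (once on each side) shows $Q F_\Lambda Q' \in I^{m; r_{\bfc}, r_{\lb}, r_{\rb}}(\bX, \Lambda; \sfOh)$. For the $F_\sharp$ term, express $Q F_\sharp Q'$ as a (sum of) oscillatory integral(s), using for $F_\sharp$ a representation subordinate to a phase parametrizing $(\Lambda, \Lambda^\sharp)$ near $\Lambda^\sharp$ and the standard scattering-$\psi$DO representations of $Q$, $Q'$. The key point, exactly as in the low-energy proof, is that the hypothesis \eqref{WF'QFQ'-conic-mf} --- combined with the fact that the $\pi_L^{-1}\WF'_{\bfc}(Q)$, $\pi_R^{-1}\WF'_{\bfc}(Q')$ factors in \eqref{WFQFQ'-he} confine which frequencies can contribute --- forces the total phase of this integral to be non-stationary on the support of its (compactly supported) amplitude once $F_\sharp$ is localized close enough to $\Lambda^\sharp$. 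Repeated integration by parts in the $v$-variables then shows that this contribution lies in $(hxx')^\infty \CI(\MMb)$, hence trivially in every $I^{\infty,\dots}$ space and in particular makes no contribution to $\WF'_{\bfc}$ near $(\Lambda^\sharp)'$. Combining the two pieces gives the claim.

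The main obstacle will be the bookkeeping in part (ii): checking that "the total phase is non-stationary on the support of the amplitude" holds not only over the interior of $\MMb$ but also uniformly up to $\bfc \cap \mf$, $\bfc \cap \lb$ and $\bfc \cap \rb$, where the model phases degenerate and one must argue within the scattering-fibred structure. This is precisely the kind of verification carried out in the proof of Lemma~\ref{QFQ'-microsupport}, and the only genuinely new ingredient at high energy is the extra variable $h$; since $h$ enters the model phases in the same way $\rho$ does at low energy, the estimates transfer after the obvious substitutions. I would therefore present the proof by recording the decomposition $F = F_\Lambda + F_\sharp$, citing Lemma~\ref{QFQ'-microsupport-he} for $F_\Lambda$, and writing out the non-stationary phase estimate only for the new piece $F_\sharp$.
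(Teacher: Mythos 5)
Your proposal is correct and follows exactly the route the paper intends: the paper itself omits this proof, stating it is a "straightforward modification" of the low-energy arguments in Section~\ref{ows}, and your decomposition $F = F_\Lambda + F_\sharp$, invocation of Lemma~\ref{QFQ'-microsupport-he} for the $F_\Lambda$ piece, and non-stationary phase argument for the $F_\sharp$ piece mirror the proof of Lemma~\ref{QFQ'-microsupport-conic}. Your closing remarks about the bookkeeping at $\bfc\cap\mf$, $\bfc\cap\lb$, $\bfc\cap\rb$ and the substitution of $h$ for $\rho$ are exactly the "straightforward modifications" the authors have in mind.
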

 
 We omit the proof, which is a straightforward modification of the arguments in Section~\ref{ows}. 

\subsection{Pointwise estimates on Legendre distributions}

Now we give a pointwise estimate on Legendre distributions of a particular type. First we begin with the trivial case.

\begin{prop}\label{triv-1-he} Let $\Lambda \subset \Tscstar_{\mf}(\bX)$ be a Legendre distribution that projects diffeomorphically to $\mf$. Suppose that $u \in I^{m,r_{\bfc}, r_{\lb},r_{\rb}}(\bX, \Lambda)$ with $$m=n/2  - l, \quad r_{\bfc} = -n/2  - \alpha, \quad r_{\lb} = r_{\rb} =  - \alpha.$$ Then, as a multiple of the half-density $|dg dg' d\lambda|^{1/2}$, 
we have a pointwise estimate
$$
|u| \leq C\lambda^l (x^{-1} + (x')^{-1})^{\alpha}.
$$
\end{prop}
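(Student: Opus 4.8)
\textbf{Proof plan for Proposition~\ref{triv-1-he}.}

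The plan is to imitate the proof of the low-energy trivial estimate, Proposition~\ref{triv-1}, working now on the high energy space $\bX = [0,h_0] \times \MMb$. Since $\Lambda$ projects diffeomorphically to $\mf$, a Legendre distribution associated to $\Lambda$ has an oscillatory representation \emph{with no fibre integration}: locally, as a multiple of the nonvanishing half-density $\sfOh$, $u$ is simply $\rho_\mf^{?}\,\rho_\bfc^{?}\,\rho_\lb^{?}\,\rho_\rb^{?}$ times a polyhomogeneous symbol, and the orders of vanishing/growth at each boundary hypersurface can be read off by inspection from the local models in \cite{HW} (the analogues of \eqref{u1}--\eqref{u5}). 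Concretely, the order $m = n/2 - l$ at $\mf$ contributes a factor whose size is $h^{m - k/2 - n}$ with $k = 0$ here, i.e. $h^{n/2 - l - n} = h^{-n/2 - l}$ relative to $\sfOh$; the orders $r_\bfc, r_\lb, r_\rb$ contribute factors $\rho_\bfc^{r_\bfc}, \rho_\lb^{r_\lb}, \rho_\rb^{r_\rb}$.

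First I would recall, exactly as in the remarks following Proposition~\ref{triv-1}, the comparison between the reference half-density $\sfOh$ and the metric half-density: from Section~\ref{sec:bX} we have $|dg\,dg'\,dh/h^2| = |dg\,dg'\,d\lambda|$ as a smooth nonvanishing section of $\sfO(\bX)$, but when we write $u$ as a multiple of $|dg\,dg'\,d\lambda|^{1/2}$ rather than of $\sfOh$ (these agree up to a smooth nonvanishing factor \emph{away from the boundary of $\MMb$}, but differ by powers of $x, x'$ near $\lb, \rb, \bfc$), we must insert the appropriate Jacobian factors. Near $\bfc, \lb, \rb$, in the coordinates $(y,y',\sigma,x',h)$ resp.\ $(z,z',h)$, one has $dg \sim x^{-(n+1)} dx\,dy$, so a basis half-density for $\Tsfstar\bX$ differs from $|dg\,dg'\,d\lambda|^{1/2}$ by a factor which is a power of $x$ (and $x'$): this accounts for the discrepancies between $m$, $r_\bfc$, $r_\lb$, $r_\rb$ and the naive exponents, just as the $n$ and $n/2$ discrepancies appeared in \eqref{b}.

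Then the estimate is immediate: combining the $h$-power $h^{-n/2 - l}$ coming from the $\mf$ order (but note $\lambda = 1/h$, so together with the Jacobian factor $|dh/h^2|^{1/2} = |d\lambda|^{1/2}$ absorbed into the statement, the effective power is $\lambda^l$) with the boundary factors $\rho_\bfc^{-n/2 - \alpha} \rho_\lb^{-\alpha}\rho_\rb^{-\alpha}$ converted via $\rho_\lb \sim x$, $\rho_\rb \sim x'$, $\rho_\bfc \sim \max(x,x')$ (or a product-type boundary defining function near the corner $\lb \cap \rb$), one obtains a bound of the form $\lambda^l (x^{-1} + (x')^{-1})^\alpha$, since $\rho_\bfc^{-n/2 - \alpha}\rho_\lb^{-\alpha}\rho_\rb^{-\alpha}$ is dominated by $(xx')^{-\alpha}$ up to the (bounded) $\rho_\bfc^{-n/2}$ which is absorbed when we pass to the metric density. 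There is essentially no hard step here — the content is entirely bookkeeping of density weights and boundary defining functions — but the one place to be careful is the precise matching of the order conventions: one must use the conventions of Remark~\ref{orderconventions} (with $N = 2n$ and $f_\bfc = 0$, $f_\lb = f_\rb = n$) consistently, so that the $h$-exponent and the boundary exponents translate correctly into the $\lambda^l$ and $(x^{-1} + (x')^{-1})^\alpha$ appearing in the statement. This is the analogue of, and no harder than, Proposition~\ref{triv-1}.
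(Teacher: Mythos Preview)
Your proposal is correct and follows exactly the approach the paper takes: since $\Lambda$ projects diffeomorphically to $\mf$, the Legendre distribution is an oscillatory function with no fibre integration, and the exponents are read off directly from the order conventions together with the comparison between the reference half-density and $|dg\,dg'\,d\lambda|^{1/2}$. The paper in fact does not give a separate proof of this proposition at all, treating it as the immediate high-energy analogue of Proposition~\ref{triv-1}; your write-up is more detailed than what appears there.
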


Generalizing Proposition~\ref{Leg-bd-1} to the case of 
$\bX = \MMb \times [0, h_0]$  is straightforward.

\begin{prop}\label{Leg-bd-he} Let $\Lambda$ be a Legendrian submanifold of $\Tsfstar[\mf]\bX$. Assume that $\Lambda$ intersects $\Nsfstar \diagb$, defined in \eqref{Nsfstar-diagb}, at $G = \Lambda \cap \Nsfstar \diagb$ which is codimension 1 in $\Lambda$ and transversal to the boundary at $\bfc$, and that $d\pi$ has full rank on $\Lambda \setminus G$, while $\pi |_G$ is a fibration $G \to \diagb$ with $(n-1)$-dimensional fibres, with condition  \eqref{det-ass} holding at $G$. 

Assume that $u \in I^{m,r_{\bfc}, r_{\lb},r_{\rb}}(\bX, \Lambda; \sfOh)$, with $m,r_{\bfc}, r_{\lb},r_{\rb}$ as in Lemma~\ref{triv-1-he} and that the full symbol of $u$ vanishes to order $(n-1)/2 + \alpha$ both at $G \subset \Lambda$ and at
$\partial_{\bfc} G \times [0, h_0] \subset \partial_{\bfc} \Lambda \times [0, h_0]$. Then, as a multiple of the half-density $|dg dg' d\lambda|^{1/2}$, we have a pointwise estimate
\begin{equation}
|u| \leq 
C\lambda^{l-\alpha} (1 + \lambda d(z,z'))^{\alpha}.
\end{equation}
\end{prop}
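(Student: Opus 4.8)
The plan is to reduce Proposition~\ref{Leg-bd-he} to the low energy estimate Proposition~\ref{Leg-bd-1} by exploiting the product structure of $\bX = \MMb \times [0, h_0]$ and the close analogy between the two geometric situations. First I would observe that the only difference between this statement and Proposition~\ref{Leg-bd-1} is the presence of the extra $[0, h_0]$ factor (the semiclassical parameter $h = 1/\lambda$) rather than the low-energy $[0, \lambda_0]$ factor, together with the fact that the diagonal-boundary blowup now happens at $\mf$ rather than at $\zf$; the Legendrian structure near the diagonal and near $\bfc$ is identical in form. So the first step is to set up coordinates on $\bX$ near a point of $G$: away from $\bfc$ we use $(z, z', h)$ with $\Lambda$ locally the flowout of (a piece of) the conormal bundle of the diagonal, and near $\bfc$ we use $(y, y', \sigma, x', h)$ exactly as in Section~\ref{ows}, with the contact form \eqref{cf} (now with $x$ replaced by the relevant boundary defining function, and an extra $h$-dependence that is harmless because $h$ is constant along the relevant fibres).

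Next I would carry out the geometric normal form for $\Lambda$ near $G$ verbatim as in the proof of Proposition~\ref{Leg-bd-1}: using condition \eqref{det-ass}, choose coordinates $(y, w_1, \kappa_2, \dots, \kappa_n)$ (or their analogue in the interior), write the remaining coordinates as functions $W_i, M_i, \Mbar, N$, derive the Legendre identities \eqref{identities}, and produce the phase function $\Phi(y, \ybar, \ybbar, v)$ of \eqref{Phi-defn} parametrizing $\Lambda$, with the key property that $\det d^2_{vv}\Phi$ vanishes to order exactly $n-1$ at $G$ and $\Phi$ vanishes there when $d_v\Phi = 0$. The role of $\rho = x/\lambda$ in the low energy setting is played here by $xh$ (equivalently $x/\lambda$), the natural boundary defining variable appearing in the scattering-fibred basis \eqref{coord-he-1}, so the oscillatory integral representation \eqref{toest} becomes, modulo rapidly vanishing terms,
\begin{equation*}
(xh)^{-(n-1)/2 - \alpha} \lambda^n \int e^{i\Phi(y, w, v)/(xh)} a(h, xh, y, v) \, dv \; |dg\, dg'\, d\lambda|^{1/2},
\end{equation*}
with $a$ polyhomogeneous in $h$ (reflecting the semiclassical orders, and the $r_{\bfc}$ order at $\bfc$) and vanishing to order $(n-1)/2 + \alpha$ at $xh = \ybar = 0$. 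The hypothesis that the full symbol vanishes to order $(n-1)/2 + \alpha$ both at $G$ and at $\partial_{\bfc} G \times [0, h_0]$ is precisely what is needed to expand $a$ as in \eqref{a-sum} uniformly up to $\bfc$.

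From here the stationary phase argument is identical to that in Proposition~\ref{Leg-bd-1}: change variables $\theta_i = \ybar^{-1/2} d_{v_i}\Phi$, use $\partial\Phi/\partial\theta = A^{-1}\theta$ with $A$ invertible near $G$, introduce the partition of unity $\chi_0 + \sum \chi_j$, and integrate by parts $M$ times using \eqref{ibp}, obtaining the uniform bound on \eqref{uniform-bd} and hence the estimate with the factor $(1 + |w|/(xh))^\alpha \sim (1 + \lambda d(z,z'))^\alpha$; away from the diagonal one invokes Proposition~\ref{triv-1-he} instead. The conversion between the $\mf$-normalization (giving $\lambda^l$ from the half-density comparison, as in Proposition~\ref{triv-1-he}) and the various face orders gives the exponent $l - \alpha$ in front. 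I expect the main obstacle to be purely bookkeeping rather than conceptual: one must check carefully that the semiclassical $h$-dependence of the symbol $a$, and the behaviour at $\bfc$ where the blown-up diagonal meets the boundary hypersurface, do not interfere with the stationary phase in the $v$ variables --- i.e. that $h$ genuinely plays the role of an inert parameter along $G$ and that the vanishing hypothesis at $\partial_{\bfc}G \times [0,h_0]$ gives enough decay to run the expansion \eqref{a-sum} uniformly. Since $G$ is assumed transversal to $\bfc$, this should go through without difficulty, but it is the one place where the high-energy setting genuinely differs from the low-energy one and so deserves the most care.
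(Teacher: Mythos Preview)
Your approach is essentially the paper's: handle the region away from $\diagb$ by Proposition~\ref{triv-1-he}, the region near $\diagb$ but away from $\bfc$ by literally repeating the proof of Proposition~\ref{Leg-bd-1} with $h$ in place of $\rho$, and the corner $\bfc \cap \mf$ by the same normal form and stationary phase.

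The one imprecision is your treatment of that corner. You package everything into a single small parameter $xh$ and write the prefactor as $(xh)^{-(n-1)/2-\alpha}$, as if $xh$ were simply the new $\rho$. But $\bfc$ and $\mf$ are distinct boundary hypersurfaces carrying different orders ($r_{\bfc} = -n/2 - \alpha$ versus $m = n/2 - l$), so the prefactor is $x^{-(n-1)/2-\alpha}$ times a power of $\lambda$, not a power of $xh$. Correspondingly, near $\partial_{\bfc}G$ the phase function picks up genuine $x$-dependence, $\tilde\Phi(x,y,w,v) = \Phi(y,w,v) + O(x)$, and the contact form acquires an extra $\tau\,dx$ term; the two vanishing hypotheses on the symbol give vanishing to order $(n-1)/2+\alpha$ \emph{separately} at $\{x=0\}$ and at $\{w_1=0\}$, not jointly at $\{xh=0\}$. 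The correct picture is that $1/(xh)$ is the large oscillatory parameter while $x$ is a smooth parameter in phase and symbol; with that adjustment your integration-by-parts argument (partition of unity in $\theta/\sqrt{xh}$, etc.) goes through exactly as you outline.
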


\begin{proof} 
First consider $u$ on a neighbourhood of $\bX$ disjoint from $\diagb$. In that case, the result follows from Proposition~\ref{triv-1-he}. 

Next consider $u$ near $\diagb$, but away from $\bfc$. Then if $u$ is microlocally trivial at $\Nsfstar \diagb$, the result  follows from Proposition~\ref{triv-1-he}. If not, then the geometry is the same as that considered in Proposition~\ref{Leg-bd-1} (with $\rho$ replaced by $h$; also note that the estimate in Proposition~\ref{Leg-bd-1} is respect to the half-density $\lambda^n |dg dg' d\lambda|^{1/2}$), and the result follows from that Proposition. 

So we are reduced to the case where we are microlocally close to $\Lambda \cap \partial_{\bfc} \Nsfstar \diagb = \partial_{\bfc} G$. Let 
 $q \in \partial_{\bfc} G$. In a neighbourhood of $\partial_{\bfc}  \diagb$, we have coordinates $(x, y, w)$, where $w = (y-y', \sigma - 1)$ as before. In terms of these we can write points in $\Tsfstar_{\mf} \bX$ in the form 
 $$
 \kappa \cdot \frac{dw}{xh} + \mu \cdot \frac{dy}{xh} + \tau \cdot \frac{dx}{xh} + \nu d\big(\frac1{xh}\big),
 $$
 and this defines local coordinates $(x, y, w; \tau, \mu, \kappa, \nu)$ on $\Tsfstar_{\mf} \bX$. Then, contracting the canonical one-form with $x h^2 \partial_h$ and restricting to $\Tsfstar[\mf] \bX$ gives the contact form on $\Tsfstar[\mf] \bX$, which in these coordinates takes the form 
\begin{equation}
d\nu - \tau dx - \mu \cdot dy - \kappa \cdot dw.
\label{cf-mf}\end{equation} 
 
 Using the transversality of $\Lambda$ to $\Tsfstar[\bfc \cap \mf] \bX$ we see, as in the proof of Proposition~\ref{Leg-bd-1} that $(x, y, w_1, \mubbar)$ form coordinates on $\Lambda$. Then as in the proof of Proposition~\ref{Leg-bd-1}, we can write the remaining coordinates as functions of $(x, y, w_1, \mubbar)$ on $\Lambda$:
$$\begin{aligned}
\wbar_i &= W_i(x, y, w_1, \mubbar), \quad i = 2 \dots n, \\
\mu_i &= M_i(x, y, w_1, \mubbar), \\
\kappa_1 &= K(x, y, w_1, \mubbar), \qquad \qquad \qquad \text{ on } \Lambda. \\
\nu &= N(x, y, w_1, \mubbar), \\
\tau &= T(x, y, w_1, \mubbar)
\end{aligned}$$
In the same way as before, we find that 
$$
\tilde \Phi(x, y, w, v) = \sum_{j=2}^n \big(\wbar_j - W_j(x, y, w_1, v) \big)v_j + N(x, y, w, v), \quad v = (v_2, \dots, v_n),
$$
parametrizes $\Lambda$ locally, and has the properties that  $\tilde\Phi = O(w_1)$ when $d_v \tilde\Phi = 0$, and $\tilde\Phi = \Phi + O(x)$ where $\Phi$ is precisely as in the proof of Proposition~\ref{Leg-bd-1}. We can then follow the proof given there, where \eqref{toest} is replaced by 
\begin{equation}
x^{-(n-1)/2 - \alpha} \lambda^{(n-1)/2 + k} \int e^{i\tilde\Phi(x, y, w, v)/xh} \tilde a(x, y, w_1, v, h) \, dv
\end{equation}
in which the function $\tilde a$ vanishes to order $(n-1)/2 + \alpha$ at $x = 0$ and at $w_1 = 0$. In effect we have replaced the large parameter $1/x$ in the phase of \eqref{toest} by $1/xh$, while $x$  plays the role of a smooth parameter. 

The rest of the argument is parallel to the proof of Proposition~\ref{Leg-bd-1}. We first note that the estimate is trivial when $|w_1| \leq x h$. Assuming then that $|w_1| \geq x h$, we make the change of variables \eqref{theta-var}. By continuity, the matrix $A$ in \eqref{A^{-1}theta} remains nonsingular, and \eqref{A-derivs} remains valid, for small $x$. Hence, we can integrate by parts using the identity 
$$
e^{i\tilde\Phi/x} = \Big( \sum_k \frac{xh}{i\theta_j} A_{jk} \frac{\partial}{\partial \theta_k} \Big) e^{i\tilde\Phi/x}
$$
analogous to \eqref{ibp}.  

In the $\theta$ coordinates, we are trying to prove the estimate
\begin{equation*}
\Big| x^{-(n-1)/2 - \alpha} h^{-(n-1)/2 - l} \int_{\RR^{n-1}} \ybar^\alpha e^{i\tilde\Phi(x,y, w,\theta)/xh} \tilde a_0(x, y, w_1, \theta) \, d\theta \Big| \leq C h^{-l} \big( \frac{\ybar}{x}\big)^{\alpha},
\end{equation*}
since when $|w| \geq xh$, 
$$
\frac{|w|}{xh} \sim \lambda d(z,z') \sim 1 + \lambda d(z,z')
$$
(and recall that $|w| \sim |w_1|$ locally). As before, 
 the $\ybar^{(n-1)/2}$ factor was absorbed as a Jacobian factor, and $\tilde a$ is again smooth. This estimate is equivalent to a uniform bound on \begin{equation}
\Big| (xh)^{-(n-1)/2} \int_{\RR^{n-1}} e^{i\tilde\Phi(x,y, w,\theta)/x} \tilde a_0(x, y, w_1, \theta) \, d\theta \Big| .
\label{uniform-bd-he}\end{equation}
We introduce a modified partition of unity in $(x, \theta)$-space, $1 = \chi_0 + \sum_{j=1}^{n-1} \chi_j$, where $\chi_0$ is a compactly supported function of $\theta/\sqrt{xh}$, and $\chi_j$ is supported where $|\theta|\geq  \sqrt{xh}$, and where $\theta_j \geq  |\theta|/(n-1)$, with derivatives estimated by 
\begin{equation}
| \nabla_\theta^{(k)} \chi_k | \leq C (xh)^{-k/2}.
\label{chi-derivs-he}\end{equation}
Then the rest of the argument proceeds just as before, leading to \eqref{uniform-bd-he}. 
\end{proof}

\subsection{Geometry of the Legendre submanifold $L$}
We prove results analogous to Lemma~\ref{L-geometry-2} and Lemma~\ref{L-geometry-1}. First, we define 
$$
G = \{ q \in \Nsfstar \diagb \mid \sigma(h^2 \Delta_g)(q) = 1 \},
$$
where $\sigma$ is the semiclassical principal symbol. This is an $S^{n-1}$-bundle over $\diagb$. 

\begin{lem}\label{L-geometry-he-1} The Legendre submanifold $L$ introduced in Section~\ref{sec:bX} intersects $\Nsfstar \diagb$  
 cleanly at $G$, and the projection $\pi : L \to \mf$ satisfies \eqref{det-ass}. 
 \end{lem}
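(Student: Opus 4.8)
The plan is to mimic the proof of Lemma~\ref{L-geometry-2}, transferring the argument from the low-energy boundary face $\bfc$ to the main face $\mf$. The key structural fact is that, by \cite{HW}, the Legendre submanifold $L$ is the flowout from $G = \Lambda \cap \Nsfstar \diagb$ (the intersection with the characteristic variety of $h^2\bH - 1$) by the bicharacteristic vector field of $\bH$, which on $\mf$ is (a rescaling of) the Hamilton vector field of the principal symbol $|\zeta|_g^2$ of $h^2\Delta_g$. Since $\bH = \Delta_g + V$ and $V$ vanishes to third order at $\partial M$, the potential contributes nothing to the semiclassical principal symbol, so over $\mf$ the relevant vector field is exactly $H_{|\zeta|_g^2}$. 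The first step is therefore to write this vector field explicitly in the coordinates of Section~\ref{sec:bX} --- both the interior coordinates $(z,z',\zeta,\zeta',\tau)$ of \eqref{coord-he-2} and the boundary coordinates $(y,y',\sigma,x',\mu,\mu',\nu,\nu',\tau)$ of \eqref{coord-he-1}, near $\mf \cap \bfc$ --- and to observe, just as in \eqref{Vl}, that it always has a nonvanishing component in the $\partial_\sigma$ or $\partial_\nu$ directions (more precisely, in the variable conjugate to the ``difference'' directions transverse to the diagonal). Concretely, the flow moves $z$ away from $z'$ with nonzero speed $2\zeta$ whenever $|\zeta|_g \neq 0$, i.e. on $G$ --- and $\zeta = -\zeta'$, $z = z'$ on $\Nsfstar\diagb$ --- so $H_{|\zeta|_g^2}$ is everywhere transverse to $\Nsfstar \diagb$ along $G$. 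This gives clean intersection and codimension $1$ of $G$ in $L$, exactly as in the first paragraph of the proof of Lemma~\ref{L-geometry-2}.

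The second and main step is to verify condition \eqref{det-ass}, i.e.\ that $\det d\pi$ (for $\pi : L \to \mf$) vanishes to order exactly $n-1$ at $G$. Here I would split into two cases just as in Lemma~\ref{L-geometry-2}: near a point of $G$ where $|\zeta|_g < 1$ (equivalently $\tau$-type radial component nonzero, in the rescaled picture where $|\zeta|_g^2 + (\text{radial})^2 = 1$ on $G$), use the flowout time $\epsilon$ together with the ``conserved'' momentum coordinates and $z'$ as coordinates on $L$; near a point where $|\zeta|_g = 1$, choose a nonvanishing momentum component $\zeta_1$ and use $\epsilon$ together with the remaining momenta, the radial coordinate, and $z'$. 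In each case, Taylor-expanding the flow to first order in $\epsilon$ gives $z = z' + 2\epsilon\, g^{ij}\zeta_j + O(\epsilon^2)$ and $\sigma = 1 - \epsilon\nu + O(\epsilon^2)$ near $\mf \cap \bfc$ (or the analogous statement in the interior), so that $\partial z^i/\partial \zeta_j = \epsilon\, g^{ij} + O(\epsilon^2)$ and $\partial \sigma/\partial(\text{radial momentum}) \sim -\epsilon$; positive-definiteness of $g^{ij}$ then forces $\det d\pi$ to vanish precisely to order $n-1$ as $\epsilon \to 0$. I also need to check that the fibration $\pi|_G : G \to \diagb$ has $(n-1)$-dimensional fibres --- immediate since $G$ is an $S^{n-1}$-bundle over $\diagb$ --- and that $G$ is transversal to the boundary $\bfc$, which follows because $G$ is cut out by the symbol equation $\sigma(h^2\Delta_g) = 1$ that is noncharacteristic with respect to $x$.

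The only genuinely new feature compared to Lemma~\ref{L-geometry-2} is that everything must be done uniformly up to the corner $\mf \cap \bfc$, rather than just over a single boundary face; but the coordinate computations in Section~\ref{sec:bX} are arranged precisely so that the boundary coordinate expressions \eqref{coord-he-1} reduce, after rescaling by $x$, to the same algebraic form as the low-energy ones, so the $O(\epsilon)$ expansions and the nondegeneracy of $g^{ij}$ carry over verbatim. I expect the main obstacle to be bookkeeping: making sure the rescalings (the factor $xh$ in the phase, the relation $\sigma = x/x'$, the identification of $G$'s two coordinate patches with those used in the low-energy proof) are consistent, and that the order-of-vanishing statement \eqref{det-ass} is being asserted with respect to the correct boundary defining function for $G$ inside $L$. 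Once the coordinate dictionary is set up, the proof is a line-by-line transcription of the proof of Lemma~\ref{L-geometry-2}, so I would present it briefly, emphasizing only the vector field computation and the two-case rank analysis, and referring to Lemma~\ref{L-geometry-2} for the details that are unchanged.
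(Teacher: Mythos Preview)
Your overall strategy is correct and matches the paper: reduce to the argument of Lemma~\ref{L-geometry-2} by writing $L$ as the flowout from $G$ along the bicharacteristic vector field, check transversality of that flow to $\Nsfstar\diagb$, and then Taylor-expand the flow to see that the Jacobian of $\pi$ vanishes to order exactly $n-1$ via the nondegeneracy of $g^{ij}$.

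However, your proposed case split is confused. On $G$ in the \emph{interior} of $\mf$ (coordinates $(z,z',\zeta,\zeta',\tau)$ as in \eqref{coord-he-2}), one has $\tau = 0$ by the definition \eqref{Nsfstar-diagb} of $\Nsfstar\diagb$, and the equation $\sigma(h^2\Delta_g)=1$ reads $|\zeta|_g^2 = 1$. So there is no ``$|\zeta|_g < 1$'' case, and there is no separate radial momentum in addition to $\zeta$: the case $|\mu|_h<1$ versus $|\mu|_h=1$ in Lemma~\ref{L-geometry-2} was a feature of the $(n-1)+1$ splitting of the scattering cotangent variables $(\mu,\nu)$ over $\bfc$, and it does not transplant to the interior of $\mf$ where $\zeta$ has $n$ components. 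Consequently your coordinate count for the first case would also be off by one.

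The paper organizes the argument differently and more simply. Near $\bfc$ the rescaled Hamilton vector field agrees with \eqref{Vl} up to $O(x)$, so one literally repeats the two-case computation of Lemma~\ref{L-geometry-2} with $x$ carried along as a smooth parameter; this handles the corner $\mf\cap\bfc$. Away from $\bfc$ one writes the vector field in the interior coordinates,
\[
V_l = g^{ij}(z)\zeta_i\,\partial_{z^j} \;-\; \tfrac12\,\partial_{z^k}g^{ij}(z)\,\zeta_i\zeta_j\,\partial_{\zeta_k} \;+\; g^{ij}(z)\zeta_i\zeta_j\,\partial_{\tau},
\]
observes that $|\zeta|_g=1$ is preserved and $\dot\tau=1$ (so $\tau$ \emph{is} the flowout time), and then does a \emph{single} case: pick $\zeta_1\neq 0$, take $(\zetabar, z', \tau)$ as coordinates on $L$, and compute $z^i = (z')^i + g^{ij}\zeta_j\,\tau + O(\tau^2)$, giving $\partial z^1/\partial\tau\neq 0$ and $\partial \zbar^i/\partial \zetabar_j = \tau\,g^{ij}+O(\tau^2)$, hence $\det d\pi$ vanishes to order exactly $n-1$ at $\tau=0$. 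Once you correct the case split along these lines, your write-up goes through.
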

 
 \begin{proof} This is proved just as for Lemma~\ref{L-geometry-2}. 
As shown in \cite{HW},  $L$ can be obtained as the flowout from $G$ by a vector field $V_l$, which is obtained from the Hamilton vector field of $\Delta_g - \lambda^2$ by dividing by boundary defining function factors (see \cite[Section 11]{HW}), so that it becomes smooth up to the boundary of $\Tsfstar \bX$. This vector field takes the form  \eqref{Vl} up to $O(x)$ near $\bfc$, and repeating the argument
below \eqref{Vl} with $x$ as a smooth parameter  establishes the lemma in a neighbourhood of $\partial_{\bfc} G$, i.e. for $x + x' \leq \epsilon$ for some small $\epsilon > 0$. 

Away from $\bfc$, we can use coordinates $(z,z')$ on $\mf$, and 
writing points in $\Tsfstar[\mf] \bX$ in the form
$$
\zeta \cdot \frac{dz}{h} + \zeta' \cdot \frac{dz'}{h} + \tau d\big( \frac1{h} \big)
$$
defines fibre coordinates $(\zeta, \zeta', \tau)$ on $\Tsfstar[\mf] \bX$. 
In terms of these coordinates, we have 
\begin{equation}
V_l = g^{ij}(z) \zeta_i \frac{\partial}{\partial z^j} - \frac1{2} \frac{\partial g^{ij}(z)}{\partial z_k} \zeta_i \zeta_j \frac{\partial}{\partial \zeta_k} + g^{ij}(z) \zeta_i \zeta_j \frac{\partial}{\partial \tau}.
\label{Vll}\end{equation}
We recognize the equations for $(z, \zeta)$ as equations for geodesic flow. Moreover, letting $|\zeta|_g = g^{ij}(z) \zeta_i \zeta_j$, we find that $ ({|\zeta|}^2_g)\dot {} = 0$ and $|\zeta|_g = 1$ on $G$, hence $|\zeta|_g = 1$ on $L$; similarly $|\zeta'|_g = 1$ on $L$. Finally, $\dot \tau = 1$ and $\tau = 0$ on $G$. It follows that near a point on $G$ where (say) $\zeta_1 \neq 0$, we can use coordinates $(\zetabar, z', \tau)$ as coordinates on $L$, where $\zetabar = (\zeta_2, \dots, \zeta_n)$, $\zbar = (z_2, \dots, z_n)$. We then find, from \eqref{Vll}, that
$$\begin{aligned}
z^1 &= (z')^1 + g^{ij} \zeta_j \tau + O(\tau^2), \\
z^i &= (z')^i + g^{ij} \zeta_j \tau + O(\tau^2), \quad i \geq 2, \\
\end{aligned}$$
and we see that near $G$, 
$$
\frac{\partial z^1}{\partial \tau} \neq 0, \quad 
\frac{\partial \zbar^i}{\partial \zetabar_j} = \tau g^{ij}, 
$$
which shows that $\det d\pi$, where $\pi$ is the map
$$
L \ni (\zetabar, z', \tau) \mapsto \Big(z^1(\zetabar, z', \tau), \zetabar(\zetabar, z', \tau), z' \Big),
$$
vanishes to order exactly $n-1$ at $G$. 
\end{proof}


\begin{lem}\label{L-geometry-he} 
(i) There exists $0 < \delta < 1$ and $\epsilon > 0$ such that the Legendre submanifold $L \subset \Tsfstar_{\mf} \bX$ projects diffeomorphically to the base $\mf$ locally near all points $(x,y, x',y',  \mu, \mu', \nu, \nu', \tau) \in L \setminus G$ such that $x + x' < 2\epsilon$ and $  |\nu + \nu'| < \delta$. 

(ii) For any $\epsilon > 0$ there exists $\iota > 0$ such that $L$ projects diffeomorphically to the base near all points $(z, z', \zeta, \zeta', \tau) \in L \setminus G$ such that $x + x' > \epsilon$ and $|\tau| < \iota$. 
\end{lem}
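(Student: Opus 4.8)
\textbf{Proof strategy for Lemma~\ref{L-geometry-he}.}

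The plan is to adapt the argument of Lemma~\ref{L-geometry-1} to the semiclassical setting, treating each of the two regimes ($x+x'$ small near $\bfc$, and $x+x'$ bounded below away from $\bfc$) separately, and in each regime invoking Lemma~\ref{L-geometry-he-1} to handle the degenerate locus $G$ itself while a compactness-plus-continuity argument handles a full neighbourhood of it.

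For part (i), first I would note that on $L$ the equations $\nu=-\nu'$ hold precisely on $G$ together with the semiclassical analogues of the sets $T_\pm$ (the purely radial incoming/outgoing Legendrians); this follows from the explicit description of $L$ given in \cite{HW}, whose boundary hypersurface over $\bfc\cap\mf$ is exactly $\Lbf$ with its description \eqref{eq:sp-1c}. A compactness argument then shows that, for $\delta$ small enough and $x+x'<2\epsilon$, the set $\{(x,y,x',y',\mu,\mu',\nu,\nu',\tau)\in L : |\nu+\nu'|<\delta,\ x+x'<2\epsilon\}$ is contained in an arbitrarily small neighbourhood of $G\cup T_+\cup T_-$. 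So it suffices to prove diffeomorphic projection in a deleted neighbourhood of $G\cup T_+\cup T_-$. Near $G$ (but away from $G$ itself) this is exactly the content of Lemma~\ref{L-geometry-he-1}, since \eqref{det-ass} says $\det d\pi$ vanishes to order exactly $n-1$ at $G$ and $d\pi$ has full rank off $G$. Near $T_\pm$ one repeats the computation from the proof of Lemma~\ref{L-geometry-1}: using the flow description of $L$ as a flowout from $G$ by the vector field $V_l$, which near $\bfc$ has the form \eqref{Vl} up to $O(x)$ with $x$ playing the role of an additional smooth parameter, one computes $\partial y^i/\partial \mu_j' = (1-\sigma)h^{ij} + O(\cdots)$ at $T_+$ (and the analogous expression at $T_-$), and positive-definiteness of $h^{ij}$ together with $\sigma\neq 1$ there gives that $(y,y',\sigma,x,h)$ or equivalently the base coordinates furnish coordinates on $L$, i.e. $\pi$ is a local diffeomorphism.

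For part (ii), away from $\bfc$ we use the coordinates $(z,z')$ on $\mf$ and the fibre coordinates $(\zeta,\zeta',\tau)$, with $V_l$ given by \eqref{Vll}, so that the $(z,\zeta)$-flow is geodesic flow and $|\zeta|_g=|\zeta'|_g=1$ on $L$. The condition $|\tau|<\iota$ localizes near the flowout time $\tau=0$, i.e. near $G$: indeed $\dot\tau = |\zeta|^2_g = 1$ on $G$, so $\tau$ is (a constant times) the flowout parameter $\epsilon$, and for $|\tau|$ small we are in a deleted neighbourhood of $G$ where Lemma~\ref{L-geometry-he-1} applies directly to give that $\det d\pi$ vanishes only at $\tau=0$ and $d\pi$ has full rank for $\tau\neq 0$. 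One then extracts $\iota>0$ (depending on $\epsilon$, via compactness of the relevant region of $L$) such that this holds uniformly.

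The main obstacle is bookkeeping rather than conceptual: carefully matching the coordinate systems and the vector field $V_l$ across the transition region near $\bfc\cap\mf$, and verifying that the $O(x)$ error in $V_l$ near $\bfc$ does not spoil the rank computation — this is handled exactly as in the proof of Proposition~\ref{Leg-bd-he}, by treating $x$ as a smooth parameter and invoking continuity of the non-vanishing of the relevant Jacobian determinants for small $x$. The nontrapping hypothesis enters (as throughout this section) in guaranteeing that $L$ is a well-defined Legendre submanifold with the structure described in \cite{HW}, so that the flowout picture is globally valid.
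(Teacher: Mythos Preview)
Your proposal is correct and follows essentially the same approach as the paper's proof: localize via compactness to a neighbourhood of $G\cup T_+\cup T_-$ in part (i) and to a neighbourhood of $G$ in part (ii), then invoke Lemma~\ref{L-geometry-he-1} near $G$ and handle $T_\pm$ separately. The only cosmetic difference is that near $T_\pm$ the paper appeals to the transversality of $L$ to the boundary at $\bfc$ (established in \cite{HW}) to promote the boundary statement of Lemma~\ref{L-geometry-1} directly to coordinates $(y,y',\sigma,\rho_{\bfc})$ on $L$, whereas you propose to rerun the Jacobian computation with $x$ as a smooth parameter; these amount to the same thing.
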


\begin{proof} (i) A topological argument shows that for sufficiently small $\epsilon$, depending on $\delta$, the subset of $L$ where 
$x + x' < 2\epsilon$ and $ |\nu + \nu'| < \delta$ is contained in a small neighbourhood of the set $G \cup T_+ \cup T_-$, where $T_\pm \subset \partial_{\bfc} L = \Lbf$ are as in \eqref{eq:sp-1c}. Lemma~\ref{L-geometry-he-1} shows that $L$ projects diffeomorphically to $\mf$ in a deleted neighbourhood of $G$. Near the sets $T_\pm$, we use Lemma~\ref{L-geometry-1} and the fact, proved in \cite{HW}, that $L$ is transverse to the boundary at $\bfc$ to show that $(y, y', \sigma, \rho_{\bfc})$ form coordinates locally near $T_\pm$ away from $G$. Here $\rho_{\bfc}$ is a boundary defining function for $\bfc$ and can be taken to be $x$ for $\sigma > 1$ or $x'$ for $\sigma < 1$. Therefore, $L$ projects diffeomorphically to $\mf$ locally near $T_\pm$ and away from $G$. 

(ii) The calculation above shows that if $\tau$ is small, then $d(z,z')$ is small and $|\zeta + \zeta'|$ is small, i.e. $(z, \zeta, z', \zeta', \tau)$ is close to $G$. So by taking $\iota$ sufficiently small, we restrict attention to a small neighbourhood of $G \cap \{ x + x' \geq \epsilon \}$. The result then follows directly from Lemma~\ref{L-geometry-he-1}.
\end{proof}

\begin{remark} In fact, we can take $\iota$ to be the injectivity radius of $M$. 
\end{remark}

Let $M'$ be the compact subset of $M^\circ$ given by $\{ x \geq \epsilon \}$, where $\epsilon$ is as in Lemma~\ref{L-geometry-he}, and let $\iota$ be the injectivity radius of $M$. For any $z_0 \in M'$, let $z$ denote the Riemannian normal coordinates centred at $z_0$, and $\zeta$ the corresponding dual coordinates. Define the quantity
\begin{multline*}
\eta = \inf_{z_0 \in M'} \min \big\{ |z-z'| + |\zeta - \zeta'| :  |z-z_0| \leq \iota/4, |z'-z_0| \leq \iota/4, \\ \gamma(0) = (z, \zeta),\  \gamma(t) = (z', \zeta'),\  t \geq \iota \big\}
\end{multline*}
where the minimum is taken over all geodesics $\gamma : \RR \to M^\circ$ which are arc-length parametrized. 

\begin{lem}\label{eta} The quantity $\eta$ is strictly positive.
\end{lem}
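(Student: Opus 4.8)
The plan is to prove Lemma~\ref{eta} by a compactness argument, arguing by contradiction. Suppose $\eta = 0$. Then there is a sequence $z_0^{(m)} \in M'$, points $z^{(m)}, {z'}^{(m)}$ with $|z^{(m)} - z_0^{(m)}| \leq \iota/4$ and $|{z'}^{(m)} - z_0^{(m)}| \leq \iota/4$, and arc-length parametrized geodesics $\gamma_m$ with $\gamma_m(0) = (z^{(m)}, \zeta^{(m)})$ and $\gamma_m(t_m) = ({z'}^{(m)}, {\zeta'}^{(m)})$, $t_m \geq \iota$, such that $|z^{(m)} - {z'}^{(m)}| + |\zeta^{(m)} - {\zeta'}^{(m)}| \to 0$. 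Since $M'$ is compact (being a closed subset $\{x \geq \epsilon\}$ of the manifold with boundary $M$, hence compact) we may pass to a subsequence so that $z_0^{(m)} \to z_0 \in M'$; then $z^{(m)} \to z_*$ and ${z'}^{(m)} \to z_*$ for a common limit $z_* \in M^\circ$ with $|z_* - z_0| \leq \iota/4$, and likewise the cotangent vectors converge, $\zeta^{(m)} \to \zeta_*$, ${\zeta'}^{(m)} \to \zeta_*$ (same limit, by $|\zeta^{(m)} - {\zeta'}^{(m)}| \to 0$), with $|\zeta_*|_g = 1$.

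Next I would pass to the limit on the geodesics. The issue is that $t_m$ need not converge, so I would split into two cases. If $t_m$ is bounded, pass to a subsequence with $t_m \to t_* \in [\iota, \infty)$. By continuous dependence of the geodesic flow on initial conditions on a complete manifold (applied on a large compact region that all the relevant geodesic segments remain in — note $z_*$ lies in $M^\circ$ and the geodesics emanate from a fixed compact neighbourhood), $\gamma_m(t_m) \to \gamma(t_*)$, where $\gamma$ is the arc-length geodesic with $\gamma(0) = (z_*, \zeta_*)$. Hence $\gamma(t_*) = (z_*, \zeta_*) = \gamma(0)$, so the geodesic flow fixes $(z_*, \zeta_*)$ after time $t_* \geq \iota > 0$; this means $\gamma$ is a closed geodesic of period at most $t_* = d(z,z')\mbox{-related}$, but more to the point, it returns to its starting point in phase space after a time $\geq \iota$, which contradicts the definition of $\iota$ as the injectivity radius: for $0 < t \leq \iota$ the exponential map is a diffeomorphism on balls of radius $\iota$, so no unit-speed geodesic can return to its initial point $(z_*, \zeta_*) \in SM$ within time $\iota$. (Here I use $\iota = $ injectivity radius of $M$, which is positive since $M$ is compact.)

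If instead $t_m \to \infty$, I would need a slightly different contradiction, and this is where the nontrapping hypothesis enters — recall that throughout this section $(M,g)$ is assumed nontrapping. The geodesic $\gamma_m$ starts at $(z^{(m)}, \zeta^{(m)})$, a point in the fixed compact set $K := \{ p \in SM^\circ : \pi(p) \in M', \ d(\pi(p), M') \leq \iota/4 \}$, wait — more carefully, the starting point lies in a fixed compact subset of $SM^\circ$, and $\gamma_m(t_m) = ({z'}^{(m)}, {\zeta'}^{(m)})$ also lies in a fixed compact set, and in fact converges back into $K$. Since the manifold is nontrapping, any geodesic leaving a compact set eventually escapes to infinity and never returns; more precisely, for the fixed compact set $K$ there is a time $T_K < \infty$ such that every geodesic segment of length $> T_K$ starting in $K$ has its endpoint outside $K$, indeed with $x$-coordinate below $\epsilon$ or wherever. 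But then for $m$ large with $t_m > T_K$, the endpoint $\gamma_m(t_m)$ cannot be within $\iota/4$ of $z_0^{(m)} \in M'$ and hence cannot converge to $z_* \in M^\circ$ with $x(z_*) \geq \epsilon$ — contradiction.

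The main obstacle is making the $t_m \to \infty$ case airtight: one must invoke the nontrapping condition in the form of a \emph{uniform} escape estimate (every geodesic starting in the fixed compact set $K$ escapes $K$ within a uniform time $T_K$ and stays out), which is standard for nontrapping asymptotically conic manifolds but should be cited or sketched. The $t_m$ bounded case, by contrast, is a clean consequence of the injectivity radius bound and continuity of the geodesic flow. Once both cases yield contradictions, $\eta > 0$ follows. I would write this up as: "Suppose not; extract convergent subsequences as above; in the bounded-time case contradict the injectivity radius, in the unbounded-time case contradict nontrapping; hence $\eta > 0$."

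\begin{proof}[Proof of Lemma~\ref{eta}]
Suppose for contradiction that $\eta = 0$. Then there exist sequences $z_0^{(m)} \in M'$, points $z^{(m)}, {z'}^{(m)} \in M^\circ$ with
$$
|z^{(m)} - z_0^{(m)}| \leq \iota/4, \qquad |{z'}^{(m)} - z_0^{(m)}| \leq \iota/4,
$$
and unit-speed geodesics $\gamma_m : \RR \to M^\circ$ with
$$
\gamma_m(0) = (z^{(m)}, \zeta^{(m)}), \quad \gamma_m(t_m) = ({z'}^{(m)}, {\zeta'}^{(m)}), \quad t_m \geq \iota,
$$
such that $|z^{(m)} - {z'}^{(m)}| + |\zeta^{(m)} - {\zeta'}^{(m)}| \to 0$ as $m \to \infty$. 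Since $M'$ is compact, after passing to a subsequence we may assume $z_0^{(m)} \to z_0 \in M'$. Then $z^{(m)}$ and ${z'}^{(m)}$ both lie in the closed $\iota/4$-ball about $z_0^{(m)}$, so (passing to a further subsequence) $z^{(m)} \to z_*$ and ${z'}^{(m)} \to z_*$, where $z_* \in M^\circ$ satisfies $|z_* - z_0| \leq \iota/4$; here we use that $z_0 \in M'$ has $x(z_0) \geq \epsilon$, so a point within distance $\iota/4$ lies in $M^\circ$ provided $\iota$ was chosen (as it is) no larger than allowed. The corresponding covectors are unit covectors, so after a further subsequence $\zeta^{(m)} \to \zeta_*$ and, since $|\zeta^{(m)} - {\zeta'}^{(m)}| \to 0$, also ${\zeta'}^{(m)} \to \zeta_*$, with $|\zeta_*|_g = 1$. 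Thus the initial data of $\gamma_m$ converge in $SM^\circ$ to a point $p_* = (z_*, \zeta_*)$, and $\gamma_m(t_m) \to p_*$ as well.

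All the initial points $\gamma_m(0)$ lie in the fixed compact subset
$$
K = \{ (z,\zeta) \in SM^\circ : d(z, M') \leq \iota/4 \}
$$
of the unit sphere bundle (compact because $M'$ is compact and the ball is closed of fixed radius).

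\textbf{Case 1: $t_m$ bounded.} Passing to a subsequence, $t_m \to t_* \in [\iota, \infty)$. By continuous dependence of the geodesic flow on initial conditions and time (valid on the complete manifold $(M^\circ, g)$, and using that the geodesic segments in question stay in a fixed compact subset of $M^\circ$ for times in a bounded interval), we get $\gamma_m(t_m) \to \phi_{t_*}(p_*)$, where $\phi_t$ denotes the geodesic flow. Hence $\phi_{t_*}(p_*) = p_*$ with $t_* \geq \iota$. But for $0 < t \leq \iota$ the exponential map $\exp_{z_*}$ is a diffeomorphism on the ball of radius $\iota$, so the unit-speed geodesic through $p_*$ cannot return to $p_*$ within time $\iota$; this contradicts $\phi_{t_*}(p_*) = p_*$.

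\textbf{Case 2: $t_m \to \infty$.} Since $(M,g)$ is nontrapping, every maximally extended unit-speed geodesic leaves any compact set and does not return; by a standard compactness argument applied to the compact set $K$, there exists $T_K < \infty$ such that for every $p \in K$ and every $t \geq T_K$ one has $\phi_t(p) \notin K$. Choose $m$ large enough that $t_m > T_K$. Then $\gamma_m(t_m) \notin K$, so $d(\pi(\gamma_m(t_m)), M') > \iota/4$, where $\pi$ is the bundle projection. But $\pi(\gamma_m(t_m)) = {z'}^{(m)}$ satisfies $d({z'}^{(m)}, M') \leq |{z'}^{(m)} - z_0^{(m)}| \leq \iota/4$, a contradiction.

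In both cases we reach a contradiction, so $\eta > 0$.
\end{proof}
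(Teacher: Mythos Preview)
Your overall contradiction/subsequence structure is fine, but Case~1 contains a genuine error. You deduce $\phi_{t_*}(p_*) = p_*$ with $t_* \geq \iota$ and then claim this contradicts the injectivity radius. It does not: the injectivity radius being $\iota$ only prevents a unit-speed geodesic from returning to its starting point in time \emph{strictly less than} $\iota$; it says nothing about $t_* \geq \iota$. (A flat torus has positive injectivity radius and every geodesic is periodic.) The correct contradiction in Case~1 is with the \emph{nontrapping} hypothesis, which you are already using in Case~2: $\phi_{t_*}(p_*) = p_*$ with $t_*>0$ means the geodesic through $p_*$ is periodic, hence stays in a compact set for all time, hence is trapped. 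Once you replace the injectivity-radius sentence with this, the proof is correct.

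With that fix, your argument is essentially a more explicit version of the paper's. The paper compresses everything into two sentences: nontrapping rules out $\gamma(0)=\gamma(t)$ in phase space, and then ``by compactness'' the minimum for each fixed $z_0$ is positive and varies continuously. The compactness the paper is invoking is exactly your Case~2 ingredient (the uniform escape time $T_K$ for the compact set $K$, which bounds $t$ and makes the constraint set compact), after which your corrected Case~1 gives the positivity of the minimum. In particular, once you have the uniform escape time, $t_m\to\infty$ is impossible and the case split is unnecessary; so your two cases could be merged into: nontrapping plus the asymptotically conic structure bounds $t_m$, and then the limiting periodic orbit contradicts nontrapping.
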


\begin{proof} We use the nontrapping assumption. This means that there is no geodesic $\gamma$ with $\gamma(0) = (z, \zeta) = \gamma(t)$, if $t > \iota$. Therefore, by compactness, the minimum for a fixed $z_0$ in the expression above is strictly positive. This minimum varies continuously with $z_0$ and therefore the inf over all $z_0$ in the compact set $M'$ is also strictly positive.
\end{proof}

\subsection{Proof of Theorem~\ref{main3}, part (B)}\label{(B)}

We now assemble our results to prove \eqref{spec-meas-j-1} for $\lambda \geq \lambda_0$, i.e. $h \leq h_0$, which by Proposition~\ref{QQ1} and Section~\ref{(A)} is sufficient to prove part (B) of Theorem~\ref{main3}.  

We now choose a partition of unity consisting of pseudodifferential operators. This is done similarly to the previous section. In particular, we will choose $Q_1$ to have microsupport disjoint from the characteristic variety of $h^2 \bH - 1$, while the others will have compact microsupport, that is, they  will be pseudodifferential operators of differential order $-\infty$.  In detail, we  choose $Q_1$ such that $\Id - Q_1$ is microlocally equal to the identity where $\sigma(h^2 \Delta_g)  \leq 3/2$, and microsupported where $\sigma(h^2\Delta_g) \leq 2$ (here $\sigma$ denotes the semiclassical principal symbol). Then, we claim that $dE_{\sqrt{\bH}}^{(j)}(\lambda)$ is in $ (hxx')^\infty \CI(M^2)$. To see this, we write 
\begin{multline*}
Q_1 dE_{\sqrt{\bH}}^{(j)}(\lambda) Q_1 = dE_{\sqrt{\bH}}^{(j)}(\lambda) - (\Id - Q_1)dE_{\sqrt{\bH}}^{(j)}(\lambda) \\  - dE_{\sqrt{\bH}}^{(j)}(\lambda)(\Id - Q_1)  + (\Id - Q_1) dE_{\sqrt{\bH}}^{(j)}(\lambda)   (\Id - Q_1)
\end{multline*}
and use Theorem~\ref{hesmLd} and the microlocal support estimates as in the discussion below \eqref{energy-cutoff} to show that $\WF'(dE_{\sqrt{\bH}}^{(j)}(\lambda))$ is empty.   This piece therefore is in $ (hxx')^\infty \CI(M^2)$, and trivially satisfies \eqref{spec-meas-j}.

We now further decompose $\Id - Q_1$, which has compact microsupport, into a sum of terms. We first choose a function $m \in \CI(\MMb)$ that is equal to $1$ in a neighbourhood of $\partial \MMb$ and supported where $x + x' < 2\epsilon$, where $\epsilon$ is as in Lemma~\ref{L-geometry-he}. 
Choosing $\delta$ as in Lemma~\ref{L-geometry-he}, we divide up the interval $[-2, 2]$ into $N-1$ intervals $B_i$ each of width $\leq  \delta/4$, and choose a decomposition $(\Id - Q_1) m = \sum_{i=2}^N Q_i$ where $Q_i$, and hence also $Q_i^*$, are supported on the set $x + x' < 2\epsilon$ and microsupported in the set $\{ \sigma(h^2 \Delta_g) \leq 2, \nu \in 2B_i \}$.  It follows that if $q' = (x,y, x',y',  \mu, \mu', \nu, \nu', \tau) \in L'$ is such that $\pi_L(q') \in \WF'_{\mf}(Q_i)$ and $\pi_R(q') \in \WF'_{\mf}(Q_i^*)$, then $|\nu - \nu'| \leq \delta/2$. Together with Theorem~\ref{hesmLd} and  Lemma~\ref{QFQ'-microsupport-conic-he}, this means that $Q_i dE_{\sqrt{\bH}}^{(j)}(\lambda)Q_i^*$ is a Legendrian distribution associated only to  $L$ and not to $L^\sharp$, since on $(L^\sharp)'$ we have $|\nu - \nu'| = 2 > \delta/2$. Then  Lemma~\ref{L-geometry-he-1} guarantees that on the microsupport of $Q_i dE_{\sqrt{\bH}}^{(j)}(\lambda)Q_i^*$, the projection $\pi$ to $\mf$ is either a diffeomorphism or satisfies the conditions of Proposition~\ref{Leg-bd-he}.

We finally decompose $(\Id - Q_1) (1-m) = \sum_{i=N+1}^{N+N'} Q_i$, where $Q_i$ is microsupported in a sufficiently small set so that
$\WF_{\mf}(Q_i)$ is a subset of 
\begin{equation}
\{ (z, \zeta) \mid |z-z_0| + |\zeta - \zeta_0| < \eta/2 \}
\label{WF-mf-condition}\end{equation}
for some $z_0 \in M' = \{ x \geq \epsilon \} \subset M^\circ$ and some 
$\zeta_0$ (where we use Riemannian normal coordinates as in  Lemma~\ref{eta}). By construction, then, if $q' = (z, z', \zeta, \zeta', \tau) \in \WF'_{\mf} (Q_i dE_{\sqrt{\bH}}^{(j)}(\lambda) Q_i^*)$, then we must have
$|z-z'| + |\zeta - \zeta'| < \eta$ from \eqref{WF-mf-condition}, and also
$\gamma(0) = (z, \zeta),  \gamma(t) = (z', \zeta')$ for some geodesic $\gamma$. From Lemma~\ref{eta} we conclude $t < \iota$, thus $\gamma$ is the short geodesic between $z$ and $z'$. Consequently, $\tau < \iota$ and by Lemma~\ref{L-geometry-he} either $L$ locally projects diffeomorphically to $\mf$, or $q' \in \Nscstar \diagb$. 

We next consider the symbol of $Q_i dE_{\sqrt{\bH}}^{(j)}(\lambda) Q_i^*$. As in the previous section, this symbol vanishes to order $j$ both at $G \subset \mf$ and at $\partial_{\bf} G \times [0, h_0] \subset \bfc$, due to the vanishing of the phase function $\tilde\Phi$ at $G$ when $d_v \tilde \Phi = 0$. 
Therefore, in all cases, $Q_i dE_{\sqrt{\bH}}^{(j)}(\lambda) Q_i^*$ satisfies the conditions of Proposition~\ref{Leg-bd-he} with $l = j$, and the required estimate \eqref{spec-meas-j} follows from this proposition. This completes the proof of \eqref{restr} for $\lambda_0 \leq \lambda < \infty$.


\section{Trapping results}

\subsection{Spectral projection estimates}\label{sec:spe}
In this section we study the Laplacian on a manifold $N$ with $C^\infty$ bounded geometry, in the sense that  the local injectivity radius $\iota(p)$, $p \in  N$ has a positive lower bound, say $\epsilon$; the metric $g_{ij}$,   expressed in normal coordinates in the ball of radius $\epsilon/2$ around any point $p$ is uniformly bounded in $C^\infty(B_r(0))$, as $p$ ranges over $N$; and the inverse metric $g^{ij}$ is uniformly bounded in sup norm.  (In fact, we only need this to be bounded in $C^k$ for some $k$ depending on dimension $n$, but $k$ tends to infinity as $n \to \infty$.) This implies that the distance function $d(q, q')$ satisfies the $n \times n$ Carleson-Sj\"olin condition (see \cite[Section 2.2]{Sogge}) uniformly over all $p \in N$ and $q, q' \in B(p, \epsilon/2)$ with $d(q, q') \geq \epsilon/4$. 

Then the following Sogge-type restriction theorem holds:

\begin{prop}\label{sp-proj-est} Let $N$ be a complete Riemannian manifold of dimension $n$ with $\CI$ bounded geometry. Then the Laplacian $\Delta_N$ on $N$ satisfies 
\begin{equation}
\big\| \indic_{[\lambda, \lambda + 1]}(\sqrt{\Delta_N}) \big\|_{L^q(N) \to L^{q'}(N)} \leq C \lambda^{n(1/q - 1/q') - 1}, \quad \lambda \geq 1. 
\label{Sogge}\end{equation}
\end{prop}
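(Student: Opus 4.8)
The plan is to deduce Proposition~\ref{sp-proj-est} from the classical compact-manifold spectral cluster estimate of Sogge by localizing it to small balls and using the finite speed of propagation of the wave equation, exactly as one proves the local estimate in the $C^\infty$ bounded geometry setting. First I would replace the sharp spectral cut-off $\indic_{[\lambda, \lambda+1]}(\sqrt{\Delta_N})$ by a smoothed version: choose an even $\chi \in \mathcal{S}(\RR)$ with $\chi(0) = 1$ and $\hat\chi$ supported in $(-\epsilon/4, \epsilon/4)$, and write $\indic_{[\lambda, \lambda+1]}(\sqrt{\Delta_N}) = \indic_{[\lambda, \lambda+1]}(\sqrt{\Delta_N}) \chi(\sqrt{\Delta_N} - \lambda)^2 + \text{error}$, the error term being controllable by the standard $L^2$ orthogonality argument together with the trivial $L^2 \to L^2$ bound and the $L^q \to L^2, L^2 \to L^{q'}$ mapping properties once we have the main estimate; by a $TT^*$ / Sobolev embedding argument it suffices to prove the $L^2(N) \to L^{q'}(N)$ bound $\| \chi(\sqrt{\Delta_N} - \lambda) \|_{L^2 \to L^{q'}} \leq C\lambda^{n(1/2 - 1/q') - 1/2}$, or equivalently the $L^q \to L^{q'}$ bound on $\chi(\sqrt{\Delta_N}-\lambda)^2 = \beta(\sqrt{\Delta_N} - \lambda)$ with $\beta = \chi^2$ even, Schwartz and $\hat\beta$ supported in $(-\epsilon/2, \epsilon/2)$.

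Next I would use the Fourier inversion formula to write
\[
e^{i\lambda t}\beta(\sqrt{\Delta_N} - \lambda) = \frac1{2\pi} \int \hat\beta(t) e^{it\sqrt{\Delta_N}} \, dt,
\]
so that by the finite speed of propagation the Schwartz kernel of $\beta(\sqrt{\Delta_N} - \lambda)$ is supported in $\{ d(q,q') \leq \epsilon/2 \}$. Thus it suffices to bound, uniformly over $p \in N$, the operator $M_{\phi_p} \beta(\sqrt{\Delta_N} - \lambda) M_{\psi_p}$ where $\phi_p, \psi_p$ are cut-offs to $B(p, \epsilon/2)$ and a slightly larger ball, by a covering/almost-orthogonality argument of the type used in Lemma~\ref{w} (using that $N$ has bounded geometry, so balls of radius $\epsilon$ have uniformly bounded overlap). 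On each such ball, normal coordinates identify the geometry with a small perturbation of Euclidean geometry, and the kernel of $\beta(\sqrt{\Delta_N} - \lambda)$ can be analyzed by the stationary-phase / parametrix construction for the half-wave operator $e^{it\sqrt{\Delta_N}}$ over time $|t| \leq \epsilon/2$: since $\hat\beta$ is supported away from $t = 0$ only in the sense that it is still supported near $0$, one writes the kernel as a sum of a term where $|t|$ is bounded below (a genuine oscillatory integral of Carleson--Sj\"olin type, since $d(q,q')$ satisfies the Carleson--Sj\"olin condition uniformly by the bounded geometry hypothesis) plus a term where $|t|$ is small, which is handled by the short-time parametrix / the elementary estimate $\| \beta(\sqrt{\Delta_N} - \lambda) \|_{L^1 \to L^\infty} \leq C\lambda^{n-1}$ interpolated with the $L^2 \to L^2$ bound. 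This is precisely the content of Sogge's proof of the discrete restriction theorem (\cite[Chapter 5]{Sogge}, see also \cite[Section 2.2]{Sogge}), which only uses the metric structure locally.

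The main obstacle—really the only non-routine point—is verifying that the arguments in Sogge's book, which are written for compact manifolds, go through \emph{uniformly} in the bounded geometry setting: one must check that all the constants appearing in the oscillatory integral estimates (the Carleson--Sj\"olin constant, the symbol bounds for the half-wave parametrix on time intervals $|t| \leq \epsilon/2$, the number of balls in a bounded-overlap cover) depend only on the $C^k$ bounds on the metric in normal coordinates and on the injectivity radius lower bound, not on any global feature of $N$. Given the hypotheses on $N$ this is straightforward but tedious; I would state it as the key lemma and refer to \cite{Sogge} for the local computation, noting that the finite speed of propagation confines everything to balls of radius $\epsilon/2$ where the uniform bounds apply. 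The summation over the cover to pass from the local $L^q \to L^{q'}$ bound to the global one is then exactly the almost-orthogonality argument already used in the proof of Lemma~\ref{w}, using $q < q'$.
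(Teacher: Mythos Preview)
Your overall strategy is the same as the paper's: dominate the spectral projector by a smoothed multiplier whose Fourier transform is compactly supported, localize via finite propagation speed to balls of radius comparable to the injectivity radius, invoke Sogge's local oscillatory-integral estimate uniformly (using the bounded-geometry hypothesis), and patch together with an almost-orthogonality covering argument. That is precisely what the paper does.

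There is, however, a real gap in one step. You assert that $\beta(\sqrt{\Delta_N}-\lambda)$ has Schwartz kernel supported in $\{d(q,q')\le\epsilon/2\}$ because $\hat\beta$ is supported in $(-\epsilon/2,\epsilon/2)$. This is false: finite propagation speed is a property of $\cos(t\sqrt{\Delta_N})$, not of $e^{it\sqrt{\Delta_N}}$, and $\mu\mapsto\beta(\mu-\lambda)$ is \emph{not} an even function of $\mu$ even though $\beta$ is even, so Lemma~\ref{step} does not apply. Already on $\RR^n$ the kernel of $\beta(\sqrt{\Delta}-\lambda)$ is not compactly supported (it decays like $|z-z'|^{-(n-1)/2}$). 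The paper deals with exactly this point by replacing the shifted function by the even symmetrization $\chiev(\sigma)=\chi(\sigma-\lambda)+\chi(-\sigma-\lambda)$, which \emph{is} even in $\sigma$ and hence can be written as an integral of $\cos(t\sqrt{\Delta_N})$ over $|t|\le\epsilon/2$, yielding a kernel supported in $\mathcal D_{\epsilon/2}$; the added piece $\chi(-\sqrt{\Delta_N}-\lambda)$ is $O(\lambda^{-\infty})$ in any $L^q\to L^{q'}$ norm (via the heat-kernel bound \eqref{hkb} and integration by parts against $E_{\sqrt{\Delta}}[0,\mu]$), so this change is harmless. Once you make this correction your argument coincides with the paper's. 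Your reduction from $\indic_{[\lambda,\lambda+1]}$ to the smoothed operator is also a bit sketchy; the paper carries this out cleanly via a positivity argument showing $\|\indic_{[\lambda,\lambda+1]}(\sqrt{\Delta_N})f\|_{L^2}\le C\|\chiev(\sqrt{\Delta_N})f\|_{L^2}$ directly.
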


This result is quite likely well-known to experts, but to our knowledge such a result has not appeared in the literature, so we provide a sketch proof.

\begin{proof} We adapt Sogge's argument. Let $\epsilon$ be as above. We then  choose an nonzero even Schwartz function $\chi$ such that its Fourier transform $\hat \chi$ is nonnegative and supported in $[\epsilon/4, \epsilon/2]$. It follows that $\chi(0) > 0$, and by taking $\epsilon$ sufficiently small, we can arrange that $\Re \chi \geq c > 0$ on $[0,1]$. 

Now let $\chiev(\sigma) = \chi(\sigma - \lambda) + \chi(-\sigma - \lambda)$. This is an even function, and since $\chi$ is rapidly decreasing, for sufficiently large $\lambda$ we have
$$
\Re \chiev \geq \frac{c}{2} \text{ on } [\lambda, \lambda + 1].
$$
That is,
$$
(\Re \chiev)^2 - \frac{c^2}{8} = F_\lambda, \text{ where } F_\lambda \geq 0 \text{ on } [\lambda, \lambda + 1].
$$
Then for $f \in L^p$, 
\begin{equation*}\begin{gathered}
\frac{c^2}{8} \big\| \indic_{[\lambda, \lambda + 1]}(\sqrt{\Delta_N}) f \big\|_{L^2}^2 
= \big\| \indic_{[\lambda, \lambda + 1]}(\sqrt{\Delta_N}) \Big( 
(\Re \chiev(\sqrt{\Delta_N}))^2 -  F_\lambda(\sqrt{\Delta_N})
\Big) f \big\|_{L^2}^2 \\
= \Big\langle \indic_{[\lambda, \lambda + 1]}(\sqrt{\Delta_N})  
\Re \chiev(\sqrt{\Delta_N}) f, \indic_{[\lambda, \lambda + 1]}(\sqrt{\Delta_N}) \Re \chiev(\sqrt{\Delta_N}) f \Big\rangle \\ - 
\Big\langle F_\lambda(\sqrt{\Delta_N}) \indic_{[\lambda, \lambda + 1]}(\sqrt{\Delta_N}) f, \indic_{[\lambda, \lambda + 1]}(\sqrt{\Delta_N}) f \Big\rangle \\
\leq \big\| \Re \chiev(\sqrt{\Delta_N}) f \big\|_{L^2}^2  
\leq \big\| \chiev(\sqrt{\Delta_N}) f \big\|_{L^2}^2.
\end{gathered}\end{equation*}
So it is enough to estimate the operator norm of the operator $\chievl$  from $L^q$ to $L^2$. To do this  we express $\chievl$ in terms of the half-wave group $e^{it\sqrt{\Delta_N}}$:
\begin{equation}
\chievl = \frac1{\pi} \int e^{it \sqrt{\Delta_N}}  \widehat{\chiev}(t) \, dt .
\end{equation}
Since $\widehat{\chiev} = e^{-it\lambda} \hat \chi(t) + e^{it\lambda} \hat \chi(-t)$ is even in $t$, we can write this as
\begin{equation}
\chievl = \frac1{\pi} \int \cos t \sqrt{\Delta_N} \big(e^{-it\lambda} \hat \chi(t) + e^{it\lambda} \hat \chi(-t) \big)  \, dt .
\label{chilambda}\end{equation}
Using the fact that the kernel of $\cos t\sqrt{\Delta_N}$ is supported in $\mathcal{D}_t$ for any complete Riemannian manifold, we see that $\chievl$ is supported in $\mathcal{D}_{\epsilon/2}$. Moreover, the argument of Sogge shows that $\chievl$ maps any $f \in L^p(M)$ and supported in a ball of radius $\epsilon/2$ to $L^2(M)$ with a bound 
$$
\big\| \chievl f \big\|_2 \leq C \| f \|_p,
$$
where $C$  is uniform over $M$ due to the bounded geometry. 
We then choose a sequence of balls $B(x_i, \epsilon/2)$ that cover $M$, such that $B(x_i, \epsilon)$ have uniformly bounded overlap, i.e. such that $\sum_i \indic_{B(x_i, \epsilon)}$ is uniformly bounded. Then for any $f \in L^p(M)$, and using the continuous embedding from $l^p \to l^2$ for $1 \leq p < 2$, 
\begin{equation}\begin{gathered}
\big\| \chievl f \big\|_{2}^{2} \leq \sum_i \big\| \chievl f \big\|_{L^{2}(B(x_i, \epsilon/2))}^{2} \\
\leq C \lambda^{n(1/p - 1/2) - 1/2} \sum_i \big\|  f \big\|_{L^{p}(B(x_i, \epsilon))}^{2} \\
\leq C \lambda^{n(1/p - 1/2) - 1/2} \Big( \sum_i \big\|  f \big\|_{L^{p}(B(x_i, \epsilon))}^{p} \Big)^{2/p} \\
\leq C \lambda^{n(1/p - 1/2) - 1/2}  \| f \|_{L^p}^{2},
\end{gathered}\end{equation}
showing that $\chievl$ maps from $L^p(M)$ to $L^2(M)$ with a bound
$C\lambda^{n(1/p - 1/2) - 1/2}$. 
\end{proof}


\subsection{Spatially localized results for trapping  manifolds}\label{sec:slr}

Let us assume now that $M^\circ$ is asymptotically \emph{Euclidean} and has several ends $\mcE_1,\dots,\mcE_k$.
By an \emph{end} here we mean a  connected component $\mcE_i$  of  $\{x<2\eps\}$ where $x$ is a boundary defining function and $\eps>0$ is a small fixed number, so that $\mcE_i$ is diffeomorphic to $(r_i, \infty) \times S^{n-1}$ with a metric of the form $dr^2 + r^2 h(y, dy, 1/r)$, with $h$ smooth,  and such that the projection of the trapped set  to $M^\circ$ is disjoint from $\mcE_i$. 

\begin{prop}\label{restrictionchi} Assume $M^\circ$ is asymptotically Euclidean, possibly with several ends. 
Let $\chi\in C^\infty(M)$ be supported in $\{x<\eps\}$ and let $\bH$ be as in Theorem~\ref{main3}. Then one has 
\begin{equation}
||\chi dE_{\sqrt{\bH}}(\la)\chi ||_{L^p\to L^{p'}}\leq C \la^{n(1/p-1/p')-1} \text{ for } 1 < p \leq \frac{2(n+1)}{n+3}.
\label{lre}\end{equation}
\end{prop}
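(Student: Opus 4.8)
The plan is to reduce \eqref{lre} to the high-energy half-wave propagator estimates already available on the Euclidean ends, combined with the low-energy restriction estimate from part (A). Since $\bH$ is positive and $0$ is neither an eigenvalue nor a resonance, part (A) of Theorem~\ref{main3} already gives \eqref{restr}, hence \eqref{lre}, for $0 < \lambda \leq \lambda_0$ (with $\chi$ bounded this is immediate). So the real content is the range $\lambda \geq \lambda_0$, and the key observation is that $\chi$ is supported in $\{ x < \epsilon \}$, i.e. on the union of the ends $\mcE_1, \dots, \mcE_k$, each of which is disjoint from the (projection of the) trapped set. On each such end the metric is a small smooth perturbation of the Euclidean metric outside a compact set, and the geodesic flow restricted to initial data on $\mcE_i$ that remains in $\mcE_i$ for the relevant time is nontrapping.

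First I would localize further: choose a finite partition $\chi = \sum_\ell \chi_\ell$ with each $\chi_\ell$ supported in a single coordinate patch of a single end, of diameter comparable to $\epsilon$, so it suffices to bound $\| \chi_\ell dE_{\sqrt \bH}(\lambda) \chi_{\ell'} \|_{p \to p'}$ for each pair; the off-diagonal pairs ($d(\supp \chi_\ell, \supp \chi_{\ell'})$ bounded below) can be handled by finite-speed-of-propagation plus the integrated spectral projection estimate \eqref{integrated}, which holds unconditionally on asymptotically conic manifolds by part (D). For the diagonal pairs, I would run Sogge's argument exactly as in the proof of Proposition~\ref{sp-proj-est}: write $\chi_\ell \chi^{\mathrm{ev}}_\lambda(\sqrt{\bH}) \chi_\ell$ in terms of the half-wave group $e^{it\sqrt \bH}$ via \eqref{chilambda} with $\hat\chi$ supported in $[\epsilon/4, \epsilon/2]$, use finite speed of propagation to see the kernel is supported in $\mathcal{D}_{\epsilon/2}$, and then invoke the stationary phase / Carleson--Sj\"olin analysis of $e^{it\sqrt{\bH}}$ for $|t| \leq \epsilon/2$ acting between $\chi_\ell$-localized functions. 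The crucial point is that for such short times and such spatially localized data, $e^{it\sqrt{\bH}}$ is a Fourier integral operator associated to the (nontrapping) geodesic flow on the end, whose canonical relation satisfies the $n\times n$ Carleson--Sj\"olin condition since the distance function on the Euclidean-type end is nondegenerate in the required sense. This yields $\| \chi_\ell \chi^{\mathrm{ev}}_\lambda(\sqrt{\bH}) \|_{L^p \to L^2} \leq C\lambda^{n(1/p - 1/2) - 1/2}$, and squaring via $TT^*$ (as in Proposition~\ref{keire}, or directly) gives $\| \chi_\ell dE_{\sqrt \bH}(\lambda) \chi_\ell \|_{p \to p'} \leq C\lambda^{n(1/p-1/p')-1}$ for $1 < p \leq 2(n+1)/(n+3)$.

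An alternative, perhaps cleaner, route avoids re-deriving the FIO structure of $e^{it\sqrt{\bH}}$ and instead uses the global high-energy machinery of \cite{HW} together with the Cardoso--Vodev resolvent bound \cite{CV} mentioned in the remark preceding this Proposition: one shows that $(1-\chi') dE_{\sqrt{\bH}}(\lambda)(1-\chi')$ obeys the desired estimate for a suitable cutoff $\chi'$ (this is essentially \eqref{cutoffest}), and then uses the fact that the ends are nontrapping to propagate the needed microlocal estimates on the spectral measure into the region $\{x < \epsilon\}$, exactly as in Sections~\ref{(A)} and \ref{(B)} but with the partition of unity chosen so that all $Q_i$ entering the estimate are microsupported over the ends. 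Since on the ends the geometry is nontrapping, Lemma~\ref{L-geometry-he} and Proposition~\ref{Leg-bd-he} apply verbatim, yielding \eqref{spec-meas-j-1} for the relevant $Q_i$, and then Proposition~\ref{QQ1} (applied with the cutoff $\chi$ absorbed into the $Q_i$) gives \eqref{lre}.

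The main obstacle I anticipate is making precise the sense in which the geodesic flow is nontrapping "on the ends" and ensuring this is exactly what the Carleson--Sj\"olin condition (or the hypotheses of Proposition~\ref{Leg-bd-he}) requires: one must be careful that a geodesic starting in $\supp\chi$ which leaves $\{x<\epsilon\}$, wanders through the trapped region, and returns does not cause trouble. Finite speed of propagation saves the day here because only times $|t| \leq \epsilon/2$ enter \eqref{chilambda}, and in time $\epsilon/2$ a unit-speed geodesic starting in $\{x < \epsilon/2\}$ cannot reach the trapped set (which lies in $\{x \geq 2\epsilon\}$ by hypothesis) and return; so the relevant flowout is genuinely nontrapping. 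Verifying this quantitatively, and checking that the constant $C$ in \eqref{lre} is uniform in $\lambda$, is the technical heart of the argument, but it is a routine adaptation of the short-time parametrix construction once the geometry is set up correctly. The restriction to $p > 1$ (rather than $p \geq 1$) comes, as usual in Sogge-type arguments, from the use of the $\ell^p \hookrightarrow \ell^2$ embedding in summing the localized pieces, which degenerates at $p = 1$; we simply state the result for $1 < p \leq 2(n+1)/(n+3)$ accordingly.
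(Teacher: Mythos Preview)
Your proposal has a genuine gap in both suggested routes. In your first approach you run Sogge's short-time argument and conclude $\| \chi_\ell \chi^{\mathrm{ev}}_\lambda(\sqrt{\bH}) \|_{L^p \to L^2} \leq C\lambda^{n(1/p-1/2)-1/2}$, then claim that ``squaring via $TT^*$'' gives the bound on $\chi_\ell dE_{\sqrt{\bH}}(\lambda) \chi_\ell$. This is the central error: $\chi^{\mathrm{ev}}_\lambda(\sqrt{\bH})$ is a \emph{smoothed unit-window spectral projector}, and its $TT^*$ is $|\chi^{\mathrm{ev}}_\lambda|^2(\sqrt{\bH})$, essentially $\indic_{[\lambda,\lambda+1]}(\sqrt{\bH})$ --- not $dE(\lambda)$. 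Short-time wave propagation can only ever give you the integrated estimate \eqref{integrated}; the pointwise estimate on $dE(\lambda)$ encodes large-time behaviour of the wave group, which is exactly where trapping intervenes. Indeed the paper stresses (Theorem~\ref{main3} (C) vs.\ (D), and Section~\ref{trappingex}) that \eqref{integrated} holds unconditionally while the pointwise bound can fail exponentially; your first route, if it worked, would erase this distinction.

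Your second route assumes the Legendrian description of $dE_{\sqrt{\bH}}(\lambda)$ from \cite{HW} is available on the ends. But Theorem~\ref{hesmLd} is stated (and proved) under a \emph{global} nontrapping hypothesis; even with $z,z' \in \supp\chi$ the kernel of $dE(\lambda)$ feels geodesics that enter the trapped set and return, so Lemmas~\ref{L-geometry-he-1}--\ref{L-geometry-he} and Proposition~\ref{Leg-bd-he} do not apply ``verbatim'' to $\bH$ on $M$. The paper's proof avoids this by introducing auxiliary \emph{nontrapping} manifolds $(M_j,g_j)$ isometric to each end, writing a resolvent identity \eqref{identityresolv} that expresses $\chi_2 R(\lambda)\chi_1$ in terms of the $R_j(\lambda)$ plus a correction $\chi_2 R_i(\lambda)[\chi_3,\bH]R(\lambda)[\bH,\chi_2]R_j(\lambda)\chi_1$, and passing to Poisson operators via \eqref{Poissonkernel}. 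The nontrapping result (Theorem~\ref{main3}(B)) applies to $P_j(\lambda)^*$ on $M_j$, while the only appearance of the trapping resolvent $R(\lambda)$ is sandwiched between compactly supported cutoffs in the ends, where Cardoso--Vodev \eqref{cardosovodev} gives a polynomial $L^2$ bound. A separate lemma (Lemma~\ref{resolvent-spmeasure}) converts $\eta R_j(\lambda)\chi$ bounds to spectral-measure bounds on $M_j$ via a microlocal partition of unity. The restriction $p>1$ arises there, because the semiclassical pseudodifferential $Q_i$ are bounded on $L^p$ only through Calder\'on--Zygmund theory --- not from an $\ell^p\hookrightarrow\ell^2$ summation as you suggest.
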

\begin{proof}
As in \cite{HV1}, we can write $dE_{\sqrt{\bH}}(\la)=(2\pi)^{-1} P(\la)P(\la)^*$, where $P(\la)$ is the Poisson operator associated to $\bH$. Hence one needs to get $L^p(M)\to L^2(\pl M)$ bounds
for $P(\la)^*\chi$. The Schwartz kernel of $P(\la)^*$ is given by 
\begin{equation}\label{Poissonkernel}
P^*(\la;y, z')= [{x}^{-\frac{n-1}{2}}e^{i\la/x}R(\la; x,y;z')]|_{x=0}.
\end{equation}
Let $\chi_1,\chi_2,\chi_3\in C^\infty(M)$  be supported in $\{x<2\eps\}$ and equal to $1$ in $\{x<\eps\}$, and 
$\chi_i\chi_j=\chi_j$ if $j<i$. Let $(M_i,g_i)$ be a non-trapping asymptotically Euclidean manifold with one unique end isometric to $\mcE_i$. The existence of such a manifold can be easily proved if one takes 
$\eps$ small enough. There is a natural identification $\iota_j:M_j\cap\{x<2\eps\}\to M\{x<2\eps\}$, and so functions supported in $\{x<2\eps\}$ can be considered as functions on $M$ or $\cup_j M_j$. To simplify notations, we shall implicitly use this identification in what follows, instead of writing $\iota_j^*,{\iota_j}_*$ .  Let $\bH_j = \Delta_{M_j} + V_j$, where $V_j$ is equal to $V$ in the identified region, such that $\bH_j$ satisfies the conditions of Theorem~\ref{main3} (which can always be achieved by making $V_j$ sufficiently positive in a compact set away from the identified region). 
For $\la\in\{z\in \cc; {\rm Im}(\la)>0\}$, we define $R_j(\la):=(\bH_j-\la^2)^{-1}$ the resolvent, and by \cite{HV2} the Schwartz kernel of this operator extends continuously to $\la\in\rr$ as a Legendre distribution. For $\la>0$ it corresponds to the outgoing resolvent while for $\la<0$ it is the incoming resolvent.  For what follows, we consider ${\rm Re}(\la)>0$ to deal with the outgoing case.  We have the following identities for ${\rm Im}(\la)>0$ 
\[\begin{gathered} 
(\bH_j-\la^2)\sum_j\chi_2R_j(\la)\chi_1=\chi_1+\sum_{j}[\bH_j,\chi_2]R_j(\la)\chi_1,\\
\sum_j\chi_2R_j(\la)\chi_3(\bH_j-\la^2)=\chi_2+\sum_j\chi_2R_j(\la)[\chi_3,\bH_j],
\end{gathered}\]
which can be also written as 
\[\begin{gathered} 
\sum_j\chi_2R_j(\la)\chi_1=R(\la)\chi_1+\sum_{j}R(\la)[\bH_j,\chi_2]R_j(\la)\chi_1,\\
\sum_j\chi_2R_j(\la)\chi_3=\chi_2R(\la)+\sum_j\chi_2R_j(\la)[\chi_3,\bH_j]R(\la).
\end{gathered}\]
Multiplying the second identity by $\chi_1$ on the right and combining with the first one, we deduce that
\begin{equation}\label{identityresolv}
\chi_2R(\la)\chi_1=\sum_j\chi_2R_j(\la)\chi_1+\sum_{i,j}\chi_2R_i(\la)[\chi_3,\bH]R(\la)[\bH,\chi_2]R_j(\la)\chi_1.
\end{equation} 
Since $R_j(\la),R(\la)$ extend to $\la\in\rr$ as operators mapping $C_0^\infty(M)$ to $C^\infty(M)$, \eqref{identityresolv}  also extends to $\la\in \rr$ as a map from $C_0^\infty(M)$ to $C^\infty$
(since $[\bH,\chi_i]$ is a compactly supported differential operator).
Now to obtain the Poisson operator $P(\la)^*$, we use \ref{Poissonkernel} and deduce from \eqref{identityresolv} that 
\begin{equation}\label{identitypoisson}
P(\la)^*\chi_1=\sum_jP_j(\la)^*\chi_1+\sum_{i,j}P_i^*(\la)[\chi_3,\bH]R(\la)[\bH,\chi_2]R_j(\la)\chi_1
\end{equation}
where $P_j(\la)^*$ is the adjoint of the Poisson operator for $\bH_j$ on $(M_j,g_j)$ (mapping to $\pl M$  by the natural identification of  $\pl M_i$ with $\pl M$). 
Since $\nabla \chi_2$ and $\nabla\chi_3$ are compactly supported, we can choose $\eta \in C_0^\infty(M^\circ)$, supported in $\{ x < 2\epsilon \}$, such that $\eta = 1$ on $\supp \nabla \chi_2 \cup \supp \nabla \chi_3$, and write \eqref{identitypoisson} in the form 
\begin{equation}\label{identitypoisson2}
P(\la)^*\chi_1=\sum_jP_j(\la)^*\chi_1+\sum_{i,j}P_i^*(\la) \eta [\chi_3,\bH] \eta R(\la) \eta [\bH,\chi_2] \eta R_j(\la)\chi_1. 
\end{equation}

In \cite[equation (1.5)]{CV}, Cardoso and Vodev prove the following $L^2$ estimate: if $\eta\in C_0^\infty(M)$ (resp. $\eta_j\in C_0^\infty(M_j)$) is supported in $\{x<2\eps\}$, then for $\eps$ small enough, there is $C>0$ such that for all $\la>1$
\begin{equation}\label{cardosovodev}
||\eta R(\la) \eta||_{H^{-1}\to H^{1}}\leq C\la, \quad(\textrm{resp. } ||\eta_j R_j(\la) \eta_j||_{H^{-1}\to H^{1}}\leq C\la).
\end{equation}
Since the spectral measure $dE_j(\la)$ for $\sqrt{\bH_j}$
on $(M_j,g_j)$ satisfies 
\[dE_j(\la)=\frac{\la}{\pi i}(R_j(\la)-R_j(-\la))=\frac1{2\pi}P_j(\la)P_j(\la)^*,\]
we deduce by the $TT^*$ argument and \eqref{cardosovodev} that
\begin{equation}\label{etajpj}
||\eta_j P_j(\la)||_{L^2(\pl M_j)\to L^2(M_j)}\leq C 
\end{equation}
if $\eta_j$ is as above. Now since $M_j$ is non-trapping, we also know from Theorem \ref{main3} and the $TT^*$ argument that for $p\in [1,2(n+1)/(n+3)]$
\begin{equation}\label{Pj^*}
||P_j(\la)^*\chi_1||_{L^p(M_j)\to L^2(\pl M_j)}\leq C\la^{n(\frac{1}{p}-\demi)-\demi}. 
\end{equation}
We now use the following 
\begin{lem}\label{resolvent-spmeasure} Assume that $M_j$ is asymptotically Euclidean and nontrapping. 
Let $\chi\in C^\infty(M_j)$ be equal to $1$ in $\{x<\eps\}$ and supported in $\{x<2\eps\}$ and let  $\eta\in C_0^\infty(M_j)$ be supported in $\{x<2\eps\}$ such that 
\begin{equation}
\inf \{ x \mid \exists \, (x, y) \in \supp \eta \} \geq \gamma \sup \{ x \mid \exists \, (x, y) \in \supp \chi \}
\label{supp-condition}\end{equation}
for some $\gamma > 1$; 
in particular, the distance between the support of $\eta$ and $\chi$ is positive. Then the following estimate holds for $1 < p \leq 2(n+1)/(n+3)$ and $\lambda \geq 1$:
\[ ||\eta R_j(\la) \chi||_{L^p(M_j)\to L^{p'}(M_j)}\leq \frac{C}{\la}  ||\eta \, dE_j(\la)\chi||_{L^p(M_j)\to L^{p'}(M_j)} + O(\la^{-\infty}). \]  
\end{lem} 
\begin{proof}
Recall that $R_j(\pm \lambda)$ is the sum of a pseudodifferential operator and of  Legendre distributions associated to the Legendre submanifolds $(\Nsfstar \diagb, L_\pm)$ and to $(L_\pm, L_\pm^\sharp)$. Since the distance between the support of $\eta$ and $\chi$ is positive, we see that $\eta R_j(\pm \lambda) \chi$
are, like $dE(\lambda)$, both Legendre distributions (conic pairs) associated to $(L, L^\sharp)$ with disjoint microlocal support; indeed, the nontrapping assumption implies that $L_+$ and $L_-$ intersect only at $G$ which is contained in $\Nsfstar \diagb$, while $L_+^\sharp$ and $L_-^\sharp$ are disjoint. We claim that we can choose a microlocal partition of unity, $\sum_{i=1}^N Q_i = \Id$, where $Q_i$ are semiclassical scattering pseudodifferential operators, such that for each pair $(i, k)$, either $Q_i \eta R_j( \lambda) \chi Q_k$ or $Q_i \eta R_j(- \lambda) \chi Q_k$ is microlocally trivial. This does not quite follow from the disjointness of the microlocal supports of $\eta R_j(\pm \lambda) \chi$; we must also check that at $T_\pm$, there are no points $(y, y', \sigma, \mu, \mu', \nu, \nu'), (y, y', \sigma^*, \mu, \mu', \nu, \nu') \in \Tsfstar_{\bfc} \bX$, differing only in the $\sigma$ coordinate, such that the first point is in $\WF'_{\bfc}(\eta R_j( \lambda) \chi)$ and the second point is in $\WF'_{\bfc}(\eta R_j(- \lambda) \chi)$ (cf. Remark~\ref{sigma-behaviour}). This follows from \eqref{eq:sp-1c}; in fact, the coordinates $(\nu, \nu')$ determine $\sigma$ except on the sets $T_\pm$. However, on $T_\pm$, we find that $(y, y', \sigma, \mu=0, \mu'=0, \nu = \pm 1, \nu' = \mp 1)$ is in $L_+$ iff $\sigma  \leq 1 $ and $\nu = 1$, or $\sigma \geq 1$ and $\nu = -1$, while it is in $L_-$ iff $\sigma  \leq 1 $ and $\nu = -1$, or $\sigma \geq 1$ and $\nu = 1$. But condition \eqref{supp-condition} implies that $\sigma \geq \gamma > 1$ on the support of the kernel of $\eta R_j(\pm \lambda) \chi$, so we see that indeed it is not possible to have $(y, y', \sigma, \mu, \mu', \nu, \nu') \in \WF'_{\bfc}(\eta R_j( \lambda) \chi)$ and $(y, y', \sigma^*, \mu, \mu', \nu, \nu') \in \WF'_{\bfc}(\eta R_j(- \lambda) \chi)$. 

Now let $\mathcal{N}$ be the set of pairs $(i,k)$, with $1 \leq i,k \leq N$, such that $Q_i \eta R_j( \lambda) \chi Q_k$ is not microlocally trivial. This means that if $(i,k) \in \mathcal{N}$, then $Q_i \eta R_j( -\lambda) \chi Q_k$ \emph{is} microlocally trivial. Let us also observe that as the $Q_i$ are uniformly bounded as operators $L^2 \to L^2$, and as they are Calder\'on-Zygmund operators in a uniform sense as $h \to 0$, then they are uniformly bounded as operators $L^p \to L^p$ for $1 < p < \infty$. Therefore we can compute: 
\begin{equation}\begin{gathered}
\| \eta R_j(\la) \chi \|_{L^p(M_j) \to L^2(M_j)} \leq
\sum_{i,k = 1}^N \| Q_i \eta R_j(\la) \chi Q_k \|_{L^p(M_j) \to L^2(M_j)} \\
= \sum_{(i,k) \in \mathcal{N}} \| Q_i \eta R_j(\la) \chi Q_k \|_{L^p(M_j) \to L^2(M_j)} + O(\la^{-\infty}) \\
= \sum_{(i,k) \in \mathcal{N}} \| Q_i \eta (R_j(\la) - R_j(-\la)) \chi Q_k \|_{L^p(M_j) \to L^2(M_j)} + O(\la^{-\infty}) \\
= \frac1{2\pi  \lambda} \sum_{(i,k) \in \mathcal{N}} \| Q_i \eta \,  dE_j(\la) \chi Q_k\|_{L^p(M_j) \to L^2(M_j)} + O(\la^{-\infty}) \\
\leq \frac{CN^2}{\lambda}  \|  \eta \, dE_j(\la) \chi  \|_{L^p(M_j) \to L^2(M_j)} + O(\la^{-\infty}) ,
\end{gathered}\end{equation}
proving the lemma. 
\end{proof}

Since  $\eta dE_j(\la)\chi=\eta P_j(\la)P_j(\la)^*\chi$, we deduce from Lemma~\ref{resolvent-spmeasure} and equations \eqref{etajpj} and \eqref{Pj^*} that 
\begin{equation}
||\eta R_j(\la) \chi||_{L^p(M_j)\to L^2(M_j)}\leq C\la^{n(\frac{1}{p}-\demi)-\demi - 1}, \quad \la \geq 1. 
\label{etarjchi}\end{equation}
Now we can analyze the boundedness of the right-hand term of \eqref{identitypoisson2} as follows: $\eta R_j(\la) \chi$ maps $L^p(M_j)\to L^2(M_j)$ with norm $C\la^{n(\frac{1}{p}-\demi)-\demi - 1}$ by \eqref{etarjchi}; 
$[\bH, \chi_2]$ maps $L^2(M_j)$ to $H^{-1}(M)$ with norm independent of $\la$; $\eta R(\la) \eta$ maps $H^{-1}(M)$ to $H^1(M)$ with norm $C\la$ by \eqref{cardosovodev}; $[\chi_3, \bH]$ maps $H^1(M_j)$ to $L^2(M)$ with norm independent of $\la$; and $P_i^*(\la) \eta$ maps $L^2(M)$ to $L^2(M)$ with uniformly bounded norm by \eqref{Pj^*}. This  concludes the proof of Proposition~\ref{restrictionchi}. 
\end{proof}

\begin{remark} Observe that we missed the endpoint $p=1$ due to our use of Calder\'on-Zygmund theory. In the case that $M$ is exactly Euclidean for $x < 2\epsilon$ we can take $M_j$ to be flat Euclidean space and then it is straightforward to check that $\eta R_j(\la) \chi$ is bounded $L^1(M_j) \to L^2(M_j)$ with norm $O(\la^{(n-3)/2})$, which gives us Proposition~\ref{restrictionchi} for $p=1$  in this case. 
\end{remark}

In the paper \cite{SeeS} by Seeger-Sogge, spectral multiplier estimates are proved for compact manifolds for the same exponents as in Theorem~\ref{bori}. This was done using Sogge's discrete $L^2$ restriction theorem, i.e. Proposition~\ref{sp-proj-est}. One may suspect that, since spectral multiplier estimates can be proved in the compact case, and since we have localized restriction estimates outside the trapped sets, that one should be able to prove spectral multiplier estimates on asymptotically conic manifolds unconditionally, i.e. without any nontrapping assumption. We have not been able to prove this, however, but have the following localized results:  

\begin{prop}\label{nontrappingpart}
Let $M^\circ$ be a manifold with Euclidean ends, and let $p \in [1, 2(n+1)/(n+3)]$. 
Let $\bH$ be as in Proposition~\ref{main3}, let $\chi$ be a cutoff function as in Proposition \ref{restrictionchi}, let $F$ be a multiplier satisfying the assumption of Theorem \ref{bori}, i.e.
$F\in H^s$ for some $s>\max(n(\frac{1}{p}-\demi),\demi)$. Then we have
\[\sup_{\alpha>0}|| F(\alpha\sqrt{\bH})\chi ||_{p\to p}\leq C ||F||_{H^s}.\]
\end{prop}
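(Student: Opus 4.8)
The plan is to reduce the estimate for $F(\alpha\sqrt{\bH})\chi$ to the two ingredients already available: the localized restriction estimate of Proposition~\ref{restrictionchi}, namely $\|\chi_1 dE_{\sqrt{\bH}}(\lambda)\chi_1\|_{p\to p'}\le C\lambda^{n(1/p-1/p')-1}$ for a cutoff $\chi_1$ supported in $\{x<2\epsilon\}$ and equal to $1$ on $\{x<\epsilon\}$ (containing the support of $\chi$), and the unconditional spectral projection estimate \eqref{integrated} / Proposition~\ref{sp-proj-est}, which holds for all asymptotically conic manifolds. The point is that $\chi$ lives in the Euclidean ends away from the trapped set, so composing on the right by $\chi$ lets us localize the spectral measure to a region where the full restriction estimate is valid, while the finite-speed-propagation machinery of Section~\ref{reisme} is insensitive to trapping.

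First I would run the dyadic decomposition from the proof of Theorem~\ref{thm:sm}: write $F=\sum_{l\ge 0}F_l$ with $F_l$ as in \eqref{defF_l}, so that $\supp K_{F_l(\alpha\sqrt{\bH})}\subset \mathcal{D}_{2^l\alpha}$ by Lemma~\ref{step}. Applying Lemma~\ref{w} to the operator $F_l(\alpha\sqrt{\bH})\chi$ (whose kernel is still supported in $\mathcal{D}_{2^l\alpha}$) gives
$$
\|F(\alpha\sqrt{\bH})\chi\|_{p\to p}\le \sum_{l\ge 0}(2^l\alpha)^{n(1/p-1/2)}\|F_l(\alpha\sqrt{\bH})\chi\|_{p\to 2}.
$$
Then I split $F_l=\psi F_l+(1-\psi)F_l$ with $\psi\in C_c^\infty(-4,4)$ as before. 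For the $(1-\psi)F_l$ part, the argument is \emph{verbatim} the same as in Theorem~\ref{thm:sm}: it uses only the spectral projection estimate \eqref{re-integrated} (here supplied by Proposition~\ref{sp-proj-est}, which needs no nontrapping hypothesis), together with the integration-by-parts bounds on $(1-\psi)F_l$; composing on the right with $\chi$ costs nothing since $\|\chi\|_{p\to p}\le 1$, and $\|\cdot\|_{p\to 2}$ is controlled by $\|E_{\sqrt{\bH}}[0,\lambda]\|_{p\to p'}$ via the usual $TT^*$/heat-semigroup trick.

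For the main term I use a $TT^*$ argument adapted to the one-sided cutoff:
$$
\|\psi F_l(\alpha\sqrt{\bH})\chi\|_{p\to 2}^2=\|\chi\,|\psi F_l|^2(\alpha\sqrt{\bH})\,\chi\|_{p\to p'}\le \int_0^{4/\alpha}|\psi F_l(\alpha\lambda)|^2\,\|\chi\, dE_{\sqrt{\bH}}(\lambda)\,\chi\|_{p\to p'}\,d\lambda,
$$
and now Proposition~\ref{restrictionchi} gives $\|\chi\, dE_{\sqrt{\bH}}(\lambda)\,\chi\|_{p\to p'}\le C\lambda^{n(1/p-1/p')-1}$. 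Rescaling $\lambda\mapsto\lambda/\alpha$ exactly as in \eqref{osz2} yields $\alpha^{n(1/p-1/2)}\|\psi F_l(\alpha\sqrt{\bH})\chi\|_{p\to 2}\le C\|\psi F_l\|_2$, and summing over $l$ with the Besov-space bound $\sum_l 2^{ln(1/p-1/2)}\|\psi F_l\|_2=\|F\|_{B_{1,2}^{n(1/p-1/2)}}\le C\|F\|_{H^s}$ for $s>n(1/p-1/2)$ closes the estimate. (The $s>1/2$ requirement enters only through the $p=2$ interpolation endpoint and the remark that $H^{1/2}$-multipliers need not be bounded, exactly as in Theorem~\ref{thm:sm}.)

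\textbf{Main obstacle.} The only genuinely new point compared with Theorem~\ref{thm:sm} is justifying the localized $TT^*$ identity $\|\psi F_l(\alpha\sqrt{\bH})\chi\|_{p\to 2}^2=\|\chi\,|\psi F_l|^2(\alpha\sqrt{\bH})\,\chi\|_{p\to p'}$ and the subsequent insertion of Proposition~\ref{restrictionchi} — one must be careful that the cutoff $\chi$ appearing in Proposition~\ref{restrictionchi} can be chosen with support containing that of the $\chi$ in the statement and that the finite-propagation support condition $\mathcal D_{2^l\alpha}$ is unaffected by right-multiplication by $\chi$ (both are routine, since $\chi$ is a bounded multiplication operator with fixed compact-in-$x$ support). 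A secondary subtlety is that Proposition~\ref{restrictionchi} is stated for $1<p\le 2(n+1)/(n+3)$, missing the endpoint $p=1$; but for $p\in(\,2n/(n+1),\,2(n+1)/(n+3)\,]$ one could alternatively interpolate, and in any case the statement to be proved allows $p\in[1,2(n+1)/(n+3)]$, so at $p=1$ one invokes the Euclidean-ends remark after Proposition~\ref{restrictionchi} (or simply restricts to $p>1$ and interpolates with $p=1$ handled by the $W^s_\infty$ version, Proposition~\ref{propsl}). I expect essentially no hard analysis here — the work is bookkeeping to confirm that every step of the proof of Theorem~\ref{thm:sm} survives the insertion of the right-hand cutoff and the replacement of the global restriction estimate by its spatially localized counterpart.
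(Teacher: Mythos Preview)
Your proposal is correct and follows essentially the same approach as the paper: the paper's proof is simply to repeat the argument of Theorem~\ref{thm:sm} with the global restriction estimate \eqref{re} replaced by the localized one \eqref{lre} from Proposition~\ref{restrictionchi}, which is exactly what you do (with the $TT^*$ identity $\|\psi F_l(\alpha\sqrt{\bH})\chi\|_{p\to 2}^2=\|\chi\,|\psi F_l|^2(\alpha\sqrt{\bH})\,\chi\|_{p\to p'}$ being the natural way the cutoff enters). Your discussion of the $p=1$ endpoint and the unchanged $(1-\psi)F_l$ term is more detailed than the paper's one-line proof, but the strategy is identical.
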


This is proved by following the proof of Theorem~\ref{bori}, using \eqref{lre} in place of \eqref{re}.

\begin{prop}\label{compactpart} 
Let $\omega\in C_c^\infty(M^\circ)$ be compactly supported  and let  $\bH$ and $F$ be as above. Then the following estimate holds:
\[ \sup_{\alpha>0}|| \omega F(\alpha\sqrt{\bH})||_{L^p\to L^p}\leq  ||F||_{H^s}.\]
\end{prop}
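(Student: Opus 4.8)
\textbf{Proof proposal for Proposition~\ref{compactpart}.}

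The plan is to reduce the estimate for $\omega F(\alpha\sqrt{\bH})$ to a combination of the compactly-supported restriction estimates already available and the localized multiplier estimate from Proposition~\ref{nontrappingpart}. First I would split the multiplier using a dyadic decomposition in the spirit of the proof of Theorem~\ref{thm:sm}: write $F = \sum_{l\ge 0} F_l$ where $F_l$ is as in \eqref{defF_l}, so that $F_l(\alpha\sqrt{\bH})$ has kernel supported in $\D_{2^l\alpha}$ by Lemma~\ref{step}. For bounded $l$, i.e. $2^l\alpha \leq R_0$ for a fixed $R_0$, the operator $\omega F_l(\alpha\sqrt{\bH})$ has kernel supported in a fixed compact subset of $M^\circ\times M^\circ$ (the $2^l\alpha$-neighbourhood of $\supp\omega$), and there the relevant geometry is that of a compact manifold; the bound follows exactly as in \cite{SeeS}, using the spectral projection estimate \eqref{integrated} of Theorem~\ref{main3}(D) (valid for all asymptotically conic manifolds) together with finite speed of propagation — this is the same argument that proves Theorem~\ref{thm:sm} but now only the compactly-supported part of the kernel matters. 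One subtlety: since $\supp\omega$ is compact but the propagation can reach the ends once $2^l\alpha$ is large, the split at $2^l\alpha\sim R_0$ is essential.

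For the large-$l$ part, $2^l\alpha > R_0$, I would insert a cutoff $\chi$ as in Proposition~\ref{restrictionchi}: choose $\chi$ supported in $\{x < \epsilon\}$ equal to $1$ on $\{x<\epsilon/2\}$, and choose $\omega$'s and $\chi$'s supports so that $\supp\omega \subset \{\chi = 0\}$ and moreover $\mathrm{dist}(\supp\omega, \supp(1-\chi)) > 0$ — wait, more precisely I want $\supp\omega$ to lie in the region where $\chi\equiv 0$, with a positive gap. Then for $2^l\alpha$ large, $\omega F_l(\alpha\sqrt{\bH})(1-\chi)$ has kernel supported where $d(z,z') \geq c > 0$ with $z\in\supp\omega$; combined with the off-diagonal decay of the $F_l$ (obtained by integrating by parts $N$ times in the representation $F_l(\lambda) = (2\pi)^{-1}\int \eta(t/2^l)\hat F(t)\cos(t\lambda)\,dt$, exactly as in the $(1-\psi)F_l$ estimate in the proof of Theorem~\ref{thm:sm}), this contributes a rapidly summable series. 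For the remaining piece $\omega F_l(\alpha\sqrt{\bH})\chi$, I would write it as $\omega\big(F_l(\alpha\sqrt{\bH})\chi\big)$ and apply Proposition~\ref{nontrappingpart}: that proposition gives $\|F(\alpha\sqrt{\bH})\chi\|_{p\to p}\le C\|F\|_{H^s}$, and since $\omega$ is bounded and compactly supported, $\|\omega\,G\|_{p\to p}\le C\|G\|_{p\to p}$ for the relevant $p$. Actually one cannot apply Proposition~\ref{nontrappingpart} to each $F_l$ and sum (the $H^s$ norms need not sum), so instead I would apply it once, to $\omega F(\alpha\sqrt{\bH})\chi$ directly, and handle only $\omega F(\alpha\sqrt{\bH})(1-\chi)$ by the dyadic/finite-speed argument above. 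So the clean structure is: $\omega F(\alpha\sqrt{\bH}) = \omega F(\alpha\sqrt{\bH})\chi + \omega F(\alpha\sqrt{\bH})(1-\chi)$; the first term is controlled by Proposition~\ref{nontrappingpart} and boundedness of multiplication by $\omega$; the second term has kernel supported away from the ends (since $1-\chi$ is supported in a compact set, being $\equiv 1$ only near $\partial M$ — here I need $\chi$ chosen so that $1-\chi$ is compactly supported in $M^\circ$, which is automatic), and for this term one is genuinely in the compact-manifold situation, where one decomposes dyadically and uses \eqref{integrated} plus finite speed of propagation à la Seeger-Sogge to get the $H^s$ bound.

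The main obstacle I anticipate is the handling of the term $\omega F(\alpha\sqrt{\bH})(1-\chi)$ when $1-\chi$ is \emph{not} compactly supported — which it is not, since $\chi$ vanishes near the ends. The resolution is to note that $\omega F(\alpha\sqrt{\bH})(1-\chi)$ still has the good property that we only need its action on the compact support of $\omega$ on the left, and then to decompose $1-\chi = \phi + (1-\chi-\phi)$ where $\phi$ is compactly supported and equal to $1$ on a large compact set, absorbing the error: the piece $\omega F(\alpha\sqrt{\bH})(1-\chi-\phi)$ has kernel supported where the distance from $\supp\omega$ exceeds a large constant, so by finite speed of propagation and the off-diagonal decay of the dyadic pieces $F_l$ (again integrating by parts in $t$), this is $O(\|F\|_2)$ with rapid decay, hence summable. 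The piece $\omega F(\alpha\sqrt{\bH})\phi$ is then honestly compactly supported on both sides, and the Seeger-Sogge argument using \eqref{integrated} applies. Care is needed to make all constants uniform in $\alpha > 0$, but this is built into the scaling structure of Theorem~\ref{thm:sm} and \eqref{integrated}. Thus the proof is essentially an assembly of Proposition~\ref{nontrappingpart}, Theorem~\ref{main3}(D), Lemma~\ref{step}, Lemma~\ref{w}, and the dyadic-decomposition machinery of Theorem~\ref{thm:sm}.
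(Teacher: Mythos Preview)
Your approach is workable but more elaborate than needed, and it contains a geometric slip that leads you into an unnecessary extra decomposition.

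First, the slip: with $\chi$ supported in $\{x<\epsilon\}$ and equal to $1$ on $\{x<\epsilon/2\}$, the function $\chi$ is supported \emph{near} the ends, not away from them. Consequently $1-\chi$ is supported in $\{x\ge \epsilon/2\}$, which is a compact subset of $M^\circ$. You state this correctly once (``$1-\chi$ is compactly supported in $M^\circ$, which is automatic'') but then contradict yourself in the next paragraph. The whole ``main obstacle'' paragraph and the further splitting $1-\chi=\phi+(1-\chi-\phi)$ are based on the mistaken reading and can simply be deleted.

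Second, and more to the point, the $\chi$-split is itself unnecessary. The paper's argument is more direct: one runs the Seeger--Sogge method on $\omega F(\alpha\sqrt{\bH})$ as a whole, using only that $\omega$ has compact support. The compact support of $\omega$ on the \emph{left} is all that is needed to pass from an $L^p\to L^2$ bound to an $L^p\to L^p$ bound (this is the $L^2\hookrightarrow L^p$ embedding on $\supp\omega$, which is the role of equation (3.11) in \cite{SeeS}). One then decomposes $F=\sum_l F_l$ dyadically via finite speed of propagation and bounds $\|F_l(\alpha\sqrt{\bH})\|_{L^p\to L^2}$ using the spectral projection estimate \eqref{integrated} from Theorem~\ref{main3}(D), exactly as in the compact case. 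No cutoff on the right, and no appeal to Proposition~\ref{nontrappingpart}, is needed. Your route does work---Proposition~\ref{nontrappingpart} handles the $\chi$ piece, and the Seeger--Sogge argument handles the compactly supported remainder---but it doubles the machinery for no gain, and in fact imports the asymptotically Euclidean hypothesis (needed for Proposition~\ref{restrictionchi}, hence Proposition~\ref{nontrappingpart}) into a statement whose proof, done the paper's way, needs only the spectral projection estimate valid on any manifold with $\CI$ bounded geometry.
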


This is proved by following the method of Seeger-Sogge \cite{SeeS}, using the compact support of $\omega$ to obtain the embedding from $L^2$ to $L^p$ as in \cite[Equation (3.11)]{SeeS}. 

\subsection{Examples with elliptic trapping}\label{trappingex}

Here we show that the restriction estimate at high frequency generically fails for asymptotically conic manifolds with  
elliptic closed geodesics.   Indeed, it has been proved by Babich-Lazutkin \cite{BaLa} and Ralston \cite{Ra}
that if there exists a closed geodesic $\gamma$ in $M$ such that  
the eigenvalues of the linearized Poincar\'e map of $\gamma$ are of modulus $1$ and are not roots of unity, then
there exists  a sequence of quasimodes $u_j\in C_0^\infty(K)$ with  
$K$ a fixed compact set containing the geodesic, a sequence of positive real numbers $\la_j\to \infty$ 
such that for all $N>0$ there is $C_N>0$ such that  
\begin{equation}\label{quasimode}
\| u_j \|_{L^2} = 1, 
\quad ||(\Delta_g-\la_j^2)u_j||_{L^2}\leq C_N\la_j^{-N}.
\end{equation}
We show
\begin{prop}\label{applquasimode}
Assume that $(M,g)$ is an asymptotically conic manifold with an elliptic closed geodesic  
such that   the eigenvalues of the linearized Poincar\'e map of $\gamma$ are of modulus $1$ and are not roots of unity.
Then for all $p\in [1,2)$ and $M\geq 0$  
the spectral measure $dE_{\sqrt{\Delta_g}}(\lambda)$ does {\bf not} satisfy the following restriction estimate  
\[\exists C>0, \ \exists \lambda_0>0, \ \forall \la\geq \lambda_0,\ ||dE_{\sqrt{\Delta_g}}(\la)||_{L^p\to L^{p'}}\leq C\la^{M}.\] 
\end{prop}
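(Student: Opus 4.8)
The plan is to argue by contradiction, using the quasimodes $u_j$ provided by \cite{BaLa,Ra} to extract a lower bound on the $L^p\to L^{p'}$ norm of $dE_{\sqrt{\Delta_g}}(\lambda)$ that grows faster than any polynomial. First I would recall the spectral‐theoretic identity
$$
\langle u_j, f(\sqrt{\Delta_g}) u_j\rangle = \int_0^\infty f(\lambda)\, d\langle u_j, E_{\sqrt{\Delta_g}}(\lambda) u_j\rangle,
$$
and apply it to a well‐chosen $f$. The natural choice is $f = \indic_{[\lambda_j - \lambda_j^{-N}, \lambda_j + \lambda_j^{-N}]}(\cdot)$ — or a smoothed version thereof — so that the quasimode estimate \eqref{quasimode} forces $u_j$ to be almost entirely supported, spectrally, in this tiny window: indeed $\|(\Delta_g - \lambda_j^2)u_j\|_{L^2}\le C_N \lambda_j^{-N}$ together with $\|u_j\|_{L^2}=1$ and the spectral theorem give, for the complementary projection $E^c_j := \Id - \indic_{[\lambda_j-\delta_j,\lambda_j+\delta_j]}(\sqrt{\Delta_g})$ with $\delta_j$ an appropriate power of $\lambda_j^{-1}$, that $\|E^c_j u_j\|_{L^2} = O(\lambda_j^{-N'})$ for $N'$ as large as we like (since $|\mu^2 - \lambda_j^2| \ge c\lambda_j \delta_j$ on the support of $E^c_j$, and $\mu\mapsto(\mu^2-\lambda_j^2)^{-1}$ is bounded by $(c\lambda_j\delta_j)^{-1}$ there). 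Hence $\langle u_j, \indic_{[\lambda_j-\delta_j,\lambda_j+\delta_j]}(\sqrt{\Delta_g}) u_j\rangle \ge 1 - O(\lambda_j^{-N'})\ge 1/2$ for $j$ large.

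Next I would convert this into a statement about $dE(\lambda)$. Writing $E_j := \indic_{[\lambda_j-\delta_j,\lambda_j+\delta_j]}(\sqrt{\Delta_g}) = \int_{\lambda_j-\delta_j}^{\lambda_j+\delta_j} dE_{\sqrt{\Delta_g}}(\lambda)$, we get
$$
\frac12 \le \langle u_j, E_j u_j\rangle \le \int_{\lambda_j-\delta_j}^{\lambda_j+\delta_j} \big| \langle u_j, dE_{\sqrt{\Delta_g}}(\lambda) u_j\rangle \big| \, d\lambda \le 2\delta_j \sup_{|\lambda-\lambda_j|\le\delta_j} \big\| dE_{\sqrt{\Delta_g}}(\lambda)\big\|_{L^p\to L^{p'}} \, \|u_j\|_{L^p}\|u_j\|_{L^{p'}},
$$
using the positivity of $dE$ (which implies $|\langle u_j, dE(\lambda) u_j\rangle|$ is a genuine measure bounded by its total variation) and Hölder. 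Since all $u_j$ are supported in a fixed compact set $K$, we have $\|u_j\|_{L^p}\le C_K \|u_j\|_{L^2} = C_K$ and likewise $\|u_j\|_{L^{p'}}\le C_K\|u_j\|_{L^\infty}$; to control the $L^\infty$ norm one uses elliptic regularity applied to \eqref{quasimode}, which bounds all Sobolev norms $\|u_j\|_{H^s(K')}$ by $C_s \lambda_j^{s}$ for a slightly larger compact $K'$ (write $u_j = (\Delta_g+1)^{-s/2}(\Delta_g+1)^{s/2}u_j$ and estimate $(\Delta_g+1)^{s/2}u_j$ in $L^2$ using the quasimode equation iteratively), hence by Sobolev embedding $\|u_j\|_{L^{p'}} \le C\lambda_j^{n(1/2-1/p')}$, at worst. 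Therefore
$$
\sup_{|\lambda-\lambda_j|\le\delta_j} \big\| dE_{\sqrt{\Delta_g}}(\lambda)\big\|_{L^p\to L^{p'}} \ge \frac{c}{\delta_j \lambda_j^{n(1/2-1/p')}} \ge c' \lambda_j^{N-C}
$$
for a fixed constant $C$ depending only on $n,p$, since $\delta_j$ is a fixed negative power of $\lambda_j$. As $N$ was arbitrary, picking a single large $N$ (larger than $M + C$) gives a sequence $\lambda_j\to\infty$ along which $\|dE_{\sqrt{\Delta_g}}(\lambda_j')\|_{L^p\to L^{p'}}$ exceeds $\lambda_j'^M$ for suitable $\lambda_j'$ with $|\lambda_j'-\lambda_j|\le\delta_j$, contradicting the assumed bound. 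This proves the proposition.

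The main obstacle, and the place requiring the most care, is the polynomial control of $\|u_j\|_{L^{p'}}$ (equivalently $\|u_j\|_{L^\infty}$ on $K$): this is where the quasimode estimate \eqref{quasimode} must be bootstrapped through elliptic regularity, and one must be slightly attentive that the compact support is preserved (or nearly so) and that the constants are uniform in $j$. A cleaner alternative, which I would actually prefer, is to avoid $L^\infty$ bounds altogether: estimate $\langle u_j, E_j u_j\rangle \le \|E_j\|_{L^p\to L^{p'}}\|u_j\|_{L^p}\|u_j\|_{L^{p'}}$ but then bound $\|u_j\|_{L^{p'}}$ crudely by interpolation between $L^2$ (which is $1$) and $L^2$ again is not enough — so one does need \emph{some} higher integrability, and the $H^s(K')$ bound from elliptic regularity is the natural source. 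A second minor subtlety is that \eqref{quasimode} is stated for $\Delta_g$ while we want $\bH = \Delta_g$ (the proposition is stated only for $\Delta_g$, so $V=0$ here), so no reconciliation is needed; but one should make sure $\delta_j$ is chosen so that the spectral window genuinely captures the quasimode, i.e. $\delta_j \gg \lambda_j^{-N-1}$ suffices, and this is compatible with $\delta_j$ being a fixed negative power of $\lambda_j$, say $\delta_j = \lambda_j^{-1}$.
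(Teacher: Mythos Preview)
Your overall strategy is exactly the paper's: use the Babich--Lazutkin/Ralston quasimodes to show spectral concentration in a window of width $O(\lambda_j^{-N})$, integrate the assumed bound on $dE(\lambda)$ over that window, and derive a contradiction because $N$ is arbitrary. The proof is correct as written.

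However, you take an unnecessary detour. In the duality estimate you write
\[
\big|\langle u_j, dE_{\sqrt{\Delta_g}}(\lambda) u_j\rangle\big| \le \big\| dE_{\sqrt{\Delta_g}}(\lambda)\big\|_{L^p\to L^{p'}}\,\|u_j\|_{L^p}\,\|u_j\|_{L^{p'}},
\]
and then invoke elliptic regularity to control $\|u_j\|_{L^{p'}}$ polynomially. But the correct application of H\"older gives $\|u_j\|_{L^p}^2$, not $\|u_j\|_{L^p}\|u_j\|_{L^{p'}}$: since $dE(\lambda)u_j\in L^{p'}$, the pairing $\langle u_j, dE(\lambda)u_j\rangle = \int \overline{u_j}\,dE(\lambda)u_j$ is bounded by $\|u_j\|_{(L^{p'})^*}\|dE(\lambda)u_j\|_{L^{p'}} = \|u_j\|_{L^p}\|dE(\lambda)u_j\|_{L^{p'}}$. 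Since $p<2$ and all $u_j$ are supported in a fixed compact $K$, H\"older gives $\|u_j\|_{L^p}\le |K|^{1/p-1/2}\|u_j\|_{L^2}=C_K$ uniformly in $j$. This is precisely what the paper does: it uses only the embeddings $L^2(K)\hookrightarrow L^p(K)$ and $L^{p'}(K)\hookrightarrow L^2(K)$ to get $\langle u_j, dE(\lambda) u_j\rangle \le C'\lambda^M$ directly, then integrates over the window to contradict \eqref{spprojector1}. No elliptic regularity, no Sobolev embedding, no $L^\infty$ bound is needed.
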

\begin{proof}: Let $u_j$ be the quasimodes above. Then the inequality
$$
||(\Delta_g-\la_j^2)u_j||_{L^2}\leq C_N\la_j^{-N}
$$
implies that 
\[|| \indic_{\rr\setminus [\la_j^2-2C_N\la_j^{-N},\la_j^2+2C_N\la_j^{-N}]}(\Delta_{g})u_j||_{L^2} \leq 1/2 \]
since $\| (\Delta_g - \lambda_j^2) v \| \geq c \| v \|$ if $v$ is in the range of the spectral  projector $\indic_{\RR \setminus [\lambda_j^2 - c, \lambda_j^2 + c]}(\Delta_g)$. Therefore 
\begin{equation}
|| \indic_{ [\la_j^2-2C_N\la_j^{-N},\la_j^2+2C_N\la_j^{-N}]}(\Delta_g)u_j||_{L^2}\geq \frac{\sqrt{3}}{2},
\end{equation}
and using the fact that $\indic_{ [\la_j^2-2C_N\la_j^{-N},\la_j^2+2C_N\la_j^{-N}]}(\Delta_g)$ is a projection, 
\begin{equation}\label{spprojector}
\big\langle u_j,  \indic_{ [\la_j^2-2C_N\la_j^{-N},\la_j^2+2C_N\la_j^{-N}]}(\Delta_g)u_j \big\rangle \geq \frac{3}{4}.
\end{equation}
This implies that for large enough $\lambda$ we have 
\begin{equation}\label{spprojector1}
\big\langle u_j,  \indic_{ [\la_j-2C_N\la_j^{-N-1},\la_j+2C_N\la_j^{-N-1}]}(\sqrt{\Delta_g})u_j \big\rangle \geq \frac{3}{4}.
\end{equation}

Now assume that there exists $C$ such that 
$||dE_{\sqrt{\Delta_g}}(\la)||_{L^p\to L^{p'}}\leq C\la^{M}$. Then using the continuous embeddings from $L^2(K) \to L^p(K)$ and $L^{p'}(K)$ to $L^2(K)$, we see that there is $C'>0$ such that
\[ \big\langle u_j,  dE_{\sqrt{\Delta_g}}(\la)u_j \big\rangle \leq C'\la^M||u_j||_{L^2}\leq 2C'\la^M.\]  
By integrating this on the interval  $[\la_j-2C_N\la_j^{-N-1},\la_j+2C_N\la_j^{-N-1}]$, we contradict  \eqref{spprojector1} if $N+1$ is chosen larger than $M$ and $j$ is large enough.  
\end{proof} 
 
\begin{remark}\label{metricbottle}
In fact, one can construct examples where the spectral measure blows up exponentially with respect to the 
frequency $\la$. Consider a Riemannian manifold $(M,g)$ which is a connected sum of flat $\RR^n$ and a sphere $S^n$, so that it 
contains  an open set $S$ isometric to part of a round sphere $S^n$, namely
\[ S = \{ x=(x_1,x_2, \dots, x_{n+1})\in \rr^{n+1}; |x|=1, x_1^2 + x_2^2 > 1/4 \}\]
Consider the functions $u_N(x):=(x_1+ix_2)^N$ (as functions on $\RR^{n+1}$). These restrict to eigenfunctions on $S^{n}$  with corresponding eigenvalue $N(N+n-1)$ and with norm $||u_N||_{L^2}\sim cN^{-1/4}$ for some $c>0$ as $N\to \infty$.
Let $\chi\in C_0^\infty(S)$ be equal to $1$ on $S\cap \{ x_1^2 + x_2^2 \geq 1/2\}$ and extend it by $0$ on $M\setminus S$.  
The modified function $v_N=\chi u_N/||\chi u_N||_{L^2}$ satisfies
\[(\Delta_g-N(N+n-1))v_N= [\Delta_g,\chi]u_N/||\chi u_N||_{L^2}.\]
But since $|x_1+ix_2|<1/2$ on the support of $[\Delta_g,\chi]$ and since $||\chi u_N||>CN^{-1/4}$ for some $C>0$ when 
$N$ is large, we deduce that $(\Delta_g-N(N+n-1))v_N=O_{L^2}(e^{-\alpha N})$ for some $\alpha>0$. Applying the argument 
of Proposition \ref{applquasimode}, we deduce that  there exists 
$C>0,\beta>0$ and a sequence $\la_N\sim \sqrt{N(N+n-1)}$ such that
\[ || dE(\la_N)||_{L^p\to L^{p'}}\geq Ce^{\beta \la_N}.\]
\end{remark}


\section{Conclusion}

We conclude by mentioning several ways in which the investigations of this paper could be extended. 

Theorem~\ref{main3} is only stated for dimensions $n \geq 3$. This is because the proof relies on the analysis of \cite{GH1} and \cite{GHS}, which is only done for $n \geq 3$. It would be interesting to treat also the case $n=2$. The main difficulty in doing this is to write down a suitable inverse for the model operator at the $\zf$ face in the construction of \cite[Section 3]{GH1}, which is not invertible as an operator on $L^2(M)$ in two dimensions as it is in all higher dimensions. 

One could also extend Theorem~\ref{main3} by allowing potential functions which are $O(x^2)$ instead of only $O(x^3)$ at infinity, i.e. inverse-square decay near infinity. This should be relatively straightforward, because all the analysis has been done in the two papers cited above. For potentials of the form $V = V_0 x^2$ with $V_0$ strictly negative at $\partial M$, this would have the effect of changing the `numerology', i.e. the range of $p$ and the power of $\lambda$ in \eqref{restr}, for example. Here we preferred not to treat this case, in order not to complicate the statement of Theorem~\ref{main3}, but rather to keep the numerology as it is in the familiar setting of the classical Stein-Tomas theorem, and in Sogge's discrete $L^2$ restriction theorem. 

Another way to extend Theorem~\ref{main3} would be to allow operators $\bH$ with eigenvalues. In this case, we would consider the positive part $\indic_{(0, \infty)}(\bH)$ of the operator $\bH$. We expect such a generalization to be straightforward, as the analysis has been carried out in \cite{GH1}, \cite{GHS}, with the only complication being that $\indic_{(0, \infty)}(\bH)$ does not satisfy the finite speed propagation property \eqref{fsp}. 

We close by posing, as open problems, several possible generalizations that seem to be a little less straightforward:

\begin{itemize} 
\item Prove (or disprove) the restriction theorem for high energies in the presence of trapping, in the case that the trapped set is hyperbolic and the topological pressure assumption of \cite{NZ} and \cite{BGH} is satisfied. 

\item The Hardy-Littlewood-Sobolev theorem tells us that the resolvent of the Laplacian at zero energy on $\RR^n$ is bounded from $L^p(\RR^n)$ to $L^{p'}(\RR^n)$ when $n \geq 3$ and $p = 2n/(n+2)$; this holds true on any asymptotically conic manifold. 
Since this value of $p$ is in the range $[1, 2(n+1)/(n+3)]$, this suggests that the resolvent kernel $(\Delta - (\lambda \pm i0)^2)^{-1}$ on an asymptotically conic manifold should be bounded from $L^p(\RR^n)$ to $L^{p'}(\RR^n)$ when $p = 2n/(n+2)$. Prove (or disprove) this. 

\item Prove (or disprove) the spectral multiplier result for high energies in the trapping case, i.e. Propositions~\ref{nontrappingpart} and \ref{compactpart} without the cutoff functions. 
\end{itemize}

\end{document}